\theoremstyle{definition}
\newtheorem{defn}{Definition}[section]
\newtheorem{exm}[defn]{Example}
\newtheorem{rmk}[defn]{Remark}
\newtheorem{con}[defn]{Construction}
\newtheorem*{exm*}{Example}
\newtheorem*{rmk*}{Remark}
\newtheorem*{prop*}{Proposition}
\newtheorem{thmx}{Theorem}
\theoremstyle{plain}
\newtheorem{thm}[defn]{Theorem}
\newtheorem{prop}[defn]{Proposition}
\newtheorem{lem}[defn]{Lemma}
\newtheorem{cor}[defn]{Corollary}
\newcommand{\kk}{\Bbbk}
\newcommand{\kmod}{\kk\mathsf{Mod}}
\newcommand{\ZZ}{\mathbb{Z}}
\newcommand{\NN}{\mathbb{N}}
\newcommand{\RP}{\mathbb{R}P}
\newcommand{\ev}{\mathrm{ev}}
\newcommand{\id}{\mathrm{id}}
\newcommand{\End}{\mathrm{End}}
\newcommand{\Sk}{\mathrm{Sk}}
\DeclareMathOperator{\Hom}{Hom}
\renewcommand{\sl}{\mathfrak{sl}}
\newcommand{\gl}{\mathfrak{gl}}
\newcommand{\op}{{\mathrm{op}}}
\newcommand{\C}{\mathsf{C}}
\newcommand{\cob}{\mathsf{Cob}}
\newcommand{\ucob}{\mathsf{UCob}_2}
\newcommand{\dTL}{\mathsf{dTL}}
\newcommand{\skcat}{\mathsf{SkCat}}
\newcommand{\BN}{\mathsf{BN}}
\newcommand{\A}{\mathsf{A}}
\newcommand{\B}{\mathsf{B}}
\newcommand{\bim}{\mathsf{Bim}}
\newcommand{\sktft}{\mathsf{AFK}}
\newcommand{\kcat}{\kk\mathsf{Cat}}
\newcommand{\mor}{\mathsf{Mor}}
\newcommand{\Tr}{\mathsf{Tr}}
\newcommand{\Mat}{\mathsf{Mat}}
\newcommand{\Kar}{\mathsf{Kar}}
\newcommand{\eval}{\mathrm{eval}}
\newcommand{\arcs}{\mathrm{arcs}}
\definecolor{amaranth}{rgb}{0.9, 0.17, 0.31}
\newcommand{\cutdup}{	
}
\begin{document}
\title{A construction of surface skein TQFTs and their extension to 4-dimensional 2-handlebodies}
\author{Leon J. Goertz}
\address{Fachbereich Mathematik, Universit\"at Hamburg, 
Bundesstra{\ss}e 55, 
20146 Hamburg, Germany
}
\email{leon.goertz@uni-hamburg.de}
\begin{abstract}
    	For a commutative Frobenius algebra $A$, we construct a $(2,3,3+\varepsilon)$-dimensional TQFT $\sktft_A$ that assigns to a 3-manifold a skein module of embedded $A$-decorated surfaces. These surface skein modules have been first defined by Asaeda--Frohman and Kaiser using skein relations that generalize the combinatorics of Bar-Natan's dotted cobordisms. For 3-manifolds with boundary, we show that surface skein modules carry an action by a certain surface skein category associated with the boundary, which yields a gluing formalism. Our main result concerns a partial extension of $\sktft_A$ to dimension 4, which uses an inductive state-sum construction following Walker. As an example, the equivariant version of Lee's deformation of dotted cobordisms yields a TQFT that extends to 4-dimensional 2-handlebodies but not 3-handlebodies. Finally, we characterize the attachment of 4-dimensional 2-handles by means of a certain Kirby color and use it to compute the invariants of 4-dimensional 2-handlebodies in examples.
\end{abstract}
\maketitle
\tableofcontents

\section{Introduction}
Skein theory provides a framework for globalizing local algebro-categorical data to manifold invariants and has deep connections to representation theory and mathematical physics. For example, the skein theory associated to ribbon categories of quantum group representations gives rise not only to the Reshetikhin--Turaev link invariants \cite{ReshetikhinTuraevRibbon}, but also to skein modules of 3-manifolds that play a fundamental role for related topological quantum field theories (TQFTs) \cite{ReshetikhinTuraev,WittenJonesPoly}.
\smallskip

To situate our work in the landscape of skein theories, we first recall that the Temperley--Lieb category, as a monoidal category, is closely related to the Turaev--Viro 3-dimensional TQFT for quantum $\mathfrak{sl}_2$ \cite{TuraevViro,RobertsTV}. When equipped with a braiding, the Temperley--Lieb category and the Kauffman bracket are connected to the Crane--Yetter 4-dimensional TQFT \cite{CraneYetter} for quantum $\mathfrak{sl}_2$. The construction of skein lasagna modules \cite{PaulInvariantsOf4Man} from Khovanov homology---which detects smooth exotic structures \cite{RenWillisExotic}---can be understood as a categorification of a Crane--Yetter TQFT and is based on a locally linear braided monoidal 2-category. Forgetting the braiding, one obtains the Bar-Natan monoidal 2-category which is the basis for the constructions of Asaeda--Frohman--Kaiser \cite{AsaedaFrohman, KaiserFrobAlg} and categorifies the Temperley--Lieb category. The relative positioning of these theories, which are all related to quantum link invariants, is summarized in the table below, see also \cite[Section 4]{WedrichSurveyLinkHomTQFT}. Note that the \emph{categorified skein theories} in the rightmost column have not yet been extended upwards in dimension\footnote{The Douglas--Reutter \cite{DouglasReutter} theory based on fusion 2-categories extends to 4-manifolds and can be described via skein theory akin to the Asaeda--Frohman--Kaiser theory considered here.}.
\begin{center}
\begin{tabular}{c||c|c}
     & linear categories & locally linear 2-categories \\
     \hline \hline
     monoidal & Turaev--Viro \cite{TuraevViro} &  Asaeda--Frohman--Kaiser \cite{AsaedaFrohman, KaiserFrobAlg} \\
     \hline
     braided & Crane--Yetter \cite{CraneYetter} & Morrison--Walker--Wedrich \cite{PaulInvariantsOf4Man} \\
\end{tabular}
\end{center}
 The purpose of this paper is to demonstrate such an extension of the Asaeda--Frohman--Kaiser TQFT from a 3-dimensional skein theory to a 4-manifold invariant---thereby providing a type of categorification of Turaev--Viro theory using Bar-Natan’s \emph{dotted cobordisms} \cite{BarNKhoHomTangles}.

\subsection{Surface skein theory as a TQFT}
Following the definition by Asaeda--Frohman \cite{AsaedaFrohman} and Kaiser \cite{KaiserFrobAlg}, the \emph{surface skein module} $\Sk_A(M,c)$ of a 3-manifold $M$ and boundary condition given by a 1-manifold $c\subset \partial M$ is spanned by isotopy classes of embedded surfaces bounding $c$ with decorations by elements of a commutative Frobenius algebra $A$ over a commutative ring $\kk$. These are considered up to local skein relations determined by $A$:
\begin{align}\label{eq:skeinrel}
	\asheet{a\quad\; b} = \asheet{m(a,b)}\;, \qquad\quad\asphere = \varepsilon(a)\,,\qquad\quad \aneck = \sum_i\; \cutupdown{x_i}{y_i}\;.
\end{align}
Here $a,b\in A$ are decorations on the shown surface, $m$ and $\varepsilon$ denote the multiplication and counit of $A$, and the last relation uses the comultiplication with $\Delta(a)=\sum_i x_i\otimes y_i$. For details see Definition~\ref{def:skeinmod}.

\begin{exm*}
    The commutative Frobenius algebra underlying Bar-Natan's local construction \cite{BarNKhoHomTangles} of Khovanov homology \cite{KhovCatJonesPoly} is the algebra $A_{\mathrm{BN}}=\kk[x]/(x^2)$ with counit $\varepsilon\colon x\mapsto 1$ and $1\mapsto 0$ and comultiplication $\Delta\colon 1\mapsto 1\otimes x + x\otimes 1$. A dot on a surface represents its decoration by $x\in A_{\mathrm{BN}}$. The surface skein relations for $A_{\mathrm{BN}}$ are
    \begin{align*}
        \ddsheet = 0\,,\qquad\quad \dsphere = 1\,, \qquad\quad \sphere = 0\,, \qquad\quad \neck = \cutddown + \cutdup\;.
    \end{align*}
\end{exm*}
The surface skein module of the solid torus for $A_{\mathrm{BN}}$ has been studied by Russell \cite{RussellBNSolidTorus} and Heyman \cite{HeymanPhDThesis}. In the PhD thesis \cite{FadaliPhDThesis}, surface skein modules for $A_{\mathrm{BN}}$ with checkerboard coloring are considered. Several variants of Khovanov's construction with different underlying Frobenius algebras have been explored \cite{LeeDeform,BarNatanMorrsion,KhovanovFrobeniusI,NaotPhDThesis,MackaayVaz}. Surface skein modules for the Frobenius algebra $\kk[x]/(p)$ with a polynomial $p\in \kk[x]$ and with counit $\varepsilon\colon x^{k}\mapsto \delta_{k,N-1}$ are studied in \cite{BoernerDrube}. In more recent work, Kaiser \cite{KaiserTunneling} develops methods to study the structure of surface skein modules using tunneling graphs.
\smallskip

The \emph{surface skein category} $\skcat_A(\Sigma)$ of a surface $\Sigma$ has objects given by embedded 1-manifolds in $\Sigma$ and $\kk$-linear morphism spaces given by the surface skein modules of $\Sigma\times [0,1]$ with appropriate boundary conditions. The composition is inherited from a stacking operation in the interval direction; see Definition~\ref{def:skeincat}.

\begin{thmx}[Construction \ref{con:theTQFT}]
    For a commutative Frobenius algebra $A$, surface skein theory assembles into a $(2,3,3+\varepsilon)$-dimensional TQFT, that is, a functor of symmetric monoidal bicategories 
\begin{align*}
	\sktft_A\colon \cob_{2,3,3+\varepsilon} \quad &\to \quad \mor(\kcat)\\
    \Sigma \quad &\mapsto \quad \skcat_A(\Sigma)\\
    M\colon \Sigma \leftarrow \Sigma' \quad &\mapsto \quad {}_{\skcat_A(\Sigma)}\Sk_A(M)_{\skcat_A(\Sigma')}\\
    \text{diffeomorphisms}\quad &\mapsto \quad \text{bimodule isomorphisms}.
\end{align*}
\end{thmx}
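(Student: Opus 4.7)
The plan is to assemble $\sktft_A$ by specifying its values on the three categorical levels of $\cob_{2,3,3+\varepsilon}$---surfaces, 3-cobordisms, and diffeomorphisms rel boundary---and then to verify compatibility with composition, identities, and the symmetric monoidal structures. On objects, the assignment $\Sigma\mapsto\skcat_A(\Sigma)$ is the skein category of the preceding definition. The value on a 1-morphism $M\colon\Sigma\leftarrow\Sigma'$ is the skein module $\Sk_A(M)$, viewed as a $(\skcat_A(\Sigma),\skcat_A(\Sigma'))$-bimodule: a morphism of $\skcat_A(\Sigma)$, represented by a decorated surface in a cylinder $\Sigma\times[0,1]$, acts on a skein class in $M$ by stacking along a chosen collar of $\Sigma$ in $M$. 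Well-definedness is a consequence of the locality of the skein relations \eqref{eq:skeinrel}, and the two one-sided actions commute because the collars of $\Sigma$ and $\Sigma'$ can be chosen disjointly.

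For 2-morphisms, a diffeomorphism $\phi\colon M\to M'$ fixing the boundary pointwise pushes embedded decorated surfaces to embedded decorated surfaces, preserves isotopy classes, and preserves each of the relations \eqref{eq:skeinrel}; hence it descends to a $\kk$-linear isomorphism of skein modules, and commutes with both bimodule actions because $\phi$ is the identity on the collars. Independence of the induced map on the isotopy class of $\phi$, together with horizontal and vertical composition of 2-morphisms, follows from the isotopy invariance built into $\Sk_A$.

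The core of the proof is functoriality on 1-morphisms, i.e.\ the gluing theorem: for composable cobordisms $M\colon\Sigma\leftarrow\Sigma'$ and $N\colon\Sigma'\leftarrow\Sigma''$, one needs a natural bimodule isomorphism
\[
    \Sk_A(M)\otimes_{\skcat_A(\Sigma')}\Sk_A(N)\;\xrightarrow{\,\sim\,}\;\Sk_A(M\cup_{\Sigma'}N).
\]
The forward map sends $[S]\otimes[T]$ to the stacking $[S\cup T]$ after matching collars. It descends to the balanced tensor product because, for every morphism $f\in\skcat_A(\Sigma')$, the elements $[Sf]\cup[T]$ and $[S]\cup[fT]$ coincide in $\Sk_A(M\cup_{\Sigma'}N)$ up to a collar reparametrization. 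Surjectivity uses transversality: any embedded decorated surface in $M\cup_{\Sigma'}N$ may be isotoped to meet $\Sigma'$ transversely along some 1-manifold $c\subset\Sigma'$, so that it splits into pieces in $M$ and $N$ that recover the given skein element as the image of $[S]\otimes[T]$ where the two halves are joined via the identity morphism at $c$. Injectivity is the main obstacle: one must show that every skein relation in the glued manifold is generated by a skein relation in $M$, a skein relation in $N$, or a balancing relation across $\Sigma'$. This reduces once again to the locality of \eqref{eq:skeinrel}: each relation is supported in a small ball which, after a small isotopy, can be arranged to lie entirely on one side of $\Sigma'$---using the $\skcat_A(\Sigma')$-action to absorb the discrepancy whenever the ball straddles $\Sigma'$.

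Finally, the symmetric monoidal structure is inherited from disjoint union of manifolds. Since decorated surfaces and the relations \eqref{eq:skeinrel} are local, $\skcat_A(\Sigma\sqcup\Sigma_1)$ factors canonically as a product of $\skcat_A(\Sigma)$ and $\skcat_A(\Sigma_1)$, and $\Sk_A(M\sqcup M_1)$ is canonically an external tensor product of bimodules. The associator, unitor, and symmetry coherence 2-cells of $\sktft_A$ can then be chosen from isotopies of the underlying manifolds, and their coherence axioms reduce to the corresponding isotopy statements already encoded in the bicategory $\cob_{2,3,3+\varepsilon}$.
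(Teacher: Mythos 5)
Your overall architecture matches the paper's: objects go to skein categories, 1-morphisms to skein bimodules, 2-morphisms to pushforward of decorated surfaces, and the functoriality content is concentrated in the gluing isomorphism $\Sk_A(M)\otimes_{\skcat_A(\Sigma')}\Sk_A(N)\xrightarrow{\sim}\Sk_A(M\cup_{\Sigma'}N)$ together with unitors, associators, and symmetric monoidality by disjoint union. The paper, however, does not prove the gluing isomorphism directly: it appeals to \cite[Theorem 4.4.2]{WalkerTQFT06} for the statement that the kernel of the natural surjection is generated by the balancing relations, and then only checks functoriality in the boundary condition arguments. You instead sketch a self-contained argument, and this is where the difference becomes a genuine gap.

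The injectivity half of your gluing argument does not actually close. You reduce to the claim that ``each relation is supported in a small ball which, after a small isotopy, can be arranged to lie entirely on one side of $\Sigma'$.'' This covers the locally supported defining relations of $\Sk_A$ (sphere, neck-cutting, decoration multiplicativity), but it does not account for the isotopy relation, which is the bulk of the kernel and is not supported in a ball. Two embedded decorated surfaces in $M\cup_{\Sigma'}N$ can be isotopic by an isotopy that repeatedly crosses $\Sigma'$, changing the transverse intersection pattern with $\Sigma'$ in the middle. To show that the corresponding element of $\bigoplus_{c''}\Sk_A(M;c',c'')\otimes\Sk_A(N;c'',c)$ lies in the span of the balancing relations, one needs to put that isotopy in generic position with respect to $\Sigma'$ and then decompose it into a finite sequence of elementary events --- isotopies away from $\Sigma'$, tangencies, births and deaths of intersection circles --- each of which is recognizable as either a relation on one side or a $\skcat_A(\Sigma')$-balancing move. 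This is a parametrized transversality / Cerf-type argument, and it is precisely the content of Walker's theorem. Your phrase ``using the $\skcat_A(\Sigma')$-action to absorb the discrepancy'' gestures at this but does not supply it. Either cite \cite[Theorem 4.4.2]{WalkerTQFT06} as the paper does, or spell out the generic-position argument; as written the injectivity step is unsupported. (A small secondary point: Construction~\ref{con:theTQFT} sources from $\cob_{2,3,3+\varepsilon}^{\mathrm{vop}}$ and pushes forward along $\varphi^{-1}$, so that the later extension to 4-handles composes correctly; you push forward along $\varphi$ itself. This is a convention mismatch rather than an error, but worth noting for consistency with Section~\ref{sec:extension}.)
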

We call $\sktft_A$ the \emph{surface skein TQFT} or \emph{Asaeda--Frohman--Kaiser TQFT}. The source is the symmetric monoidal bicategory $\cob_{2,3,3+\varepsilon}$ with closed surfaces as objects, 3-dimensional cobordisms as 1-morphisms, and diffeomorphisms up to isotopy as 2-morphisms. The target is the symmetric monoidal Morita bicategory of $\kk$-linear categories $\mor(\kcat)$. The surface skein TQFT $\sktft_A$ assigns to a closed surface $\Sigma$ the surface skein category $\skcat_A(\Sigma)$ and to a closed 3-manifold $M$ the surface skein module $\Sk_A(M)$. If $M\colon \Sigma \leftarrow \Sigma'$ is a cobordism between two closed surfaces $\Sigma$ and $\Sigma'$, the surface skein module of $M$ has boundary conditions given by closed embedded 1-manifolds in the boundary of $M$. It then carries a left and right action of the corresponding surface skein categories of the boundary surfaces making it into a bimodule.

\begin{rmk}
The Asaeda--Frohman--Kaiser TQFT $\sktft_A$ is manifestly a bicategorical version of a $(3+\varepsilon)$-TQFT controlled by a disklike 3-category in the sense of \cite{WalkerTQFT06,MorrsionWalkerBlob}. As an alternative to such a top-down approach, one can also extract a locally linear monoidal 2-category $\mathbf{BN}_A$ with duals and adjoints, see e.g.\ \cite[Proposition 2.5]{HRWBorderedInvariantsKh}, with the aim of using it to construct an fully local TQFT from the bottom up. We expect that $\mathbf{BN}_A$ is 3-dualizable when considered as an object in a suitable target 4-category, e.g.\ a higher Morita 4-category in the framework of Johnson-Freyd--Scheimbauer \cite{JFScheimbauerMorita} or a ``pointless'' analog thereof, and that the 3-dimensional part of the corresponding TQFT has a skein-theoretic description in terms of the surface skein modules defined here. Along these lines, the work in this paper on the extension of $\sktft_A$ to 4-dimensional handles is related to studying fractional 4-dualizability of $\mathbf{BN}_A$ as an object in the higher Morita 4-category.
\end{rmk}

\begin{rmk}
    Note that the construction of $\sktft_A$ is based on \emph{linear} surface skein modules and categories. Gluing the skein modules of 3-manifolds $M$ and $N$ along common boundary $\Sigma$ is captured by the (relative) tensor product over $\skcat_A(\Sigma)$. One could also consider a \emph{derived} version, whose construction starts with the monoidal 2-category $\mathbf{BN}_A$, now considered as locally enriched in chain complexes, and then proceeds as in \cite{HRWBorderedInvariantsKh}. The gluing of derived skein modules is then realized as a derived tensor product over a derived version of a surface skein category.
\end{rmk}

\begin{rmk}
It is also natural to study the skein theory of \emph{foams} in $3$-manifolds, for example Blanchet's $\gl_2$-foams from \cite{MR2647055}, which are related to Bar-Natan's dotted cobordisms, see \cite{MR4598808}. For the skein modules, one replaces the boundary curves by webs, and the decorated embedded surfaces by decorated foams. We will consider such skein theories in future work.
\end{rmk}

\subsection{Extension of surface skein theory to 4-dimensional manifolds}
Since surface skein theory $\sktft_A$ assigns skein modules to 3-dimensional manifolds, one might hope it can assign $\kk$-linear maps between skein modules to 4-dimensional cobordisms between the 3-manifolds
\begin{align*}
    W\colon M\to M' \quad \mapsto \quad \sktft_A(W)\colon\Sk_A(M)\to \Sk_A(M').
\end{align*}
In Section \ref{sec:extension}, we construct such an extension to 4-manifolds by choosing a handle decomposition and assigning linear maps to 4-dimensional handles. For most commutative Frobenius algebras $A$, however, there does not exist an extension of $\sktft_A$ to all 4-manifolds. Instead, we construct invariants of \emph{4-dimensional $k$-handlebodies}, 4-manifolds built from handles of index $\leq k$ considered up to handle cancellations and handle slides involving only such handles. 
\smallskip

In Construction \ref{con:extension}, we apply Walker's universal state-sum \cite{WalkerUnivState} to $\sktft_A$ and inductively construct for $4$-dimensional $k$-handles a linear map between the skein modules of the 3-dimensional boundary regions. This procedure starts by choosing data for the 0-handle, and iteratively constructs for every $k\geq 1$, a pairing $p_k$ on the skein module of the attaching region of a $k$-handle. If $p_k$ is perfect, the dual copairing is used to construct the map assigned to a $k$-handle. If the pairings $p_l$ are perfect for all $l\leq k$, then Walker's construction provides an extension to 4-dimensional $k$-handlebodies, see \cite[Theorem 4.1.2]{WalkerUnivState} and Theorem \ref{thm:WalkerExtension}. A related framework for extending TQFTs via handle attachments appears in \cite{HaiounHandleTQFT}.
\smallskip

We study how the perfectness of the pairing and hence the extendability of the surface skein TQFT $\sktft_A$ depends on the commutative Frobenius algebra $A$. An extension to 0-handles is data and the possible choices are scalars in the ground ring. For an invertible scalar, the extension to 1-handles is then guaranteed by the perfectness of the Frobenius pairing. 
\begin{thmx}[Corollary \ref{cor:allAExtendToOneH}]
For every commutative Frobenius algebra $A$, the surface skein theory $\sktft_A$ extends to $4$-dimensional 1-handlebodies. The extensions are parameterized by invertible scalars and assign to 4-dimensional cobordisms maps that only differ by a rescaling dependent on Euler characteristic. Up to this scalar, the invariant assigned by $\sktft_A$ to 4-dimensional 1-handlebody $W$ is the linear functional
\begin{align*}
    \sktft_A\colon \Sk_A(\partial W) \to \kk, \quad S\mapsto \eval_A(S)
\end{align*}
given on a decorated surface $S$ in $\partial W$ as the \emph{abstract evaluation} of $S$, defined in Construction \ref{con:abstractEval} as $\kk$-linearization and extension to decorated surfaces of the 2-dimensional TQFT associated to $A$.
\end{thmx}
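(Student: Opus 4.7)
The plan is to apply Walker's inductive extension scheme recalled in Construction~\ref{con:extension} and Theorem~\ref{thm:WalkerExtension}: extending $\sktft_A$ to 4-dimensional 1-handlebodies requires only fixing the 0-handle datum---an invertible scalar $\lambda\in\kk^\times$---and verifying that the induced pairing $p_1$ on the skein module attached to a 1-handle is perfect. The parametrization by invertible scalars and the Euler-characteristic rescaling in the final formula will emerge automatically from how the construction depends on $\lambda$.

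First I would identify the skein modules at hand. Iterated neck-cutting from \eqref{eq:skeinrel} reduces any closed decorated surface in the 3-ball $D^3$ to a disjoint union of decorated 2-spheres, each of which evaluates via the sphere relation to a scalar; hence $\Sk_A(D^3)\cong\kk$ is spanned by the empty surface, and the 1-handle attaching-region skein module $\Sk_A(S^0\times D^3)\cong\kk$ is free of rank one. The Walker pairing $p_1$ on this module is computed by capping off $D^1\times D^3$ on both sides, producing a 4-ball equipped with the 0-handle state on its boundary $S^3$. A direct skein computation identifies $p_1$ with a nonzero multiple of $\lambda$, so $p_1$ is perfect precisely when $\lambda\in\kk^\times$, yielding the extension to 1-handlebodies.

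To describe the invariant explicitly, I would choose a handle decomposition of a 1-handlebody $W$ with $n_0$ 0-handles and $n_1$ 1-handles, and analyze the state-sum on a decorated surface $S\subset\partial W$. After a small isotopy, $S$ lies in the union of boundary 3-spheres of the 0-handles and avoids the 1-handle attaching regions. Pushing $S$ into the interior of each 0-handle $D^4$ and applying neck-cutting along suitable 2-spheres reduces the skein evaluation, component by component, to the value of the 2-dimensional TQFT associated to $A$---that is, to the abstract evaluation $\eval_A(S)$ of Construction~\ref{con:abstractEval}. The 0-handle data contributes a factor $\lambda$ per 0-handle, while the copairing dual to $p_1$ contributes an inverse factor per 1-handle. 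By Walker's theorem these factors combine into an overall scalar depending only on $n_0-n_1=\chi(W)$, giving the stated formula up to a rescaling of the form $c^{\chi(W)}$.

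The main technical obstacle is the identification, inside a single 0-handle, of the skein evaluation with $\eval_A$. This amounts to checking that the local relations \eqref{eq:skeinrel} implement exactly the commutative Frobenius algebra structure on $A$ underlying the 2-dimensional TQFT, and that isotopies of closed decorated surfaces in $D^4$ can be arranged to lie within a 3-dimensional slice, so that the 4-dimensional evaluation matches the 2-dimensional cobordism invariant. Once this compatibility is established, invariance under 1-handle slides and 0/1-handle cancellations is immediate from Walker's theorem, and the scalar bookkeeping reduces to additivity of Euler characteristic under handle attachment.
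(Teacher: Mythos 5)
Your proposal follows the same Walker-style inductive framework as the paper, but it has a genuine gap in the verification that the $1$-handle pairing $p_1$ is perfect. You assert that the attaching-region skein module satisfies $\Sk_A(S^0\times B^3)\cong\kk$ (rank one, spanned by the empty skein), but this is only the value at the \emph{empty} boundary condition. The Walker construction (Construction~\ref{con:extension}, step (2)) requires $p_1^{(c)}$ to be perfect for \emph{every} boundary condition $c\subset\delta_1=S^0\times S^2$, so that the copairings assemble into a bimodule map $c_1$. For a general $c$ with nonempty $\pi_0(c)$, the skein module is $\Sk_A(B^3,c)\cong A^{\otimes\pi_0(c)}$ by Example~\ref{exm:FirstSkeinModules}, not rank one. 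The paper's key step is to identify $\tilde p_1^{(c)}$ under this isomorphism with the tensor power of the Frobenius pairing $\beta=\varepsilon\circ m$ on $A$, whose perfectness is precisely the Frobenius-algebra axiom; nothing in your proposal establishes this. Reducing to a ``nonzero multiple of $\lambda$'' on a rank-one module silently assumes $A$ has rank one, i.e.\ it would only cover the trivial case $A=\kk^u$.

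A second problem is the isotopy claim in your final paragraph: that a decorated surface $S\subset\partial W$ can be pushed off the $1$-handle regions and into the boundary $3$-sphere of a single $0$-handle. The $1$-handle attaching and belt regions are codimension $0$ in $\partial W$, so a $2$-dimensional $S$ cannot generically be isotoped off them; concretely, for $W=S^1\times B^3$ the essential sphere $\mathsf{S}\subset S^1\times S^2$ necessarily traverses the $1$-handle region, and more generally surfaces can wrap around $1$-handles. The paper instead handles this via Example~\ref{exm:FirstSkeinModules}.(iv): one \emph{cuts} (rather than isotopes) surfaces in $\#_l(S^1\times S^2)$ along generic $S^2$ fibers, reducing to punctured surfaces in ball complements where the abstract evaluation of Construction~\ref{con:abstractEval} is defined. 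This cutting argument, together with Lemma~\ref{lem:absevalskein}, is what shows the resulting functional coincides with $\eval_A$; your proposal flags this as the ``main technical obstacle'' but the proposed route to fill it (isotopy away from handles, then evaluating inside a single $B^4$) does not work.
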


\begin{exm*}[Theorem \ref{thm:rkOneTheory} and Proposition \ref{prop:rkOneAndDW}]
 Every commutative ring $\kk$ can itself be given the structure of a commutative Frobenius algebra, see Example~\ref{exm:trivFA}. If $2\in \kk^\times$, the surface skein theory $\sktft_\kk$ associated to the Frobenius algebra $\kk$ extends to all 4-dimensional $k$-handles. Let $W$ be a closed, connected, oriented 4-manifold. Then $\sktft_\kk$ assigns to $W$ the value
	\begin{align*}
		\sktft_{\kk}(W) = \frac{1}{2}|H_3(W;\ZZ/2)| = \mathcal{DW}_{\ZZ/2}(W)\in \kk
	\end{align*} 
    where $\mathcal{DW}_{\ZZ/2}(W)$ is the invariant assigned to $W$ by 4-dimensional Dijkgraaf-Witten theory for $G=\ZZ/2$.
\end{exm*}

\begin{prop*}[Proposition \ref{prop:Ext2handlesSeparable}]
    A necessary condition for the extension to 2-handles is the \emph{strong separability} of the commutative Frobenius algebra $A$, that is, the invertibility of the handle element $H=m\circ \Delta(1)\in A$. 
\end{prop*}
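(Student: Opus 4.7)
The plan is to show that perfectness of Walker's state-sum pairing $p_2$ on the skein module of the attaching region of a $4$-dimensional $2$-handle forces the handle element $H\in A$ to be invertible. The attaching region of a $2$-handle is the solid torus $S^1\times D^2$, and by Construction \ref{con:extension}, $p_2$ is a bilinear form on $\Sk_A$ of this (or a closely related) $3$-manifold, obtained by gluing two $2$-handles along their attaching regions, evaluating via $\sktft_A$, and normalizing by the $0$- and $1$-handle data fixed in Corollary \ref{cor:allAExtendToOneH}.

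First, I would isolate a test subspace of $\Sk_A(S^1\times D^2)$ on which $p_2$ can be computed explicitly: the $\kk$-linear image of $A$ under the map sending $a\mapsto D_a$, where $D_a$ is a decorated meridian disk $\{\mathrm{pt}\}\times D^2$ (or an appropriate parallel copy of $T^2$ decorated by $a$) with the corresponding boundary condition. This map is injective because the skein of the solid torus retains the decoration data carried by such embedded nontrivial surfaces.

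Second, I would compute $p_2(D_a,D_b)$ both geometrically and algebraically. Gluing the two attaching regions identifies the two decorated meridian disks into a single embedded decorated $2$-sphere in the resulting closed $3$-manifold, carrying decoration $a$ on one hemisphere and $b$ on the other. The glued $2$-handle cores produce a neck around this sphere, and applying the neck-cutting relation from \eqref{eq:skeinrel} across it introduces the element $\Delta(1)=\sum_i x_i\otimes y_i$. The remaining configuration collapses, by the sheet multiplication and sphere evaluation relations in \eqref{eq:skeinrel}, to $\varepsilon\bigl(\sum_i x_i y_i\,a\,b\bigr)=\varepsilon(H\,a\,b)$, up to an invertible scalar $\lambda\in\kk^\times$ inherited from the lower-index handle data. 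Thus, up to this invertible prefactor, $p_2$ restricts on the test subspace to the bilinear form $(a,b)\mapsto \varepsilon(Hab)$ on $A$.

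Finally, I would conclude using the non-degeneracy of the Frobenius pairing $\langle a,b\rangle=\varepsilon(ab)$ on $A$. The twisted form $(a,b)\mapsto \varepsilon(Hab)$ is non-degenerate if and only if the multiplication operator $H\cdot\colon A\to A$ is a bijection, equivalently $H\in A^\times$. Since $p_2$ must be perfect for Construction \ref{con:extension} to produce an extension to $2$-handles, its restriction to this subspace must be non-degenerate, forcing $H$ to be invertible and thus $A$ to be strongly separable. The main technical obstacle is the second step: carefully setting up the geometry of Walker's pairing for $2$-handles and tracking that the doubled-handle configuration produces exactly one copy of the handle element $H$ via a single application of the neck-cutting relation.
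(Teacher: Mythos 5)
Your high-level strategy matches the paper's: identify a copy of $A$ inside a suitable skein module of the solid torus, compute the restriction of $p_2$ to be $(a,b)\mapsto\varepsilon(Hab)$, and invoke the fact that this pairing is perfect iff $H$ is invertible (Lemma~\ref{lem:pairingsOnA}). But the geometric middle step does not hold up, and as written your argument would actually fail to detect the handle element at all.

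The boundary condition that reveals $H$ is $c=(2,0)$ (two curves parallel to the longitude), and the surfaces spanning $\Sk_A(\mathbb{H},(2,0))\cong A$ (Example~\ref{exm:dTLEnd1}) are $a$-decorated annuli $S^1\times I$ wrapping around the longitude direction --- not meridian disks and not boundary-parallel tori. When you glue two such annuli along the boundary torus, they close up into a \emph{genus-one} torus $T\subset S^1\times S^2$, and the abstract evaluation of a closed decorated surface from Construction~\ref{con:abstractEval} (which is exactly what the remaining $1$- and $0$-handle data in Walker's $p_2$ compute, cf.\ Example~\ref{exm:s2s1eval}) yields $\varepsilon(m(H^{g(T)},ab))=\varepsilon(Hab)$ because $g(T)=1$. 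That is the source of $H$: one non-separating neck-cut of the genus-one handle, not a ``neck around a sphere.''

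By contrast, what you describe --- gluing two decorated meridian disks (boundary condition $(0,1)$) --- produces the essential sphere $\{*\}\times S^2\subset S^1\times S^2$ decorated by $ab$. This sphere has genus $0$, is incompressible, and abstract-evaluates to $\varepsilon(ab)$, the ordinary Frobenius pairing, which is always perfect. So this boundary condition cannot detect the obstruction. Moreover, the claim that ``the glued $2$-handle cores produce a neck around this sphere'' misreads Construction~\ref{con:extension}: Walker's pairing $p_2$ lives entirely at the level of skein modules of the $3$-dimensional attaching region $\alpha_2=S^1\times B^2$; the $2$-handle itself is what one is trying to construct and is not present when evaluating $p_2$. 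After the gluing there is only $S^1\times S^2$ and the previously-built $1$-handlebody functional $\sktft_A(S^1\times B^3)$; there is no tube or neck for the neck-cutting relation to act on. You arrive at the formula $\varepsilon(Hab)$ only by positing a non-existent geometric move. The fix is to replace meridian disks by longitude-parallel annuli and to attribute the factor $H$ to the genus of the glued surface rather than to a fictitious neck. (A minor further point: one needs \emph{perfectness}, not merely non-degeneracy, since the ground ring $\kk$ is not assumed to be a field; your statement that the twisted form is perfect iff $H\cdot$ is a bijection iff $H\in A^\times$ is correct for free $A$, but the word ``non-degenerate'' is the wrong one.)
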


\begin{exm*}[Example \ref{exm:BNnoExtension}]
    The surface skein theory $\sktft_{A_\mathrm{BN}}$ associated to Bar-Natan's dotted cobordisms $A_\mathrm{BN}$ does not extend to 4-dimensional 2-handlebodies.
\end{exm*}

In Section \ref{sec:KirbyCol}, we focus on the commutative Frobenius algebra
\begin{align*}
	A_\alpha = K[\alpha^{\pm1}][x]/(x^2-\alpha), \quad\quad \varepsilon\colon x\mapsto 1, \; 1\mapsto 0,\quad  \quad \Delta\colon 1\mapsto 1\otimes x + x\otimes 1,
\end{align*}
an equivariant version of Lee's deformation of Bar-Natan's dotted cobordisms \cite{LeeDeform,BarNatanMorrsion}. Here $K$ is a field with $2\in K^\times$ and we set $\kk:= K[\alpha^{\pm1}]$. Note that we choose the parameter $\alpha$ to be invertible. This ensures the invertibility of the handle element $H=2x$ in $A_\alpha$. 

\begin{thmx}\label{thmx:alphaExtension}
    The surface skein theory $\sktft_{A_\alpha}$ associated to the commutative Frobenius algebra $A_\alpha$ extends to 4-dimensional 2-handlebodies, but not to 3-handlebodies.
\end{thmx}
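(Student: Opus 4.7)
The plan is to apply the inductive extension framework of Construction \ref{con:extension} together with Theorem \ref{thm:WalkerExtension}, verifying perfectness of the pairings $p_k$ for $k \leq 2$ and exhibiting its failure at $k = 3$. By Corollary \ref{cor:allAExtendToOneH}, $\sktft_{A_\alpha}$ already extends to $1$-handlebodies once an invertible scalar is fixed for the $0$-handle, so only the $k=2$ and $k=3$ steps need attention.

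For the extension to $2$-handlebodies, Proposition \ref{prop:Ext2handlesSeparable} reduces the question to checking that the handle element $H = m \circ \Delta(1) \in A_\alpha$ is invertible. Using the Frobenius structure, one computes
\begin{align*}
H \;=\; m(1 \otimes x + x \otimes 1) \;=\; 2x \in A_\alpha.
\end{align*}
Since $2 \in K^\times$, $\alpha \in \kk^\times$, and $x^2 = \alpha$, the element $(2\alpha)^{-1} x$ is a two-sided inverse for $H$. This invertibility is exactly the input needed for the Kirby-color construction of Section \ref{sec:KirbyCol} to produce a well-defined $2$-handle attachment map, and thus the perfect pairing $p_2$ required to extend $\sktft_{A_\alpha}$ to $2$-handlebodies.

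For the non-extension to $3$-handlebodies, the task is to show that the pairing $p_3$ associated to the $3$-handle attaching region $S^2 \times D^1$ is not perfect. The key observation is that in $A_\alpha$ one has $\varepsilon(1) = 0$, so that the abstract evaluation $\eval_{A_\alpha}$ from Construction \ref{con:abstractEval} sends any closed surface whose genus-dot decoration reduces to $1 \in A_\alpha$ to zero. My approach is: (i) identify $\Sk_{A_\alpha}(S^2 \times D^1)$ with appropriate boundary data by using the skein relations \eqref{eq:skeinrel} to reduce to a controlled set of representatives built from decorated spheres parallel to the attaching $S^2$; (ii) unpack Walker's state-sum at the $3$-handle level, writing $p_3$ as the bilinear pairing induced by the doubling $S^2 \times D^1 \cup_{\partial} S^2 \times D^1$ evaluated via the previously constructed extension to $2$-handlebodies; and (iii) exhibit a distinguished class---represented by an undotted $2$-sphere parallel to the attaching $S^2$---that is nonzero in the skein module but pairs trivially with every basis element under $p_3$, thereby obstructing perfectness.

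The main obstacle is step (ii): making $p_3$ sufficiently explicit requires careful bookkeeping of the bimodule actions of the boundary surface skein categories and of the normalization accumulated from the scalars and handle inverses fixed in $p_0, p_1, p_2$. Once $p_3$ is in hand, however, the existence of the radical element follows from the combination of two facts: an undecorated $2$-sphere is not killed by any skein relation of $A_\alpha$, while its evaluation in a closed background is governed by $\varepsilon(1) = 0$. By \cite[Theorem 4.1.2]{WalkerUnivState} (cited as Theorem \ref{thm:WalkerExtension}), this obstruction to perfectness of $p_3$ is precisely the obstruction to extending beyond $2$-handlebodies, completing the proof.
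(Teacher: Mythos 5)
Your argument for the extension to 2-handlebodies has a real gap. You invoke Proposition~\ref{prop:Ext2handlesSeparable} as if it \emph{reduces} the extension question to checking invertibility of the handle element, but that proposition only treats the single boundary condition $(2,0)$ (i.e.\ $n=1$): it says that $p_2^{(2,0)}$ is perfect if and only if $A$ is strongly separable. The paper is explicit that strong separability is a \emph{necessary} condition, not a sufficient one. To extend to 2-handlebodies one must show that $p_2^{(2n,0)}$ is perfect for \emph{all} $n\geq 0$, and the skein modules $\Sk_\alpha(\mathbb{H},(2n,0))$ for $n\geq 2$ are considerably more complicated. Establishing perfectness there is where the real work of the paper lies: the idempotent analysis of $\BN_\alpha(B^2,2n)$ in Propositions~\ref{prop:structureOfIdem} and~\ref{prop:isoClassesOfIdem}, the identification of a walk-indexed basis in Corollary~\ref{cor:isoClassesWalks}, and the explicit pairing computation in Proposition~\ref{prop:pairing}. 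Appealing to the Kirby-color construction of Section~\ref{sec:KirbyCol} is circular, since that construction is stated under the \emph{hypothesis} that $p_2$ is already perfect (see Construction~\ref{con:TheInvariant} and Definition~\ref{def:capAndKirby}). Computing $H=2x$ with inverse $(2\alpha)^{-1}x$ is correct, but this alone is not enough.

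For the non-extension to 3-handlebodies, your approach differs from the paper's and is significantly harder to carry out. The paper's argument (Corollary~\ref{cor:alphaNonExtensionTo3}, relying on Corollary~\ref{cor:alphaSkmodThickSphere}) is that $\Sk_\alpha(S^2\times B^1,\emptyset)$ is a free $\kk$-module of infinite rank, with basis $\{\mathsf{S}^k, \mathsf{S}^k\mathsf{D}\mid k\geq 0\}$ determined via the diamond lemma from Proposition~\ref{prop:TensorAlgebraSkModSphere}; an infinite-rank free module over a nonzero commutative ring cannot carry a perfect pairing (in the sense of admitting a copairing/dual basis as required by Construction~\ref{con:extension}), so $p_3$ cannot be perfect regardless of what it is. This avoids having to compute $p_3$ at all. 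Your proposal requires unpacking the inductive state-sum for $p_3$ (which passes through $m_2$, not just abstract evaluation) and then verifying that a specific undotted-sphere class lies in the radical; this claim is plausible but is left unverified, and the bookkeeping you acknowledge as an obstacle is real. If you do want a radical element, the infinite-dimensionality argument is both stronger and simpler. Overall, the 3-handle part could in principle be repaired, but the 2-handle part as written does not prove the statement.
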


TQFTs with this level of extendability are of interest in the context of 4-dimensional analogs of the Andrews--Curtis conjecture, see Remark~\ref{rmk:AC}. A possibly related example of a surface skein TQFT with equivalent extendability appears in (yet unpublished) work of Walker \cite{WalkerPartitionTQFT}. Examples sourced from ribbon categories, i.e.\ skein theory with codimension 2 skeins, are the subject of e.g.\ \cite{BeliakovaDeRenzi4d2h,Beliakova_2023,beliakova2023algebraizationlowdimensionaltopology}.
\smallskip

In general, for a commutative Frobenius algebra $A$, the extension of the surface skein theory $\sktft_A$ to 4-dimensional 2-handles, if it exists, can be computed explicitly using a \emph{Kirby color} as we explain next.

\subsection{A Kirby color for 4-dimensional 2-handles}

\begin{thmx}[Construction \ref{con:TheInvariant}]\label{thmx:TheInvariant}
    Consider a commutative Frobenius algebra $A$ such that the surface skein TQFT $\sktft_A$ extends to 4-dimensional 2-handlebodies. Then there exists a family $\{\omega_{n}\in A^{\otimes n}\mid n\geq 0\}$, called \emph{Kirby color}, such that the linear functional 
    \begin{align*}
	   \sktft_A(W) \colon \Sk_A(\partial W)\to \kk
    \end{align*}
    given as the invariant of a 4-dimensional 2-handlebody $W$ with boundary $\partial W$ can be described as follows. Let $L^\vee_j\subset \partial W$ denote the knots obtained as boundaries of the cocores of the 2-handles. Consider a decorated surface $S\in \Sk_A(\partial W)$ intersecting every $L^\vee_j$ transversely in a finite set $P_j$. Puncturing $S$ at all such intersection points $P:=\sqcup_j P_j$ and applying the 2-dimensional TQFT\footnote{Suitably $\kk$-linearized and extended to decorated cobordisms as in Construction~\ref{con:abstractEvalext}.} associated to $A$ defines an evaluation 
    \begin{align*}
    \eval_A(S,P)\colon  \bigotimes_j A^{\otimes P_j} &\to \kk \\
                \bigotimes_j \omega_{|P_j|} & \mapsto  \sktft_A(W)(S).
    \end{align*}
\end{thmx}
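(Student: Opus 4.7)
The approach uses the inductive handle-attachment construction of Section~\ref{sec:extension}. For a 2-handle attached along a framed knot $K$, the extension of $\sktft_A$ is determined by the dual copairing of the perfect pairing $p_2$ on the skein module $V_n := \Sk_A(S^1 \times D^2, c_n)$ of the attaching region, where $c_n$ consists of $n$ parallel meridians on the torus $\partial(S^1 \times D^2)$. Here $n$ corresponds to the number of transverse intersections of a surface with the dual knot $L^\vee$. Using the canonical involution exchanging the two copies of $S^1 \times D^2$ in the pairing construction, this copairing is encoded by a single element $\zeta_n \in V_n$.

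Next, I would show that $\zeta_n$ lies in the image of the natural map
\begin{equation*}
\iota_n \colon A^{\otimes n} \longrightarrow V_n
\end{equation*}
sending $a_1 \otimes \cdots \otimes a_n$ to the skein class of $n$ parallel meridian disks in $S^1 \times D^2$ decorated by $a_1, \ldots, a_n$. The Kirby color $\omega_n$ is then defined as a preimage of $\zeta_n$ under $\iota_n$. Surjectivity of $\iota_n$ follows from a normal form argument using the skein relations \eqref{eq:skeinrel}: any decorated surface in $S^1 \times D^2$ bounding $c_n$ can be simplified by neck cutting along essential curves, absorbing extra handles via the invertibility of the handle element $H$ (which holds by Proposition~\ref{prop:Ext2handlesSeparable} since we assume $\sktft_A$ extends to 2-handlebodies), until only parallel meridian disks with consolidated $A$-decorations remain. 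Any ambiguity in $\omega_n$ from the kernel of $\iota_n$ is immaterial, since $\omega_n$ is only ever paired against elements of $V_n$ obtained from decorated meridian disks arising from the surface $S$.

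Given $\omega_n$, the evaluation formula follows from the gluing structure of $\sktft_A$ established in Construction~\ref{con:theTQFT}. Splitting $W$ into its 1-handlebody part $W_0$ and the attached 2-handles, a decorated surface $S$ transverse to all $L_j^\vee$ can be isotoped so that inside each 2-handle's attaching region it appears as $|P_j|$ parallel meridian disks through the points of $P_j$. The value $\sktft_A(W)(S)$ then factors as a pairing of the restriction of $S$ to $\partial W_0$ against the copairings $\zeta_{|P_j|}$ for each 2-handle. Replacing each $\zeta_{|P_j|}$ by $\iota_{|P_j|}(\omega_{|P_j|})$ converts the computation into an evaluation of a decorated surface in $W_0$, which by the 1-handlebody formula of Corollary~\ref{cor:allAExtendToOneH} equals the abstract evaluation $\eval_A$ of the punctured surface with the $A$-decorations supplied by $\omega_{|P_j|}$ at each $P_j$. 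The main technical obstacle is the surjectivity of $\iota_n$; this is a local calculation inside $S^1 \times D^2$, but it must track both the reduction of topologically nontrivial parts of the surface and the consolidation of decorations, and it is precisely where the hypothesis that $\sktft_A$ extends to 2-handlebodies enters essentially via the invertibility of $H$.
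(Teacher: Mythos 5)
Your high-level outline (perfect pairing on the attaching-region skein module, dualize to a copairing, encode the 2-handle by a Kirby color, factor the invariant through a punctured-surface evaluation via Construction~\ref{con:abstractEvalext} and the 1-handlebody formula) matches the structure of Construction~\ref{con:TheInvariant} and Definition~\ref{def:capAndKirby}. However there are two concrete gaps in the middle of the argument.

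First, the step ``using the canonical involution exchanging the two copies of $S^1\times D^2$, this copairing is encoded by a single element $\zeta_n\in V_n$'' does not produce a single element. The copairing $c_2(\id_c)$ is an element of $V_n\otimes V_n$, and symmetry of the pairing does not fold it into $V_n$. What actually turns it into a single element in the paper's Definition~\ref{def:capAndKirby} is applying the \emph{cap value} $\mathrm{cap}\colon V_n\to\kk$ (gluing on undecorated meridian disks of the belt region $B^2\times S^1$ and applying $m_0$) to one tensor factor, yielding $\omega_{2n} = (\id\otimes\mathrm{cap})\bigl(c_2(\id_{(2n,0)})\bigr)$. Topologically this is dictated by how the handle map $m_2$ is assembled from $c_2$ and $m_0$ in step (3) of Construction~\ref{con:extension}; it is not an involution.

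Second, the map $\iota_n\colon A^{\otimes n}\to V_n$ is not well-defined as you describe it, and the surjectivity claim is also not salvageable. The attaching region is $S^1\times B^2$, and the surfaces meeting $L^\vee$ in $n$ points meet the belt region $B^2\times S^1$ in $n$ meridian disks of the \emph{belt} region; these bound, on the common torus, the curves that are longitudes $(n,0)$ of the attaching solid torus $S^1\times B^2$. Longitudes are essential curves in $S^1\times B^2$, so they cannot bound ``parallel meridian disks'' inside the attaching region: there is no $\iota_n$ of the sort you write. Even if you replace it by the map sending decorations to $n/2$ nested annuli, that map is not surjective: for $A_\alpha$, $\Sk_\alpha(\mathbb{H},(2m,0))$ has $\kk$-rank $\binom{2m}{m}$ by Corollary~\ref{cor:isoClassesWalks}, while $A_\alpha^{\otimes m}$ has rank $2^m$, and $\binom{2m}{m}>2^m$ for $m\geq 2$. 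The paper uses the map in the \emph{opposite} direction: Proposition~\ref{prop:fillAnnulus} shows that the abstract-evaluation map $\phi\colon\Sk_A(\mathbb{H},(2n,0))\to A^{\otimes 2n}$ (obtained by embedding the solid torus in $B^3$ and filling the longitudes with disks there) is \emph{injective}, and the injectivity is exactly what needs, and uses, the perfectness of $p_2$. The Kirby color is then defined intrinsically in the skein module and identified with its image $\phi(\omega_{2n})\in A^{\otimes 2n}$; no surjectivity is required, and the invertibility of $H$ enters via Proposition~\ref{prop:Ext2handlesSeparable} to guarantee perfectness of $p_2$, not a normal form reduction of decorated surfaces in the solid torus to meridian disks.
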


For the proof of Theorem~\ref{thmx:TheInvariant}, we carefully study the surface skein module of the solid torus $S^1\times B^2$, the attaching region for a 4-dimensional 2-handle. To construct the map associated to the 2-handle, we can restrict to the boundary conditions $(2n,0)$ of the surface skein module of the solid torus, $2n$ curves parallel to the longitude. For Bar-Natan theory $A_{\mathrm{BN}}$, this has been studied by Russell \cite{RussellBNSolidTorus} and Heyman \cite{HeymanPhDThesis}. We analyze its structure for commutative Frobenius algebras $A$ using different methods. In Section \ref{sec:KirbyCol}, we first establish a general framework to compute the structure of the skein module of the solid torus by computing idempotents of the endomorphism algebras of the $\kk$-linear Bar-Natan disk category $\BN_A(B^2,2n)$ with $2n$ points on the boundary $\partial B^2$. We obtain the skein module of the solid torus as the trace
\begin{align*} 
	\Tr(\BN_A(B^2,2n))\cong \Sk_A(S^1\times B^2,(2n,0)).
\end{align*}

If the pairing $p_2$ on this skein module is perfect, then the attachment of a 2-handle intersecting a surface $2n$ times can be modeled by the Kirby color $\omega_{2n}\in \Sk_A(S^1\times B^2, (2n,0)) \hookrightarrow A^{\otimes 2n}$ as stated in Theorem D. Due to homological obstructions, the Kirby color for an odd number of curves parallel to the longitude is zero.

\begin{rmk}
It is important to note that there are several related, but different notions of Kirby color in similar contexts. For instance, the Kirby color constructed by Hogancamp--Rose--Wedrich \cite{HRWKirbyColorForKh} is an object in an appropriate completion of the \emph{dotted Temperley--Lieb category} and models the attachment of 4-dimensional 2-handles of skein lasagna modules as constructed in \cite{MNSkeinLasagna2Handle, MWWSkeinLasagnaHandle}.
It is conceivable, however, that their Kirby color and its generalization to commutative Frobenius algebras $A$ may be applicable to $\sktft_A$, namely to model 3-dimensional 2-handle attachments. By comparison, our Kirby color for $\sktft_A$ is relevant one dimension higher.
\end{rmk}

For the proof of Theorem~\ref{thmx:alphaExtension}, we study the skein module of the solid torus for $A_\alpha$ and explicitly check that the pairing $p_2$ is perfect. As first step, we determine the general structure of the idempotents in the Bar-Natan disk category $\BN_{A_\alpha}(B^2,2n)$ in Proposition~\ref{prop:structureOfIdem} and \ref{prop:isoClassesOfIdem}. In Corollary~\ref{cor:isoClassesWalks} we show that a $\kk$-basis on the surface skein module $\Sk_{A_\alpha}(S^1\times B^2, (2n,0))$ is provided by the set of walks on $\ZZ$ of length $2n$ starting and ending at 0. We compute the pairing on this basis and show that is perfect, see Proposition~\ref{prop:pairing}.
From the copairing, we compute the Kirby color modeling the attachment of 4-dimensional 2-handles for $A_\alpha$ and find two concrete expressions. 
\smallskip

Viewing the Kirby color for $\sktft_{A_\alpha}$ as an element in $A_{\alpha}^{\otimes 2n}$, we arrive at the first expression: Let $z\in (\ZZ/2)^{2n}$ and write $T_z=x^{z_1}\otimes \dots \otimes x^{z_{2n}} \in A_{\alpha}^{\otimes 2n}$ for the element with $|z|=|\{i \mid z_i=1\}|$ many $x$s. The collection of all $T_z$ forms a basis of $A_\alpha$. We show in Theorem~\ref{thm:KirbyAlpha} that the Kirby color for $\sktft_{A_\alpha}$ has the form
\begin{align*}
    \omega_{2n} = \frac{1}{2^{2n}}\sum_{k=0}^{n}\frac{1}{\alpha^k}S(n-k,k)\sum_{\substack{z\in (\ZZ/2)^{2n} \\|z|=2k}} \mathrm{sign}(z,k)T_z \;\in A_\alpha^{\otimes 2n}
\end{align*}
where $S(n-k,k)$ are the super Catalan numbers and $\mathrm{sign}(z,k)\in \{\pm1\}$.\\

For the second expression, we describe elements of the skein module of the solid torus in terms of $A$-\emph{decorated Temperley--Lieb} diagrams, see Proposition \ref{prop:SolidtorusAnddTL}. For any given Frobenius algebra $A$, these diagrams assemble into a $\kk$-linear monoidal category $\dTL_A$, a generalization of the dotted Temperley--Lieb category. Specializing to $A_\alpha$, we have 
\begin{align*}
    \dTL_{A_\alpha}(0,2n)\cong \Sk_{A_\alpha}(S^1\times B^2,(2n,0)).
\end{align*}
On this module we have an action of the symmetric group $S_{2n}$. More generally, if the field $K$ has characteristic $\mathrm{char}(K)=0$, we form for every $m\geq 0$ the symmetrizer
\begin{align*}
    \Symmet{\mathrm{Sym}_{m}}\; =\; \frac{1}{m!}\sum_{\sigma\in S_m} P_\sigma \in \dTL_{A_\alpha}(m,m).
\end{align*}
and use this to obtain the second expression of the Kirby color:
\begin{thmx}[Theorem \ref{thm:KirbySymmetrizer}]
The Kirby color for $\sktft_{A_\alpha}$, considered as family of elements in $\dTL_{A_\alpha}(0,2n)$, is given by
\begin{align*}
    \kirbycol{\omega_{2n}}\;= \tfrac{1}{2^n} \tbinom{2n}{n}\; \SymmetKirby{\mathrm{Sym}_{2n}} \; \in \dTL_{A_\alpha}(0,2n),\quad \text{where}\quad \SepidemCup\; =\; \tfrac{1}{2\alpha}\;\dcupbig\;\in \dTL_{A_\alpha}(0,2)
\end{align*}
corresponds to the separability idempotent in $A_\alpha^{\otimes 2}$, and $\mathrm{Sym}_{2n}$ is the symmetrizer on $2n$ strands.
\end{thmx}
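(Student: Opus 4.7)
The strategy is to compare both expressions inside the ambient tensor power $A_\alpha^{\otimes 2n}$, using the embedding $\dTL_{A_\alpha}(0,2n) \cong \Sk_{A_\alpha}(S^1\times B^2,(2n,0)) \hookrightarrow A_\alpha^{\otimes 2n}$ and the explicit formula for $\omega_{2n}$ from Theorem~\ref{thm:KirbyAlpha}. The first step is to identify the image of a single separability cup: a dotted cup corresponds to $\Delta(x) = x\otimes x + \alpha\cdot 1\otimes 1$, so $\SepidemCup$ maps to $\tfrac{1}{2\alpha}x\otimes x + \tfrac{1}{2}\,1\otimes 1 = \Delta(H^{-1})$, the separability idempotent of $A_\alpha$ for the handle element $H=2x$. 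Stacking $n$ copies on adjacent pairs and expanding gives
\begin{align*}
\prod_{i=1}^n \SepidemCup_{2i-1,2i} \;=\; \tfrac{1}{2^n}\sum_{S\subseteq[n]}\alpha^{-|S|}\,T_{z(S)},
\end{align*}
where $z(S)\in(\ZZ/2)^{2n}$ is the vector with $1$'s in positions $2i-1,2i$ for $i\in S$.

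Next, I would apply $\mathrm{Sym}_{2n}=\tfrac{1}{(2n)!}\sum_{\sigma\in S_{2n}}\sigma$, which acts by permuting tensor factors. By orbit–stabilizer, for every $S$ with $|S|=k$ one has
\begin{align*}
\mathrm{Sym}_{2n}(T_{z(S)}) \;=\; \binom{2n}{2k}^{-1}\sum_{|z|=2k} T_z,
\end{align*}
since the stabilizer of $z(S)$ in $S_{2n}$ has order $(2k)!(2n-2k)!$. Grouping subsets $S\subseteq[n]$ by their size ($\binom{n}{k}$ per size) and multiplying by the overall prefactor $\tfrac{1}{2^n}\binom{2n}{n}$ yields
\begin{align*}
\tfrac{1}{2^n}\tbinom{2n}{n}\,\mathrm{Sym}_{2n}\!\left(\prod_{i=1}^n \SepidemCup_{2i-1,2i}\right) \;=\; \tfrac{1}{2^{2n}}\sum_{k=0}^{n}\frac{\binom{2n}{n}\binom{n}{k}}{\binom{2n}{2k}}\,\alpha^{-k}\sum_{|z|=2k}T_z.
\end{align*}

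To match this with the formula $\omega_{2n}=\tfrac{1}{2^{2n}}\sum_{k}\alpha^{-k}S(n-k,k)\sum_{|z|=2k}\mathrm{sign}(z,k)T_z$ from Theorem~\ref{thm:KirbyAlpha}, I would invoke the closed form $S(m,p)=\tfrac{(2m)!(2p)!}{m!\,p!\,(m+p)!}$ of the super Catalan numbers, which gives the identity $S(n-k,k)=\tfrac{\binom{2n}{n}\binom{n}{k}}{\binom{2n}{2k}}$, and verify that $\mathrm{sign}(z,k)=+1$ under the embedding conventions fixed in Section~\ref{sec:KirbyCol} (ultimately because the symmetrizer, being $S_{2n}$-invariant and built from local dotted cups, cannot introduce any relative sign between the basis vectors $T_z$ of a fixed weight).

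The main obstacle is a bookkeeping one: ensuring that the diagrammatic composition on the right—symmetrizer on top of $n$ separability cups—is identified with the same element of $A_\alpha^{\otimes 2n}$ that the copairing computation produces for $\omega_{2n}$, in particular that no auxiliary factors arise from the orientation of longitudes on $\partial(S^1\times B^2)$ or from the chosen ordering of the $2n$ boundary points. Once these conventions are pinned down and the super Catalan identity is applied, the two expressions coincide term by term in $A_\alpha^{\otimes 2n}$, so (via injectivity of the embedding) also in $\dTL_{A_\alpha}(0,2n)$.
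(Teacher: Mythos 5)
Your derivation of the unsigned coefficient $\tfrac{1}{2^{2n}}S(n-k,k)\alpha^{-k}$ for terms of weight $2k$ is a pleasant application of von Szily-free algebra, and the identity $S(n-k,k)=\binom{2n}{n}\binom{n}{k}/\binom{2n}{2k}$ is correct. However, the argument rests on a false premise that makes the conclusion wrong: the $S_{2n}$-action on $\dTL_{\alpha}(0,2n)$ given by composing with $P_\sigma$ does \emph{not} correspond to permutation of tensor factors under the embedding $\dTL_{\alpha}(0,2n)\hookrightarrow A_\alpha^{\otimes 2n}$. The crossing morphism is $\cross = \udid - \cupcap$, and a dot slides through it only up to a sign (the paper emphasizes exactly this point when introducing the $S_m$-action: ``naturality fails as the dot slides through a crossing only up to sign''). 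Consequently your Step~3,
\begin{align*}
\mathrm{Sym}_{2n}(T_{z(S)}) \;=\; \tbinom{2n}{2k}^{-1}\sum_{|z|=2k} T_z,
\end{align*}
is not what the $\dTL_\alpha$-symmetrizer does, and your parenthetical claim that ``the symmetrizer, being $S_{2n}$-invariant and built from local dotted cups, cannot introduce any relative sign between the basis vectors $T_z$'' is false.

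Concretely, your final formula has equal coefficients on all $T_z$ of a fixed weight $2k$, whereas $\omega_{2n}$ carries the factor $\mathrm{sign}(z,k)$, which genuinely varies. For $n=2$ and $k=1$ one has $\mathrm{sign}((1,0,1,0),1)=-1$ while $\mathrm{sign}((1,1,0,0),1)=+1$, and the explicit computation of $\omega_4$ in the paper exhibits coefficients $\pm\tfrac{2}{\alpha}$ among the $|z|=2$ terms. The paper's proof avoids this pitfall by computing only the coefficient of the \emph{specific} $z_0=(0,\dots,0,1,\dots,1)$, for which $\mathrm{sign}(z_0,k)=+1$ and the orbit-stabilizer count is unambiguous, and then uses the dot-slide relation to propagate to a general $z$ with the correct sign. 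Your orbit-stabilizer computation essentially reproduces the paper's numerics for $z_0$, so the fix is to replace ``applies by permuting tensor factors'' with the paper's sign-tracking via dot slides, rather than to assert the signs away.
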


The Kirby color satisfies an annulus capping property which is explained in Remark \ref{rmk:kirbyRecursiveCapping} and proved explicitly for $A_\alpha$ in Corollary \ref{cor:annulusCapping}.
	\begin{align*}
		\kirbyCap{\omega_{2n}} = 0\in \dTL_{A_\alpha}(0,2n) \quad \quad \text{and} \quad \quad \kirbydCap{\omega_{2n}}=\kirbywide{\omega_{2n-2}}\;\in \dTL_{A_\alpha}(0,2n-2).
	\end{align*}
We expect that this lends itself to determining the Kirby color for commutative Frobenius algebras in greater generality. 
\smallskip

Finally, we compute the invariant of the $4$-dimensional $2$-handlebodies $S^2\times B^2$, $S^1\times B^3$, and ${S^1\times S^1\times B^2}$ using Construction \ref{con:TheInvariant} and our expressions for the Kirby color in Proposition~\ref{prop:InvSTwoBTwo}, Remark~\ref{rmk:InvThickenedCircle} and Proposition~\ref{prop:InvThickenedTorus}, respectively.

\subsection*{Conventions}
All manifolds are considered smooth, if not otherwise stated. All rings and algebras are always assumed to be unital and associative. By $\kk$, we denote a commutative (unital) ring.
\subsection*{Acknowledgments}
The author would like to thank Jesse Cohen, Catherine Meusburger, David Reutter and Max-Niklas Steffen for helpful discussions. The author especially thanks Kevin Walker and Paul Wedrich for suggesting this project, and for their guidance and insight along the way.
The author acknowledges support from the Deutsche Forschungsgemeinschaft (DFG, German Research Foundation) under Germany's Excellence Strategy - EXC 2121 ``Quantum Universe'' - 390833306 and partial funding from the Collaborative Research Center - SFB 1624 ``Higher structures, moduli spaces and integrability'' - 506632645.

\section{Surface skein theory from commutative Frobenius algebras}
In this section, for a commutative Frobenius algebras $A$, we construct surface skein modules of $3$-manifolds and surface skein categories of closed surfaces. We then recall definitions of cobordism and Morita bicategories which will be the source and target for the surface skein TQFT $\sktft_A$. We start with a brief recap of Frobenius algebras and their relation to 2-dimensional TQFT.
\subsection{Frobenius algebras}
Let $\kk$ be a commutative ring. A Frobenius algebra\footnote{Some authors call this a Frobenius extension.} over $\kk$ is a tuple $A=(A,m,\eta, \Delta,\varepsilon)$ such that $(A,m,\eta)$ is an associative algebra over $\kk$ with multiplication $m\colon A\otimes A\to A$ and an injective map $\eta\colon \kk \to A$ as a unit, and $(A,\Delta,\varepsilon)$ is a coassociative coalgebra over $\kk$ with comultiplication $\Delta\colon  A\to A\otimes A$ and counit $\varepsilon \colon A\to \kk$ satisfying the Frobenius relation
\begin{align*}
	(m \otimes \id ) \circ (\id \otimes \Delta) = \Delta \circ m = (\id \otimes m) \circ (\Delta \otimes \id) 
\end{align*}
that is, the comultiplication $\Delta \colon A\to A\otimes A$ is an $(A,A)$-bimodule map. 
The Frobenius pairing $\beta\colon A\otimes A\to \kk$ is a perfect pairing defined as $\beta = \varepsilon \circ m$. The comuliplication $\Delta\circ \eta \colon \kk \to A\otimes A$ provides a copairing. Recall that the comultiplication in a Frobenius algebra can be recovered from the underlying associative algebra together with the counit $\varepsilon$. 
We will work with Frobenius algebras that are free as a $\kk$-module, so that we can choose bases $(x_i)$ and $(y_i)$ of $A$ over $\kk$ such that $\Delta(1) = \sum_i x_i\otimes y_i$ and $\beta(x_i,y_j)=\delta_{ij}$. See \cite{KadisonNewExOFrobExt} and \cite{KockFA} for more details on Frobenius algebras.

It is a well-known fact \cite{AbramsTFTFrobAlg} that commutative Frobenius algebras $A$ that are projective over $\kk$ correspond to 2-dimensional TQFTs $\mathcal{F}_A$, that is, symmetric monoidal functors 
\begin{align*}
\mathcal{F}_A\colon \cob_2 \to \kk\mathsf{Mod} 
\end{align*}
where $\cob_2$ is the cobordism category with objects closed oriented 1-dimensional manifolds and morphisms oriented 2-dimensional cobordisms. The monoidal structure is given by disjoint union. The standardly oriented\footnote{The oppositely oriented circle is orientation-preservingly diffeomorphic to this standard circle.} circle $S^1$ is mapped to the $\kk$-module $A$ and the Frobenius structure maps arise as images of the following standard cobordisms, drawn to be read from bottom to top:
\begin{equation}\label{eq:standardcobs}
    \frobunit \mapsto \eta,\quad\frobcounit\mapsto \varepsilon,\quad  \frobmult\mapsto m, \quad \frobcomult\mapsto \Delta.
\end{equation}

For example, the value $\mathcal{F}_A(S^1\times S^1)$ is the map $\varepsilon\circ m\circ \Delta\circ \eta$, i.e.\ the endomorphism of $\kk$ given by multiplying by the rank of $A$. 

\begin{rmk}
    There is an \emph{unoriented} version of the 2-dimensional cobordism category denoted by $\ucob$. Objects are unoriented closed 1-manifolds. Morphisms are unoriented 2-dimensional cobordisms. These are generated under composition and disjoint union by the usual cup, cap, pair of pants and upside-down pair of pants morphisms together with two additional morphisms: the crosscap (the once-punctured $\RP^2$) and the cobordism induced by the orientation-reversing diffeomorphism of the circle. See \cite[Section 3.1]{CzenkyUnoriented} for more details. Symmetric monoidal functors $\ucob\to \kk\mathsf{Mod}$ are called \emph{unoriented} 2-dimensional TQFTs. Turaev--Turner \cite[Proposition 2.9]{TuraevTurnerUnoriented} show that unoriented 2-dimensional TQFTs correspond to commutative Frobenius algebras with an \emph{extended structure}\footnote{Note that a Frobenius algebra with an extended structure is not to be confused with extending a TQFT to other dimensions.}. This classification starts by evaluating an unoriented TQFT on standard circles and cobordisms as in \eqref{eq:standardcobs}, which implies the existence of an underlying commutative Frobenius algebra $A=(A,m,\eta,\Delta,\varepsilon)$. An \emph{extended structure} on $A$ in the sense of \cite[Definition 2.5]{TuraevTurnerUnoriented}, see also \cite[Definition 2.10]{CzenkyUnoriented}, is a tuple $(\phi, \theta)$. Here, $\phi\colon A\to A$ is an involution of Frobenius algebras arising as the image of the orientation-reversing cobordism of the circle, and the element $\theta\in    A$ arises as the image of the crosscap. This data is required to satisfy the following relations.
    \begin{enumerate}[(i)]
	   \item $m(\phi \otimes \id)(\Delta(1)) = \theta^2$,
	   \item $\phi(\theta a) = \theta a$ for all $a\in A$.
    \end{enumerate}
\end{rmk}

\begin{exm}[Trivial Frobenius algebra]\label{exm:trivFA}
	Let $\kk$ be a commutative ring and let $u\in \kk^\times$ be a unit in $\kk$. Denote by $\kk^u$ the commutative Frobenius algebra with $\kk$ as underlying $\kk$-module and counit given by $\varepsilon\colon 1\mapsto u$. The comultiplication is $\Delta\colon 1\mapsto u^{-1}1\otimes 1$. If $u=1$, we simply write $\kk=\kk^u$.
\end{exm}
 In Section \ref{sec:TrivialTheoryDW}, we consider surface skein theory associated to $\kk^u$ with the additional assumption that $2\in \kk^\times$.

\begin{exm}[Frobenius algebras in link homology theories]\label{exm:FAExmLinkHom}
	We give the following examples of free commutative Frobenius algebras over a commutative ring $\kk$ that are relevant for us. All of these are versions of the Frobenius algebras underlying constructions in link homology theories.
	\begin{enumerate}[(i)]
		\item Khovanov homology \cite{KhovCatJonesPoly} is governed by the Frobenius algebra $A_{\mathrm{BN}} = \kk[x]/(x^2)$ which is free of rank 2 with basis $\{1,x\}$ and has counit and comultiplication characterized by
		\begin{align*}
			\varepsilon\colon 1\mapsto 0, \quad x\mapsto 1&&	\Delta\colon 1\mapsto 1\otimes x + x\otimes 1, \quad x\mapsto x\otimes x.
		\end{align*}
		\item A modification of $A_{\mathrm{BN}}$ and the most important example for us is the following. Let $K$ be a field with $2\in K^\times$ and let $\kk=K[\alpha^{\pm1}]$. Consider the Frobenius algebra $A_\alpha =\kk[x]/(x^2-\alpha)$ free of rank 2 over $\kk$ with basis $\{1,x\}$. The counit and comultiplication are given on this basis by
		\begin{align*}
			\varepsilon\colon 1\mapsto 0, \quad x\mapsto 1&& \Delta\colon 1\mapsto 1\otimes x + x\otimes 1, \quad x\mapsto x\otimes x + \alpha 1\otimes 1.
		\end{align*}
		After setting $\alpha=1$, we recover the Frobenius algebra used in Lee's deformation of Khovanov homology \cite{LeeDeform, BarNatanMorrsion}. We refer to $A_\alpha$ as the \emph{equivariant version} of Lee's Frobenius algebra.
		\item For a generalization to rank $N\geq 2$, we consider $\kk=K[\beta^{\pm1}]$ for a field $K$ with $N\in K^\times$ and a Frobenius algebra structure on $A_\beta =\kk[x]/(x^N-\beta)$. A basis is given by $\{1, x, \dots, x^{N-1}\}$. The counit and comultiplication are characterized by
		\begin{align*}
			\varepsilon\colon x^{k}\mapsto \delta_{k,N-1}, && \Delta\colon 1\mapsto \sum_{i=0}^{N-1} x^{N-1-i}\otimes x^i.
		\end{align*}
		We obtain the values $\Delta(x^k)$ from $\Delta(1)$ by multiplication since $\Delta$ is a bimodule map. 
		\item Let $K$ be a field with $N\in K^\times$ and $\kk=K[\mu_1, \dots, \mu_N, \mathrm{disc}(\mu_1,\dots,\mu_N)^{-1}]$ where $\mathrm{disc}(\mu_1,\dots,\mu_N)$ is the discriminant of the polynomial $p=\prod_i (x-\mu_i)$ as a function of the roots $\mu_i$. Consider the Frobenius algebra $A_\mu =\kk[x]/(\prod_i (x-\mu_i))$. The counit is characterized by
		\begin{align*}
			\varepsilon\colon x^k\mapsto \delta_{k,N-1}
		\end{align*}
		and the comultiplication can be derived from $\varepsilon$. 
	\end{enumerate}
\end{exm}
We will see in Example \ref{exm:BNnoExtension}, that surface skein theory associated to $A_{\mathrm{BN}}$ extends to 4-dimensional 0- and 1-handles, but not beyond. The surface skein theory associated to $A_\alpha$ does extend to 4-dimensional 2-handles, as shown in Section \ref{sec:KirbyCol}, using the invertibility assumptions on $\alpha$ and $2$.
In Remark~\ref{rmk:genRkN} and \ref{rmk:copRkN}, we comment on the generalization of our results for $A_\alpha$ to the Frobenius algebra $A_\beta$, which uses the invertibility of $\beta$ and $N$. 
Furthermore, we sketch how our results for $A_\alpha$ and $A_\beta$ can be modified for the Frobenius algebra $A_\mu$ in Remark~\ref{rmk:genSemisimpleFA}. 

In the following, we indicate the dependence on the commutative Frobenius algebra as a subscript. To simplify notation, we often only use the subscript $\mathrm{BN}$, $\alpha$, $\beta$, or $\mu$ for the Frobenius algebras $A_{\mathrm{BN}}$, $A_\alpha$, $A_\beta$, or $A_\mu$ from Example~\ref{exm:FAExmLinkHom} respectively.

\begin{exm}
	Another example is the group algebra $\kk[G]$ for a finite group $G$ equipped with a comultiplication $\Delta \colon e\mapsto \sum_g g\otimes g^{-1}$ and counit $\varepsilon\colon g\mapsto \delta_{g,e}$. The group algebra for $G=\ZZ/N$ is closely related to the Frobenius algebra $A_\beta$ above. In particular, we have	$\kk[\ZZ/N]\cong\kk[x]/(x^N-1)$ as an algebra, but the comultiplication is given by $\Delta\colon 1\mapsto \sum_k x^k\otimes x^{N-k}$ and for the counit, we have $\varepsilon\colon x^k\mapsto \delta_{k,0}$. 
\end{exm}

\begin{rmk}
	Many of the above examples of Frobenius algebras admit a $\ZZ$-grading. While gradings are an important ingredient of constructions in link homology theories, we will work in an ungraded setting. If one were to use the $\ZZ$-grading on the Frobenius algebras, this would result in a $\ZZ$-grading on the $\kk$-modules and $\kk$-linear categories defined in the next section. We also expect that many of the constructions generalize to commutative Frobenius algebra objects in (locally small) symmetric monoidal categories. 
\end{rmk}

\subsection{Surface skein modules and categories}
In the following, we define skein modules of embedded surfaces in 3-manifolds for every commutative Frobenius algebra. These skein modules were first defined in \cite{AsaedaFrohman} and \cite{KaiserFrobAlg}. See also \cite{BoernerDrube}.
\begin{defn}[Decorated surfaces]\label{def:decSurface}
	Let $A$ be a commutative Frobenius algebra over a commutative ring $\kk$. An \emph{$A$-decorated surface} in a 3-manifold $M$ is a compact properly embedded surface $S\subset M$ with a map $\ell\colon \pi_0(S)\to A$ labeling every component of $S$ by an element of $A$.
\end{defn}

\begin{defn}[Skein module]\label{def:skeinmod}
	Let $M$ be a 3-manifold with boundary and let $c\subset \partial M$ be a closed 1-manifold. Let $A$ be a commutative Frobenius algebra over a commutative ring $\kk$. 
	\begin{enumerate}[(i)]
		\item We write $C_A(M,c)$ for the set of isotopy classes of $A$-decorated properly embedded surfaces in $M$ bounding $c$, with isotopies taken relative to the boundary. 
		\item We define the \emph{surface skein module} of $M$ with boundary $c$ to be the $\kk$-module
		\begin{align*}
			\Sk_A(M,c):= \kk\{C_A(M,c)\}/R_A(M,c)
		\end{align*}
		where $R_A(M,c)$ is the submodule of $\kk\{C_A(M,c)\}$ spanned by the following relations
		\begin{itemize}
			\item ($\kk$-multilinearity) The classes represented by $A$-decorated surfaces in $(M,c)$ are considered to be $\kk$-multilinear in the decorations on each connected component.
			\item (Sphere relations) 
			If a connected component of an $A$-decorated surface in $(M,c)$ is a 2-sphere bounding a ball in $M$, then this component with label $a\in A$ can be removed at the expense of scaling the remaining $A$-decorated surface by $\varepsilon(a)\in \kk$. 
            \item (Neck-cutting) Suppose an $A$-decorated surface $S$ in $(M,c)$ contains a simple closed curve $\gamma$ on a component decorated by $a\in A$, such that $\gamma$ bounds a disk $D$ in $M\setminus S$. Let $S'$ denote the surface obtained from $S$ by replacing a regular neighborhood of $\gamma$ by two parallel copies of $D$. Let $\Delta(a)=\sum_i x_i\otimes y_i$. If $\gamma$ is a separating curve, then we equate $S$ with the linear combination of $A$-decorated surfaces $S'$ with $x_i$ and $y_i$ on the components of the two disks. If $\gamma$ is non-separating, then the component of $S'$ containing both disks is given the decoration $\sum_i m(x_i,y_i)$. 
		\end{itemize}
	\end{enumerate}
We call the elements of $\Sk_A(M,c)$ \emph{(surface) skeins}. If $\partial M=\emptyset$, we also write $\Sk_A(M)=\Sk_A(M,\emptyset)$.
\end{defn}
\begin{rmk}
    Sometimes it is convenient to allow multiple labels on a connected component of an $A$-decorated surface. For this, we equate labels $a,b\in A$ on a connected component with the label $m(a,b)$. Then, this relation and the sphere and neck-cutting relations can be displayed pictorially as in \eqref{eq:skeinrel}.
\end{rmk}
\begin{rmk}\label{rmk:orientation}
	In the above definition, we consider \emph{unoriented} surface skeins.
	Alternatively, one could require \emph{oriented} surface skeins in an oriented 3-manifold $M$ bounding closed oriented 1-manifolds in $\partial M$ and analogously define an oriented surface skein module $\Sk^{\mathrm{or}}_A(M,c)$. Then, the neck-cutting relation can be applied from right to left only for two parallel disks having opposite orientations. One reason to consider oriented surface skein modules is their closer connection to skein lasagna modules as constructed in \cite{PaulInvariantsOf4Man}. Specifically, the embedding of a closed oriented 3-manifold $M$ into its thickening $M\times [0,1]$ induces a well-defined map
    \begin{align*}
        \Sk^{\mathrm{or}}_{A_{\mathrm{BN}}}(M)\to \mathcal{S}^2_0(M\times [0,1],\emptyset)
    \end{align*}
    from the oriented surface skein module of $M$ to the $\mathfrak{gl}_2$ skein lasagna module of $M\times [0,1]$ with empty link in the boundary. For a discussion of surface skein modules that deals with the question of orientation via checkerboard coloring, see \cite{FadaliPhDThesis}. 
\end{rmk}

\begin{rmk}\label{rmk:orientableSkeins}
    Asaeda--Frohman \cite{AsaedaFrohman} and Kaiser in his recent work \cite{KaiserTunneling} consider the surface skeins to be \emph{orientable}. However, to assemble surface skein theory into a TQFT, we require gluing properties which seem not to be achievable in the case of orientable surface skeins for the following reason. Consider the skein module $\Sk_A(S^2\times [0,1], c\times \{0,1\})$ where $c$ is the equator in $S^2$. Let $\id_c$ be the element represented by the surface $c\times [0,1]$ with decoration $1\in A$, and let $\phi_c$ be the element that is obtained by rotating $c$ by $\pi$ fixing two antipodal points of $c$, with decoration $1\in A$.
    Both of these skeins are represented by orientable surfaces. Gluing the skein module $\Sk_A(S^2\times [0,1], c\times \{0,1\})$ to itself should define a map into the skein module $\Sk_A(S^2\times S^1)$ gluing the element $\id_c$ to $\phi_c$. The resulting skein in $\Sk_A(S^2\times S^1)$ is represented by a Klein bottle embedded in $S^2\times S^1$ and is unorientable. To reconcile orientability when gluing, one could try to choose an orientation of the representing surfaces and induce an orientation on their boundary $c$, but the element $\id_c+\phi_c\in \Sk_A(S^2\times [0,1],c\times \{0,1\})$ does not allow for a consistent choice of orientation. 
\end{rmk}
\begin{rmk}
    One might expect that surface skein theory based on unoriented surfaces would require a commutative Frobenius algebra with an extended structure, corresponding to an unoriented TQFT. However, the guiding example $A_\mathrm{BN}$ from \cite{BarNKhoHomTangles} does not even admit an extended structure, as proven in \cite[Proposition 2.11]{CzenkyKestenQuinonezWaltonExtended}. In fact, one only needs a 2d TQFTs for orientable cobordisms, as described in the accompanying note \cite{GoertzWedrich}, which are classified by pairs $(A,\phi)$ where $A$ is a commutative Frobenius algebra and $\phi$ is an involution of Frobenius algebras. With the choice $\phi=\id_A$, every oriented 2d TQFT gives rise to an orientable one. In Construction \ref{con:abstractEval} and Lemma \ref{lem:absevalskein} below, we construct the abstract evaluation of surface skeins using the orientable 2d TQFT associated to $(A,\id_A)$, i.e.\ for surfaces of type $(\alpha$) and $(\gamma)$ below. Indeed, this provides a description of the surface skein module of the 3-ball in Example \ref{exm:FirstSkeinModules}.
\end{rmk}
\begin{con}[Abstract evaluation]\label{con:abstractEval}
    Let $A$ be a commutative Frobenius algebra over a commutative ring $\kk$. Recall that \emph{connected} compact surfaces $C$ are classified into three types:
    \begin{enumerate}[label=(\greek*)]
    \item orientable
    \item unorientable and $|\pi_0(\partial C)|+\chi(C) \in 2\ZZ+1$
    \item unorientable and $|\pi_0(\partial C)|+\chi(C) \in 2\ZZ$
    \end{enumerate}

\noindent Let $S$ be a (not necessarily connected) compact surface satisfying the condition
\begin{equation}\label{eq:agcondition}\text{Every connected component }C\text{ of }S\text{ is of type } (\alpha) \text{ or }(\gamma)
\end{equation}
equipped with a labeling $\ell\colon \pi_0(S)\to A$ of the connected components by elements of $A$. (Note that $|\pi_0(\partial C)|+\chi(C) \in 2\ZZ$ also holds in case $(\alpha)$.) Then we define the \emph{abstract evaluation} of $S$ to be the element 

    \begin{align*}
        \eval_A(S):= \bigotimes_{C\in \pi_0(S)} \Delta^{\pi_0(\partial C)}(m(H^{g(C)},\ell(C))) \in \bigotimes_{C\in \pi_0(S)} A^{\otimes \pi_0(\partial C)} = A^{\otimes \pi_0(\partial S)}
    \end{align*}
    where $H=m\circ \Delta(1)\in A$ is the handle element of $A$, $g(C):= 1- (|\pi_0(\partial C)|+\chi(C))/2$ and $\ell(C)$ are the \emph{genus} and label of the component $C$, and 
    \begin{align*}
        \Delta^{\pi_0(\partial C)}\colon  A\to A^{\otimes \pi_0(\partial C)}
    \end{align*}
    is the (unique) map with these given domain and codomain, which is assembled from the coalgebra maps $\Delta,\varepsilon$ of $A$.
\end{con}

\begin{con}[Abstract evaluation, extended]\label{con:abstractEvalext}
Let $A$ and $S$ be as in Construction~\ref{con:abstractEval}. For a given finite set $P\subset \mathrm{int}(S)$, we denote by $S^\circ$ the compact $A$-labeled surface obtained by removing a small disk around every point of $P$ from $S$. To the pair $(S,P)$ we then associate the \emph{abstract evaluation}:
    \begin{align*}
        \eval_A(S,P):= \bigotimes_{C\in \pi_0(S)} \Delta^{\pi_0(\partial C)} \circ m(H^{g(C)},-) \circ m(\ell(C),-) \circ m^{P\cap C} \colon  A^{\otimes P} \to A^{\otimes \pi_0(\partial S)}
    \end{align*}
    where we use notation as in Construction~\ref{con:abstractEval} and denote by   \begin{align*}
        m^{P\cap C}\colon  A^{\otimes P}\to A
    \end{align*}
    the (unique) map with these given domain and codomain, which is assembled from the algebra maps $m,\eta$ of $A$. Note that $\eval_A(S,P)(1\otimes \cdots \otimes 1) = \eval_A(S)$.
\end{con}

\begin{lem} \label{lem:absevalskein}
Let $M$ be a 3-manifold and $c\subset \partial M$, such that every properly embedded surface $S$ in $M$ with boundary $\partial S=c$ satisfies condition \eqref{eq:agcondition}. 
Then the abstract evaluation of $A$-decorated surfaces from Construction~\ref{con:abstractEval}, upon $\kk$-linearization, factors through a $\kk$-linear map 
\begin{align}
\label{eq:abs}
\Sk_A(M,c) \to A^{\otimes \pi_0(c)}.
\end{align}
\end{lem}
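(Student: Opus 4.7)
The plan is to show that the abstract evaluation $\eval_A$ from Construction~\ref{con:abstractEval}, extended $\kk$-linearly to $\kk\{C_A(M,c)\}$, annihilates each family of defining relations of $\Sk_A(M,c)$. By the hypothesis, every properly embedded surface in $M$ with boundary $c$ satisfies~\eqref{eq:agcondition}, and this remains true after sphere removal or neck-cutting, so $\eval_A$ is defined on all representatives appearing on either side of any relation, and I can check the three relations term by term.

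The $\kk$-multilinearity relation is immediate from the construction, since the label $\ell(C)$ enters $\eval_A$ only through the factor $m(\ell(C),-)$ attached to $C$, while the tensor factors for other components are unaffected. For the sphere relation, a component $C$ that is an embedded $2$-sphere bounding a ball has $\chi(C)=2$ and $|\pi_0(\partial C)|=0$, so $g(C)=0$ and its contribution is $\Delta^{\emptyset}(m(1,\ell(C))) = \varepsilon(\ell(C))$, which is exactly the scalar by which the skein relation rescales the surface after removing the sphere.

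The heart of the argument is the neck-cutting relation. I would first use additivity of Euler characteristic under gluing along $\gamma$, together with the $+1$ contribution of each capping disk and the unchanged boundary count, to verify the genus bookkeeping $g(C_1)+g(C_2)=g(C)$ in the separating case and $g(C')=g(C)-1$ in the non-separating case. In the non-separating case, both capping disks lie on the single component $C'$, their labels multiply, and $\sum_i m(x_i,y_i)=m\circ\Delta(1)=H$, so the contribution of $C'$ is
\begin{equation*}
\Delta^{\pi_0(\partial C')}\bigl(m(H^{g(C)-1},m(a,H))\bigr) = \Delta^{\pi_0(\partial C)}\bigl(m(H^{g(C)},a)\bigr),
\end{equation*}
matching $\eval_A$ on $C$. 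In the separating case, the required identity is
\begin{equation*}
\Delta^{\pi_0(\partial C)}\bigl(m(H^{g},a)\bigr) = \sum_i \Delta^{\pi_0(\partial C_1)}\bigl(m(H^{g_1},a x_i)\bigr)\otimes \Delta^{\pi_0(\partial C_2)}\bigl(m(H^{g_2},y_i)\bigr),
\end{equation*}
which follows from coassociativity $\Delta^{\pi_0(\partial C)}=(\Delta^{\pi_0(\partial C_1)}\otimes \Delta^{\pi_0(\partial C_2)})\circ\Delta$ together with the $(A,A)$-bimodule property of $\Delta$ applied to $\Delta(m(H^g,a))$. Commutativity of $A$ and the resulting cocommutativity of $\Delta$ make the answer symmetric, so it is irrelevant on which side of the split the label $a$ is placed.

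The main technical obstacle I anticipate is the bookkeeping in the non-orientable case (type~$(\gamma)$): one must verify that $\chi$-additivity and the integrality of $g(C) = 1 - (|\pi_0(\partial C)|+\chi(C))/2$ behave correctly when $C$, $C_1$, $C_2$, or $C'$ is non-orientable with an even number of crosscaps, and that every cut-and-capped surface is again of type $(\alpha)$ or $(\gamma)$. The latter is automatic from the global hypothesis on $(M,c)$, and the former is routine but requires care. Once this is in hand, the entire lemma reduces to standard identities in the commutative Frobenius algebra $A$.
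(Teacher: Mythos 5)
Your proposal is correct and simply supplies the details that the paper's proof waives away with the sentence ``It is straightforward to check that the abstract evaluation respects the defining skein relations.'' You verify the three relations directly, your Euler-characteristic bookkeeping for $g(C)$, $g(C_1)+g(C_2)=g(C)$, and $g(C')=g(C)-1$ is right (and applies uniformly to orientable and type-$(\gamma)$ components since $\chi$-additivity under cut-and-cap is independent of orientability), and your reduction of the separating case to coassociativity together with the $(A,A)$-bimodule property of $\Delta$ is the intended calculation.
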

\begin{proof}
    It is straightforward to check that the abstract evaluation respects the defining skein relations of $\Sk_A(M,c)$.
\end{proof}

\begin{exm}\label{exm:FirstSkeinModules}
	Let $A$ be a commutative Frobenius algebra over a commutative ring $\kk$. 
	\begin{enumerate}[(i)]
		\item Every closed $1$-manifold $c\subset S^2=\partial B^3$ bounds a configuration of embedded disks $D$ and all properly embedded surfaces in $B^3$ are orientable. The $\kk$-linear map
        \[
A^{\otimes\pi_0(c)} \xrightarrow{\cong} \Sk_A(B^3,c),
        \]
        defined by sending an elementary tensor to the class of the $A$-decorated surface represented by $D$ with decoration given by the tensor, is an isomorphism of $\kk$-modules with inverse given by abstract evaluation, i.e.\ \eqref{eq:abs} specialized to $M=B^3$. See also \cite[Corollary 5.3]{KaiserFrobAlg}.
		\item The skein module of the 3-sphere $\Sk_A(S^3)=\Sk_A(S^3,\emptyset)$ is free of rank one spanned by the empty ($A$-decorated) surface. To see this, note that the same is true for the skein module $\Sk_A(B^3,\emptyset)$ by the previous item, and this skein module maps surjectively to $\Sk_A(S^3,\emptyset)$ along the standard embedding. This map is also injective, with a left-inverse given by abstract evaluation \eqref{eq:abs}.
        \item Let $M$ be the result of removing a finite number of embedded 3-balls from the interior of $B^3$. Then $M$ satisfies the hypotheses of Lemma~\ref{lem:absevalskein} since any properly embedded surface $S$ in $M$ can plugged by disk configuration in the removed 3-balls without changing the parity of $|\pi_0(\partial C)|+\chi(C)$ for any connected component $C$. Since the parity has to be even in $B^3$, the same is true for $M$.
        \item The hypotheses of Lemma~\ref{lem:absevalskein} are also satisfied for $M=\#_k S^1\times S^2$ with $k\geq 1$. Cutting embedded surfaces $S$ along generic $S^2$ fibers produces surfaces embedded in 3-manifolds as in (iii), without changing the parity of $|\pi_0(\partial C)|+\chi(C)$ for any connected component $C$.
        \item Consider $M=S^1\times B^2$ with boundary condition $c=S^1\times P_{2n}$ for some set $P_{2n}\in \partial B^2$ with $|P_{2n}|=2n$ for $n\geq 0$. Then the pair $(M,c)$ satisfies the hypotheses of Lemma~\ref{lem:absevalskein}. To see this, note  that attaching a 3-dimensional 2-handle to $M$ along $*\times S^1$ yields $B^3$, during which core-parallel disks cap off any embedded surface $S$ with boundary $c$, yielding a closed surface in $B^3$ without changing the parity of $|\pi_0(\partial C)|+\chi(C)$ for any connected component $C$ of $S$.
	\end{enumerate}	
\end{exm}

\begin{rmk}
Example~\ref{exm:FirstSkeinModules}.(iii) implies that the skein modules of Definition~\ref{def:skeinmod} admit alternative definitions akin to skein lasagna module for 4-manifolds \cite{PaulInvariantsOf4Man}, see e.g.\ \cite[Section 2.4]{WedrichSurveyLinkHomTQFT}.
\end{rmk}

The surface skein theory associates $\kk$-linear categories to surfaces. For a surface $\Sigma$, the morphism $\kk$-modules are computed as skein modules associated with the thickened surface $\Sigma \times [0,1]$.
\begin{defn}[Skein category]\label{def:skeincat}
	Let $\Sigma$ be a compact surface. For every commutative Frobenius algebra $A$ over a commutative ring $\kk$, we define the skein category of $\Sigma$ as the $\kk$-linear category $\skcat_A(\Sigma)$ with
	\begin{itemize}
		\item \textbf{Objects:} closed embedded 1-manifolds $c\subset \Sigma$.
		\item \textbf{Morphisms:} The $\kk$-module of morphisms between objects $c$ and $d$ is the skein module \begin{align*}
		    \skcat_A(\Sigma)(c,d) := \Sk_A(\Sigma\times [0,1], c\times \{0\}\sqcup d\times\{1\}).
		\end{align*} 
		\item \textbf{Composition:} $\skcat_A(\Sigma)(d,e) \otimes \skcat_A(\Sigma)(c,d) \to \skcat_A(\Sigma)(c,e)$ is induced by the map given on elementary tensors by gluing $A$-decorated surfaces along their common boundary $d$, multiplying the decorations on merging components using the Frobenius algebra multiplication, and rescaling in the interval direction. 
	\end{itemize}
    The identity morphism on an object $c$ is given by the $A$-decorated surface $c\times [0,1]$ with decoration $1\in A$ on every component. The composition is unital for these identity morphisms and associative, as can be seen using isotopies reparameterizing the unit interval $[0,1]$.
\end{defn}

\begin{rmk}\label{rmk:nonclosedsurfaceskeincat} Although Definition~\ref{def:skeincat} works perfectly well for compact surfaces with boundary, we will mostly use it in the closed case. Taking boundary of $\Sigma$ into account, it would be more natural to model the skein category of $\Sigma$ as module category for a locally linear 2-category associated to (a designated submanifold of) $\partial \Sigma$--- in this setting objects are properly embedded 1-manifolds that may end on the boundary; see \cite{HRWBorderedInvariantsKh} for a related construction in a derived setting. When specifying designated intervals in $\partial \Sigma$, one arrives at an analog of the stated/internal skein algebras (with multiple gates) \cite{MR3847209, MR4493620, MR4437512, jordan2025finitenessholonomicityskeinmodules} for the Asaeda--Frohman--Kaiser TQFT $\sktft_A$.
\end{rmk}

For a 3-manifold $M$ with boundary $\partial M$, the objects of the skein category $\skcat_A(\partial M)$ are called \emph{boundary conditions} for the skein module of $M$. Let $c\subset \partial M$ be a boundary condition. Recall the following arguments from \cite[Section 7]{KaiserFrobAlg}. Consider the long exact sequence
\begin{equation}\label{eq:LES}
	\begin{tikzcd}
		{H_2(M;\ZZ/2)} & {H_2(M,c;\ZZ/2)} & {H_1(c; \ZZ/2)} & {H_1(M;\ZZ/2).}
		\arrow[from=1-1, to=1-2]
		\arrow["\partial", from=1-2, to=1-3]
		\arrow["\iota", from=1-3, to=1-4]
	\end{tikzcd}
\end{equation}
Let $[c]\in H_1(c;\ZZ/2)$ be the fundamental class. By exactness, we have
\begin{align*}
	\iota ([c]) =0 \iff \partial^{-1}[c] \neq \emptyset.
\end{align*} 
If $\partial^{-1}[c]=\emptyset$, there are no surfaces in $M$ bounding $c$ and $\Sk_A(M,c)=\{0\}$. 
\begin{prop}\label{prop:HomologDecompOfSkMod}
Let $A$ be a commutative Frobenius algebra over a commutative ring $\kk$. Let $M$ be a 3-manifold with boundary, and $c\subset \partial M$ a boundary condition. Let $\xi\in H_2(M,c;\ZZ/2)$. Write $\Sk_A(M,c;\xi)\subseteq \Sk_A(M,c)$ for the submodule spanned by classes of surfaces represented by $\xi$. Then,
\begin{align*}
	\Sk_A(M,c) = \bigoplus_{\xi\in \partial^{-1}[c]} \Sk_A(M,c;\xi).
\end{align*}
\end{prop}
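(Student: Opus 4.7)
The plan is to exhibit an $H_2(M,c;\ZZ/2)$-grading on the free module $\kk\{C_A(M,c)\}$ that descends through the defining skein relations. For a properly embedded surface $S \subset M$ with $\partial S = c$, the mod-$2$ fundamental chain determines a class $[S] \in H_2(M,c;\ZZ/2)$, and by definition $\partial[S]=[c]$, so $[S]\in \partial^{-1}[c]$. This class is clearly invariant under isotopy rel $\partial$ and does not depend on the Frobenius labels, so we obtain a direct sum decomposition
\begin{align*}
    \kk\{C_A(M,c)\} = \bigoplus_{\xi\in \partial^{-1}[c]} \kk\{C_A(M,c;\xi)\},
\end{align*}
where $C_A(M,c;\xi)$ denotes those isotopy classes of $A$-decorated surfaces whose underlying surface represents $\xi$. (Classes not in $\partial^{-1}[c]$ carry no surfaces and thus no basis elements.)

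Next, I would verify that the submodule $R_A(M,c)$ of skein relations is homogeneous with respect to this grading, by checking each generating relation separately. Multilinearity in labels is immediate since labels do not enter the underlying topological surface. For the sphere relation, the component being removed is a 2-sphere bounding a ball in $M$, hence it is null-homologous even with $\ZZ$-coefficients, so removing it does not change $[S]$. The main computation is neck-cutting: if $\gamma\subset S$ bounds a disk $D$ in $M\setminus S$, let $N$ be a regular neighborhood of $D$ in $M$, which is a 3-ball. Then the annular neighborhood of $\gamma$ in $S$ together with the two parallel copies of $D$ in $S'$ assemble into $\partial N \cong S^2$. Hence $S$ and $S'$ differ (as mod-$2$ chains) by $\partial[N]$, so $[S]=[S']\in H_2(M,c;\ZZ/2)$, independently of whether $\gamma$ is separating or not. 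In all three cases, both sides of the relation live in the same homogeneous component, so $R_A(M,c)$ is graded.

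Passing to the quotient then yields the claimed decomposition
\begin{align*}
    \Sk_A(M,c) = \bigoplus_{\xi\in \partial^{-1}[c]} \Sk_A(M,c;\xi).
\end{align*}
The only genuinely non-formal point is the verification that neck-cutting preserves the $\ZZ/2$-class; once the local $\partial N \cong S^2$ picture is set up, however, the argument is essentially a single line. No special properties of $A$ beyond those recorded in the skein relations are needed.
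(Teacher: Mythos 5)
Your proof is correct. The paper itself gives no argument for this proposition but simply cites Kaiser \cite[Proposition 7.3]{KaiserFrobAlg}; your proposal supplies a self-contained proof by exhibiting an $H_2(M,c;\ZZ/2)$-grading on the free module $\kk\{C_A(M,c)\}$ and verifying that the submodule $R_A(M,c)$ of relations is homogeneous. The three checks you make are exactly the right ones: multilinearity only alters labels, not the underlying surface; a sphere component bounding a ball is null-homologous (its fundamental chain is $\partial$ of the ball's chain); and for neck-cutting, the mod-2 chain $\sigma_S + \sigma_{S'}$ equals $\sigma_A + \sigma_{D_1} + \sigma_{D_2} = \partial \sigma_N$ where $N$ is a regular neighborhood of the compressing disk, so $[S]=[S']$. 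Your remark that the separating/non-separating dichotomy is irrelevant is correct, since the relative homology class sees only the underlying mod-2 cycle, not how it is grouped into components. This is the argument one would expect to find in Kaiser's paper; you have reconstructed it independently rather than deferring to the citation, which makes the exposition more self-contained.
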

\begin{proof}
	This is \cite[Proposition 7.3]{KaiserFrobAlg}.
\end{proof}
\begin{rmk}
	A similar decomposition using integral homology applies to an oriented version of surface skein theory.
\end{rmk}
We recall the following standard terminology for surfaces embedded in 3-manifolds.
\begin{defn}
	Let $M$ be a 3-manifold possibly with boundary and $S$ a surface that is properly embedded in $M$. 
	\begin{enumerate}[(i)]
		\item A simple closed curve on $S$ is called \emph{essential} if it does not bound a disk in $S$.
		\item A \emph{compression disk} for $S$ is a disk $D$ embedded in $M$ such that $S\cap D=\partial D$, and $\partial D$ is an essential curve on $S$.
		\item An embedded surface $S$ in $M$ is called \emph{compressible} if it has a compression disk or if there is a component of $S$ that is a 2-sphere bounding a 3-ball in $M$. Otherwise $S$ is called \emph{incompressible}.
		\item $M$ is called \emph{irreducible} if every 2-sphere in $M$ is the boundary of a 3-ball in $M$.
		\item Two incompressible surfaces $S'$ and $S''$ in $M$ are called \emph{parallel} if there is an embedding of a thickened surface $\Phi\colon S\times [0,1] \to M$ with $\Phi(S \times \{0\})= S'$ and $\Phi(S\times \{1\})= S''$. 
	\end{enumerate}
\end{defn}
Note that sometimes the notion of incompressible surfaces excludes spheres. Incompressible spheres are sometimes also called \emph{essential spheres}. We will use these interchangeably.
\begin{lem}\label{lem:piInjective}
	Let $M$ be an orientable 3-manifold. Consider the embedding of a closed connected orientable surface $\iota\colon S\to M$ that is not a sphere. Then $S$ is incompressible if and only if the induced map 
	\begin{align*}
		\iota_*\colon  \pi_1(S) \to \pi_1(M)
	\end{align*}
	is injective.
\end{lem}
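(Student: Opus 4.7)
The plan is to establish the two implications separately, with the forward direction following from classical surface topology and the converse being essentially a restatement of Papakyriakopoulos' loop theorem.

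For the easier direction, that $\iota_*$ injective implies $S$ incompressible, I would argue by contrapositive. Suppose $S$ is compressible: since $S$ is not a 2-sphere, this must be witnessed by a compression disk $D \subset M$ with $D \cap S = \partial D$ and $\partial D$ essential on $S$. Here I will invoke the standard surface-topology fact that on a closed orientable surface of genus $\geq 1$, a simple closed curve is nullhomotopic if and only if it bounds an embedded disk in $S$; in particular, an essential simple closed curve represents a nontrivial element of $\pi_1(S)$. Thus $[\partial D] \in \pi_1(S)$ is nontrivial, but $\iota_*[\partial D] = 1$ because $\partial D$ bounds the embedded disk $D$ in $M$. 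Hence $\ker(\iota_*) \neq 1$.

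For the converse, suppose $\ker(\iota_*)$ is nontrivial, so there is a loop $\gamma$ on $S$ that is essential in $\pi_1(S)$ but nullhomotopic in $M$. Applying the loop theorem of Papakyriakopoulos to the submanifold $S \subset M$ and the kernel subgroup $\ker(\iota_*) \subset \pi_1(S)$ produces an embedded disk $D \hookrightarrow M$ with $D \cap S = \partial D$ such that the class $[\partial D] \in \pi_1(S)$ lies in $\ker(\iota_*)$ and is nontrivial. In particular $\partial D$ is an essential simple closed curve on $S$, so $D$ is a compression disk and $S$ is compressible.

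The main obstacle is the loop theorem itself, which is a genuinely deep result in 3-manifold topology; I would simply cite a standard reference (e.g.\ Hatcher's \emph{Notes on basic 3-manifold topology} or Hempel's \emph{3-Manifolds}) rather than reproduce its proof. A minor care point is verifying that the embedded $D$ produced by the loop theorem can be taken to meet $S$ only in $\partial D$, which is part of the standard statement, and that the sphere hypothesis is used precisely to guarantee that essential curves on $S$ are $\pi_1$-nontrivial (the statement would otherwise fail for $S = S^2$, where every curve is nullhomotopic yet $\pi_1(S)$ is trivial).
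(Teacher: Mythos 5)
Your proof is correct and takes essentially the same route as the paper's. The easy direction is the identical compression-disk argument, and the converse invokes the same deep result: what you call Papakyriakopoulos' loop theorem (applied to the two-sided interior surface $S$, i.e.\ after cutting $M$ along $S$) is precisely what the paper cites as Kneser's Lemma.
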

\begin{proof}
    If $D$ is a compression disk for $S$ in $M$, then $\partial D$ defines a non-trivial element in the kernel of $\iota_*$. Hence, if $\iota_*$ is injective, $S$ is incompressible in $M$. The other implication is Kneser's Lemma \cite{KneserLemma, StallingsGTandThreeMan}.
\end{proof}
\begin{rmk}
	The orientability assumptions in Lemma \ref{lem:piInjective} are required for Kneser's Lemma. In particular, if an embedded surface $S$ is non-orientable, it need not induce an injective map on the fundamental groups. For example, lens spaces $L(p,q)$ have $\pi_1(L(p,q))\cong \ZZ/p$ and therefore no orientable incompressible surfaces. However, there exist non-orientable incompressible surfaces in $L(p,q)$ if $p=2k$ and $1\leq q\leq k$ \cite{BredonWood}.
\end{rmk}
\begin{lem}\label{lem:SurfInsimplyCon}
    Let $M$ be a simply connected 3-manifold. Then $M$ is orientable and every closed surface $S\hookrightarrow M$ embedded in $M$ is orientable. 
\end{lem}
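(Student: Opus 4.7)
The plan is to establish the two conclusions separately. For orientability of $M$, the approach is a covering-space argument: the orientation double cover $\tilde{M}\to M$ is a 2-fold covering, and since $\pi_1(M)=0$, every connected covering space of $M$ is trivial (i.e., homeomorphic to $M$ itself via the covering map). Therefore each connected component of $\tilde{M}$ maps homeomorphically to $M$, forcing $\tilde{M}\cong M\sqcup M$, which is precisely the condition that $M$ be orientable.

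For the surface statement, I would reduce to a connected component and so assume $S$ is connected. The key identity is the splitting $TM|_S \cong TS \oplus \nu_S$, where $\nu_S$ is the normal line bundle of $S$ in $M$. Taking first Stiefel--Whitney classes and using orientability of $M$ (so $w_1(TM)=0$) yields $w_1(TS)=w_1(\nu_S)\in H^1(S;\mathbb{Z}/2)$. Since orientability of $S$ is equivalent to $w_1(TS)=0$, it suffices to show the normal line bundle $\nu_S$ is trivial.

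To show $w_1(\nu_S)=0$, I would exploit $H^1(M;\mathbb{Z}/2)=0$, which follows from $\pi_1(M)=0$ via the Hurewicz theorem and universal coefficients. Let $\tau\in H^1(M, M\setminus S;\mathbb{Z}/2)\cong H^1(N(S),\partial N(S);\mathbb{Z}/2)$ (excision) denote the Thom class of $\nu_S$, where $N(S)$ is a tubular neighborhood of $S$. In the long exact sequence of the pair $(M, M\setminus S)$, the image of $\tau$ lies in $H^1(M;\mathbb{Z}/2)=0$ and hence vanishes; by naturality, its further restriction to $S$ also vanishes. A standard diagram chase (comparing the long exact sequences of $(M,M\setminus S)$ and $(N(S),\partial N(S))$, and using that for a line bundle the restriction of the Thom class to the zero section is the first Stiefel--Whitney class) identifies this restriction with $w_1(\nu_S)$, giving $w_1(\nu_S)=0$ as desired.

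The main piece requiring care is the diagrammatic identification of the restricted Thom class with $w_1(\nu_S)$; this is classical and is the only point where the geometry of the normal bundle enters beyond bookkeeping. Everything else is a direct consequence of the vanishing of $\pi_1(M)$ propagated through covering space theory, Hurewicz, and universal coefficients.
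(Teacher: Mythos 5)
Your proof is correct, and it is more detailed than what the paper provides: the paper simply defers to references (Hatcher for orientability of $M$, Samelson and Martelli for orientability of $S$) rather than giving an argument. Both halves of your argument are sound. The covering-space argument for orientability of $M$ is standard, and the reduction $w_1(TS)=w_1(\nu_S)$ via $TM|_S\cong TS\oplus\nu_S$ and $w_1(TM)=0$ is exactly right. The Thom-class chase also holds up: the square relating the long exact sequences of $(M,M\setminus S)$ and $(N(S),\partial N(S))$ via excision and inclusion does commute, the image of $\tau$ under $H^1(M,M\setminus S;\ZZ/2)\to H^1(M;\ZZ/2)\to H^1(S;\ZZ/2)$ equals the restriction of the Thom class to the zero section, and for a line bundle with $\ZZ/2$ coefficients this is precisely $w_1(\nu_S)$. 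Combined with $H^1(M;\ZZ/2)=0$ from Hurewicz and universal coefficients, the conclusion follows.

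The route you take is the characteristic-class formulation. The references cited in the paper (notably Samelson's original note and Martelli's textbook) prove the same statement by a more hands-on geometric argument: if $\nu_S$ were nontrivial one could find a loop $\gamma$ in $S$ along which a normal vector field reverses, push $\gamma$ off $S$ along that normal field to get a loop $\gamma'$ in $M$ meeting $S$ transversally in a single point, and then observe that a null-homotopy of $\gamma'$ (which exists since $\pi_1(M)=0$) would provide a $2$-chain whose mod-$2$ intersection number with $S$ is simultaneously $1$ and $0$, a contradiction. Your cohomological version packages the same content more cleanly at the cost of invoking the Thom class and the Whitney sum formula; the geometric version is more elementary but requires a transversality and intersection-number setup. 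Either is a legitimate self-contained proof of the lemma, and yours is complete.
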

\begin{proof}
    For the first statement, see for example \cite[Proposition 3.25]{HatcherAT}. The second statement goes back to \cite{SamelsonOrientableHypersurf}. For a modern account in the context of 3-manifolds, see for example \cite[Corollary 9.1.8]{MartelliGeomTop}.
\end{proof}
For more details on the theory of incompressible surfaces in 3-manifolds, we refer to \cite{MartelliGeomTop} and \cite{hatcher3manifolds}.
\begin{prop}\label{prop:incompGens}
	Let $A$ be a commutative Frobenius algebra over a commutative ring $\kk$ and fix a spanning set $B\subset A$. Let $M$ be a closed 3-manifold. The skein module $\Sk_A(M,\emptyset)$ is spanned by the set of $A$-decorated surfaces in $M$, whose underlying surface is incompressible and all decorations are taken from the set $B$.
\end{prop}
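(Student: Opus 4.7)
The plan is to induct on a complexity measure of the underlying surface and use the skein relations to rewrite any decorated surface class as a $\kk$-linear combination of classes whose underlying surfaces are incompressible; once the underlying surface is incompressible, the multilinearity of the skein relations in the decorations lets us further expand each component's label as a $\kk$-linear combination of elements of the spanning set $B$.

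Concretely, I would define the complexity
\begin{align*}
	c(S) := \Bigl( \sum_{C \in \pi_0(S)} (2 - \chi(C))^2,\ \#\pi_0(S) \Bigr) \in \ZZ_{\geq 0}^2
\end{align*}
equipped with the lexicographic order, and induct on $c(S)$. If $S$ is already incompressible, there is nothing to do. Otherwise, by Definition 2.10, either some component $C_0$ of $S$ is a $2$-sphere bounding a ball in $M$, or there is a compression disk $D$ whose boundary $\partial D$ is essential on some component $C$ of $S$. In the first case the sphere relation expresses $[S,\ell]$ as $\varepsilon(\ell(C_0))\cdot [S \setminus C_0,\ell|_{S \setminus C_0}]$; this leaves the first coordinate of $c$ unchanged (a sphere contributes $0$) while decreasing the second. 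In the second case the neck-cutting relation expresses $[S,\ell]$ as a $\kk$-linear combination of decorated surfaces all supported on a common underlying surface $S'$ obtained from $S$ by excising an annular neighborhood of $\partial D$ and capping off with two parallel copies of $D$.

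Writing $\xi(C') := 2 - \chi(C') \geq 0$ for a closed connected surface, a direct computation shows the first coordinate of $c$ strictly decreases under neck-cutting: if $\partial D$ is non-separating on $C$ then $\xi(C)$ drops by $2$; if $\partial D$ is separating then $C$ splits into $C_1 \sqcup C_2$ with $\xi(C_1)+\xi(C_2)=\xi(C)$, and essentiality of $\partial D$ forces each $\xi(C_i) \geq 1$ (neither cap-off can be a disk), whence $\xi(C_1)^2 + \xi(C_2)^2 < \xi(C)^2$. Thus $c(S') < c(S)$ lexicographically in every subcase, and since $c$ takes values in a well-founded set, the induction terminates with each term having an incompressible underlying surface. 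A final application of $\kk$-multilinearity in the decorations, using that $B$ spans $A$, converts each term into a combination of the required form.

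The main point of care is verifying the strict decrease of $c$ uniformly across all geometric subcases, in particular for non-orientable components and for ensuring that the neck-cutting relation can actually be applied to the compression disk produced by compressibility; one should argue that a $2$-sided essential compression disk always exists whenever $S$ is compressible in the sense of Definition 2.10 (so that the annular-neighborhood picture underlying neck-cutting is available), and confirm that a separating essential cut never yields a sphere on either side, guaranteeing $\xi(C_i)\geq 1$. Once these local checks are in place, the rest is formal induction on $c(S)$.
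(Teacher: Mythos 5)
Your proof is correct, and its structure (a complexity induction via neck-cutting at compression disks and removing inessential spheres) is the standard Haken-hierarchy argument for such spanning results. Note that in the paper this proposition carries no proof at all: it is simply cited to Kaiser's Theorem 9.1, so your proposal supplies a self-contained argument in place of a black-box reference, and I would expect Kaiser's own proof to run along the same lines.

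The two points you flag at the end do in fact work out, and are worth recording explicitly. First, the boundary $\partial D$ of a compression disk $D$ is automatically $2$-sided in $S$, so the annular-neighborhood picture underlying the neck-cutting relation always applies: since $\partial D$ bounds a disk it is null-homotopic, hence its normal bundle in $M$ is the trivial solid torus, and a winding-number comparison between the line field along $S$ and the half-plane field along $D$ rules out a M\"obius neighborhood (the former would wind by degree $1$ in $\RP^1$ while the latter can only wind by even degree). Second, in the separating case neither cap-off can be a sphere, since a sphere side would force $\partial D$ to bound a disk on $S$, contradicting essentiality; this gives $\xi(C_i)\geq 1$ and hence the strict drop $\xi(C_1)^2+\xi(C_2)^2 < \xi(C)^2$. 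One further sanity check worth noting: in the non-separating case $\xi(C)\geq 2$ automatically (a sphere has no essential curves and $\RP^2$ has no $2$-sided essential curve), so the drop from $\xi(C)^2$ to $(\xi(C)-2)^2$ is indeed strict. With these in place the lexicographic induction on $\bigl(\sum_C \xi(C)^2,\ \#\pi_0\bigr)$ terminates, and the final expansion in the spanning set $B$ via $\kk$-multilinearity is immediate.
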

\begin{proof}
	This is \cite[Theorem 9.1]{KaiserFrobAlg}.
\end{proof}
Since we always allow the empty skein, we obtain the following consequence.
\begin{cor}\label{cor:noIncomp}
	Let $A$ be a commutative Frobenius algebra over a commutative ring $\kk$. Let $M$ be a closed 3-manifold that does not contain any incompressible surfaces, then we have a $\kk$-linear surjection
    \[
    \kk \twoheadrightarrow \Sk_A(M,\emptyset), \quad 1 \mapsto [\emptyset].
    \]
\end{cor}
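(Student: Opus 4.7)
The plan is to deduce this directly from Proposition~\ref{prop:incompGens}, which is the substantive result; the corollary is essentially an observation about what the generating set reduces to in the absence of incompressible surfaces.

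First I would fix a spanning set $B\subset A$ (for instance any $\kk$-basis, taking advantage of our standing assumption that $A$ is free as a $\kk$-module) and apply Proposition~\ref{prop:incompGens} to $M$. This gives that $\Sk_A(M,\emptyset)$ is $\kk$-spanned by the set of classes $[S,\ell]$, where $S\subset M$ is an incompressible closed surface and $\ell\colon \pi_0(S)\to B$ is a decoration by elements of $B$.

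Next I would observe that the empty surface $\emptyset\subset M$ is vacuously incompressible: it has no component that is a sphere bounding a ball, and it admits no compression disk, simply because it has no points at all. Moreover the labeling on $\pi_0(\emptyset)=\emptyset$ is unique, so the empty surface contributes exactly one generator, namely $[\emptyset]$. Under the hypothesis that $M$ contains no (nonempty) incompressible surface, every other generator provided by Proposition~\ref{prop:incompGens} is excluded, so $\Sk_A(M,\emptyset)$ is spanned as a $\kk$-module by the single class $[\emptyset]$.

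Finally, the $\kk$-linear map $\kk\to \Sk_A(M,\emptyset)$ sending $1\mapsto [\emptyset]$ is well-defined and, by the previous step, surjective. I do not foresee any obstacle: the only minor subtlety is the convention that the empty surface counts as incompressible, which is immediate from the definition given in the text.
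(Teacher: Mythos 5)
Your proof is correct and follows the same route the paper intends: apply Proposition~\ref{prop:incompGens} and note that, by convention, the empty surface is (vacuously) incompressible and is therefore the only surviving generator when $M$ contains no nonempty incompressible surface. The paper states this in a single sentence ("Since we always allow the empty skein\ldots"), and your argument simply spells that out.
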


\subsection{The surface skein TQFT}
We define cobordism bicategories which will be the source categories of the surface skein TQFT. We follow definitions in \cite{HaiounHandleTQFT}. See \cite[Section 3.1]{SchommerPriesThesis} for a more detailed account on cobordism bicategories. 
\begin{defn}
	Let $\Sigma_0$ and $\Sigma_1$ be closed oriented surfaces. 
	The \emph{relative cobordism category} $\cob^{\Sigma_0,\Sigma_1}_{3,4}$ consists of
	\begin{itemize}
		\item \textbf{Objects:} oriented 3-manifolds $M$ with boundary $\partial M \cong \overline{\Sigma_0}\cup\Sigma_1$ equipped with collars called \emph{horizontal collars}.
		\item \textbf{Morphisms:} 4-dimensional oriented cobordisms $W\colon M \to N$ with corners. 
		That is, diffeomorphism classes of oriented 4-manifolds $W$ with corners and orientation-preserving diffeomorphisms embedding $M$ and $N$ as boundaries respecting the horizontal collars of $\Sigma_0$ and $\Sigma_1$, i.e.\ 
		\begin{align*}
			\partial W\cong \overline{M} \cup_{\Sigma_0\sqcup \overline{\Sigma}_1} ((\overline{\Sigma}_0\sqcup \Sigma_1) \times [0,1]) \cup_{\overline{\Sigma}_0\sqcup \Sigma_1} N,
		\end{align*}
		and equipped with \emph{vertical collars} on $M$ and $N$. 
        \item \textbf{Composition:} Gluing along the vertical collars. This is associative by a diffeomorphism appropriately reparameterizing the thickened horizontal collars.
	\end{itemize}
\end{defn}
\begin{defn}
	The symmetric monoidal cobordism bicategory $\cob_{2,3,4}$ consists of
	\begin{itemize}
		\item \textbf{Objects:} closed oriented 2-manifolds.
		\item \textbf{Morphism categories:} Relative cobordism categories $\Hom_{\cob_{2,3,4}}(\Sigma_0,\Sigma_1) = \cob^{\Sigma_0,\Sigma_1}_{3,4}$.
		\item \textbf{Composition of 1-morphisms:} Gluing along horizontal collars
		\item \textbf{Horizontal composition of 2-morphisms:} Gluing along thickened horizontal collars.
		\item \textbf{Symmetric monoidal structure:} Disjoint union with unit $\emptyset$ and swapping components.
	\end{itemize}
\end{defn}
\begin{defn}
	The symmetric monoidal cobordism bicategory $\cob_{2,3,3+\varepsilon}$ consists of
	\begin{itemize}
		\item The same objects and 1-morphisms as $\cob_{2,3,4}$.
		\item 2-morphisms are isotopy classes of orientation-preserving diffeomorphisms fixing the horizontal collars.
	\end{itemize}
\end{defn}
Note that there is a functor of symmetric monoidal bicategories from $\cob_{2,3,3+\varepsilon}$ to $\cob_{2,3,4}$ which is the identity on objects and 1-morphisms. On 2-morphisms, it assigns to an orientation-preserving diffeomorphism $\phi\colon M\to M'$ the cobordism $W_\phi\colon M\to M'$ induced by $\phi$.

The target of the surface skein TQFT is a Morita bicategory of $\kk$-linear categories $\mor(\kcat)$. We follow definitions in \cite[Section 3.2]{HRWBorderedInvariantsKh} although in our version, we do not have a grading on the linear categories. Also compare to the Morita bicategory of associative unital algebras $\mathsf{Alg}_\kk$ \cite[Section 8.9]{GordonPowerStreet}.
\begin{defn}[Bimodules and tensor products]
	Let $\A$ and $\B$ be $\kk$-linear categories. An $(\A,\B)$-bimodule is a $\kk$-linear functor $\A\otimes \B^{\op}\to \kmod$. 
    Let $\mathcal{F}$ and $\mathcal{F}'$ be $(\A,\B)$-bimodules. A bimodule map is a linear natural transformation $\alpha\colon \mathcal{F} \Rightarrow\mathcal{F}'$.
	The collection of $(\A,\B)$-bimodules together with bimodule maps form a $\kk$-linear category denoted by $\bim_{\A,\B}$. The composition is given by composition of natural transformations. To emphasize the left- and right-action, we sometimes write ${}_{\A}M_{\B}$ for an $(\A,\B)$-bimodule. For bimodules ${}_{\A}M_{\B}$ and ${}_{\A'}N_{\B'}$, the \emph{(absolute) tensor product} ${}_{\A\otimes \A'}(M\otimes_\kk N)_{\B\otimes \B'}$ defines an $(\A\otimes \A', \B\otimes \B')$-bimodule in the usual way.
	
	Let $\A,\B,\C$ be $\kk$-linear categories and $M= {}_{\A}M_{\B}\in \bim_{\A,\B}$ and $N={}_{\B}N_{\C}\in \bim_{\B,\C}$ be bimodules. Define the \emph{relative tensor product} $({}_{\A} M_{\B})\otimes_\B ({}_{\B}N_{\C})$ as the $(\A,\C)$-bimodule given by the cokernel
	\begin{equation}\label{eq:coker}
		(M\otimes_\B N) (a,c) := \mathrm{coker} \left(\begin{aligned}
			\bigoplus_{b,b'\in \B} M(a,b')\otimes \Hom(b,b')\otimes N(b,c) &\longrightarrow \bigoplus_{b\in \B} M(a,b)\otimes N(b,c)\\ f\otimes g\otimes h\quad  &\longmapsto (fg)\otimes h - f\otimes (gh)
		\end{aligned}\right)
	\end{equation}
	that is, coequalizing the left- and right-action.
\end{defn}
\begin{defn}[Morita bicategory]
	Let $\kk$ be a commutative ring. The symmetric monoidal Morita bicategory $\mor(\kcat)$ of $\kk$-linear categories consist of
	\begin{itemize}
		\item \textbf{Objects:} $\kk$-linear categories
		\item \textbf{Morphism categories:} $\Hom_{\mor(\kcat)}(\B,\A)$ is the $\kk$-linear category $\bim_{\A,\B}$ of $(\A,\B)$-bimodules with bimodule maps
		\item \textbf{Horizontal composition:} relative tensor product 
		\begin{align*}
			\bim_{\A,\B}\otimes \bim_{\B,\C}&\to \bim_{\A,\C}\\
			M\otimes N &\mapsto M \otimes_\B N
		\end{align*}
	\end{itemize}
      The symmetric monoidal structure is given by the absolute tensor product $\otimes_\kk$ and the symmetric braiding by swapping tensor factors.
\end{defn}
\begin{con}[Surface skein TQFT]\label{con:theTQFT}
	Let $A$ be a commutative Frobenius algebra over $\kk$. The \emph{surface skein TQFT} or the \emph{Asaeda--Frohman--Kaiser TQFT} $\sktft_A$ is the symmetric monoidal functor of bicategories
	\begin{align*}
		\sktft_A\colon \cob_{2,3,3+\varepsilon}^{\mathrm{vop}} \to \mor(\kcat)
	\end{align*}
    constructed as follows:
	\begin{itemize}
		\item \textbf{Objects:} A closed oriented surface $\Sigma$ is mapped to the skein category $\sktft_A(\Sigma) := \skcat_A(\Sigma)$.
		\item \textbf{1-morphisms:} To a cobordism $M\colon \Sigma\leftarrow \Sigma'$ the TQFT $\sktft_A$ assigns the $(\skcat_A(\Sigma),\skcat_A(\Sigma'))$-bimodule 
		\begin{align*}
			\sktft_A(M) :={}_{\skcat_A(\Sigma)}\Sk_A(M)_{\skcat_A(\Sigma')}\colon \skcat_A(\Sigma) \otimes \skcat_A(\Sigma')^\op&\to \kmod\\
			(c, c')&\mapsto \Sk_A(M; c, c')
		\end{align*}
        Here we write $\Sk_A(M; c, c')$ for $\Sk_A(M, c\sqcup c')$ to emphasize that $c$ and $c'$ belong to the out-going and in-going boundary of $M$ respectively.
		\item \textbf{2-morphisms:} Let $\varphi$ be an orientation-preserving diffeomorphism from $\Sigma \xleftarrow{M} \Sigma'$ to $\Sigma \xleftarrow{N} \Sigma'$ fixing the collars on $\Sigma$ and $\Sigma'$. The functor assigns to $\varphi$ the bimodule map 
		\begin{align*}
			\sktft_A(\varphi) \colon {}_{\skcat_A(\Sigma)}\Sk_A(M)_{\skcat_A(\Sigma')} 
            {\leftarrow} 
            {}_{\skcat_A(\Sigma)}\Sk_A(N)_{\skcat_A(\Sigma')}
		\end{align*}
		induced by pushing forward $A$-decorated surfaces along $\varphi^{-1}$. Note that this reverses the vertical direction of 2-morphisms.
        \end{itemize}
        It is clear that the assignments up to here constitute functors $\sktft_A(\Sigma, \Sigma')$ on the level of $\Hom$-categories. It remains to establish the compatibility with the horizontal composition.
        
        \begin{itemize}

    \item \textbf{Horizontal composition 2-cells:} Let $M\colon \Sigma' \leftarrow \Sigma''$ and $N\colon \Sigma''\leftarrow \Sigma$ be cobordisms and $M\cup_{\Sigma''}N$ their composite. For any triple of objects $c'\in \skcat_A(\Sigma')$, $c''\in \skcat_A(\Sigma'')$ and $c\in \skcat_A(\Sigma)$ the gluing of $A$-decorated surfaces induces a $\kk$-linear map on skein modules:

    \[
\Sk_A(M; c', c'') \otimes_{\kk} \Sk_A(N; c'', c) \to 
\Sk_A(M\cup_{\Sigma''}N; c', c) 
    \]
   The direct sum over these maps for all possible $c''$ is surjective and following \cite[Theorem 4.4.2]{WalkerTQFT06} the kernel is given by the image of a map as in \eqref{eq:coker}. Noting the functoriality of the construction in the arguments, $c',c$, we thus obtain an isomorphism of bimodules:
       \[
       \psi_A(M,N)\colon
{}_{\skcat_A(\Sigma')}\Sk_A(M) \otimes_{\skcat_A(\Sigma'')} \Sk_A(N)_{\skcat_A(\Sigma)} 
\xrightarrow{\cong}
{}_{\skcat_A(\Sigma')}\Sk_A(M\cup_{\Sigma''}N)_{\skcat_A(\Sigma)} 
    \]
   The isomorphisms depend naturally on $M$ and $N$. We will not name these isomorphisms in the following.   
   \item \textbf{Unit 2-cells:} As last bit of data required for a functor of bicategories we need to prescribe for every surface $\Sigma$ an invertible 2-cell
   \[
   \id_{\sktft_A(\Sigma)} = 
   {}_{\skcat_A(\Sigma)}\skcat_A(\Sigma)_{\skcat_A(\Sigma)}
   \xrightarrow{\cong}
   {}_{\skcat_A(\Sigma)}\Sk_A(\Sigma \times [0,1])_{\skcat_A(\Sigma)}
   = 
   \sktft_A(\Sigma, \Sigma)(\id_\Sigma).
   \]
Indeed, there is a tautological choice, given by the identity map on $A$-decorated surfaces upon inspecting the definition of $\skcat_A(\Sigma)$.

	\end{itemize}
    To show that $\sktft_A$ is a functor of bicategories, it still remains to verify compatibility with the unitors and associators. This amounts to checking that the corresponding coherence diagrams commute. We omit the details here. Symmetric monoidality follows because the constructions of surface skein modules and categories are manifestly monoidal under disjoint union. 
\end{con}
\begin{rmk}\label{rmk:gluingMorita}
    Let $A$ be a commutative Frobenius algebra over a commutative ring $\kk$ and let $\Sigma$ be a closed oriented surface. Write $\C=\skcat_A(\Sigma)$ and consider a full subcategory $\C'\hookrightarrow \C$ such that the induced functor $\Mat(\C')\to \Mat(\C)$ on the additive completion is an equivalence. Such $\C'$ are called \emph{Morita equivalent subcategories}. Then the bimodules 
    \begin{align*}
        {}_{\C}\C_{\C'} \quad \text{and} \quad {}_{\C'}\C_{\C}
    \end{align*}
    are inverse to each other. Indeed, using that $\C'\hookrightarrow \C$ is full, we obtain
    \begin{align*}
        {}_{\C'}\C_{\C} \otimes_\C {}_{\C}\C_{\C'} = {}_{\C'}\C_{\C'} = {}_{\C'}\C'_{\C'}.
    \end{align*}
    From the equivalence $\Mat(\C')\to \Mat(\C)$, it can be shown that 
    ${}_{\C}\C_{\C'} \otimes_{\C'} {}_{\C'}\C_{\C} ={}_{\C}\C_{\C}$. As a result, when gluing, we can replace $\C=\skcat_A(\Sigma)$ by a Morita equivalent subcategory $\C'$. In particular, for commutative Frobenius algebras $A$ that are free of rank $r$ over $\kk$, we have the following consequence. Let $\C'\subset \skcat_A(\Sigma)$ be the full subcategory consisting of those objects $c\in \skcat_A(\Sigma)$ that do not contain inessential circle components. Then a generalized \emph{delooping}\footnote{This is the standard terminology in link homology literature originating from \cite[Lemma 4.1]{BarNFastComp}.} argument shows that $\Mat(\C')\to \Mat(\skcat_A(\Sigma))$ is an equivalence. Choose $x_i, y_i\in A$ for $i=1,\dots, r$ such that $\Delta(1)=\sum_i x_i\otimes y_i$. An inessential circle is isomorphic to the $r$-fold direct sum of the vacuum via the map that has as $i$-th component a cup (resp.\ cap) decorated with $x_i$ (resp.\ $y_i$). One composite is the identity on the inessential circle by the neck-cutting relation and the other composite is the identity matrix of size $r$ by $\varepsilon(m(x_i,y_j))=\delta_{i,j}$.
\end{rmk}

\begin{rmk}
	Our construction is related to a version of the Asaeda--Frohman--Kaiser TQFT $\mathbf{Z}_A$ that is fully extended down and also takes a commutative Frobenius algebra $A$ as input. We expect $\mathbf{Z}_A$ to be a symmetric monoidal functor of symmetric monoidal 4-categories
	\begin{align*}
		\mathbf{Z}_A \colon \cob_{0,1,2,3,3+\varepsilon} \to \mor(\mathsf{2Cat}^{\text{loc.$\kk$-lin.}}_{\boxtimes})
	\end{align*}
	where $\cob_{0,1,2,3,3+\varepsilon}$ is an appropriate symmetric monoidal 4-category with finite sets of points as objects, $k$-bordisms with corners as $k$-morphisms for $k=1,2,3$, and isotopy classes of diffeomorphisms as 4-morphisms. The target $\mor(\mathsf{2Cat}^{\text{loc.$\kk$-lin.}}_{\boxtimes})$ is, conjecturally, an appropriate symmetric monoidal Morita 4-category of locally $\kk$-linear monoidal 2-categories in the sense of Johnson-Freyd--Scheimbauer \cite{JFScheimbauerMorita}. The point is sent to the locally $\kk$-linear monoidal Bar-Natan 2-category $\mathbf{BN}_A$
	\begin{align*}
		\mathbf{Z}_A(\mathrm{pt}) = \mathbf{BN}_A \in \mor(\mathsf{2Cat}^{\text{loc.$\kk$-lin.}}_{\boxtimes})
	\end{align*} 
	which is constructed analogously to the usual locally linear monoidal 2-category in the construction of Khovanov homology for $A_{\mathrm{BN}}$ categorifying the Temperley--Lieb category. One can show that $\mathbf{BN}_A$ has duals and adjoints, presumably providing the data for $\mathbf{BN}_A$ being a 3-dualizable object.
	For closed oriented 2-manifolds and 3-dimensional cobordisms between them, the assignments of $\sktft_A$ and $\mathbf{Z}_A$ agree. The fully extended down version $\mathbf{Z}_A$ has built in cutting and gluing of $k$-bordisms with corners for all $1\leq k\leq 3$. The extendability results for $\sktft_A$ should carry over to $\mathbf{Z}_A$.
\end{rmk}
\begin{rmk}
    In \cite{WalkerTQFT06} and \cite{WalkerUnivState}, $(n+\varepsilon)$-dimensional TQFTs are defined from more general skein theory. Note that these TQFTs assign to closed $n$-dimensional manifolds the dual of the skein module.

    Also note the vertical contravariance built into the definition of $\sktft_A$. The extension to 4d in Construction~\ref{con:extension} below, following \cite[Theorem 4.1.2]{WalkerUnivState}, reverses the direction of 2-morphisms. Using the invertibility of diffeomorphisms, we compatibly reversed the direction of 2-morphisms in Construction \ref{con:theTQFT}.
\end{rmk}

\section{Extension to 4-dimensional handles}\label{sec:extension}
\subsection{General construction}
Let $W_k\colon M \to N$ be a 4-dimensional oriented cobordism between oriented 3-manifolds, consisting of a single index $k$ handle. Recall that a 4-dimensional $k$-handle $H_k= B^k\times B^{4-k}$ has 
\emph{attaching region} $\alpha_k:= S^{k-1}\times B^{4-k}$ and \emph{belt region} $\beta_k := B^k\times S^{3-k}$. Those share the common boundary $\delta_k:=S^{k-1}\times S^{3-k}$. 
 We view a $k$-handle as a cobordism with corners
\begin{align*}
	H_k\colon \alpha_k \to \beta_k
\end{align*}
and can write 
\[
M = M' \cup_{\delta_k} \alpha_k
, \quad 
N = M' \cup_{\delta_k} \beta_k
, \quad 
W_k = (M\times [0,1]) \cup_{\alpha_k \times \{1\}} H_k.
\]
We call $W_k$ a \emph{handle cobordism}. To extend $\sktft_A$ to 4d $k$-handles, it needs to assign to $H_k$ a bimodule map of skein module functors
\begin{align*}
	m_k = \sktft_A(H_k)\colon {}_{\skcat_A(\delta_k)}\Sk_A(\beta_k)\to {}_{\skcat_A(\delta_k)}\Sk_A(\alpha_k).
\end{align*}
Note that the surface skein TQFT reverses the order of 2-morphisms.
Then we can define maps on skein modules by choosing $\sktft_A(W_k)$ to be the unique map that makes the following diagram commute.
\[\begin{tikzcd}
            \sktft_A(N) &
            \sktft_A(M' \cup_{\delta_k} \beta_k)& [1.5em]
            \Sk_A(M') \otimes_{\skcat_A(\delta_k)} \Sk_A(\beta_k)
            \\
            \sktft_A(M) &
            \sktft_A(M' \cup_{\delta_k} \alpha_k)&[1.5em]
            \Sk_A(M') \otimes_{\skcat_A(\delta_k)} \Sk_A(\alpha_k)
            \arrow["\sktft_A(W_k)",from=1-1, to=2-1]
            \arrow["\id \otimes\sktft_A(H_k)",from=1-3, to=2-3]
         	\arrow["{\sktft_A(M',\beta_k)}"',from=1-3, to=1-2]
        	\arrow["{\sktft_A(M',\alpha_k)}"', from=2-3, to=2-2]
            \arrow[equal, from=1-1, to=1-2]
            \arrow[equal, from=2-1, to=2-2]
\end{tikzcd}\]

\begin{defn}
	Let $k\in \{0,1,\dots, 4\}$. A \emph{$(4,k)$-handlebody} or a  \emph{$4$-dimensional $k$-handlebody} is an oriented 4-manifold presented by means of a handle decomposition with all handles of index $\leq k$. Two $(4,k)$-handlebodies are \emph{$j$-equivalent} if their handle decompositions are related by handle slides and handle cancellations involving only handles of index $\leq j$. 
\end{defn}
Note that $j$-equivalence is an at least as strong notion of equivalence as diffeomorphism. 

\begin{rmk}\label{rmk:AC}
    It is an open question whether diffeomorphic 4-dimensional 2-handlebodies are necessarily 2-equivalent. Restricted to 4-dimensional 2-handlebodies diffeomorphic to $B^4$, this question is analogous to the \emph{Andrews--Curtis Conjecture} \cite{AndrewsCurtis}:  that any balanced\footnote{A presentation of a group is \emph{balanced} if the number of generators is equal to the number of relations.} presentations of the trivial group can be transformed into the empty presentation via Nielsen transformations---the analogues of handle moves of 4-dimensional handles of index $\leq 2$. These conjectures are still open, although widely believed to be false with proposed counterexamples e.g.\ in \cite{GompfAK4Sphere}. Invariants sensitive to 4-dimensional 2-handlebodies such as TQFTs defined for 4-dimensional handles of index $\leq 2$, but not for handles of index 3 could provide tools to detect these counterexamples. For more details on this, and another construction of invariants of 4-dimensional 2-handlebodies, we refer to \cite{BeliakovaDeRenzi4d2h}.
\end{rmk}

The notion of $j$-equivalence also makes sense for 4-dimensional oriented cobordisms with corners built from handles of index $\leq j$ (and diffeomorphisms).

\begin{defn}
    Let $k\in \{0,1,\dots, 4\}$. We define $\cob_{2,3,3+k/4}$ to be the symmetric monoidal bicategory with the same objects and 1-morphisms as $\cob_{2,3,4}$. The 2-morphisms are 4-dimensional oriented cobordisms with corners that are compositions of handle cobordisms $W_l$ for handles of index $l\leq k$ and of cobordisms $W_\varphi$ induced by orientation-preserving diffeomorphisms $\varphi$. These are considered up to $k$-equivalence. 
\end{defn}

For $0\leq k\leq 3$, there is an obvious symmetric monoidal functor of bicategories $\cob_{2,3,3+k/4}\to \cob_{2,3,3+(k+1)/4}$. For $k=4$, we have an equivalence $\cob_{2,3,3+4/4}\to \cob_{2,3,4}$ since every 4-dimensional oriented cobordism admits a handle decomposition which is unique up to $4$-equivalence. The symmetric monoidal functor of bicategories $\cob_{2,3,3+\varepsilon}\to \cob_{2,3,4}$ assigning to every orientation-preserving diffeomorphism $\varphi\colon M\to N$ the cobordism $W_\varphi$ factors through each of the $\cob_{2,3,3+k/4}$ via the functors above. 

Let $A$ be a commutative Frobenius algebra over a commutative ring $\kk$ and let $k\in \{0,1,\dots,4\}$. Consider the surface skein theory 
\begin{align*}
    \sktft_A\colon \cob_{2,3,3+\varepsilon}^{\mathrm{vop}} \to \mor(\kcat)
\end{align*} 
associated to $A$. In the following, we construct maps assigned to handle cobordisms by defining the handle maps $m_l=\sktft_A(H_l)$ for handles of index $l\leq k$. By gluing them via the handle cobordisms, this extends $\sktft_A$ to a symmetric monoidal functor of bicategories
\begin{align*}
    \cob_{2,3,3+k/4}^{\mathrm{vop}}\to \mor(\kcat).
\end{align*}
In particular, this provides an invariant of $(4,k)$-handlebodies. The construction below follows the arguments in the proof of Walker's universal state sum construction \cite[Theorem 4.1.2]{WalkerUnivState}. See also \cite{WalkerTQFT06}. Note, however, that we work with slightly different assumptions. We will comment on this difference below.

\begin{con}[Extension to $k$-handles]\label{con:extension}
	Let $A$ be a commutative Frobenius algebra over a commutative ring $\kk$ such that for every $k\in \{0,1,\dots,4\}$ and every boundary condition $c\subset \delta_k$, the skein module $\Sk_A(\alpha_k,c)$ of the attaching region of the $k$-handle is free over $\kk$. Consider the surface skein TQFT $\sktft_A$. The construction proceeds iteratively.
    \smallskip

    \noindent\textbf{Base case.} The universal state sum construction for handles of index $k=0$ requires a choice of initial data. In the case of interest, the 0-handle data is a bimodule map 
	\begin{align*}
		m_0=\sktft_A(H_0)\colon {}_{\skcat_A(\emptyset)}\Sk_A(S^3)_{\skcat_A(\emptyset)} \to {}_{\skcat_A(\emptyset)}\Sk_A(\emptyset)_{\skcat_A(\emptyset)}.
	\end{align*}
     Recall that by Example \ref{exm:FirstSkeinModules}, we have
		$\Sk_A(S^3) \cong \kk\{\emptyset\}$ and $\Sk_A(\emptyset) = \kk$, each with the obvious bimodule structure\footnote{In the following, we will often suppress such canonical actions of $\skcat_A(\emptyset)$ from the notation.} of $\skcat_A(\emptyset)$, which has a single object with endomorphisms $\kk$. As a consequence, the choice of $m_0$ is unique up to the scalar given by the evaluation of the empty skein
    $\emptyset \mapsto \ev(\emptyset)\in \kk$.
	In the following, we assume that $\ev(\emptyset)\in \kk^\times$. Otherwise the pairings $p_k$ cannot be perfect. There are no conditions on handle slides and cancellation for 0-handles.

    \smallskip

    \noindent\textbf{Inductive step.}
	Now, we assume that we have already obtained the $k$-handle map $m_k$ for $k\geq 0$. To construct $m_{k+1}$, we will proceed in the following steps.
	\begin{enumerate}[(1)]
		\item For every boundary condition $c\subset \delta_{k+1}$, we construct a pairing $p_{k+1}$ on the skein module $\Sk_A(\alpha_{k+1}, c)$ using the previously obtained maps $m_k$ and $m_0$.
		\item If the pairing $p_{k+1}$ is perfect, we obtain an associated copairing $c_{k+1}$.
		\item We construct the $(k+1)$-handle map $m_{k+1}$ from the data of the copairing $c_{k+1}$ and the map $m_0$.
	\end{enumerate}

	\noindent\textbf{(1) Constructing the pairings.} Let  $c\subset \partial\alpha_{k+1} = \delta_{k+1}$ be a boundary condition. Given the handle maps $m_k$ and $m_0$, we define the pairing 
	\begin{align*}
    p_{k+1}^{(c)}\colon\Sk_A(\alpha_{k+1},c)\otimes \Sk_A(\alpha_{k+1},c)\to \kk
	\end{align*}
	as the composite
	\begin{align*}
		\Sk_A(\alpha_{k+1},c)\otimes \Sk_A(\alpha_{k+1},c) =&\  \Sk_A(S^k\times B^{3-k},c)\otimes\Sk_A(S^k\times B^{3-k},c)\\
		\xrightarrow{\text{glue}}&\ \Sk_A(S^k\times S^{3-k})\\
		=&\ \Sk_A( B^k\times S^{3-k} \cup_{S^{k-1}\times S^{3-k}} B^k\times S^{3-k})\\
		\cong&\ \Sk_A(B^k\times S^{3-k})\otimes_{\skcat_A(S^{k-1}\times S^{3-k})} \Sk_A(B^k\times S^{3-k}) \\
        \xrightarrow{\id \otimes m_k}&\ \Sk_A(B^k\times S^{3-k})\otimes_{\skcat_A(S^{k-1}\times S^{3-k})} \Sk_A(S^{k-1}\times B^{4-k}) \\
        \cong &\ \Sk_A( B^k\times S^{3-k} \cup_{S^{k-1}\times S^{3-k}} S^{k-1}\times B^{4-k})\\
        = &\ \Sk_A( S^3)\\
        \xrightarrow{m_0}&\ \kk.
	\end{align*} 
    Here, we used that $S^k\times S^{3-k}$, obtained from gluing the attaching regions, is the boundary of the 4-manifold $S^k\times B^{4-k}$ which is built from one $0$-handle and one $k$-handle.

\smallskip
	\noindent\textbf{(2) Obtaining the copairing.} If the pairing $p_{k+1}^{(c)}$ for a boundary condition $c$ is perfect, there exists an associated dual copairing 
    \begin{align*}
        \kk &\to \Sk_A(\alpha_{k+1},c)\otimes \Sk_A(\alpha_{k+1},c)\\
        1 &\mapsto \sum_i x^{(c)}_i\otimes y^{(c)}_i
    \end{align*}
    where $(x^{(c)}_i)_{i}$ and $(y^{(c)}_i)_{i}$ are dual bases in the sense that $p_{k+1}^{(c)}(y^{(c)}_i \otimes x^{(c)}_j)=\delta_{i,j}$.

 Suppose that $p_{k+1}^{(c)}$ is perfect for all boundary conditions $c\subset\delta_{k+1}$. Then we claim that the associated copairings assemble into a bimodule map
	\begin{align*}
		c_{k+1}\colon {}_{\skcat_A(\delta_{k+1})}\Sk_A(I\times \delta_{k+1})_{\skcat_A(\delta_{k+1})}
        &\to 
        {}_{\skcat_A(\delta_{k+1})}\Sk_A(\alpha_{k+1})\otimes \Sk_A(\alpha_{k+1})_{\skcat_A(\delta_{k+1})}
        \\
        \id_c 
        &\mapsto 
        \sum_i x^{(c)}_i\otimes y^{(c)}_i
	\end{align*}
    \begin{proof}[Proof that $c_{k+1}$ is a bimodule map] Note that a bimodule map from the regular bimodule for $\skcat_A(\delta_{k+1})$ is determined by its values on identity morphisms $\id_c$ on all objects $c\subset \delta_{k+1}$. To show that $c_{k+1}$ defines a bimodule map, we need to verify for every morphism $f\colon c \xleftarrow{} d$ in $\skcat_A(\delta_{k+1})$:
    \begin{equation}\label{eq:bimodule}
    \sum_i x^{(c)}_i\otimes (y^{(c)}_i\circ f) = \sum_j (f\circ x^{(d)}_j)\otimes y^{(d)}_j \text{ in } \Sk_A(\alpha_{k+1},c)\otimes \Sk_A(\alpha_{k+1},d)
    \end{equation}
    Note that every element of $\Sk_A(\alpha_{k+1},c)\otimes \Sk_A(\alpha_{k+1},d)$ can be expanded in the basis $(x_i^{(c)}\otimes y_j^{(d)})$ with coefficient extracted by the linear functional
    \begin{align*}
    \Sk_A(\alpha_{k+1},c)\otimes \Sk_A(\alpha_{k+1},d) &\to \kk\\
    x\otimes y &\mapsto p_{k+1}^{(c)}(y_i^{(c)}\otimes x)\cdot p_{k+1}^{(d)}(y \otimes x_j^{(d)}) 
    \end{align*}
    Applying this functional to both sides of the desired equation \eqref{eq:bimodule}, we obtain $p_{k+1}^{(d)}((y^{(c)}_i\circ f) \otimes x_j^{(d)})$ and $p_{k+1}^{(c)}(y^{(c)}_i \otimes (f\circ x_j^{(d)}))$. These coefficients agree by construction of the pairings, which proves \eqref{eq:bimodule}.
    \end{proof}

    \smallskip
	\noindent\textbf{(3) Constructing the handle map.}
	We can now construct the map for the $(k+1)$-handle 
    \begin{align*}
        m_{k+1}\colon {}_{\skcat_A(\delta_{k+1})}\Sk_A(\beta_{k+1})\to {}_{\skcat_A(\delta_{k+1})}\Sk_A(\alpha_{k+1})
    \end{align*}
    as the composition
	\begin{align*}
		{}_{\skcat_A(\delta_{k+1})}\Sk_A(\beta_{k+1})=&\ {}_{\skcat_A(\delta_{k+1})}\Sk_A(B^{k+1}\times S^{2-k})\\ 
        =&\ {}_{\skcat_A(\delta_{k+1})}\Sk_A(I\times S^{k}\times S^{2-k}  \cup_{\{1\}\times S^{k}\times S^{2-k}} \{1\}\times B^{k+1}\times S^{2-k})\\
		\cong&\ {}_{\skcat_A(\delta_{k+1})}\Sk_A(I\times S^{k}\times S^{2-k})\otimes_{\skcat_A(S^{k}\times S^{2-k})} \Sk_A(B^{k+1}\times S^{2-k})\\
		\xrightarrow{c_{k+1}\otimes \id}&\ {}_{\skcat_A(\delta_{k+1})} \Sk_A(\alpha_{k+1})\otimes_{\kk}\Sk_A(\alpha_{k+1}) \otimes_{\skcat_A(\delta_{k+1})}\Sk_A(\beta_{k+1})\\
		\cong&\ {}_{\skcat_A(\delta_{k+1})}\Sk_A(\alpha_{k+1})\otimes_{\kk} \Sk_A(S^{3})\\
		\xrightarrow{\id\otimes m_0}&\ {}_{\skcat_A(\delta_{k+1})}\Sk_A(\alpha_{k+1})\otimes_{\kk} \kk \cong {}_{\skcat_A(\delta_{k+1})}\Sk_A(\alpha_{k+1}).
	\end{align*}
	Note that if $k+1=4$, the belt region is empty.

\end{con}
For later convenience, we have listed the relevant regions and boundaries of $k$-handles in Table \ref{tab:khandle}.
\begin{table}[h!]\label{tab:khandle}
	\begin{center}
		\caption{$4$-dimensional $k$-handles and their boundary regions and corners}
		\begin{tabular}{c|c|c|c|c}
			$k$ & $k$-handle $H_k$ & attaching region $\alpha_k$ & common boundary $\delta_k$ & belt region $\beta_k$ \\
			\hline
			0 & $B^0\times B^{4}$ & $\emptyset$ & $\emptyset$ & $S^3$ \\
			1 &  $B^1\times B^{3}$ & $S^0\times B^3$ & $S^0\times S^2$ & $B^1\times S^2$ \\
			2 &  $B^2\times B^{2}$ & $S^1\times B^2$ & $S^1\times S^1$ & $B^2\times S^1$ \\
			3 &  $B^3\times B^{1}$ & $S^2\times B^1$ & $S^2\times S^0$ & $B^3\times S^0$ \\
			4& $B^4\times B^0$ & $S^3$ & $\emptyset$ & $\emptyset$ 
		\end{tabular}
	\end{center}
\end{table}

Recall from \cite[Section 4.2]{WalkerUnivState} that the inductive construction has invariance under handle slides and handle cancellation built in: Invariance under handle cancellation comes from invariance under isotopy, and invariance under handle slides from associativity of gluing \cite[Lemma 4.2.5]{WalkerUnivState}. For us, \cite[Theorem 4.1.2]{WalkerUnivState} does not immediately apply because of several differences in the assumptions. The first difference is that here, we work over a ground ring $\kk$ rather than a field. The right notion for us is therefore the perfectness of the pairing rather than non-degeneracy. Also note the freeness assumptions in Construction \ref{con:extension}. The second and perhaps more important difference is that in \cite{WalkerUnivState}, pairings and handle maps for handles of all indices are constructed. For this, the finite-dimensionality of the skein modules of the attaching regions and semisimplicity of the skein categories are required. Since we induce up to handles of a fixed index $k$, no requirements for higher handles are necessary. Furthermore, finite-dimensionality is only assumed to guarantee the perfectness of the pairings, but we check this by hand for each handle. The semisimplicity assumption is used to expand morphisms in a basis of minimal idempotents. This is again used in the proof of perfectness of the pairings. However, it is also used in \cite[Lemma 4.2.5]{WalkerUnivState} required for proof of invariance under handle cancellations. It turns out, this holds in greater generality and follows from the construction using the pairing and copairing. These modifications to the approach of \cite{WalkerUnivState} result in the following theorem below. Also compare an $(\infty,2)$-categorical version in \cite[Proposition 3.4.19]{LurieCobHyp} and future work by Reutter--Walker \cite{ReutterWalker} announced in \cite{WalkerUnivState}.

\begin{thm}\label{thm:WalkerExtension}
    Let $A$ be a commutative Frobenius algebra over a commutative ring $\kk$. Consider the associated surface skein TQFT
    \begin{align*}
        \sktft_A\colon \cob_{2,3,3+\varepsilon}^{\mathrm{vop}} \to \mor(\kcat).
    \end{align*}
    Fix an evaluation $\ev(\emptyset)\in \kk^\times$ of the empty skein in $\Sk_A(S^3)$ determining the 0-handle map
    \begin{align*}
        m_0 \colon {}_{\skcat_A(\emptyset)}\Sk_A(S^3)\to {}_{\skcat_A(\emptyset)}\Sk_A(\emptyset).
    \end{align*}
    Let $k\leq 4$ be the largest index, for which the skein modules $\Sk_A(\alpha_l,c)$ with $l\leq k$ are free over $\kk$, and all pairings $p_l$ are perfect. Then, Construction \ref{con:extension} provides a well-defined and unique extension to a symmetric monoidal functor of bicategories
    \begin{align*}
        \sktft_A\colon \cob_{2,3,3+k/4}^{\mathrm{vop}} \to \mor(\kcat)
    \end{align*}
    satisfying $\sktft_A(H_l)=m_l$ for $l\leq k$. 
\end{thm}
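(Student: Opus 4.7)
The plan is to adapt Walker's universal state-sum construction \cite[Theorem 4.1.2]{WalkerUnivState} to our two modifications: we work over a commutative ground ring $\kk$ (using freeness of the relevant skein modules to guarantee dual bases) and we truncate the induction at index $k$ (so no invariance requirements arise for higher-index handle moves). A $2$-morphism in $\cob_{2,3,3+k/4}$ is by definition a composite of handle cobordisms $W_l$ with $l \leq k$ and of diffeomorphism-induced cobordisms $W_\varphi$; the value of $\sktft_A$ on such a composite is prescribed to be the corresponding composite of the bimodule maps associated to the $m_l$ (extended from the handle cobordism maps as described just before Construction \ref{con:extension}) and of the diffeomorphism-induced bimodule maps from Construction \ref{con:theTQFT}. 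This prescription immediately yields uniqueness of the extension, given the stipulation $\sktft_A(H_l) = m_l$.

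To prove well-definedness I would verify invariance under the two elementary moves generating $k$-equivalence: handle slides and handle cancellations of pairs of handles of index $\leq k$. For handle cancellation of an $(l+1)$-handle against an adjacent $l$-handle, I would proceed by direct computation: the handle map $m_{l+1}$ is constructed from the copairing $c_{l+1}$ dual to $p_{l+1}$, while $p_{l+1}$ is itself built from $m_l$ and $m_0$, so the duality relation $p_{l+1}^{(c)}(y_i^{(c)}, x_j^{(c)}) = \delta_{ij}$ together with the normalization $\ev(\emptyset) \in \kk^\times$ forces the canceling composite to collapse to the identity bimodule map. Freeness of $\Sk_A(\alpha_l, c)$ is used here to guarantee that the dual basis $(x_i^{(c)}, y_i^{(c)})$ exists. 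For handle slides, I would appeal to the associativity of gluing of skein bimodules and the naturality of the horizontal composition isomorphisms $\psi_A$ of Construction \ref{con:theTQFT}: a slide amounts to a reassociation of a multi-handle gluing, and since $p_l$ and $c_l$ are built by iterated gluings and applications of $m_{l-1}$ and $m_0$, they commute with such reassociation, yielding an analogue of \cite[Lemma 4.2.5]{WalkerUnivState}.

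The main obstacle will be carrying out both invariance arguments without the semisimplicity and finite-dimensionality crutches used in the original Walker proof---specifically, without expanding morphisms in a basis of minimal idempotents, one must trace the relative-tensor-product identifications $\psi_A(M, N)$ by hand and exploit the defining property of the copairing as the inverse of the pairing in the appropriate categorical sense. Once handle-move invariance is in place, vertical functoriality follows from the compositional nature of the definition; horizontal functoriality (compatibility with $\psi_A$) is inherited from the original surface skein TQFT of Construction \ref{con:theTQFT}; and symmetric monoidality follows from the manifest compatibility of handle decompositions, skein modules, and skein categories with disjoint union.
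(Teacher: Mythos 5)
Your proposal is correct and takes essentially the same route as the paper: well-definedness reduces to invariance under handle slides (via associativity of gluing, as in Walker's Lemma 4.2.5) and handle cancellation (via the duality between $p_l$ and $c_l$). Where your sketch says the duality relation ``forces the canceling composite to collapse,'' the paper makes this precise by first using a commuting diagram and perfectness of the pairing on $\Sk_A(B^3)$ to reduce the check to showing $m_0\circ\Phi=m_0$ on $\Sk_A(S^3)$, then tracing the decomposition $S^3=\alpha_l\cup_{\delta_l}\beta_l$ to exhibit the zig-zag $(p_l\otimes\id)\circ(\id\otimes c_l)=\id$ explicitly inside the composite.
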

\begin{proof}
    By the above discussion, we only have to show that Construction \ref{con:extension} is invariant under handle cancellation. In particular, for $1\leq l\leq k$ and any pair of 4-dimensional handles of index $l-1$ and $l$ that are in canceling configuration, we will show that the composite of the handle maps $m_l$ and $m_{l-1}$ yields a bimodule map
    \begin{align*}
        \varphi\colon {}_{\skcat_A(S^2)}\Sk_A(B^3) \xrightarrow{\text{canceling }(l-1,l)\text{-pair}} {}_{\skcat_A(S^2)}\Sk_A(B^3)
    \end{align*}
    which we prove to be the identity. To this end, we use the non-degeneracy of the pairing on $\Sk_A(B^3)$ and the commuting diagram
    \begin{center}
        \begin{tikzcd}
    	   \Sk_A(B^3) \otimes_{\skcat_A(S^2)} \Sk_A(B^3) & \Sk_A(B^3) \otimes_{\skcat_A(S^2)} \Sk_A(B^3) \\
    	    \Sk_A(S^3)  & \Sk_A(S^3) & \kk
    	\arrow["\id\otimes \varphi", from=1-1, to=1-2]
    	\arrow["\cong",from=1-1, to=2-1]
    	\arrow["\cong", from=1-2, to=2-2]
    	\arrow["\Phi", from=2-1, to=2-2]
        \arrow["m_0", from=2-2, to=2-3]
        \end{tikzcd}
    \end{center}
    to argue that $\phi$ is the identity if and only $\Phi$, the induced map on $\Sk_A(S^3)$, is the identity. Equivalently, this is the case if and only if $m_0\circ \Phi=m_0$ (since $m_0$ is an isomorphism), and this we will show below.
    
    First we recall the construction of the pairing $p_l^{(c)}$ using the bimodule maps $m_{l-1}$ and $m_0$, and the construction of $m_l$ using $c_l$ and $m_0$. We write $p_l$ for the induced (bimodule) map
    \begin{align*}
        p_l\colon \Sk_A(\alpha_l)\otimes_{\skcat_A(\delta_l)} \Sk_A(\alpha_l) \to \kk.
    \end{align*} and record the zig-zag relation for the pairing and copairing maps:
    \begin{equation}\label{eq:stringstraight}
        (p_l\otimes \id)\circ (\id\otimes c_l) = \id\quad \text{as bimodule maps}\quad \Sk_A(\alpha_l)_{\skcat_A(\delta_l)}\to \Sk_A(\alpha_l)_{\skcat_A(\delta_l)}.
    \end{equation}
    The 3-sphere decomposes as
    \begin{align*}
        S^3 = S^{l-1}\times B^{4-l} \cup_{S^{l-1}\times S^{3-l}} B^l\times S^{3-l} = \alpha_l \cup_{\delta_l} \beta_l.
    \end{align*}
    To compute $m_0 \circ \Phi$, we first apply $m_l$ to $\beta_l$, and then follow up with $m_{l-1}$ and finally $m_0$:
    \begin{align*}
        \Sk_A(S^3)
         =&\ \Sk_A(S^{l-1}\times B^{4-l} \cup_{\delta_l} B^{l}\times S^{3-l}) \\
         =&\ \Sk_A(S^{l-1}\times B^{4-l} \cup_{\delta_l} I\times S^{l-1}\times S^{3-l}  \cup_{\delta_l} B^{l}\times S^{3-l})\\
         \cong &\ \Sk_A(S^{l-1}\times B^{4-l}) \otimes_{\skcat_A(\delta_l)} \Sk_A(I\times S^{l-1}\times S^{3-l})  \otimes_{\skcat_A(\delta_l)} \Sk_A(B^{l}\times S^{3-l})\\
         \xrightarrow{\id\otimes c_{l}\otimes \id}&\ \Sk_A(S^{l-1}\times B^{4-l}) \otimes_{\skcat_A(\delta_l)} \Sk_A(S^{l-1}\times B^{4-l}) \otimes_\kk \Sk_A(S^{l-1}\times B^{4-l})\otimes_{\skcat_A(\delta_l)} \Sk_A(B^{l}\times S^{3-l})\\
         \cong &\ \Sk_A(S^{l-1}\times B^{4-l}) \otimes_{\skcat_A(\delta_l)} \Sk_A(S^{l-1}\times B^{4-l}) \otimes_\kk \Sk_A(S^3)\\
         \xrightarrow{\id\otimes \id\otimes m_0} &\ \Sk_A(S^{l-1}\times B^{4-l}) \otimes_{\skcat_A(S^{l-1}\times S^{3-l})} \Sk_A(S^{l-1}\times B^{4-l})\otimes_\kk \kk\\
         \cong &\ \Sk_A(S^{l-1}\times S^{4-l})\\
         \cong &\ \Sk_A(B^{l-1}\times S^{4-l})\otimes_{\skcat_A(S^{l-2}\times S^{4-l})} \Sk_A(B^{l-1}\times S^{4-l})\\
         \xrightarrow{\id \otimes m_{l-1}} &\ \Sk_A(B^{l-1}\times S^{4-l})\otimes_{\skcat_A(S^{l-2}\times S^{4-l})} \Sk_A(S^{l-2}\times B^{5-l})\\
         \cong &\ \Sk_A(S^3)\\
         \xrightarrow{m_0}& \kk
    \end{align*}
    The last three lines compose to the pairing $p_l$ and we note that this makes sense for $l=1$ if we regard spheres and balls with negative exponents as empty. 
    By far-commutativity, we can move $m_0\colon \Sk_A(S^3)\to \kk$ to the end of the composition, so that \eqref{eq:stringstraight} can be applied to cancel $(\id \otimes c_l)$ against $(p_l\otimes \id)$. The only term remaining in this computation of $m_0\circ \Phi$ is $m_0$, which completes the proof.
\end{proof}

Let $W$ be a $(4,k)$-handlebody viewed as a cobordism $W\colon \emptyset \to \partial W$. Then $\partial W$ is a closed oriented 3-manifold with skein module $\sktft_A(\partial W) =\Sk_A(\partial W)$ and the surface skein theory $\sktft_A$ assigns to $W$ the linear functional
	\begin{align*}
		\sktft_A(W)\colon \Sk_A(\partial W)\to \kk
	\end{align*}
by gluing the handle maps $m_l=\sktft_A(H_l)$ for handles of index $l\leq k$ via handle cobordisms.

\begin{prop}\label{prop:evEulerChar}
	Let $A$ be a commutative Frobenius algebra over a commutative ring $\kk$ and consider $\sktft_A$. Assume that there exists an extension of $\sktft_A$ to $(4,k)$-handlebodies for some $k\leq 4$ and some choice of evaluation data for $\ev(\emptyset)\in \kk^\times$. Then $\sktft_A$ extends to $(4,k)$-handlebodies for any choice of evaluation data $y=\ev(\emptyset)\in \kk^\times$. Write $\sktft_A^y$ for such an extension. Then the invariant on a $(4,k)$-handlebody $W$ satisfies
	\begin{align*}		\sktft^y_A(W)=y^{\chi(W)}\cdot\sktft_A^1(W)
	\end{align*}
	where $\chi(W)$ is the Euler characteristic of $W$.	 
\end{prop}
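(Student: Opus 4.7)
The plan is to track how the rescaling of the 0-handle evaluation $y = \ev(\emptyset) \in \kk^\times$ propagates through Construction~\ref{con:extension} to each handle map $m_l$, and hence to the full invariant of $W$. Concretely, I would prove by induction on $l$ with $0 \leq l \leq k$ that, writing $m_l^y$ for the $l$-handle map produced by Construction~\ref{con:extension} with 0-handle data $y$, one has
\begin{equation}\label{eq:handlescaling}
    m_l^y \;=\; y^{(-1)^l}\, m_l^1.
\end{equation}

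The base case $l=0$ holds by definition since the 0-handle map is characterized by $m_0^y(\emptyset) = y$. For the inductive step, I would first observe that the pairing $p_{l+1}^{y}$ on $\Sk_A(\alpha_{l+1},c)$ is obtained from $m_l^y$ and $m_0^y$ by the composite displayed in Construction~\ref{con:extension}(1); each of $m_l$ and $m_0$ appears once, so by the inductive hypothesis
\begin{equation*}
    p_{l+1}^{y} \;=\; y^{(-1)^l} \cdot y \cdot p_{l+1}^{1} \;=\; y^{(-1)^l + 1}\, p_{l+1}^{1}.
\end{equation*}
Since $y \in \kk^\times$, the pairing $p_{l+1}^{y}$ is perfect iff $p_{l+1}^{1}$ is, which is guaranteed by the hypothesis that $\sktft_A^1$ extends. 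Given dual bases $(x_i), (y_i)$ for $p_{l+1}^1$, the rescaled bases $(x_i), (y^{-(-1)^l - 1} y_i)$ are dual for $p_{l+1}^y$, so the copairing satisfies $c_{l+1}^{y} = y^{-(-1)^l - 1}\, c_{l+1}^{1}$. Finally, $m_{l+1}^y$ is obtained from $c_{l+1}^y$ and $m_0^y$, each used once, giving
\begin{equation*}
    m_{l+1}^{y} \;=\; y^{-(-1)^l - 1} \cdot y \cdot m_{l+1}^{1} \;=\; y^{-(-1)^l}\, m_{l+1}^{1} \;=\; y^{(-1)^{l+1}}\, m_{l+1}^{1},
\end{equation*}
which is \eqref{eq:handlescaling} for $l+1$. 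In particular, this shows that all pairings $p_l^y$ with $l \leq k$ are perfect, so Theorem~\ref{thm:WalkerExtension} yields an extension $\sktft_A^y$ to $\cob_{2,3,3+k/4}^{\mathrm{vop}}$.

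To finish, I would use that the invariant $\sktft_A^y(W)$ of a $(4,k)$-handlebody $W$, viewed as a cobordism $\emptyset \to \partial W$, is computed by composing the handle cobordism maps along any handle decomposition. If $W$ has $h_l$ handles of index $l$, then $\sktft_A^y(W)$ differs from $\sktft_A^1(W)$ by the product of the scaling factors from \eqref{eq:handlescaling}:
\begin{equation*}
    \sktft_A^y(W) \;=\; \prod_{l=0}^{k} \bigl(y^{(-1)^l}\bigr)^{h_l} \, \sktft_A^1(W) \;=\; y^{\sum_{l} (-1)^l h_l}\, \sktft_A^1(W) \;=\; y^{\chi(W)}\, \sktft_A^1(W),
\end{equation*}
using $\chi(W) = \sum_l (-1)^l h_l$.

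The main obstacle is the bookkeeping of the copairing scaling: although it is formally the inverse of the pairing scaling, one must verify that the bimodule map $c_{l+1}$ (which is the data that actually enters $m_{l+1}$) scales uniformly across all boundary conditions $c \subset \delta_{l+1}$. This is automatic once one observes that the assembly of the fiberwise dual bases into the bimodule map $c_{l+1}$ in Construction~\ref{con:extension}(2) is linear in the choice of copairing on each $\Sk_A(\alpha_{l+1},c)$, and that rescaling the pairing by a global scalar $\lambda \in \kk^\times$ rescales every fiberwise copairing by $\lambda^{-1}$.
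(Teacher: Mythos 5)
Your proposal is correct and takes essentially the same approach as the paper's proof: both trace the factors of $y$ through Construction~\ref{con:extension} step by step, observing that the pairing $p_{l+1}$ acquires two $m_0$-like contributions (one from $m_l$ and one from the final $m_0$), the copairing inverts that factor, and $m_{l+1}$ acquires one more $y$ from $m_0$, yielding $m_l^y = y^{(-1)^l} m_l^1$ and hence $\sktft_A^y(W) = y^{\sum_l(-1)^l h_l}\sktft_A^1(W) = y^{\chi(W)}\sktft_A^1(W)$. Your version is slightly more explicit in packaging this as a clean induction with the closed-form scaling $y^{(-1)^l}$ and in flagging the (easy but worth stating) point that the copairing bimodule map rescales uniformly over all boundary conditions; the paper instead lists the factors $y^{\pm 1}$ for $m_0,\dots,m_4$ directly.
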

\begin{proof}
	The handle map $m_0$ only contributes invertible scalars to $p_l$, $c_l$ and $m_l$ for all $l\leq k$, and does not affect the perfectness of the pairings $p_l$. Tracing through Construction \ref{con:extension}, we record the occurring factors of $y=\ev(\emptyset)\in \kk^\times$. The 0-handle carries a factor of $y$. The construction of the pairing $p_1$ in step (1) uses the map $m_0$ two times. Hence, if it is perfectness, the copairing $c_1$  carries a factor of $y^{-2}$. In the construction of $m_1$ in step (3) another factor of $y$ is introduced by $m_0$. Hence, $m_1$ contributes the scalar $y^{-1}$. Using step (1) again, we obtain that $p_2$ has a factor of $y^0$ and so does $c_2$, if it exists. This yields a factor of $y$ in $m_2$. Repeating the above, we find that $m_0,m_2,m_4$ contribute $y$, and $m_1,m_3$ contribute $y^{-1}$.
\end{proof}

\begin{con}[1-handles]\label{con:extToOneHandles}
    Let $A$ be a commutative Frobenius algebra over a commutative ring $\kk$. We construct the extension of $\sktft_A$ to $(4,1)$-handlebodies by applying Construction \ref{con:extension}. For 0-handles, we can restrict to the choice $\ev(\emptyset)=1\in \kk^\times$ by Proposition \ref{prop:evEulerChar}. The attaching region of a 1-handle $B^1\times B^3$ is $\alpha_1=S^0\times B^3=B^3\sqcup B^3$. Let $c,d\subset \partial B^3$ be boundary conditions. Then by gluing the two components separately, we can write
    \begin{align*}
        p_1^{(c\sqcup d)} = \tilde{p}_1^{(c)} \otimes \tilde{p}_1^{(d)}
    \end{align*}
    for a pairing
    \begin{align*}
        \tilde{p}_1^{(c)}\colon \Sk_A(B^3,c)\otimes \Sk_A(B^3,c)\to \kk.
    \end{align*}    
    The pairing $p_1^{(c\sqcup d)}$ is perfect for all boundary conditions if and only if $\tilde{p}_1^{(c)}$ is. Recall from Example~\ref{exm:FirstSkeinModules} that the surface skein module of $B^3$ with boundary condition $c$ is 
    \begin{align*}
        \Sk_A(B^3,c) \cong A^{\otimes \pi_0(c)}.
    \end{align*}
    Under this identification, the pairing $\tilde{p}_1^{(c)}$ is the tensor product of the Frobenius pairings in every factor of $A$ and hence perfect. It follows that the copairing for $p_1^{(c\sqcup d)}$ 
    \begin{align*}
        \kk \to \Sk_A(S^0\times B^3, c\sqcup d)\otimes \Sk_A(S^0\times B^3, c\sqcup d) \cong A^{\otimes \pi_0(c\sqcup d)}\otimes A^{\otimes \pi_0(c\sqcup d)}
    \end{align*}
    is the tensor product $(\Delta \circ \eta)^{\otimes \pi_0(c\sqcup d)}$ of the Frobenius copairing $\Delta\circ \eta\colon \kk \to A\otimes A$.
    The bimodule map
    \begin{align*}
         c_1 \colon {}_{\skcat_A(S^0\times S^2)}\Sk_A(I\times S^0\times S^2)_{\skcat_A(S^0\times S^2)} &\to {}_{\skcat_A(S^0\times S^2)}\Sk_A(S^0\times B^3)\otimes \Sk_A(S^0\times B^3)_{\skcat_A(S^0\times S^2)}
     \end{align*}
    from the regular bimodule is determined by its values on elements $\id_{c\sqcup d} = I\times (c\sqcup d)\in \Sk_A(I\times S^0\times S^2, c\sqcup d)$. It sends the cylinder $\id_{c\sqcup d}$ to the neck-cut cylinder in $\Sk_A(S^0\times B^3,c\sqcup d)\otimes \Sk_A(S^0\times B^3, c\sqcup d)$. Note that the cylinder skein can be neck-cut inside $\Sk_A(I\times S^0\times S^2, c\sqcup d)$ by the skein relations. As a consequence, the bimodule map $c_1$ is already determined\footnote{ Using Remark \ref{rmk:gluingMorita} and $\skcat_A(S^0\times S^2)=\skcat_A(S^2)\otimes \skcat_A(S^2)$, this is a consequence of the fact that $\skcat_A(S^2)$ is Morita equivalent to the full subcategory on the vacuum object $\emptyset\subset S^2$.} by its value 
    \begin{align*}
        \id_\emptyset\mapsto \emptyset\otimes \emptyset\in \Sk_A(S^0\times B^3, \emptyset)\otimes \Sk_A(S^0\times B^3, \emptyset).
    \end{align*}

    By the above, we obtain that $\sktft_A$ assigns to a $(4,1)$-handlebody $W$ the linear functional
    \begin{align*}
        \sktft_A(W) \colon \Sk_A(\partial W) &\to \kk\\
        S&\mapsto \eval_A(S)
    \end{align*}
    given by abstract evaluation as in Lemma~\ref{lem:absevalskein}, see also Example~\ref{exm:FirstSkeinModules}.(iv).
\end{con}

As a direct consequence of Theorem \ref{thm:WalkerExtension}, we have the following.
\begin{cor}\label{cor:allAExtendToOneH}
 	For every commutative Frobenius algebra $A$ over a commutative ring $\kk$, the surfaces skein theory $\sktft_A$ extends to $(4,1)$-handlebodies by Construction~\ref{con:extToOneHandles}.
 \end{cor}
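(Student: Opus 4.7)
The plan is to appeal directly to Theorem~\ref{thm:WalkerExtension} with $k=1$: once the hypotheses of that theorem are verified for every commutative Frobenius algebra $A$, the result follows, and the explicit formulas for the handle maps are exactly those spelled out in Construction~\ref{con:extToOneHandles}.

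First I would choose the 0-handle data, taking $\ev(\emptyset) = 1 \in \kk^\times$, which provides the base case $m_0$ of the inductive extension procedure; Proposition~\ref{prop:evEulerChar} shows that any other invertible choice yields an equivalent theory up to an Euler-characteristic rescaling. Next I would check the freeness and perfectness conditions at level $l=1$. Since $\alpha_1 = S^0 \times B^3 = B^3 \sqcup B^3$, a boundary condition on $\alpha_1$ has the form $c \sqcup d$ with $c, d \subset S^2$, and Example~\ref{exm:FirstSkeinModules}(i) gives the canonical isomorphism
\[
\Sk_A(\alpha_1, c \sqcup d) \;\cong\; A^{\otimes \pi_0(c)} \otimes_\kk A^{\otimes \pi_0(d)},
\]
which is free over $\kk$ because $A$ itself is (as is assumed throughout).

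The decisive step is the perfectness of the pairing $p_1^{(c \sqcup d)}$, and here I would unpack it as the tensor product $\tilde p_1^{(c)} \otimes \tilde p_1^{(d)}$ as in Construction~\ref{con:extToOneHandles}. Since the two components of $\alpha_1$ can be glued to the corresponding components of the $0$-handle separately, each factor $\tilde p_1^{(c)}$ identifies, under the isomorphism with $A^{\otimes \pi_0(c)}$, with the $\pi_0(c)$-fold tensor product of the Frobenius pairing $\varepsilon \circ m \colon A \otimes A \to \kk$. The Frobenius pairing is perfect by the definition of a Frobenius algebra, and perfect pairings are closed under tensor products over $\kk$; thus $\tilde p_1^{(c)}$, and hence $p_1^{(c \sqcup d)}$, is perfect for every boundary condition.

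With these verifications, Theorem~\ref{thm:WalkerExtension} applies with $k=1$ to produce the desired symmetric monoidal extension
\[
\sktft_A \colon \cob_{2,3,3+1/4}^{\mathrm{vop}} \to \mor(\kcat),
\]
and inspection of Construction~\ref{con:extension} at this level recovers the formulas of Construction~\ref{con:extToOneHandles}: the copairing $c_1$ is the $\pi_0$-fold tensor product of $\Delta \circ \eta$, and the invariant attached to a $(4,1)$-handlebody $W$ is the abstract evaluation functional $\sktft_A(W) = \eval_A \colon \Sk_A(\partial W) \to \kk$ of Lemma~\ref{lem:absevalskein}. No step is really an obstacle here; the only subtlety is being careful about the identification of the pairing, via gluing the two components of $\alpha_1$ independently, as a tensor product of Frobenius pairings, which is what makes perfectness automatic for every $A$.
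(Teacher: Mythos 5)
Your proof is correct and follows the same route as the paper: Corollary~\ref{cor:allAExtendToOneH} is stated in the paper as a direct consequence of Theorem~\ref{thm:WalkerExtension}, with the hypotheses (freeness of $\Sk_A(\alpha_1,c)$ via Example~\ref{exm:FirstSkeinModules}(i) and perfectness of $p_1$ reducing to the Frobenius pairing on the components of $\alpha_1 = B^3\sqcup B^3$) verified exactly as you do in Construction~\ref{con:extToOneHandles}. The only thing you spell out slightly more explicitly than the paper is that freeness of the attaching-region skein module follows from the standing assumption that $A$ is free over $\kk$.
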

 
\begin{exm}
\label{exm:s2s1eval}
Let $A$ be a commutative Frobenius algebra over a commutative ring $\kk$.
Then the  $(4,1)$-handlebody $B^3\times S^1$ induces the $\kk$-linear map
\[
\Sk_A(S^2\times S^1) \to \Sk_A(\emptyset)=\kk,\quad S \mapsto \eval_A(S)
\]
which coincides with the abstract evaluation of $A$-decorated skeins $S$ from Construction~\ref{con:abstractEval}, see also Lemma~\ref{lem:absevalskein}.
\end{exm}

We now consider the skein module of the thickened sphere $S^2\times B^1$, the attaching region of the 4-dimensional 3-handle.
\begin{lem}\label{lem:incompInThickSTwo}
    Isotopy classes of closed incompressible surfaces in $S^2\times B^1$ are represented by parallel copies of essential spheres $S^2\times \{*\}$.
\end{lem}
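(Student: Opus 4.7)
The plan is to first rule out all surfaces of higher genus via a fundamental-group obstruction, then to classify the embedded incompressible spheres, and finally to arrange them into parallel copies.

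First I would reduce to connected components. Let $S \subset S^2 \times B^1$ be a closed incompressible surface. Since $S^2 \times B^1$ is simply connected, Lemma~\ref{lem:SurfInsimplyCon} tells us $S^2 \times B^1$ is orientable, and that every closed surface $S$ embedded in it is orientable as well. Let $C \subset S$ be a connected component. If $C$ is not a sphere, then $\pi_1(C)$ is non-trivial, but $\pi_1(S^2\times B^1) = 1$, so the map $\pi_1(C) \to \pi_1(S^2 \times B^1)$ cannot be injective. By Lemma~\ref{lem:piInjective}, $C$ is compressible in $S^2 \times B^1$, contradicting incompressibility of $S$. Hence every component of $S$ is a 2-sphere.

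Next I would classify embedded 2-spheres in $S^2 \times B^1$ up to isotopy. Viewing $S^2 \times B^1$ as the complement of two disjoint open 3-balls in $S^3$, any embedded sphere $\Sigma$ in $S^2 \times B^1$ also embeds into $S^3$ and thus separates $S^3$ by Alexander duality. Let $\Sigma$ bound the two complementary regions $X$ and $Y$ in $S^3$. There are two cases: either one of $X$ or $Y$ lies entirely in $S^2 \times B^1$, in which case that region is a 3-ball by Alexander's theorem and hence $\Sigma$ bounds a ball in $S^2 \times B^1$; or else each of $X$ and $Y$ contains exactly one of the two removed open balls, in which case $\Sigma$ separates the two boundary components of $S^2\times B^1$. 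In the latter situation one invokes the classical fact (see e.g.\ \cite[Proposition 9.2.4]{MartelliGeomTop} or the $h$-cobordism argument for $S^2 \times I$) that a separating 2-sphere in $S^2 \times I$ that is not null-homotopic is ambient isotopic to a fiber $S^2 \times \{t\}$. Since $\Sigma$ is incompressible, the first case is excluded, and $\Sigma$ must be isotopic to $S^2 \times \{*\}$.

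Finally, I would show that several parallel copies can be arranged simultaneously. Each component $C_i$ of $S$ can be individually isotoped to a vertical sphere $S^2 \times \{t_i\}$. Distinct components are disjoint throughout the isotopy (using the isotopy extension theorem), and if two heights $t_i = t_j$ collide, we perturb one of them slightly along the $B^1$-direction to separate them; this does not affect the isotopy class of $S$. After reordering we obtain distinct heights $t_1 < \dots < t_n$ realizing $S$ as a disjoint union of parallel essential spheres.

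I expect the main technical step to be the classification of embedded 2-spheres in $S^2 \times B^1$ in the second paragraph: ensuring that a non-null-homotopic embedded sphere is genuinely ambient isotopic to a fiber, rather than merely homotopic to one, is the content of the cited irreducibility/parallel-sphere result and is the only place where one really uses the product structure of $S^2 \times B^1$.
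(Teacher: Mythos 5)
Your proof is correct and takes essentially the same route as the paper: orientability from simple connectivity via Lemma~\ref{lem:SurfInsimplyCon}, then $\pi_1$-injectivity via Lemma~\ref{lem:piInjective} to reduce to spheres, then arrange essential spheres as parallel fibers $S^2\times\{*\}$. The paper compresses the last step into a single cited innermost-disk argument, whereas you take a detour through $S^3$ and Alexander duality before quoting the classification of separating spheres in $S^2\times I$---a result that is itself proved by the same innermost-disk argument---so the two proofs are morally the same.
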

\begin{proof}
    By Lemma \ref{lem:SurfInsimplyCon}, it follows that every surface in $S^2\times B^1$ is orientable. Applying Lemma \ref{lem:piInjective}, it follows that the only connected closed incompressible surfaces can be essential spheres which can be isotoped to $S^2\times \{*\}$ by an innermost disk argument.
\end{proof}
Recall from Definition~\ref{def:skeincat} that $\Sk_A(S^2\times B^1, \emptyset) = \End_{\skcat_A(S^2)}(\emptyset)$ with empty boundary condition $\emptyset \subset S^2$ affords the structure of an algebra by gluing and rescaling in the $B^1$ direction. Denote by $\mathsf{S}_a\in \Sk_A(S^2\times B^1,\emptyset)$ the isotopy class of an essential sphere $S^2\times \{*\}\subset S^2\times B^1$ with decoration $a\in A$. Let $(a_i)_{i\in I}$ be a generating set for $A$. As an algebra, $\Sk_A(S^2\times B^1, \emptyset)$ is generated by $\{\mathsf{S}_{a_i}\mid i \in I\}$ where we write $k$ parallel spheres with decorations as linear combinations of words in the $\mathsf{S}_{a_i}$. Relations between them arise from tubing between two adjacent parallel copies. In particular, a surface given by two parallel decorated essential spheres joined by a tube is compressible. Applying the neck-cutting relation to the tube results in a sum where each summand consists of two decorated essential spheres. Compare \cite[Lemma 3.2 and 3.3]{AsaedaFrohman} in the case of $A_{\mathrm{BN}}$. More general arguments in this direction are developed in \cite{KaiserTunneling}. Explicitly, for the skein module $\Sk_A(\Sigma\times B^1,\emptyset)$ with a closed surface $\Sigma$, see \cite[Theorem 11.2]{KaiserTunneling}. For $\Sigma=S^2$, we have the following statement.
\begin{prop}\label{prop:TensorAlgebraSkModSphere}
	Let $A$ be a commutative Frobenius algebra. Then, the map that sends an elementary tensor $a_1\otimes \dots \otimes a_n \in A^{\otimes n}$ to the element
    \begin{align*}
        \mathsf{S}_{a_1}\cdots \mathsf{S}_{a_n}\in \Sk_A(S^2\times B^1, \emptyset)
    \end{align*} 
    given by $n$ parallel copies of essential spheres $S^2\times \{*\}$ decorated with $a_1,\dots, a_n$ induces an isomorphism of $\kk$-algebras
	\begin{align*}
		T(A)/(\Delta(a)-\varepsilon(a) \mid a\in A)\xrightarrow{\cong} \Sk_A(S^2\times B^1, \emptyset) 
	\end{align*}
	where $T(A)=\bigoplus_{k\geq0} A^{\otimes k}$ is the tensor algebra of $A$.
\end{prop}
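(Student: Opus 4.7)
My plan is to establish the isomorphism in four steps: well-definedness of the candidate map, verification of the claimed relations, surjectivity via the spanning result for incompressible surfaces, and the more delicate injectivity argument.

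First, I will define the $\kk$-algebra homomorphism
\[
\phi\colon T(A) \to \Sk_A(S^2 \times B^1, \emptyset), \quad a_1 \otimes \cdots \otimes a_n \mapsto \mathsf{S}_{a_1} \cdots \mathsf{S}_{a_n}.
\]
Multilinearity in each tensor factor is immediate from the $\kk$-multilinearity of decorations in the skein relations, and compatibility with the algebra structures is clear since both multiplications are given by concatenation, respectively by stacking in the $B^1$-direction.

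Next, to check that $\phi$ descends to the quotient, I will evaluate an $A$-decorated surface $T_a$ obtained from two parallel essential spheres joined by a small tube and carrying total decoration $a \in A$. Topologically $T_a$ is a $2$-sphere, and its $\ZZ/2$-homology class in $S^2 \times B^1$ is $2[S^2 \times \{*\}] = 0$; hence $T_a$ bounds a $3$-ball, and the sphere relation gives $T_a = \varepsilon(a) \cdot \emptyset$. Alternatively, the tube is a separating neck on $T_a$ bounding a disk in the complement, so the neck-cutting relation yields $T_a = \sum_i \mathsf{S}_{x_i} \mathsf{S}_{y_i}$ for $\Delta(a) = \sum_i x_i \otimes y_i$. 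Equating the two expressions shows $\phi(\Delta(a) - \varepsilon(a)) = 0$. Surjectivity then follows by combining Proposition~\ref{prop:incompGens}, which expresses any class in the skein module as a $\kk$-linear combination of $A$-decorated incompressible surfaces, with Lemma~\ref{lem:incompInThickSTwo}, which identifies isotopy classes of closed incompressible surfaces in $S^2 \times B^1$ as parallel copies of essential spheres, i.e.\ exactly the images of elementary tensors under $\phi$.

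The main obstacle will be injectivity, for which I plan to adapt the tunneling strategy of \cite[Theorem 11.2]{KaiserTunneling} (compare \cite[Lemmas 3.2 and 3.3]{AsaedaFrohman} for $A = A_{\mathrm{BN}}$). Given any $A$-decorated surface in $S^2 \times B^1$, the idea is to choose a maximal collection of pairwise disjoint compressing disks and iteratively apply neck-cutting and the sphere relation to reduce it to a linear combination of products of decorated essential spheres, producing a candidate inverse map $\psi\colon \Sk_A(S^2 \times B^1, \emptyset) \to T(A)/(\Delta(a) - \varepsilon(a))$. The hard part is to show that $\psi$ is well-defined, i.e.\ independent of the sequence of compressions chosen. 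Two reduction sequences can be compared locally, and any discrepancy between them is controlled by inserting or removing a tube between two sphere components---which is precisely the tubing identity established above. Assembling these local comparisons yields a two-sided inverse to $\phi$, completing the proof.
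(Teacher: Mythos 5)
Your proposal is correct and takes essentially the same approach as the paper, which simply cites Kaiser \cite[Theorem 11.2]{KaiserTunneling} and uses Lemma~\ref{lem:incompInThickSTwo} (hence Lemma~\ref{lem:SurfInsimplyCon}) to dispense with the orientable/unoriented distinction, since every embedded surface in $S^2\times B^1$ is orientable; you unfold that citation into a proof outline whose hard step (well-definedness of the inverse map via independence of the compression sequence) is precisely what Kaiser's tunneling framework establishes in the genus-zero case. One small imprecision: Proposition~\ref{prop:incompGens} is stated only for \emph{closed} $M$, so for $M = S^2\times B^1$ the surjectivity step should instead appeal to the manifold-with-boundary version \cite[Theorem 9.2]{KaiserFrobAlg}, which the paper invokes in the proof of Proposition~\ref{prop:SolidtorusAnddTL}.
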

\begin{proof}
	This is shown in \cite[Theorem 11.2]{KaiserTunneling} for the skein module of thickened surfaces $\Sigma_g\times B^1$ of genus $g\geq 0$. While Kaiser considers orientable skeins, his result for genus $g=0$ is still applicable in our setting of unoriented skeins by Lemma~\ref{lem:incompInThickSTwo}.
\end{proof}
Recall the Frobenius algebra $A_\alpha$ over $\kk=K[\alpha^{\pm1}]$ from Example \ref{exm:FAExmLinkHom}. We study the extension of surface skein theory $\sktft_\alpha$ for $A_\alpha$ to 4-dimensional 2-handles in Section \ref{sec:KirbyCol}. However, the TQFT $\sktft_\alpha$ does not extend to 3-handles.
\begin{cor}\label{cor:alphaSkmodThickSphere}
    Denote by $\mathsf{S},\mathsf{D}\in \Sk_\alpha(S^2\times B^1,\emptyset)$ the isotopy class of an essential sphere $S^2\times \{*\}\subset S^2\times B^1$ with decoration $1\in A_\alpha$ and $x\in A_\alpha$ respectively. As a $\kk$-algebra, we have
    \begin{align*}
        \Sk_\alpha(S^2\times B^1,\emptyset) = \kk\langle \mathsf{S}, \mathsf{D}\rangle
        /(\mathsf{S} \mathsf{D} + \mathsf{D} \mathsf{S},
        \mathsf{D} \mathsf{D} + \alpha \mathsf{S} \mathsf{S} - 1).
    \end{align*}
    A $\kk$-basis for $\Sk_\alpha(S^2\times B^1,\emptyset)$ is given by
    \begin{align*}
        \{\mathsf{S}^k, \mathsf{S}^k\mathsf{D} \mid k\geq 0\}.
    \end{align*}
    In particular, it is infinite-dimensional as a $\kk$-module.
\end{cor}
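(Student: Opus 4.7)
The plan is to specialize Proposition~\ref{prop:TensorAlgebraSkModSphere} to $A_\alpha$ and read off both the presentation and the basis by direct manipulation. First I would set $\mathsf{S} := \mathsf{S}_1$ and $\mathsf{D} := \mathsf{S}_x$, corresponding to the $\kk$-basis $\{1, x\}$ of $A_\alpha$ viewed inside $A_\alpha^{\otimes 1} \subset T(A_\alpha)$. By $\kk$-linearity the family of relations $\Delta(a) = \varepsilon(a)$ indexed by $a \in A_\alpha$ reduces to the two instances $a = 1$ and $a = x$. Substituting the explicit Frobenius structure of $A_\alpha$ gives $\Delta(1) - \varepsilon(1) = 1 \otimes x + x \otimes 1$ and $\Delta(x) - \varepsilon(x) = x \otimes x + \alpha (1 \otimes 1) - 1$, which translate under the isomorphism of Proposition~\ref{prop:TensorAlgebraSkModSphere} to precisely $\mathsf{S}\mathsf{D} + \mathsf{D}\mathsf{S} = 0$ and $\mathsf{D}\mathsf{D} + \alpha \mathsf{S}\mathsf{S} - 1 = 0$. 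Since $A_\alpha = \kk\{1, x\}$ generates $T(A_\alpha)$ as an algebra, $\mathsf{S}$ and $\mathsf{D}$ generate the quotient.

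Next, for spanning, I would treat the two relations as rewriting rules. The anticommutation $\mathsf{S}\mathsf{D} = -\mathsf{D}\mathsf{S}$ lets any monomial be brought to the form $\pm \mathsf{S}^a \mathsf{D}^b$, and the quadratic $\mathsf{D}^2 = 1 - \alpha \mathsf{S}^2$ reduces $\mathsf{D}^b$ to a $\kk$-polynomial in $\mathsf{S}^2$ times $\mathsf{D}^{b \bmod 2}$. Hence the set $\{\mathsf{S}^k, \mathsf{S}^k \mathsf{D} : k \geq 0\}$ spans the algebra.

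The main obstacle is $\kk$-linear independence, since the presentation carries no obvious grading that would yield it for free. The plan is to construct an explicit representation that pulls the proposed basis back to a visibly independent set. Take the free $\kk$-module $M := A_\alpha \otimes_\kk \kk[t] = \kk[t] \oplus x\kk[t]$, let $\mathsf{S}$ act on $M$ by multiplication by $t$, and define $\mathsf{D}$ by $\mathsf{D}(t^k) := (-t)^k x$ and $\mathsf{D}(t^k x) := (-t)^k (1 - \alpha t^2)$, extended $\kk$-linearly. A short computation on the generating vectors $t^k$ and $t^k x$ verifies both $\mathsf{S}\mathsf{D} + \mathsf{D}\mathsf{S} = 0$ and $\mathsf{D}^2 + \alpha \mathsf{S}^2 - 1 = 0$ as operators on $M$, so the assignment descends to a representation $\rho$ of the algebra from Proposition~\ref{prop:TensorAlgebraSkModSphere}. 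Evaluating at $1 \in M$ then yields $\rho(\mathsf{S}^k)(1) = t^k$ and $\rho(\mathsf{S}^k \mathsf{D})(1) = t^k x$, which lie in the standard $\kk$-basis of $M$ and are therefore $\kk$-linearly independent. Combined with the spanning argument, this settles both the presentation and the stated basis, and infinite-dimensionality follows at once.
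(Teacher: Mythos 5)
Your argument is correct, and it takes a genuinely different route from the paper for the linear-independence step. The paper invokes Bergman's diamond lemma: it fixes the degree-lexicographic order with $\mathsf{S} < \mathsf{D}$, identifies the leading words $\mathsf{D}\mathsf{S}$ and $\mathsf{D}\mathsf{D}$, checks that the two overlap ambiguities $\mathsf{D}\mathsf{D}\mathsf{S}$ and $\mathsf{D}\mathsf{D}\mathsf{D}$ resolve consistently, and concludes that the irreducible monomials are a basis. You instead build a concrete faithful-enough representation on $A_\alpha \otimes_\kk \kk[t] \cong \kk[t] \oplus x\kk[t]$, with $\mathsf{S} \mapsto t\cdot(-)$ and $\mathsf{D}$ a twisted multiplication-by-$x$ operator, and read off independence from the image of the cyclic vector $1$. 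I checked your operator relations and they do hold: e.g.\ $\mathsf{S}\mathsf{D}(t^k) = (-1)^k t^{k+1}x = -\mathsf{D}\mathsf{S}(t^k)$ and $\mathsf{D}^2(t^k) = t^k - \alpha t^{k+2} = (1 - \alpha\mathsf{S}^2)(t^k)$, and similarly on $t^k x$, so the map does descend to an algebra representation. Both approaches are sound; the diamond lemma yields a rewriting system (hence a normal form for arbitrary words) as a byproduct and is arguably more routine to verify, while your construction is more elementary once the module is exhibited, though it requires finding the right twist by $(-1)^k$ in the definition of $\mathsf{D}$ and is somewhat ad hoc for larger Frobenius algebras. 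The derivation of the presentation and the spanning argument you give match the paper's.
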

\begin{proof}
    We apply Bergman's diamond lemma \cite{BergmanDiamondLemma} to the left-hand side of Proposition \ref{prop:TensorAlgebraSkModSphere}, specialized to the Frobenius algebra $A_\alpha$ over $\kk$, which  is free with basis $\{1,x\}$. The map from Proposition \ref{prop:TensorAlgebraSkModSphere} identifies the tensor algebra $T(A_\alpha)$ with the free $\kk$-algebra $\kk\langle\mathsf{S}, \mathsf{D}\rangle$. 
    Since the counit on the basis elements is given as $\varepsilon(1)=0$ and $\varepsilon(x)=1$, we obtain that the ideal $(\Delta(a)-\varepsilon(a) \mid a\in A_\alpha)$ is generated by the relations 
    \begin{align}
        \mathsf{S} \mathsf{D} + \mathsf{D} \mathsf{S} & = 0,\label{eq:tubingWith1}\\
        \mathsf{D} \mathsf{D} + \alpha \mathsf{S} \mathsf{S} - 1 &=0.\label{eq:tubingWithx}
    \end{align}
    Choose the monomial order on $\kk\langle\mathsf{S}, \mathsf{D}\rangle$ given by degree lexicographic order with $\mathsf{S}<\mathsf{D}$. Then, this defines a total order with
    \begin{align*}
        1<\mathsf{S}<\mathsf{D}< \mathsf{S}\mathsf{S}<\mathsf{S}\mathsf{D}< \mathsf{D}\mathsf{S}<\mathsf{D}\mathsf{D}<\dots
    \end{align*}
    The relations \eqref{eq:tubingWith1} and \eqref{eq:tubingWithx} have leading words $\mathsf{D}\mathsf{S}$ and $\mathsf{D}\mathsf{D}$ respectively. Next, we need to check for ambiguities. The only minimal overlap ambiguities are $\mathsf{D}\mathsf{D}\mathsf{S}$ and $\mathsf{D}\mathsf{D}\mathsf{D}$. There are no inclusion ambiguities. For $\mathsf{D}\mathsf{D}\mathsf{S}$, we have the resolution of ambiguities:
    \begin{align*}
        \mathsf{D}\mathsf{D}\mathsf{S} &\xrightarrow{\eqref{eq:tubingWith1}} -\mathsf{D}\mathsf{S}\mathsf{D} \xrightarrow{\eqref{eq:tubingWith1}}\mathsf{S}\mathsf{D}\mathsf{D} \xrightarrow{\eqref{eq:tubingWithx}} -\alpha \mathsf{S}\mathsf{S}\mathsf{S}+ \mathsf{S},\\
        \mathsf{D}\mathsf{D}\mathsf{S} &\xrightarrow{\eqref{eq:tubingWithx}} -\alpha \mathsf{S}\mathsf{S}\mathsf{S}+ \mathsf{S}.
    \end{align*}
    For $\mathsf{D}\mathsf{D}\mathsf{D}$, we have:
    \begin{align*}
        \mathsf{D}\mathsf{D}\mathsf{D} &\xrightarrow{\eqref{eq:tubingWithx}} -\alpha\mathsf{D}\mathsf{S}\mathsf{S} + \mathsf{D} \xrightarrow{\eqref{eq:tubingWith1}} \alpha\mathsf{S}\mathsf{D}\mathsf{S} + \mathsf{D}\xrightarrow{\eqref{eq:tubingWith1}} 
        -\alpha\mathsf{S}\mathsf{S}\mathsf{D}+ \mathsf{D},\\
        \mathsf{D}\mathsf{D}\mathsf{D} &\xrightarrow{\eqref{eq:tubingWithx}} -\alpha\mathsf{S}\mathsf{S}\mathsf{D} + \mathsf{D}.
    \end{align*}
    It follows by the diamond lemma that the set of irreducible monomials, that is, monomials that do not contain the leading words $\mathsf{D} \mathsf{S}$ or $\mathsf{D} \mathsf{D}$ form a basis. With $\mathsf{S}^0=\emptyset$, we obtain that $\{\mathsf{S}^k, \mathsf{S}^k\mathsf{D} \mid k\geq 0\}$ is a $\kk$-basis for $\Sk_\alpha(S^2\times B^1,\emptyset)$. 
\end{proof}
\begin{cor}\label{cor:alphaNonExtensionTo3}
    The surface skein TQFT $\sktft_\alpha$ cannot be extended to 4-dimensional 3-handles.
\end{cor}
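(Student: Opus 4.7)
The plan is to obstruct the extension by showing that the pairing $p_3$ arising in Construction~\ref{con:extension} cannot be perfect. Since the attaching region of a 3-handle is $\alpha_3 = S^2 \times B^1$, Theorem~\ref{thm:WalkerExtension} requires that the pairing
\[
p_3^{(c)} \colon \Sk_\alpha(S^2 \times B^1, c) \otimes \Sk_\alpha(S^2 \times B^1, c) \to \kk
\]
be perfect for every boundary condition $c \subset \delta_3 = S^2 \sqcup S^2$. It therefore suffices to obstruct a single case, and I will take $c = \emptyset$. The key input is Corollary~\ref{cor:alphaSkmodThickSphere}, which exhibits the explicit $\kk$-basis $\{\mathsf{S}^k, \mathsf{S}^k \mathsf{D} \mid k \geq 0\}$ and thereby identifies $V := \Sk_\alpha(S^2 \times B^1, \emptyset)$ as a free $\kk$-module of countably infinite rank.

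Suppose for contradiction that $p_3^{(\emptyset)}$ is perfect. Then Construction~\ref{con:extension} furnishes a dual copairing
\[
c_3^{(\emptyset)} \colon \kk \to V \otimes_\kk V, \qquad 1 \mapsto \sum_{i=1}^{n} x_i \otimes y_i,
\]
which is necessarily a \emph{finite} sum in the algebraic tensor product, together with the zig-zag identity $(p_3^{(\emptyset)} \otimes \id_V) \circ (\id_V \otimes c_3^{(\emptyset)}) = \id_V$. Unpacking this identity on $v \in V$ gives
\[
v = \sum_{i=1}^{n} p_3^{(\emptyset)}(v \otimes x_i)\, y_i,
\]
so the finitely many elements $y_1, \ldots, y_n$ generate $V$ over $\kk$. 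However, any finitely generated $\kk$-submodule of $V$ is supported on only finitely many of the basis vectors $\mathsf{S}^k, \mathsf{S}^k \mathsf{D}$, contradicting the infinite rank of $V$. Hence $p_3^{(\emptyset)}$ is not perfect, and no extension of $\sktft_\alpha$ to 3-handles exists.

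The only subtle point in this plan is the implicit reliance on the copairing lying in the algebraic (rather than completed) tensor product---this is built in to Construction~\ref{con:extension} because $c_{k+1}$ is required to be a bimodule map out of the \emph{identity} bimodule on $\skcat_\alpha(\delta_{k+1})$, and the image of an identity morphism is necessarily a single element of $V \otimes_\kk V$. Once that is observed, finiteness of the sum is automatic and the contradiction is immediate.
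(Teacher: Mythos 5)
Your proof takes the same route as the paper: both invoke Corollary~\ref{cor:alphaSkmodThickSphere} to get that $\Sk_\alpha(S^2\times B^1,\emptyset)$ is of infinite rank over $\kk$, and both conclude that no perfect pairing can exist on it. You merely spell out the folklore fact (via the zig-zag identity and finite generation) that the paper states implicitly; the argument is otherwise identical.
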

\begin{proof}
    Corollary \ref{cor:alphaSkmodThickSphere} shows that the surface skein module of attaching region for the 4-dimensional 3-handle $S^2\times B^1$ is infinite-dimensional. Hence, there cannot be a perfect pairing defined on this skein module and the surface skein TQFT $\sktft_\alpha$ does not extend to 3-handles.
\end{proof}
\begin{rmk}
    We expect Corollary \ref{cor:alphaNonExtensionTo3} to be true for a large class of free commutative Frobenius algebras of rank greater than one.
\end{rmk}

\subsection{Extending rank 1 surface skein theory}\label{sec:TrivialTheoryDW}
Let $\kk$ be a commutative ring for which $2\in \kk^\times$ is invertible. Let $u\in \kk^\times$ be a unit and $\kk^u$ the Frobenius algebra from Example \ref{exm:trivFA}, which is free of rank 1 over $\kk$. The counit and coproduct are given by $\varepsilon\colon 1 \mapsto u$ and $\Delta\colon 1\mapsto u^{-1}1\otimes 1$. If $u=1$, we simply denote the Frobenius algebra by $\kk=\kk^1$.

We now consider surface skein theory associated to $\kk^u$ and construct its extension to 4-dimensional $k$-handles for all $k$.
 
\begin{prop}\label{prop:SkModForRkOne}
	Let $M$ be a 3-manifold and $\kk^u$ as above. Then, for a boundary condition $c\subset \partial M$, the long exact sequence \eqref{eq:LES} provides an isomorphism $\Sk_{\kk^u}(M,c)\cong \kk\{\delta^{-1}[c]\}$. In particular, there is a non-canonical isomorphism $\Sk_{\kk^u}(M,c)\cong \kk\{H_2(M;\ZZ/2)\}$. 
\end{prop}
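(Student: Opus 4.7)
The plan is to apply Proposition \ref{prop:HomologDecompOfSkMod} to obtain the direct-sum decomposition $\Sk_{\kk^u}(M,c) = \bigoplus_{\xi \in \partial^{-1}[c]} \Sk_{\kk^u}(M,c;\xi)$ and then to show each summand is a free $\kk$-module of rank one. The final description in terms of $H_2(M;\ZZ/2)$ will follow from the long exact sequence \eqref{eq:LES} together with the vanishing $H_2(c;\ZZ/2)=0$, which identifies $\ker(\partial) \cong H_2(M;\ZZ/2)$ and hence exhibits $\partial^{-1}[c]$ (when nonempty) as a torsor over $H_2(M;\ZZ/2)$.

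Fixing $\xi \in \partial^{-1}[c]$, I would pick an embedded representative surface $S_\xi$ with label $1$---which exists by standard realizability of $\ZZ/2$-homology classes by embedded submanifolds---and define
\[
\Psi_\xi\colon \kk \to \Sk_{\kk^u}(M,c;\xi), \qquad 1 \mapsto [S_\xi].
\]
For the inverse, observe that for any other surface $S$ representing $\xi$ the $\ZZ/2$-cycle $S\cup S_\xi$ bounds in $(M,\partial M)$, so from $\chi(\partial W) = 2\chi(W)$ one gets $\chi(S) \equiv \chi(S_\xi) \pmod 2$, and the formula
\[
\Phi_\xi([S]) \;:=\; u^{(\chi(S) - \chi(S_\xi))/2} \cdot \ell(S) \;\in\; \kk,
\]
where $\ell(S)\in\kk$ is the product of the component labels of $S$, makes sense. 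I would then verify $\Phi_\xi$ respects the skein relations: the counit $\varepsilon(a)=ua$ makes sphere-removals contribute a factor $u$ accompanied by the Euler-characteristic jump of $+2$; the comultiplication $\Delta(1)=u^{-1}\cdot 1\otimes 1$ makes both separating and non-separating neck-cuts contribute a factor $u^{-1}$ accompanied by a $\chi$-jump of $+2$; and multilinearity in labels is built into $\ell(S)$. The equality $\Phi_\xi \circ \Psi_\xi = \id_\kk$ is then immediate.

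The heart of the argument is $\Psi_\xi \circ \Phi_\xi = \id$, i.e., $[S] = u^{(\chi(S)-\chi(S_\xi))/2}\,\ell(S)\,[S_\xi]$ in the skein module for every $S$ representing $\xi$. The geometric input is to realize the null-homologousness of $S\cup S_\xi$ by an embedded 3-submanifold $W\subset M$ with $\partial W = S\cup S_\xi\cup(c\times[0,1])$, and to choose a Morse function $h\colon W\to[0,1]$ with $h^{-1}(0)=S_\xi$ and $h^{-1}(1)=S$ (constant on the side boundary). Passing through an index-$k$ critical point in $W$ modifies the level set $h^{-1}(t)$ by exactly one of the skein-allowed moves: a sphere birth/death for $k\in\{0,3\}$ (bounding a ball in the corresponding 0-/3-handle, so that the sphere relation applies) or a band/compression surgery for $k\in\{1,2\}$ (where the core disk of the handle is a compression disk disjoint from the current level set, so that neck-cutting applies). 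Each critical point multiplies the skein class by $u^{\pm 1}$ matching the $\chi$-jump of $\pm 2$, and taking the product over all critical points yields exactly $u^{(\chi(S)-\chi(S_\xi))/2}$.

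The main obstacle will be the technical setup of the cobounding 3-submanifold together with a Morse function compatible with the corner along $c$, so that every handle attachment is performed in the interior of $M$ and its attaching data realize a valid skein move; this is a standard application of Morse theory on compact 3-manifolds with corners. Given this setup, the verification of the skein-relation compatibility and the matching of Euler-characteristic jumps with handle indices is routine bookkeeping.
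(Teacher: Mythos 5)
Your proposal is correct and follows the same high-level plan as the paper: decompose $\Sk_{\kk^u}(M,c)$ by relative $\ZZ/2$-homology class via Proposition~\ref{prop:HomologDecompOfSkMod}, show each summand $\Sk_{\kk^u}(M,c;\xi)$ is free of rank one, and identify $\partial^{-1}[c]$ as an $H_2(M;\ZZ/2)$-torsor from \eqref{eq:LES} using $H_2(c;\ZZ/2)=0$. The difference is one of detail: the paper establishes the rank-one claim by citation to Kaiser's work, while you supply a self-contained argument. Your Euler-characteristic-weighted functional $\Phi_\xi([S])=u^{(\chi(S)-\chi(S_\xi))/2}\ell(S)$ is a clean choice of inverse --- it is essentially the rank-one specialization of the abstract evaluation of Construction~\ref{con:abstractEval}, since $H=u^{-1}$ and $\varepsilon(1)=u$ combine to $u^{\chi/2}$ on closed components --- and your bookkeeping of $u$-powers against Morse indices checks out: an index-$k$ critical point shifts $\chi$ by $(-1)^k\cdot 2$, which the $\kk^u$ sphere and neck-cut relations compensate by $u^{\pm 1}$, so $u^{-\chi(S_t)/2}[S_t]$ is constant across critical levels. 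What this approach buys is an explicit formula for the inverse of $\Psi_\xi$ that transparently verifies the skein relations; what the paper's citation sidesteps is precisely the geometric setup you flag at the end. That setup is slightly more than bookkeeping: realizing the cobounding 3-submanifold $W\subset M$ requires first arranging $S$ and $S_\xi$ to be disjoint away from $c$ so that one side of the 2-coloring of $M\setminus(S\cup_c S_\xi)$ gives $W$, and two surfaces in general position may intersect in circles that cannot be removed by isotopy rel $c$ (they require surgery, which replaces $S$ by a different representative). The alternative of taking $W\subset M\times[0,1]$ and slicing avoids the disjointness issue but then requires checking that the handle-attachment regions project to balls and disks in $M$ meeting the level surface only in the expected way, which needs a perturbation of the Morse function. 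Both routes are standard in this context and your plan is sound; just be aware that this step, rather than the $\chi$-matching, is where the substance lies.
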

\begin{proof}
     Recall the decomposition from Proposition \ref{prop:HomologDecompOfSkMod}. The skein module $\Sk_{\kk^u}(M,c; \xi)$ is of rank 1 for all $\xi\in \partial^{-1}[c]$, and it follows from \eqref{eq:LES} that $\partial^{-1}[c]$ is a $H_2(M;\ZZ/2)$-torsor. See \cite[Example 8.3(d)]{KaiserFrobAlg} for $u=1$ and note that the arguments apply to general scalars $u\in \kk^\times$. Also compare \cite[Remark 4.2(e)]{KaiserFrobAlg}.
\end{proof}
\begin{prop}\label{prop:bdyConditionsRk1}
    Let $\Sigma$ be a compact oriented surface. The set of isomorphism classes of objects in the skein category $\skcat_{\kk^u}(\Sigma)$ is in bijection to $H_1(\Sigma;\ZZ/2)$.
\end{prop}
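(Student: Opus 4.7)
The plan is to define a map $\Phi$ from isomorphism classes of objects in $\skcat_{\kk^u}(\Sigma)$ to $H_1(\Sigma;\ZZ/2)$ by sending a 1-manifold $c$ to its fundamental class $[c]$, and then to show that $\Phi$ is a well-defined bijection using Proposition~\ref{prop:SkModForRkOne} together with a Morse-theoretic decomposition of cobordisms in $\Sigma\times[0,1]$.

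For well-definedness, I would argue as follows. If $c\cong d$ in $\skcat_{\kk^u}(\Sigma)$, then in particular $\Sk_{\kk^u}(\Sigma\times[0,1], c\sqcup d)\neq 0$. By Proposition~\ref{prop:SkModForRkOne} applied to $M=\Sigma\times[0,1]$, this forces $\partial^{-1}[c\sqcup d]$ to be non-empty in the long exact sequence \eqref{eq:LES}. Exactness then says that $[c\sqcup d]$ lies in the kernel of $H_1(c\sqcup d;\ZZ/2)\to H_1(\Sigma\times[0,1];\ZZ/2)\cong H_1(\Sigma;\ZZ/2)$; this map sends $[c\sqcup d]\mapsto [c]+[d]$, so $[c]=[d]$. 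Surjectivity of $\Phi$ is a classical fact: every class in $H_1$ of a compact surface is represented by an embedded disjoint union of simple closed curves.

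For injectivity, suppose $[c]=[d]$ in $H_1(\Sigma;\ZZ/2)$. The same exact sequence now yields an embedded surface $S\subset\Sigma\times[0,1]$ with $\partial S=c\sqcup d$. I would put the height function $\pi\colon S\to[0,1]$ into generic Morse position, decomposing $S$ along regular levels into a finite stack of elementary cobordisms: cylinders (regular regions), cups and caps (index $0$ and $2$ critical points, locally disks) and saddles (index $1$ critical points, locally pairs of pants). This presents $S$ as a composition $c=c_0\to c_1\to\cdots\to c_n=d$ of morphisms in $\skcat_{\kk^u}(\Sigma)$, and I claim each elementary piece is invertible up to a unit. Cylinders are identities; for a cup $\cup\colon\emptyset\to s$ and cap $\cap\colon s\to\emptyset$ on a small inessential circle $s$, the composite $\cup\circ\cap$ of two parallel disks equals $u\cdot\id_s$ by neck-cutting, since $\Delta(1)=u^{-1}\,1\otimes 1$ in $\kk^u$; for a saddle $P\colon c\to c'$ with reverse $\bar P\colon c'\to c$, the composite $\bar P\circ P$ of two stacked pairs of pants forms a surface with a compressible meridional neck (the disk between the two saddle points), and neck-cutting rewrites it as $u^{-1}\cdot\id_c$. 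Hence each elementary piece is an isomorphism, and so is their composite $c\to d$.

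The main technical step will be confirming the Morse-theoretic decomposition and identifying the compression disk in the saddle composite explicitly, both of which I expect to be routine. The hypothesis $u\in\kk^\times$ is used precisely to make the scalars arising from the neck-cutting and sphere relations invertible in $\kk$.
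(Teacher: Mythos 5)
Your proof is correct and follows the same overall outline as the paper's, which is quite terse: the paper merely asserts that when $[c_1]=[c_2]$ one ``can find an embedded surface $S\subset\Sigma\times[0,1]$ which yields an isomorphism,'' leaving the justification to the reader. You supply exactly the missing content via a Morse-theoretic decomposition of $S$ into cups, caps, and saddles, checking each piece is invertible up to a unit in $\kk$. The saddle calculation is correct in both combinatorial configurations: if the meridian of the tube in $\bar P\circ P$ is separating, the neck-cutting relation inserts $\Delta(1)=u^{-1}1\otimes 1$ on two disks, while if it is non-separating, it inserts $m\circ\Delta(1)=u^{-1}$ on the single component; either way the result is $u^{-1}\cdot\id$. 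Two small points worth tightening: (a) the existence of an embedded representative for a class in $\partial^{-1}[c\sqcup d]\subset H_2(\Sigma\times[0,1],c\sqcup d;\ZZ/2)$ is implicitly invoked, and should be cited as the standard fact $H_2(M,\partial M;\ZZ/2)\cong H^1(M;\ZZ/2)\cong[M,\RP^\infty]$ plus transversality; and (b) the claim that $\cup$ and $P$ are genuine isomorphisms requires the two-sided checks $\cap\circ\cup=u\cdot\id_\emptyset$ (a sphere) and $P\circ\bar P=u^{-1}\cdot\id_{c'}$ as well, though these follow by the same computation you gave. Neither is a genuine gap.
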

\begin{proof}
    The skein category $\skcat_{\kk^u}(\Sigma)$ is graded by $H_1(\Sigma;\ZZ/2)$: every object $c$ represents a first homology class $[c]\in H_1(\Sigma;\ZZ/2)$ and any nonzero morphisms between two objects exhibits the underlying 1-manifolds as homologous. Conversely, if two objects $c_1,c_2$ of $\skcat_{\kk^u}(\Sigma)$ satisfy $[c_1]=[c_2]$, then we can find an embedded surface $S\subset \Sigma \times [0,1]$, which yields an isomorphism from $c_1$ to $c_2$ in $\skcat_{\kk^u}(\Sigma)$. 
\end{proof}
\begin{con}[Extension for $\kk^u$]\label{con:extendingRkOne}
	We apply Construction \ref{con:extension} to surface skein theory $\sktft_{\kk^u}$ associated to the Frobenius algebra $\kk^u$. To simplify notation, we denote the skein modules and skein categories with index $\kk$ instead of $\kk^u$. First note that as a consequence of Proposition \ref{prop:SkModForRkOne} the skein modules of all 3-manifolds are finite-dimensional and free. By Proposition \ref{prop:bdyConditionsRk1}, isomorphism classes of boundary conditions for the skein module of a compact oriented 3-manifold are given by first homology classes $c\in H_1(\partial M; \ZZ/2)$. Recall from Proposition \ref{prop:HomologDecompOfSkMod}  that the skein module is non-zero only for boundary conditions $c$ with $\delta^{-1}[c]\neq \emptyset$. We call such homology classes \emph{admissible boundary conditions} for the skein module of $M$.

    \smallskip

	\noindent\textbf{0-handles.} The datum for 0-handles is determined by the choice of an evaluation $\ev(\emptyset)\in \kk^\times$ of the empty skein $\emptyset\in \Sk_\kk(S^3)$. By Proposition \ref{prop:evEulerChar}, we can restrict to the case $\ev(\emptyset)=1$ for $m_0$.

    \smallskip
	\noindent\textbf{1-handles.} 
        By Construction \ref{con:extToOneHandles}, the bimodule map
		\begin{align*}
		m_1\colon {}_{\skcat_\kk(S^0\times S^2)}\Sk_\kk(B^1\times S^2)\to {}_{\skcat_\kk(S^0\times S^2)}\Sk_\kk(S^0\times B^3)
		\end{align*}
        is given by abstract evaluation of the skeins in $\Sk_{\kk}(B^1\times S^2,c)$ for every boundary condition $c$. By Remark~\ref{rmk:gluingMorita}, we only need to consider the boundary condition $\emptyset\sqcup \emptyset\in \skcat_{\kk}(S^0\times S^2)$ and obtain
		\begin{align*}
			m_1\colon \Sk_\kk(B^1\times S^2, \emptyset\sqcup \emptyset)&\to \Sk_\kk(S^0\times B^3,\emptyset\sqcup \emptyset)\\
			\emptyset&\mapsto \emptyset\\
			\mathsf{S}&\mapsto u\cdot\emptyset.
		\end{align*}
\smallskip

	\noindent\textbf{2-handles.} A 2-handle $B^2\times B^2$ has attaching region $S^1\times B^2$ and belt region $B^2\times S^1$. For the admissible boundary conditions of the solid torus $S^1\times B^2$, we only have to consider the homology classes $(0,0)$ and $(0,1)$ in $H_1(S^1\times S^1; \ZZ/2)$ by Remark \ref{rmk:gluingMorita} and Proposition \ref{prop:bdyConditionsRk1}. The skein module of $S^1\times B^2$ is zero for the boundary conditions $c=(1,0)$ and $c=(1,1)$. 
    For $c=(0,0)$ we compute the pairing $p_2^{(c)}$ as the composition 
	\begin{align*}
			\Sk_\kk(S^1\times B^2,(0,0)) \otimes \Sk_\kk(S^1\times B^2,(0,0)
			\xrightarrow{\text{glue}}&\ \Sk_\kk(S^1\times S^2,\emptyset)\xrightarrow{\text{abs. eval.}} \kk
	\end{align*}
    where we applied the functional $\sktft_\kk(S^1\times B^3)$ from Example~\ref{exm:s2s1eval} using abstract evaluation of skeins in $\Sk_\kk(S^1\times S^2)$.
	The skein module of the solid torus is $\Sk_\kk(S^1\times B^2, (0,0)) \cong \kk\{\emptyset\}$ and pairing the empty skein with itself yields 
	$\emptyset \otimes \emptyset \mapsto 1$. For the boundary condition $c=(0,1)$, the meridional disk $\mathsf{M}\in \Sk_\kk(S^1\times B^2,(0,1))$ spans the skein module. The pairing is the map
    \begin{align*}
        \Sk_\kk(S^1\times B^2,(0,1)) \otimes \Sk_\kk(S^1\times B^2,(0,1))
		\xrightarrow{\text{glue}}&\ \Sk_\kk(S^1\times S^2,\emptyset)\xrightarrow{\text{abs. eval.}} \kk
    \end{align*}
    given by $\mathsf{M}\otimes \mathsf{M}\mapsto u$. The copairing assembles into the the bimodule map
	\begin{align*}
		c_2 \colon {}_{\skcat_\kk(S^1\times S^1)}\Sk_\kk(I\times S^1\times S^1)_{\skcat_\kk(S^1\times S^1)} &\to {}_{\skcat_\kk(S^1\times S^1)}\Sk_\kk(S^1\times B^2)\otimes_\kk \Sk_\kk(S^1\times B^2)_{\skcat_\kk(S^1\times S^1)}\\
		\id_{(0,0)} &\mapsto \emptyset \otimes \emptyset\\
        \id_{(0,1)} &\mapsto u^{-1} \mathsf{M}\otimes \mathsf{M}\\
        \id_{(1,0)} &\mapsto 0\\
        \id_{(1,1)} &\mapsto 0
	\end{align*}
    We obtain the handle map for $c=(0,0)=\emptyset$ 
	\begin{align*}
		m_2\colon \Sk_\kk(B^2\times S^1,\emptyset) &\to
		\Sk_\kk(S^1\times B^2,\emptyset)\\
		\emptyset &\mapsto \emptyset,
	\end{align*}
    and on the boundary conditions $c=(0,1),(1,0)$ and $(1,1)$, the handle map $m_2$ is zero since source or target are zero.

    \smallskip
	\noindent\textbf{3-handles.}
	The attaching region for the 3-handle $B^3\times B^1$ is $\alpha_3=S^2\times B^1$. For the skein module of $S^2\times B^1$, we again only have to consider the (admissible) boundary condition $\emptyset$. The skein module is
	\begin{align*}
		\Sk_\kk(S^2\times B^1,\emptyset)&\cong \kk\{\emptyset, \mathsf{S}\}
	\end{align*}
	where $\mathsf{S}$ denotes the class of a 2-sphere $S^2\times \{*\}\subseteq S^2\times B^1$. 
	The pairing $p_3^{(\emptyset)}$ is the composition
	\begin{align*}
		\Sk_\kk(S^2\times B^1,\emptyset) \otimes \Sk_\kk(S^2\times B^1,\emptyset)\xrightarrow{\text{glue}}&\ \Sk_\kk(S^2\times S^1)\\
		\cong&\ \Sk_\kk(B^2\times S^1) \otimes_{\skcat_\kk(S^1\times S^1)} \Sk_\kk(B^2\times S^1)\\
		\xrightarrow{\id\otimes m_2}&\ \Sk_\kk(B^2\times S^1)\otimes_{\skcat_\kk(S^1\times S^1)} \Sk_\kk(S^1\times B^2) \\
		\cong&\ \Sk_\kk(S^3)\\
        \xrightarrow{m_0}&\ \kk.
	\end{align*}
	We show that pairing is given by 
	\begin{align*}
		p_3^{(\emptyset)}\colon \Sk_\kk(S^2\times B^1,\emptyset) \otimes \Sk_\kk(S^2\times B^1,\emptyset)& \to \kk\\
		\emptyset \otimes \emptyset &\mapsto 1\\
		\emptyset \otimes \mathsf{S}&\mapsto 0\\
		\mathsf{S} \otimes \emptyset &\mapsto 0\\
		\mathsf{S} \otimes \mathsf{S}&\mapsto u^2.
	\end{align*}
	The empty skein is mapped to 1 since we chose $\ev(\emptyset)=1$. The skeins $\emptyset \otimes \mathsf{S}$ and $\mathsf{S} \otimes \emptyset$ each glue to an essential sphere in $\Sk_\kk(S^2\times S^1)$ which is cut into two meridional disks 
	\begin{align*}
	\mathsf{M}\otimes  \mathsf{M}\in \Sk_\kk(B^2\times S^1, (1,0))\otimes \Sk_\kk(B^2\times S^1,(1,0))
	\end{align*} 
	and mapped to zero by $\id \otimes m_2$. Two parallel essential spheres in $S^2\times S^1$ can be joined by a neck yielding a factor of $u$
    and become a compressible sphere which is equal to $u$. We obtain that $\mathsf{S}\otimes \mathsf{S}\mapsto u^2$. 
    The pairing is perfect with copairing
	\begin{align*}
		c_3\colon {}_{\skcat_\kk(S^2\times S^0)}\Sk_\kk(I\times S^2\times S^0)_{\Sk_\kk(S^2\times S^0)} \to {}_{\skcat_\kk(S^2\times S^0)}\Sk_\kk(S^2\times B^1) \otimes_\kk \Sk_\kk(S^2\times B^1)_{\skcat_\kk(S^2\times S^0)}
	\end{align*}
	given for the boundary condition $\emptyset$ as
	\begin{align*}
		\id_{\emptyset}&\mapsto \emptyset\otimes \emptyset + u^{-2}\mathsf{S}\otimes \mathsf{S}.
	\end{align*}
	We construct the handle map $m_3$
	\begin{align*}
		m_3\colon {}_{\skcat_\kk(S^2\times S^0)}\Sk_\kk(B^3\times S^0) &\to {}_{\skcat_\kk(S^2\times S^0)}\Sk_\kk(S^2\times B^1)
	\end{align*}
	using step (3) in Construction \ref{con:extension}. We again only have the boundary condition $\emptyset\sqcup \emptyset\in \skcat_\kk(S^2\times S^0)$. Applying the copairing $c_2$ followed by $\id \otimes m_0$, the sphere $\mathsf{S}$ in the second factor becomes compressible in $\Sk_\kk(S^3)$ and is replaced by a factor of $u$. 
    We obtain
	\begin{align*}
		m_3\colon \Sk_{\kk}(B^3\times S^0, \emptyset\sqcup \emptyset) &\to \Sk_{\kk}(S^2\times B^1,\emptyset\sqcup\emptyset)\\
		\emptyset &\mapsto \emptyset + \tfrac{1}{u}\mathsf{S}.
	\end{align*}
\smallskip

	\noindent\textbf{4-handles.}
	The 4-handle has attaching region $S^3$. Its skein module has no non-empty boundary conditions, and $\Sk_\kk(S^3)\cong \kk\{\emptyset\}$. Using $m_3$ and $m_0$, we construct the pairing $p_4$ as the composition
	\begin{align*}
		\Sk_\kk(S^3) \otimes \Sk_\kk(S^3)\xrightarrow{\text{glue}}&\ \Sk_\kk(S^3\times S^0)\\
		\cong&\ \Sk_\kk(B^3\times S^0) \otimes_{\skcat_\kk(S^2\times S^0)} \Sk_\kk(B^3\times S^0)\\
		\xrightarrow{\id\otimes m_3}&\ \Sk_\kk(B^3\times S^0)\otimes_{\skcat_\kk(S^2\times S^0)} \Sk_\kk(S^2\times B^1) \\
        \cong&\ \Sk_\kk(S^3)\\\xrightarrow{m_0}&\ \kk.
	\end{align*}
	After applying $m_3$ from above and gluing to $S^3$, the essential sphere $\mathsf{S}$ in the term $\emptyset+\tfrac{1}{u}\mathsf{S}\in \Sk_\kk(S^2\times B^1)$ becomes compressible and evaluates to $u$. 
    We obtain for the pairing
	\begin{align*}
		p_4\colon \Sk_\kk(S^3)\otimes_\kk \Sk_\kk(S^3)&\to \kk\\
		\emptyset\otimes \emptyset&\mapsto (1+ \tfrac{u}{u})=2.
	\end{align*}		
	It follows that $p_4$ is perfect by the assumption that $2\in \kk^\times$. We obtain as copairing	
	\begin{align*}
		c_4 \colon {}_{\skcat_\kk(\emptyset)}\Sk_\kk(I\times \emptyset)_{\Sk(\emptyset)}&\to  {}_{\skcat_\kk(\emptyset)}\Sk_\kk(S^3\times B^0) \otimes_\kk \Sk_\kk(S^3\times B^0)_{\skcat_\kk(\emptyset)}
	\end{align*}
	sending $\id_\emptyset \mapsto\tfrac{1}{2} \emptyset\otimes \emptyset$. Finally, the handle map is	
	\begin{align*}
		m_4\colon {}_{\Sk(\emptyset)}\Sk(\emptyset) &\to {}_{\Sk(\emptyset)}\Sk(S^3)\\
		\emptyset &\mapsto \tfrac{1}{2} \emptyset.
	\end{align*}
\end{con}
\begin{thm}\label{thm:rkOneTheory}
	Construction \ref{con:extendingRkOne} provides an extension of the surface skein theory $\sktft_{\kk^u}$ to a symmetric monoidal functor of bicategories
    \begin{align*}
        \sktft_{\kk^u}\colon \cob_{2,3,4}^{\mathrm{vop}}\to \mor(\kcat).
    \end{align*}
    Let $W$ be a closed oriented 4-manifold. The invariant associated to the Frobenius algebra $\kk$ assigns to $W$
	\begin{align*}
		\sktft_{\kk^u}(W)=\tfrac{1}{2}|H_3(W;\ZZ/2)|.
	\end{align*}
\end{thm}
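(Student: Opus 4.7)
The first statement is an immediate application of Theorem~\ref{thm:WalkerExtension}: by Proposition~\ref{prop:SkModForRkOne} every skein module $\Sk_{\kk^u}(\alpha_k, c)$ is free over $\kk$, and the explicit calculations carried out in Construction~\ref{con:extendingRkOne} verify that each of the four pairings $p_1, p_2, p_3, p_4$ is perfect. The perfectness of $p_4$ is precisely where the standing hypothesis $2 \in \kk^\times$ is used, as $p_4(\emptyset\otimes\emptyset) = 2$.

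For the formula, I would fix a handle decomposition of the closed oriented 4-manifold $W$ and compute $\sktft_{\kk^u}(W)$ by iteratively composing the handle maps $m_0, m_1, m_2, m_3, m_4$ from Construction~\ref{con:extendingRkOne}. The central tool is Proposition~\ref{prop:SkModForRkOne}, which identifies the skein module of every intermediate 3-manifold boundary $M$ with a free $\kk$-module of rank $|H_2(M;\ZZ/2)|$ after restricting to the admissible boundary condition $\emptyset$. Under this identification, the explicit formulas for $m_k$ describe transparent homological operations on basis elements labeled by $\ZZ/2$-homology classes, together with certain prefactors: $m_1$ produces a factor $u$ on the essential-sphere coefficient, $m_3$ produces a factor $u^{-1}$ on the corresponding coefficient, and $m_4$ produces the overall factor $\tfrac{1}{2}$, while $m_0$ and $m_2$ are scalar-free on the relevant basis elements.

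The plan is then to recognize the resulting state-sum as the standard one for 4-dimensional Dijkgraaf--Witten theory with gauge group $\ZZ/2$. To organize this, I would split $W = W_1 \cup_M W_2$ along a middle-dimensional cut where $W_1$ contains all 0- and 1-handles, and $W_2$, viewed upside-down, is a 1-handlebody built from the duals of the 3- and 4-handles. Under this duality the $u$-scalars contributed by $m_1$ and $m_3$ pair up and cancel globally, consistent with the $u$-independence of the stated formula (and consistent with Proposition~\ref{prop:evEulerChar} once one remembers that the $u$-factor of the Frobenius algebra $\kk^u$ is independent from the choice of $\ev(\emptyset)$). The remaining factor $\tfrac{1}{2}$ from $m_4$ then plays the role of the normalization $1/|G|$ in Dijkgraaf--Witten theory. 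A final application of Poincaré duality $H^1(W;\ZZ/2) \cong H_3(W;\ZZ/2)$ for closed oriented $W$ yields the stated formula.

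The principal obstacle is the careful bookkeeping required to identify the combinatorial state-sum coming from our construction with that of 4d $\ZZ/2$-Dijkgraaf--Witten theory. One must check that (i) the induced pairing on $\kk\{H_2(M;\ZZ/2)\}$ at each intermediate stage matches the Dijkgraaf--Witten one up to the $u$-scalars; (ii) these $u$-scalars indeed cancel globally under the middle-dimensional splitting; and (iii) the resulting weighted sum reproduces exactly $|\mathrm{Hom}(\pi_1(W), \ZZ/2)| = |H^1(W;\ZZ/2)|$, so that the overall prefactor $\tfrac{1}{2}$ from $m_4$ gives $\tfrac{1}{2}|H^1(W;\ZZ/2)| = \tfrac{1}{2}|H_3(W;\ZZ/2)|$.
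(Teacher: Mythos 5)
Your handling of the first claim matches the paper's: the extension exists by Theorem~\ref{thm:WalkerExtension} together with the freeness statement from Proposition~\ref{prop:SkModForRkOne} and the perfectness checks carried out in Construction~\ref{con:extendingRkOne}, and you correctly pinpoint $p_4(\emptyset\otimes\emptyset)=2$ as the place where the hypothesis $2\in\kk^\times$ is used.

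For the formula, however, you take a genuinely different route and leave its crucial step open. The paper proves $\sktft_{\kk^u}(W)=\tfrac{1}{2}|H_3(W;\ZZ/2)|$ by a direct handle count, without any reference to Dijkgraaf--Witten theory: for connected $W$ one takes a decomposition with a single $0$-handle and a single $4$-handle; the unique $m_4$ contributes the factor $\tfrac12$; a $3$-handle in canceling position with a $2$- or $4$-handle contributes the identity by the proof of Theorem~\ref{thm:WalkerExtension}; each remaining $3$-handle contributes the term $\emptyset+\tfrac1u\mathsf{S}$, and when $\mathsf{S}$ is eventually evaluated by abstract evaluation or the sphere skein relation it returns $\varepsilon(1)=u$, so each such $3$-handle contributes a factor of $1+\tfrac1u\cdot u=2$. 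With $k$ non-canceling $3$-handles one obtains $\tfrac12\cdot2^k=\tfrac12|H_3(W;\ZZ/2)|$. The Dijkgraaf--Witten comparison that you want to \emph{use} is in the paper a \emph{consequence} (Proposition~\ref{prop:rkOneAndDW}), proved afterwards by Hurewicz, universal coefficients and Poincaré duality.

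Two concrete issues in your plan. First, the identification of your state-sum with the $\ZZ/2$-Dijkgraaf--Witten state-sum is not a bookkeeping exercise you can defer: you would have to show that under Proposition~\ref{prop:SkModForRkOne} the gluing pairing on $\kk\{H_2(M;\ZZ/2)\}$ at the middle cut coincides with the Dijkgraaf--Witten pairing, and this is nowhere established in the construction. That missing comparison is the entire content of the formula, so as written the proof has a genuine gap. Second, the claim that the $u$-scalars from $m_1$ and $m_3$ ``pair up and cancel globally'' misattributes the cancellation: the $\tfrac1u$ introduced by $m_3$ is canceled by the counit $\varepsilon(1)=u$ when the essential sphere is evaluated downstream, and this evaluation can occur inside $m_0$, $m_1$, or $m_2$ depending on the decomposition — it is not tied to a partnered $1$-handle. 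The paper's direct count avoids both issues and is shorter.
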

\begin{proof}
    Let $W$ be a closed oriented 4-manifold. Assume without loss of generality that $W$ is connected. If $W$ is not connected, the same argument applies to every connected component and the value is additive. Then, for a handle decomposition of $W$, we can assume that there is only one 0-handle and one 4-handle. The map $m_4$ contributes the factor $\tfrac{1}{2}$. Consider the presence of a 3-handle. By assumption, we have that the 3-handle is not in canceling configuration with the unique 4-handle. By the proof of Theorem \ref{thm:WalkerExtension}, we may assume that the 3-handle is not in canceling configuration with a 2-handle; otherwise this pair would contribute the identity. Every non-canceling 3-handle adds a term $\tfrac{1}{u}\mathsf{S}$ which is evaluated to 1 by subsequent handle maps (via abstract evaluation or the skein relations). All other evaluations by handle maps do not contribute any factors or terms. We obtain that $\sktft_{\kk^u}(W)$ counts the number of third homology classes of $W$.
\end{proof}
We now briefly make connection between the surface skein theory $\sktft_{\kk^u}$ and 4-dimensional Dijkgraaf-Witten theory \cite{DijkgraafWitten} for the finite group $G=\ZZ/2$. For this, we first recall the following. Let $X$ be a (essentially)\footnote{An \emph{essentially finite} groupoid is a groupoid equivalent to a finite one.} finite groupoid. The \emph{groupoid cardinality} is the rational number
\begin{align*}
	|X| =  \sum_{[x]\in \pi_0(X)} \frac{1}{|\mathrm{Aut}_\mathcal{G}(x)|}
\end{align*}
where the sum is taken over isomorphism classes of objects in $X$.

Let $G$ be a finite group. To a closed, oriented 4-manifold $W$, Dijkgraaf-Witten theory assigns the groupoid cardinality $\mathcal{DW}_G(W)=|\mathsf{Bun}_G(W)|$ where $\mathsf{Bun}_G(W)$ is the groupoid of principal $G$-bundles on $W$. If $W$ is connected, then
\begin{align*}
	\mathcal{DW}_{G}(W) = \frac{|\Hom(\pi_1(W),G)|}{|G|}.
\end{align*}
\begin{prop}\label{prop:rkOneAndDW}
	Let $W$ be a connected, closed, oriented 4-manifold. Then, $\mathcal{DW}_{\ZZ/2}(W)=\sktft_{\kk^u}(W)$.
\end{prop}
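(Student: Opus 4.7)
The plan is to reduce both sides to the cardinality of $H_3(W;\ZZ/2)$ divided by $2$, and hence to a standard application of Poincar\'e duality and the universal coefficient theorem. The statement of Theorem \ref{thm:rkOneTheory} already gives us $\sktft_{\kk^u}(W)=\tfrac{1}{2}|H_3(W;\ZZ/2)|$, so the entire task is to verify that $\mathcal{DW}_{\ZZ/2}(W)$ equals the same quantity.

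First, I would unpack the definition of $\mathcal{DW}_{\ZZ/2}(W)$. Since $W$ is connected, the recalled formula above the proposition gives
\begin{align*}
\mathcal{DW}_{\ZZ/2}(W) \;=\; \frac{|\Hom(\pi_1(W),\ZZ/2)|}{|\ZZ/2|} \;=\; \tfrac{1}{2}\,|\Hom(\pi_1(W),\ZZ/2)|.
\end{align*}
Because $\ZZ/2$ is abelian, every homomorphism $\pi_1(W)\to \ZZ/2$ factors through the abelianization, so $\Hom(\pi_1(W),\ZZ/2)\cong \Hom(H_1(W;\ZZ),\ZZ/2)$. By the universal coefficient theorem (and the fact that $\Ext^1(-,\ZZ/2)$ vanishes for any $\ZZ/2$-vector space argument), this identifies with $H^1(W;\ZZ/2)$.

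Next, I would invoke Poincar\'e duality with $\ZZ/2$ coefficients, which applies to any closed (not necessarily oriented) $n$-manifold, to conclude $H^1(W;\ZZ/2)\cong H_3(W;\ZZ/2)$ since $W$ is a closed $4$-manifold. Taking cardinalities, $|\Hom(\pi_1(W),\ZZ/2)| = |H_3(W;\ZZ/2)|$, hence
\begin{align*}
\mathcal{DW}_{\ZZ/2}(W) \;=\; \tfrac{1}{2}\,|H_3(W;\ZZ/2)| \;=\; \sktft_{\kk^u}(W).
\end{align*}
There is no real obstacle here; the only thing to be slightly careful about is that Poincar\'e duality is being used with $\ZZ/2$-coefficients so that orientability is not needed, and that the finiteness of $H_3(W;\ZZ/2)$ (which makes the cardinality count sensible) follows from $W$ being a closed manifold.
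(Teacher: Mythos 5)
Your proof is correct and follows essentially the same chain of identifications as the paper's own argument: definition of Dijkgraaf--Witten, factor through abelianization, Hurewicz, universal coefficient theorem, Poincar\'e duality, and then Theorem~\ref{thm:rkOneTheory}. The only nitpick is your parenthetical justification for the universal coefficient step: the relevant vanishing is of $\mathrm{Ext}^1(H_0(W;\ZZ),\ZZ/2)=\mathrm{Ext}^1(\ZZ,\ZZ/2)=0$ because $H_0$ of a connected space is free, not because of anything about $\ZZ/2$-vector space arguments; the conclusion $H^1(W;\ZZ/2)\cong\Hom(H_1(W;\ZZ),\ZZ/2)$ is nonetheless correct.
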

\begin{proof}
	Using the Hurewicz theorem, the universal coefficient theorem and Poincaré duality, we compute
	\begin{align*}
		\mathcal{DW}_{\ZZ/2}(W) &= \tfrac{1}{2} |\Hom(\pi_1(W),\ZZ/2)| = \tfrac{1}{2} |\Hom(\pi_1(W)^{\mathrm{ab}},\ZZ/2)| = \tfrac{1}{2}|\Hom(H_1(W;\ZZ),\ZZ/2)|\\
		&= \tfrac{1}{2}|H^1(W;\ZZ/2)| = \tfrac{1}{2}|H_3(W;\ZZ/2)| = \sktft_{\kk^u}(W).\qedhere
	\end{align*}
\end{proof}
It could be interesting to explore this connection further, but we do not pursue this here.

\subsection{The skein module of the solid torus}
\label{sec:solidtorus}
We now turn our attention back to surface skein theories for general commutative Frobenius algebras $A$, and study the extendability of $\sktft_A$ to 4-dimensional 2-handles. For this, we describe the skein module $\Sk_A(S^1\times B^2,c)$ of the solid torus, the attaching region of the 2-handle, for boundary conditions $c$.

Isotopy classes of oriented 1-manifolds embedded in the torus $S^1\times S^1$ without inessential components are classified by their first homology class in $H_1(S^1\times S^1; \ZZ)$. Consider the map 
\begin{align*}
	 \ZZ\oplus \ZZ\cong H_1(S^1\times S^1; \ZZ) \xrightarrow{\iota} H_1(S^1\times B^2; \ZZ) \cong \ZZ
\end{align*}
induced by the embedding $S^1\times S^1 \subset S^1\times B^2$ of the boundary torus. Denote by $\lambda$ and $\mu$ the curves $S^1\times \{*\}$ and $\{*\}\times S^1$ in $S^1\times S^1$ (for some choice of points $*$), respectively. We call $\lambda$ the longitude and $\mu$ the meridian. Their homology classes form a basis of $H_1(S^1\times S^1; \ZZ)$ which we also denote by $\{\lambda,\mu\}$. Note that $\mu$ is filled by the embedding, that is, $\iota(\mu)=0$. We can write a homology class in $H_1(S^1\times S^1;\ZZ)$ as $(l,m) = l\cdot \lambda+ m\cdot\mu$, where $l$ denotes the number of intersections with $\mu$, and $m$ the number of intersections with $\lambda$. Boundary conditions without inessential components and considered without orientation are thus indexed by pairs of integers $c=(l,m)\in (\ZZ\oplus\ZZ)/\{\pm1\}$ up to global sign. 

We will see in Construction \ref{con:TheInvariant} that for the extension to 2-handles, it suffices restrict to the case $m=0$. Then, by the homological condition in Proposition \ref{prop:HomologDecompOfSkMod}, for $l$ odd, the skein module of the solid torus for $c=(l,0)$ is zero. Hence, we focus on the boundary conditions $c=(2n,0)$ for $n\geq 0$. 

In the following, we use the notation $\mathbb{T}=S^1\times S^1$ for the torus and $\mathbb{H}=S^1\times B^2$ for the solid torus.

\begin{defn}\label{def:dTL}
	Let $A$ be a commutative Frobenius algebra. Define the \emph{$A$-decorated Temperley--Lieb category} $\dTL_A$ to be the $\kk$-linear category with:
	\begin{itemize}
		\item \textbf{Objects:} $n\in \NN_0$.
		\item \textbf{Morphisms:} Let $m,n\in \NN_0$ and consider the unit square $[0,1]\times[0,1]$ with $m$ points equidistantly embedded in $[0,1]\times\{0\}$ and $n$ points equidistantly embedded in $[0,1]\times\{1\}$. An $A$-decoration for a properly embedded 1-manifold $c\subset [0,1]\times[0,1]$ is a labeling function $\ell\colon \pi_0(c)\to A$. The $\kk$-module of morphisms $\Hom_{\dTL_A}(m,n)$ is spanned by isotopy classes (rel.\ boundary) of properly embedded 1-manifolds in $[0,1]\times [0,1]$ bounding $m\times\{0\}\sqcup n\times \{1\}$ with an $A$-decoration---we call them \emph{$\dTL_A$-diagrams from $m$ to $n$}. These are considered up to the following relations. 
        \begin{itemize}
            \item ($\kk$-multilinearity) The decorations are $\kk$-linear in each component.
            \item (Circle relations) If a connected component of an $\dTL_A$-diagram is a closed loop with decoration $a\in A$, then it can be removed at the expense of scaling the remaining $\dTL_A$-diagram by the element $\varepsilon\circ m\circ \Delta(a)\in \kk$. Pictorially, this means: 
            \begin{align*}
                \dTLCirclea = \varepsilon\circ m\circ\Delta(a)\; \dTLemptyCircle
            \end{align*}
            \item (Saddle relations) Suppose a disk $D\subset (0,1)\times (0,1)$ intersects a given $\dTL_A$-diagram $c$ transversely in exactly two arcs. Let $c'$ denote the $\dTL_A$-diagram obtained from $c$ by surgering the two arcs inside $D$. 
            For any $a\in A$, writing $\Delta(a)=\sum_i x_i\otimes y_i$, we impose the following relation between decorated $c$ and $c'$:
            \begin{align*}
                \sum_i\LocaldTLLeftRight{x_i}{y_i} = \sum_i\LocaldTLUpDown{x_i}{y_i}\;. 
            \end{align*}
        \end{itemize}
        \item \textbf{Composition:} The composition map $\dTL_A(m,n)\otimes\dTL_A(n,p) \to \dTL_A(m,p)$ is induced by gluing the two unit squares along the vertical direction and appropriate rescaling. The decorations of composed $\dTL_A$-diagrams are multiplied using the commutative multiplication of $A$. The identity morphism on $m$ is provided by the isotopy class of $m$ parallel vertical strands.
	\end{itemize}
    By using the circle relations, every $\dTL_A$-diagram can be represented (although not necessarily uniquely) by an embedded planar matching with decorations on the arcs. Note that $\Hom_{\dTL_A}(m,n)$ is zero if $m+n$ is odd.
\end{defn}
\begin{rmk}\label{rmk:TL-in-dTL}
    Consider the usual $\kk$-linear Temperley--Lieb category $\mathsf{TL}^\delta$ with circle value $\delta\in \kk$. For a free commutative Frobenius algebra $A$ with rank $\delta$, there is a functor $\mathsf{TL}^{\delta}\to \dTL_A$ which is the identity on objects and sends every $\mathsf{TL}$-diagram to the $\dTL_A$-diagram that has the same underlying embedded 1-manifold and has decoration $1\in A$ on every connected component. In particular, via this functor, we can view the Jones--Wenzl projectors as idempotents in $\dTL_A$.
\end{rmk}
\begin{rmk}
    The $A$-decorated Temperley--Lieb category $\dTL_A$ carries a monoidal structure. On objects it is given by addition, and the  monoidal unit is $0\in \NN_0$. On morphisms the monoidal structure is induced by stacking two unit squares horizontally and rescaling appropriately.
\end{rmk}

\begin{rmk}
	Let $m,n\geq 1$ with $m+n$ even, and let $0\leq k\leq n$. Nesting $k$ cups provides an isomorphism
	\begin{align*}
		\dTL_A(m,n)\cong\dTL_A(m+k,n-k).
	\end{align*}
\end{rmk}
\begin{prop}\label{prop:SolidtorusAnddTL}
	Write $I=[0,1]$ for the unit interval. Let $o_{2n}\subset S^1\times I \times \{1\}$ be $2n$ concentric circles in the annulus. Rotating $\dTL_A$-diagrams induces an isomorphism
	\begin{align*}
		\dTL_A(0,2n)\xrightarrow{\cong} \Sk_A(S^1\times I\times I, o_{2n}) \cong\Sk_A(\mathbb{H}, (2n,0)).
	\end{align*}
\end{prop}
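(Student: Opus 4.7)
The plan is to define the rotation map $\Phi\colon \dTL_A(0,2n)\to \Sk_A(S^1\times I\times I,o_{2n})$ on representatives, verify it descends to skein modules, and construct an inverse by meridional slicing. Identify the unit square containing $\dTL_A$-diagrams with a fixed meridional disk $D_0:=\{1\}\times I\times I \subset S^1\times I\times I$. Send an $A$-decorated $1$-manifold $c\subset D_0$ with $2n$ endpoints on $\{1\}\times I\times\{1\}$ to $\Phi(c) := S^1\cdot c$ obtained by rotating $c$ about the $S^1$-factor: arcs sweep out annuli, closed loops sweep out tori, decorations transfer componentwise, and the endpoints sweep out $o_{2n}$. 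The second isomorphism with $\Sk_A(\mathbb{H},(2n,0))$ is then induced by the diffeomorphism of pairs $I\times I\cong B^2$ sending $o_{2n}$ to $2n$ parallel longitudes.

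To see that $\Phi$ respects the defining relations of $\dTL_A$, observe: a closed $a$-decorated loop in $D_0$ sweeps out an $a$-decorated torus $T\subset\mathbb{H}$ whose meridional curve is non-separating on $T$ but bounds a disk in $\mathbb{H}$; neck-cutting along this curve converts $T$ into an $aH$-decorated sphere bounding a ball, which by the sphere relation evaluates to $\varepsilon(aH)=\varepsilon\circ m\circ\Delta(a)$, matching the $\dTL_A$ circle relation. The saddle relation arises from neck-cutting in a rotationally symmetric ball $B\subset\mathbb{H}$ obtained by rotating a small disk meeting $c$ transversely in two arcs: the two local configurations swept out differ by a neck-cut along the separating curve in $B$ traced by the disk's boundary, and the decorations $\sum_i x_i\otimes y_i = \Delta(a)$ on both sides match by the $(A,A)$-bimodule structure built into the separating neck-cut.

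For bijectivity, I would construct an inverse $\Psi$ via transverse sweep-outs. Given a representative $A$-decorated surface $S\subset\mathbb{H}$ bounding $o_{2n}$, perturb it so that the $S^1$-projection restricts to a generic circle-valued Morse function on $S$ with finitely many critical angles $\theta_1<\cdots<\theta_k$. Between critical angles the slices $S\cap D_\theta$ are ambient-isotopic $A$-decorated $1$-manifolds in $B^2$; at each $\theta_j$ the slice undergoes a birth, a death, or a saddle. Using the $\Sk_A$-relations, each critical event can be traded for a $\dTL_A$-relation applied to $S\cap D_0$, replacing $S$ modulo $\Sk_A$-relations by a linear combination of rotations of slices. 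Setting $\Psi([S]):=[S\cap D_0]$ gives $\Psi\circ\Phi = \id$ by construction, and $\Phi\circ\Psi = \id$ follows from the straightening procedure.

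The principal obstacle is establishing well-definedness of $\Psi$: independence of the perturbation, of the choice of meridional disk, and of the skein representative. Independence under rotating $D_0$ past a single critical angle is precisely the forward compatibility checked in paragraph two. Independence under generic $1$-parameter families of perturbations requires analyzing the codimension-$2$ bifurcations in the space of circle-valued Morse sweep-outs (cusp- and swallowtail-type events), each of which should reduce to iterated applications of circle and saddle relations. The most delicate point is invariance under a $\Sk_A$ neck-cut or sphere relation applied in a ball that is not rotationally symmetric: one first isotopes the ball into rotationally symmetric position via an ambient isotopy of $S$, and then the relation becomes the rotation of a $\dTL_A$-relation and transfers cleanly through $\Phi$, using the previously established invariance under isotopy.
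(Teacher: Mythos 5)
Your forward map $\Phi$ and the two compatibility checks (circle relation via neck-cutting the swept-out torus to a sphere decorated by $aH$, saddle relation via tubing in a rotationally symmetric ball) are sound and broadly track the first half of the paper's argument. But the paper does not construct an explicit inverse at all: it cites \cite[Theorem 9.2]{KaiserFrobAlg}, which presents $\Sk_A(S^1\times I\times I,o_{2n})$ as the $\kk$-span of $A$-decorated \emph{incompressible} surfaces modulo tubing relations, then observes that the incompressible surfaces in $S^1\times I\times I$ bounding $o_{2n}$ are exactly the rotations of closed-component-free $\dTL_A$-diagrams, and identifies the tubing relation with the saddle relation, citing \cite[Theorem 2.1]{RussellBNSolidTorus} for the detailed matching in the case $A_{\mathrm{BN}}$. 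This is a structural presentation argument, not a slicing argument.

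Your inverse via circle-valued Morse sweep-outs is a genuinely different route and valid in principle, but what you flag as ``the principal obstacle'' is the entire content of the theorem, and you do not fill it in. Concretely: (a) the assertion that each critical event ``can be traded for a $\dTL_A$-relation applied to $S\cap D_0$'' so that $S$ is ``replaced by a linear combination of rotations of slices'' \emph{is} the surjectivity statement that the paper extracts for free from Kaiser's presentation; it does not follow from anything you have written. (b) Well-definedness of $\Psi$ under a neck-cut performed in a non-symmetric ball cannot simply be reduced to isotopy invariance as you propose: isotoping the compression ball into rotationally symmetric position will in general move $S$ out of general position with respect to the $S^1$-projection, create and cancel critical angles, and thus feed directly back into the codimension-$2$ bifurcation analysis you defer. (c) That bifurcation analysis (cusps, swallowtails, simultaneous critical values, births/deaths interacting with neck-cuts) must then be carried out and shown to descend to the circle and saddle relations, and this is genuinely delicate; for $A_{\mathrm{BN}}$ it is essentially the content of \cite[Theorem 2.1]{RussellBNSolidTorus}. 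Without (a)--(c) you have a program, not a proof. The shortest complete route is the paper's: invoke Kaiser's incompressible-surface presentation, classify the incompressible surfaces in $(S^1\times I\times I, o_{2n})$ as rotations, and match tubing against the saddle relation.
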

\begin{proof}
    Rotation of a $\dTL_A$-diagram yields a decorated surface in $S^1\times I\times I$. A closed component in a $\dTL_A$-diagram decorated with $a\in A$ can be replaced by $\varepsilon\circ m\circ \Delta(a)\in \kk$. This is the same value assigned to neck-cutting the $a$-decorated torus in $S^1\times I\times I$ obtained by rotation of the closed component. Let $D$ be a $\dTL_A$-diagram from 0 to $2n$ that does not contain any closed components. By rotating $D$ we obtain (the isotopy class of) an $A$-decorated incompressible surface $S^1\times D$ in $(S^1\times I\times I, o_{2n})$. These are exactly all classes of $A$-decorated incompressible surfaces in $(S^1\times I\times I, o_{2n})$. By \cite[Theorem 9.2]{KaiserFrobAlg}, the skein module $\Sk_A(S^1\times I\times I, o_{2n})$ is the quotient of the $\kk$-linear span of all such decorated surfaces by the submodule generated by tubing between them. We extend the rotation map linearly and note that this is compatible with the linearity in the decorations on each component. Lastly, note that the saddle relation between two $\dTL_A$-diagrams corresponds to the tubing relation between two incompressible surfaces since a tube between two incompressible surfaces can be neck-cut in two different ways. This is proved in detail in \cite[Theorem 2.1]{RussellBNSolidTorus} for the case $A_{\mathrm{BN}}$ and generalizes to general commutative Frobenius algebras $A$.
\end{proof}

\begin{exm}\label{exm:dTLEnd1} Consider the map that sends an element $a\in A$ to the decorated Temperley--Lieb diagram consisting of a single $a$-decorated strand. This and the isomorphisms from Proposition~\ref{prop:SolidtorusAnddTL} give compatible isomorphisms:
	\begin{align*}
		A \cong \dTL_A(0,2)\cong \Sk_A(S^1\times I\times I, o_{2}) \cong\Sk_A(\mathbb{H}, (2,0)).
	\end{align*}
\end{exm}
\begin{proof}
    We show that an isomorphism is given by the map $A \to \Sk_A(\mathbb{H}, (2,0))$ that sends $a\in A$ to the $a$-decorated annulus in $\Sk_A(\mathbb{H}, (2,0))$, which is clearly surjective. To verify the injectivity, we further embed $(\mathbb{H},(2,0))$ into $(B^3,S^1\sqcup S^1)$ and use Example~\ref{exm:FirstSkeinModules} to identify its skein module with $A^{\otimes 2}$. The composite is $\Delta\colon A\to A^{\otimes 2}$, and is injective since it has left inverse $\id_A\otimes \varepsilon$, so the original map also had to be injective.
\end{proof}

\begin{rmk}
	For the Frobenius algebra $A_\mathrm{BN}$, the definition of $\dTL_{\mathrm{BN}}$ recovers the \emph{dotted Temperley--Lieb} category $\mathrm{dTL}$ used in \cite{HRWKirbyColorForKh} (up to a $\ZZ$-grading). The relations defining $\dTL_{\mathrm{BN}}$ also appear in \cite{AsaedaFrohman} as the \emph{SBN relations} for a general description of the vertical part of the skein module of a Seifert fibered 3-manifold. See \cite[Proposition 5.20]{AsaedaFrohman}. The endomorphism algebras of $\dTL_{\mathrm{BN}}$ are also known as \emph{nil-blob algebras}. 
\end{rmk}

\subsection{Strong separability and perfectness}
\begin{defn}
	Let $A$ be a symmetric Frobenius algebra over $\kk$.
	\begin{enumerate}[(i)]
		\item The \emph{handle element} or \emph{window element} of $A$ is the element $H=m\circ \Delta(1)\in A$. 
		\item $A$ is \emph{strongly separable} if $H$ is invertible (under the multiplication $m$). In this case the element $\Delta(H^{-1})\in A\otimes A$ is called the  \emph{separability idempotent}. The $(A,A)$-bimodule map
        \begin{align*}
            \Delta\circ m(H^{-1},-)\colon A\to A\otimes A, \quad 1\mapsto \Delta(H^{-1})
        \end{align*}
        provides a section for the multiplication. We call a strongly separable Frobenius algebra \emph{$\Delta$-separable} if the comultiplication itself provides the section $\Delta\circ m= \id$, that is, if $H=1=H^{-1}$. Note that some authors call such Frobenius algebras \emph{special}.
	\end{enumerate}
\end{defn}
\begin{rmk}[Related notions of separability]
	Recall that classically, an algebra $A$ is called separable if there exists a separability idempotent $e$ in the enveloping algebra $A\otimes A^\op$. An equivalent condition for the separability of an algebra $A$ is the existence of a section of the multiplication as a bimodule map. Our definition for strongly separable Frobenius algebras agrees with the classical definition, see \cite[Theorems 2.6 and 2.14]{LaudaPfeiffer2DTFTopenclosed}, and readily generalizes to symmetric Frobenius algebra objects in a (locally small) symmetric monoidal category with monoidal unit $\kk$. The handle element is then the morphism $\kk\to A$ given by $H=m\circ \Delta \circ \eta$. For a more detailed discussion of different definitions of (strong) separability of algebras in a categorical setting and a comparison to the classical definition, we refer to \cite[Sections 2.4 and 2.5]{LaudaPfeiffer2DTFTopenclosed}.	Also compare \cite[Definition 4.1]{CzenkyKestenQuinonezWaltonExtended} and the classical reference \cite{AguiarSeparable}.
\end{rmk}
\begin{lem}\label{lem:pairingsOnA}
	Let $A$ be a commutative Frobenius algebra and $u\in A$ be an element. Then,
	\begin{align*}
		p_u\colon A\otimes A \to \kk,\quad  p_u(a,b):= \varepsilon(m(u,m(a,b)))
	\end{align*}
	defines a pairing which is perfect if and only if $u$ is invertible in $A$. The Frobenius pairing is $p_1=\varepsilon\circ m$.
\end{lem}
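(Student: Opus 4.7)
The plan is to reformulate the pairing $p_u$ using the Frobenius pairing $\beta = \varepsilon\circ m$ and the multiplication-by-$u$ endomorphism, and then reduce perfectness of $p_u$ to invertibility of this endomorphism.

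First I would observe that by associativity and commutativity of $m$, the formula unpacks to $p_u(a,b) = \varepsilon(uab) = \beta(ua,b)$. Hence, if we denote by $m_u\colon A\to A$ the $\kk$-linear map $a\mapsto ua$, the associated map $\hat p_u\colon A\to \Hom_\kk(A,\kk)$, $a\mapsto p_u(a,-)$, factors as $\hat p_u = \hat\beta \circ m_u$, where $\hat\beta$ is the analogous map for $\beta$. Bilinearity of $p_u$ is immediate from bilinearity of $m$, so $p_u$ is indeed a pairing.

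Since $A$ is a Frobenius algebra, $\hat\beta$ is an isomorphism of $\kk$-modules. Consequently $\hat p_u$ is an isomorphism if and only if $m_u$ is. If $u\in A$ is invertible with inverse $u^{-1}$, then $m_{u^{-1}}$ is a two-sided inverse to $m_u$ by commutativity and associativity of $m$, so $p_u$ is perfect. Conversely, if $m_u$ is a $\kk$-linear isomorphism, there exists $v\in A$ with $m_u(v)=uv=1$, and by commutativity $vu=1$, so $u$ is invertible in $A$.

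For the final sentence of the statement, setting $u=1$ gives $p_1(a,b)=\varepsilon(m(a,b))=\beta(a,b)$, which is the Frobenius pairing. The main obstacle here is essentially bookkeeping---the only subtlety is ensuring that $m_u$ being a $\kk$-linear bijection really does force $u$ to have a multiplicative inverse in $A$, which works because $m_u$ is actually $A$-linear (so a left inverse as a $\kk$-linear map gives a right multiplicative inverse of $u$), and commutativity then upgrades this to a two-sided inverse.
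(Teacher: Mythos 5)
Your proof is correct, but it takes a genuinely different route from the paper's. You reduce everything to the factorization $\hat p_u = \hat\beta \circ m_u$ of the induced maps $A\to\Hom_\kk(A,\kk)$ and observe that $\hat\beta$ is already an isomorphism, so perfectness of $p_u$ is equivalent to $m_u$ being an isomorphism, which is equivalent (via surjectivity plus commutativity) to $u$ being a unit. The paper instead stays entirely in the language of pairings and copairings and their zig-zag (string-straightening) relations: in one direction it exhibits the explicit copairing $\Delta(u^{-1})\colon \kk\to A\otimes A$ for $p_u$ and verifies the zig-zag identities using $u u^{-1}=1$; in the other direction, given any copairing $q_u$ for $p_u$, it extracts the element $(\id\otimes\varepsilon)\circ q_u(1)$ and shows it is an inverse for $u$. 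The paper's argument has the advantage of producing the copairing in closed form---which is what actually gets used downstream when the handle maps $c_{k+1}$ and $m_{k+1}$ are built from copairings---whereas your argument is perhaps cleaner as a proof of the bare equivalence, since it isolates the role of the Frobenius isomorphism $\hat\beta$. Both implicitly rely on $A$ being finitely generated projective (the paper assumes free) so that $\hat\beta$ being an isomorphism and the existence of a finite copairing are equivalent notions of perfectness; you might want to flag this assumption when you invoke ``$\hat\beta$ is an isomorphism.''
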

\begin{proof}
	If $u$ is invertible, $\Delta(u^{-1})$ defines a copairing. Indeed, the string straightening relations follow from the relations of the Frobenius (co)pairing and from $m(u,u^{-1})=1$. Suppose there exists a copairing $q_u$. Then the string straightening relations for $p_u$ and $q_u$ imply that $(\id\otimes \varepsilon)\circ q_u(1)$ provides an inverse for $u$. More details in a more general setting can be found in \cite[Theorem 2.14]{LaudaPfeiffer2DTFTopenclosed}.
\end{proof}
\begin{prop}\label{prop:Ext2handlesSeparable}
	Let $A$ be a commutative Frobenius algebra. The pairing on the skein module $\Sk_A(\mathbb{H}, (2,0))$ is perfect if and only if $A$ is strongly separable.
\end{prop}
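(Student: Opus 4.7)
The strategy is to explicitly compute the pairing $p_2^{(c)}$ on $\Sk_A(\mathbb{H},(2,0))$ for the boundary condition $c=(2,0)$ under the identification $\Sk_A(\mathbb{H},(2,0))\cong A$ from Example~\ref{exm:dTLEnd1}, and then match it with the pairing $p_H\colon A\otimes A\to\kk$ from Lemma~\ref{lem:pairingsOnA}. By that lemma, $p_H$ is perfect if and only if the handle element $H$ is invertible in $A$, i.e.\ $A$ is strongly separable, which is exactly the desired equivalence.

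To carry this out, I will first represent an element $a\in A$ by the $a$-decorated essential annulus in the solid torus $\mathbb{H}=S^1\times B^2$, namely $S^1\times\gamma$ for an embedded arc $\gamma\subset B^2$ connecting the two points of $P_2\subset\partial B^2$. Next, I trace through the definition of $p_2^{(c)}$ in Construction~\ref{con:extension}. The gluing step identifies the two copies of $\mathbb{H}$ along their boundary tori by the identity, yielding $S^1\times S^2$ together with an embedded surface obtained from the two annuli $S^1\times\gamma_1$ and $S^1\times\gamma_2$. Since the four boundary circles match in pairs and $\gamma_1\cup\gamma_2$ forms a circle in $B^2\cup_{\partial B^2}B^2=S^2$, the glued surface is a single essential torus $S^1\times(\gamma_1\cup\gamma_2)\subset S^1\times S^2$, whose decoration is $m(a,b)=ab$ because the two annulus components merge.

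The remaining part of $p_2^{(c)}$ is the functional $\Sk_A(S^1\times S^2)\to\kk$ induced by the $(4,1)$-handlebody $S^1\times B^3$; by Construction~\ref{con:extToOneHandles} and Example~\ref{exm:s2s1eval}, this is abstract evaluation $\eval_A$ from Construction~\ref{con:abstractEval}. For the decorated essential torus constructed above, with $g=1$, $\pi_0(\partial C)=\emptyset$, and label $ab$, the formula evaluates to
\[
    \eval_A(\text{torus})=\varepsilon(m(H,ab))=\varepsilon(H\cdot a\cdot b)=p_H(a,b).
\]
Thus $p_2^{(c)}$ coincides with $p_H$ under the identification $\Sk_A(\mathbb{H},(2,0))\cong A$, and Lemma~\ref{lem:pairingsOnA} finishes the argument. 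The main point requiring care is the identification of the glued skein with a single decorated torus and verification that no auxiliary factors appear beyond the handle element; since this is a direct isotopy check in $S^1\times S^2$ together with the definition of $\eval_A$ on a single genus-one component, no serious obstacle arises.
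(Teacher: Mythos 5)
Your proof is correct and takes essentially the same approach as the paper, whose proof merely cites Example~\ref{exm:dTLEnd1} for the identification $\Sk_A(\mathbb{H},(2,0)) \cong A$, asserts that under this identification the pairing becomes $p_H$, and concludes by Lemma~\ref{lem:pairingsOnA}. You have filled in the geometric details the paper leaves implicit: the glued annuli form a single essential torus in $S^1 \times S^2$ with decoration $ab$, whose abstract evaluation is $\varepsilon(Hab) = p_H(a,b)$.
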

\begin{proof}
	The skein module $\Sk_A(\mathbb{H}, (2,0))$ is isomorphic to $A$ by Example~\ref{exm:dTLEnd1} and under this identification, the pairing is given by $p_H$ which by Lemma \ref{lem:pairingsOnA} is perfect if and only if $H$ is invertible.
\end{proof}
\begin{exm}[Bar-Natan theory does not extend]\label{exm:BNnoExtension}
	Consider the Frobenius algebra $A_\mathrm{BN}$. Since $x^2=0$, $H=2x$ is not invertible, and hence, the associated skein TQFT $\sktft_{\mathrm{BN}}$ does not extend to 2-handles. The structure of the skein module of the solid torus for $A_{\mathrm{BN}}$ has been studied in \cite{RussellBNSolidTorus} and \cite{HeymanPhDThesis}.
\end{exm}

\subsection{The Kirby color for 2-handles}
If the skein TQFT $\sktft_A$ extends to 4-dimensional 2-handlebodies, then the attachment of a single 4-dimensional 2-handle is modeled by a family of elements $\omega_{2n} \in \Sk_A(\mathbb{H},(2n,0))$ that we call the Kirby color for $\sktft_A$\footnote{Note that there are related, but different notions of Kirby colors in the literature. For example, in \cite{HRWKirbyColorForKh}, a Kirby color to model the attachment of 4-dimensional 2-handles for skein lasagna modules \cite{PaulInvariantsOf4Man, MNSkeinLasagna2Handle}. Their Kirby color is an object in a completion of the category $\dTL_{\mathrm{BN}}$ satisfying a handle slide move.}, as follows. 
\smallskip

Let $A$ be a strongly separable commutative Frobenius algebra. Let $W$ be a $(4,2)$-handlebody with boundary $M=\partial W$. If it exists, the invariant $\sktft_A$ associated to $W$ defines a linear map 
\begin{align*}
	\sktft_A(W)\colon \Sk_A(M)\to \kk.
\end{align*}
Let $S\in \Sk_A(M)$ be (the isotopy class of) a decorated surface. We can assume that $S$ intersects the core of the belt region $\beta_2=B^2\times S^1$ of a 2-handle only in meridional disks. All other surface configurations can be pushed out of $\beta_2$ through the 2-handle. The boundary curve bounded by $S$ is therefore $c=(m,0)$. The 2-handle map on $c$ is $m_2(c)\colon \Sk_A(B^2\times S^1,(m,0))\to  \Sk_A(S^1\times B^2, (m,0))$. When $m$ is odd, $\Sk_A(S^1\times B^2, (m,0))= 0$ since there are no surfaces in the solid torus bounding an odd number of longitudes. In the case that $m=2n$, we make the following definition.
\begin{defn}\label{def:capAndKirby}
	Let $n\geq 0$ and let $A$ be a commutative Frobenius algebra such that the pairing $p_2$ on  $\Sk_A(\mathbb{H},(2n,0))$ is perfect, and let $c_2$ be the copairing. Then, we define the following.
	\begin{enumerate}[(i)]
		\item The decomposition $S^3 = S^1\times B^2 \cup_{S^1\times S^1} B^2\times S^1$ defines a map
		\begin{align*}
			\Sk_A(S^1\times B^2, (2n,0)) \otimes \Sk_A(B^2\times S^1, (2n,0)) \xrightarrow{\mathrm{glue}} \Sk_A(S^3,\emptyset)\xrightarrow{m_0}\kk.
		\end{align*}
		Let $\mathsf{M}\in \Sk_A(B^2\times S^1,(2n,0))$ be represented by $2n$ parallel undecorated meridional disks. We define
		\begin{align*}
			\mathrm{cap}\colon \Sk_A(S^1\times B^2, (2n,0))\to \kk, \quad 
			\mathrm{cap}(-)=m_0\circ \mathrm{glue}(-\otimes \mathsf{M}).
		\end{align*}
		We call $\mathrm{cap}(S)\in \kk$ the \emph{cap value} of an element $S\in \Sk_A(S^1\times B^2,(2n,0))$. 
		\item The \emph{Kirby color} $\omega_{2n}$ is the element 
		\begin{align*}
			\omega_{2n}=(\id\otimes\mathrm{cap})(c_2(\id_{(2n,0)})) \in \Sk_A(\mathbb{H},(2n,0)).
		\end{align*}
	\end{enumerate}
\end{defn} 

\begin{prop}\label{prop:fillAnnulus}
	Let $A$ be a commutative Frobenius algebra such that the pairing $p_2$ on $\Sk_A(\mathbb{H},(2n,0))$ with boundary condition $c:=(2n,0)$ is perfect. Then the map 
	\begin{align*}
		\Sk_A(\mathbb{H},(2n,0))&\xrightarrow{\phi} \mathcal{F}_A(c) = A^{\otimes \pi_0(c)} \cong A^{\otimes 2n}\\
        S &\mapsto \mathcal{F}_A(S)(1)
	\end{align*}
    induced by abstract evaluation from Construction \ref{con:abstractEval} (see Lemma~\ref{lem:absevalskein} and \ref{exm:FirstSkeinModules}.(v)) is injective. 
	 In particular, we can think of the Kirby color as an element $\omega_{2n}\in A^{\otimes 2n}$.
\end{prop}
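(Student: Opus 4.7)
The strategy is to factor the pairing through $\phi\otimes\phi$ via the Frobenius pairing, reducing injectivity to the perfectness of $p_2$. Concretely, I will establish
\begin{align*}
p_2(S,T) = \beta^{\otimes 2n}\bigl(\phi(S),\phi(T)\bigr),
\end{align*}
where $\beta^{\otimes 2n}$ denotes the natural pairing on $A^{\otimes 2n}$ built from $\beta = \varepsilon \circ m$ acting on paired tensor factors. Once this identity is in hand, $\phi(S) = 0$ forces $p_2(S, -) \equiv 0$, and perfectness of $p_2$ gives $S = 0$.

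To derive the identity, I will first unravel $p_2$ from Construction~\ref{con:extension}. The pairing factors as gluing $S$ and $T$ along the boundary torus into $\mathbb{H}\cup_{\mathbb{T}}\mathbb{H} = S^1 \times S^2$ followed by the composite $\Sk_A(S^1 \times S^2) \to \kk$ assembled from $m_1$ and $m_0$. By Construction~\ref{con:extToOneHandles}, this latter composite is the invariant of the $(4,1)$-handlebody $S^1 \times B^3$, which acts by abstract evaluation; note that $S\cup T$ meets the hypothesis of Lemma~\ref{lem:absevalskein} by Example~\ref{exm:FirstSkeinModules}.(iv). Thus
\begin{align*}
p_2(S,T) = \eval_A(S \cup T).
\end{align*}

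It then remains to verify $\eval_A(S \cup T) = \beta^{\otimes 2n}(\phi(S), \phi(T))$. Abstractly, $\phi(S)$ and $\phi(T)$ are the 2-dimensional TQFT values of $S$ and $T$ viewed as $A$-decorated cobordisms from $\emptyset$ to $2n$ circles, and $S \cup T$ is obtained by identifying the $2n$ corresponding boundary circles; the standard gluing axiom of 2d TQFT then gives the formula. The main obstacle will be making this rigorous purely in terms of the explicit component-wise formula in Construction~\ref{con:abstractEval}. My plan is to induct on the number of glued circles, using that gluing two boundary circles of a surface increases the genus $g(C) = 1 - (|\pi_0(\partial C)| + \chi(C))/2$ by one if they lie on the same component (thereby contributing an extra factor of $H$) and otherwise merges two components with an attendant multiplication; both operations correspond precisely to a single contraction via $\beta$ on the 2d TQFT side, completing the induction.
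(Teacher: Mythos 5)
Your proposal is correct and follows the same strategy as the paper: factor $p_2$ through $\phi$ using the fact that the map $\Sk_A(S^1\times S^2)\to\kk$ produced by gluing $m_1$ and $m_0$ is the abstract evaluation (the paper cites Example~\ref{exm:s2s1eval}, you cite Construction~\ref{con:extToOneHandles}, which is equivalent), and then deduce injectivity from perfectness of $p_2$. The only difference is one of explicitness: the paper contents itself with asserting that $p_2$ factors through $\phi\times\phi$, whereas you propose to pin down the induced pairing as $\beta^{\otimes 2n}$ via the 2d TQFT gluing axiom, verified by induction on glued circles --- a slightly more detailed route to the same commuting diagram, and a correct one.
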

\begin{proof} By Example~\ref{exm:s2s1eval}, the pairing $p_2$ factors through the abstract evaluation, providing a commutative diagram:
\[\begin{tikzcd}
            \Sk_A(\mathbb{H},(2n,0)) \times \Sk_A(\mathbb{H},(2n,0)) &
            \Sk_A(S^2 \times S^1) 
            \\
            \mathrm{im}(\phi)\times \mathrm{im}(\phi) &
            \kk
            \arrow["\phi \times \phi",from=1-1, to=2-1]
            \arrow["\text{glue}",from=1-1, to=1-2]
            \arrow["\sktft_A(B^3\times S^1)",from=1-2, to=2-2]
            \arrow["p_2",from=1-1, to=2-2]
            \arrow[dashed,from=2-1, to=2-2]
\end{tikzcd}\]
Since $\phi$ preserves a perfect pairing (images of dual basis elements remain dual in $\mathrm{im}(\phi)$), it is an isomorphism onto its image, i.e.\ injective.
\end{proof}

\begin{rmk}
	We expect the above to be true for commutative Frobenius algebras $A$ in greater generality. In particular, the strong separability is not a necessary condition. For example, in the case of $A_{\mathrm{BN}}$, this is shown in \cite[Proposition 3.26]{HRWKirbyColorForKh} using the polynomial representation $\mathbf{Pol}$.
\end{rmk}

\begin{con}[The invariant of $(4,2)$-handlebodies]\label{con:TheInvariant}	
	Let $A$ be a commutative Frobenius algebra such that the pairing $p_2^{(c)}$ on the skein module of the solid torus is perfect for all boundary conditions $c=(2n,0)$. 
    Consider the surface skein TQFT $\sktft_A$ and assume it can be extended to $\cob_{2,3,3+2/4}$ with $m_0$ determined by the evaluation $\ev(\emptyset)=1$. For more general evaluations of the empty skein in $\Sk_A(S^3)$ to $\ev(\emptyset)\in \kk^\times$, we apply Proposition \ref{prop:evEulerChar}. Let $W$ be a connected oriented $(4,2)$-handlebody with boundary a closed oriented 3-manifold $\partial W$. In the following, we explicitly describe the linear functional
	\begin{align*}
		\sktft_A(W) \colon \Sk_A(\partial W) \to \kk
	\end{align*}
	assigned to $W$ by the partial extension of $\sktft_A$. 
	
	We can assume that $W$ has only one 0-handle. Each 1-handle is attached along some $S^0\times B^3$. After attachment of the 1-handles, the 4-manifold has boundary $\#_l(S^1\times S^2)$ where $l$ is the number of 1-handles. If there are no 2-handles, by abuse of notation we write $\#_0(S^1\times S^2)=S^3$. Each 2-handle is attached along the attaching region $S^1\times B^2$ embedded in $\#_l(S^1\times S^2)$. The embeddings are determined by a framed link $L$ in $\#_l(S^1\times S^2)$. Consider the dual link $L^\vee$ in $\partial W$, i.e.\ the link obtained by taking the cores of the solid tori $B^2\times S^1$ that are the belt regions of the 2-handles. 

    Let $S\hookrightarrow \partial W$ be an $A$-decorated closed surface in $\partial W$ transverse to $L^\vee$ and record the finite set of intersections $P=S\cap L^\vee$ and its partition $P=\sqcup_jP_j$ with $P_j :=S\cap L^\vee_j$, according to the connected components $L^\vee_j$ of $L^\vee$.
    Construction~\ref{con:abstractEvalext} with \ref{exm:FirstSkeinModules}.(iv) provides a $\kk$-linear abstract evaluation map
	\begin{align*}
		\eval_A(S,P)\colon \bigotimes_j A^{\otimes P_j}\to \kk
	\end{align*} 
     For the punctures corresponding to the link component $L_j^\vee$, we take the Kirby color $\omega_{|P_j|}\in A^{\otimes P_j}$ to arrive at the description:
	\begin{align*}
		\sktft_A(W) \colon \Sk_A(\partial W)\to \kk\;, \quad
        S\mapsto \eval_A(S,P)(\otimes_j \omega_{|P_j|}).
	\end{align*}
    By Proposition \ref{prop:incompGens}, the $\kk$-linear map $\sktft_A(W)$ is determined by its values on $A$-decorated incompressible surfaces in $\partial W$. 
\end{con}

\begin{rmk}\label{rmk:kirbyRecursiveCapping}
	The Kirby color $\omega_{2n}$ satisfies a recursive property. Let $(a_1,\dots, a_n)$ and $(b_1,\dots, b_n)$ be a pair of bases of $A$ that are dual under the Frobenius pairing. That is, $\beta(a_i,b_j)= \varepsilon\circ m(a_i,b_j)=\delta_{ij}$. Set $a_1=1$ and write $1^\vee=b_1$ for the element dual to $1$. 
    Then, $\varepsilon(1^\vee)=1$ and $\varepsilon(b_i)=0$ for $2\leq i\leq N$. Consider the Kirby color $\omega_{2n}\in \Sk_A(S^1\times I\times I,o_{2n})$ as above and choose two adjacent concentric circles in $o_{2n}$ at position $(k,k+1)$ and glue on an annulus decorated with $a\in A$. This defines an element 
	\begin{align*}
		\mathrm{Ann}^{(k,k+1)}_{a}(\omega_{2n})\in \Sk_A(S^1\times I \times I, o_{2n-2})
	\end{align*}
	satisfying
	\begin{align*}
		\mathrm{Ann}^{(k,k+1)}_{1^\vee}(\omega_{2n})=\omega_{2n-2}\quad \quad \text{and}\quad \quad  \mathrm{Ann}^{(k,k+1)}_{b_i}(\omega_{2n}) = 0,\quad \text{for } 2\leq i\leq N.
	\end{align*}
	To see this, note that the basis is chosen such that a compressible sphere decorated with $1^\vee$ evaluates to 1 and a compressible sphere decorated with $b_i$ evaluates to 0. In the situation of Construction \ref{con:TheInvariant}, consider a surface $S$ in the skein module of $\partial W$ for a $(4,2)$-handlebody $W$ such that $S$ intersects a 2-handle of $W$ in $2n-2$ points. By the sphere relation, we can create a compressible sphere decorated with $1^\vee$ in the belt region, a tubular neighborhood for the dual link $L^\vee$, such that $L^\vee$ intersects the sphere component twice in a row. We obtain the relation $\mathrm{Ann}^{k,k+1}_{1^\vee}(\omega_{2n})=\omega_{2n-2}$.	If the sphere component has a decoration by $b_i$, by the same arguments, we obtain the second relation for $\omega_{2n}$.
	
	Note that there is a cyclic symmetry in the Kirby color by rotation of the boundary curves. This introduces another annulus capping property
	\begin{align*}
		\mathrm{Ann}^{(1,2n)}_{1^\vee}(\omega_{2n})=\omega_{2n-2}\quad \quad \text{and}\quad \quad  \mathrm{Ann}^{1,2n}_{b_i}(\omega_{2n}) = 0,\quad \text{for } 2\leq i\leq N.
	\end{align*}
		
	We expect that for a given strongly separable, commutative Frobenius algebra $A$ such that the surface skein theory extends to 2-handles, one can determine the Kirby color recursively using the above properties after having determined the structure of idempotents in $\dTL_A(2n,2n)$. We do not pursue this approach here. Instead, we will combinatorially compute the Kirby color for surface skein theory associated to the Frobenius algebra $A_\alpha$ in the next section. We do, however, establish these properties for the Kirby color $\omega_{2n}$ in the case of $A_\alpha$ in Corollary \ref{cor:annulusCapping} explicitly.
\end{rmk}

\section{Computation of the Kirby color}\label{sec:KirbyCol}
\subsection{Idempotents in the disk category}
We recall the definition of the Bar-Natan category $BN_A(\Sigma,\mathbf{p})$ of a surface $\Sigma$ with boundary points $\mathbf{p}\subset \Sigma$ from \cite[Definition 3.6]{HRWKirbyColorForKh} and \cite[Definition 2.4]{HRWBorderedInvariantsKh} which relies on the original construction in \cite{BarNKhoHomTangles}. Our version here is not graded and takes a general commutative Frobenius algebra $A$ as an input. Note that before, we have only allowed closed surfaces in the definition of the surface skein category in Definition \ref{def:skeincat}. 
\begin{defn}
	Let $\Sigma$ be an oriented surface with boundary $\partial\Sigma$, and let $\mathbf{p}\subset \partial\Sigma$ be a finite set of points in the boundary. Define the \emph{Bar-Natan category} $BN_A(\Sigma,\mathbf{p})$ as the $\kk$-linear category that has 
    \begin{itemize}
        \item \textbf{Objects:} 1-dimensional properly embedded 1-manifolds bounding $\mathbf{p}$.
        \item \textbf{Morphisms:} The $\kk$-module of morphisms $\Hom_{\BN_A(\Sigma,\mathbf{p})}(c,d)$ is spanned by $A$-decorated cobordisms with corners\footnote{The definition of decorated surfaces in Definition \ref{def:decSurface} readily extends to the case with corners.} properly embedded in $\Sigma\times [0,1]$ with boundary 
    \begin{align*}
        \mathbf{p}\times [0,1]\cup_{\mathbf{p}\times\{0,1\}} (c\times \{0\} \sqcup d\times \{1\}) \subset \partial \Sigma\times [0,1] \cup_{\partial \Sigma\times \{0,1\}} \Sigma \times \{0,1\}.
    \end{align*}
    The decorated cobordisms are considered up to skein relations as in Definition \ref{def:skeinmod}: the decorations are $\kk$-linear in each connected component, and the sphere and neck-cutting relations are imposed.
        \item \textbf{Composition:} As in Definition \ref{def:skeincat}, composition is induced by gluing along common boundary, multiplying the decorations and rescaling in the interval direction.
    \end{itemize}
\end{defn}

In the following, we write $BN_A(B^2,2n)$ for the Bar-Natan category of the disk with $\mathbf{p}$ given by $2n$ points embedded in the boundary of the disk. Denote by $\BN_A(B^2,2n)$ the full subcategory of $BN_A(B^2,2n)$ with objects given by \emph{planar matchings} embedded in $B^2$. There are $C_n=\tfrac{1}{n+1}\binom{2n}{n}$ isomorphism classes of objects in $\BN_A(B^2,2n)$.

We also write $\overline{\mathsf{BN}}_A(B^2, 2n) = \Kar(\BN_A(B^2,2n))$ for the Karoubi completion of $\BN_A(B^2,2n)$. When $2n$ is clear from context, we sometimes abbreviate $\BN_A=\BN_A(B^2,2n)$ and $\overline{\BN}_A = \overline{\BN}_A(B^2,2n)$. 

\begin{prop}[Properties of the disk category]\label{prop:diskCat}
	Let $M,N$ be objects in $\BN_A(B^2,2n)$, that is, planar matchings in $B^2$ with $2n$ endpoints. Then the following holds.
	\begin{enumerate}[(i)]
		\item The endomorphism algebra is $\End_{\BN_A}(M)\cong A^{\otimes\pi_0(M)}$. In particular, it is commutative.
		\item Every morphism $\varphi\in \Hom_{\BN_A}(M,N)$ is of the form \begin{align*}
			\varphi = g\circ s_k\circ \dots \circ s_1 \circ f
		\end{align*} 
		with $f\in \End_{\BN_A}(M)$ and $g\in \End_{\BN_A}(N)$ and saddles $s_i$, i.e.\ a cobordism of Morse index 1, locally surgering two arcs.
	\end{enumerate}
\end{prop}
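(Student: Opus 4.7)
For part (i), the statement follows directly from Example \ref{exm:FirstSkeinModules}(i). The endomorphism $\kk$-module is $\End_{\BN_A}(M) = \Sk_A(B^3, c_M)$, where $c_M \subset \partial(B^2 \times [0,1]) \cong S^2$ is the closed 1-manifold $M \times \{0\} \cup \mathbf{p} \times [0,1] \cup M \times \{1\}$, consisting of $|\pi_0(M)|$ disjoint circles (one per arc of the planar matching $M$). The cited example identifies this skein module with $A^{\otimes \pi_0(M)}$ via $\bigotimes_i a_i \mapsto [M \times [0,1]]$ decorated by $a_i$ on the $i$-th component. Under this identification, vertical composition of two such decorated cylinders yields the product cylinder with decorations multiplied componentwise, matching the tensor product algebra structure on $A^{\otimes \pi_0(M)}$; commutativity then follows from that of $A$.

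For part (ii), my plan is to represent $\varphi$ by a decorated embedded surface $(\Sigma, \ell)$ in $B^2 \times [0,1]$ and apply Morse theory to the height function given by projection onto $[0,1]$. After a generic isotopy rel boundary, I may assume this projection is Morse with critical points of three standard types: cups (index 0), saddles (index 1), and caps (index 2). Using standard Cerf-theoretic rearrangement of handles, I can further arrange that all cups have heights in $[0,1/3]$, all saddles in $[1/3,2/3]$, and all caps in $[2/3,1]$.

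I then argue by induction on the number of cups plus caps that $\Sigma$ reduces to a form with only saddles between the top and bottom product pieces. In the base case, the product cobordisms over $[0,1/3]$ and $[2/3,1]$ carry only decorations, supplying $f \in \End_{\BN_A}(M)$ and $g \in \End_{\BN_A}(N)$ respectively, with $s_1, \ldots, s_k$ the intermediate saddles. For the inductive step, I use the skein relations to eliminate cups (and symmetrically caps): if a cup-born circle never interacts with a saddle, it must be capped off and, by the sphere relation, evaluates to a scalar $\varepsilon(a)$; otherwise, the first saddle meeting such a circle either merges it with another arc, in which case the Frobenius unit axiom $m \circ (\eta \otimes \id) = \id$ eliminates both the cup and the saddle while multiplying the cup's decoration into the other arc, or splits it, in which case $\Delta \circ \eta = \sum_i x_i \otimes y_i$ replaces the cup-saddle pair by a sum of two decorated cups.

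The main obstacle is termination of this reduction, since the splitting-saddle case naively increases the cup count. I intend to resolve this by introducing a well-founded complexity measure — for instance, the lexicographic pair consisting of (the total number of saddles involving descendants of any cup-born circle, the number of cups) — which strictly decreases under each elementary reduction since splitting a cup-born circle strictly reduces the number of future saddles that the resulting cups can encounter. Once the reduction terminates and all cups and caps are eliminated, the residual decorated product pieces at the ends supply $f$ and $g$, completing the proof.
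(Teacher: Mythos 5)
Your part (i) matches the paper's argument in essence: identify $\End_{\BN_A}(M)$ with $\Sk_A(B^3, c_M)$ and invoke Example~\ref{exm:FirstSkeinModules}.(i). The vertical-composition check for the algebra structure is a welcome extra detail that the paper leaves implicit.

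For part (ii), you take a genuinely different and considerably more laborious route than the paper. The paper simply mirrors part (i): it identifies $\Hom_{\BN_A}(M,N)$ with $\Sk_A(B^3, c)$ where $c$ is obtained by gluing $M$ and $N$ along $\mathbf{p}$, applies Example~\ref{exm:FirstSkeinModules}.(i) again to conclude that this Hom-space is spanned by decorated properly embedded \emph{disk} configurations bounding $c$, and then observes that such a disk configuration is, after a suitable isotopy, a sequence of saddles with the decorations pushed to the ends. In effect, the Morse-theoretic reduction you propose is a re-derivation from scratch of the content already packaged in Example~\ref{exm:FirstSkeinModules}.(i) (namely, that in $B^3$ every decorated surface can be reduced via the skein relations to a configuration of decorated disks). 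You could have avoided all of the Cerf-theoretic machinery by citing that example once more.

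Beyond the inefficiency, your version of part (ii) has a real gap, which you yourself half-acknowledge. The termination argument is left at the level of a sketch (``for instance, the lexicographic pair\ldots''), and the splitting move is treated only in the separating case: you write $\Delta\circ\eta$, but the tube between the two new circles may well be a non-separating curve of the ambient surface (e.g.\ when the cup-born circle belongs to a torus or higher-genus closed component), in which case Definition~\ref{def:skeinmod} prescribes a different replacement with decoration $\sum_i m(x_i,y_i)$ and the resulting piece does not simply become ``two decorated cups.'' Your complexity measure and your case analysis would both need to be redone to account for this. A cleaner lex measure (total saddle count, then total cup count) would handle termination, but the separating vs.\ non-separating bookkeeping would still have to be carried out. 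None of this is needed if one follows the paper and routes the argument through $\Sk_A(B^3, c)\cong A^{\otimes \pi_0(c)}$.
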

\begin{proof}
	For \textit{(i)}, we identify $B^2\times I \cong B^3$ and note that the boundary conditions determined by $M$ consist of closed curves enumerated by $\pi_0(M)$. Analogous to Example \ref{exm:FirstSkeinModules}.(i), we obtain the desired isomorphism.
    For \textit{(ii)}, we use that $\Hom_{\BN_A}(M,N)$ is isomorphic to $\Sk_A(B^3, c)$ for a closed 1-manifold $c$ obtained from gluing $M$ and $N$. By Example \ref{exm:FirstSkeinModules}.(i), any morphisms in $\Hom_{\BN_A}(M,N)$ can be represented by a configuration of decorated disks. These can be written as a sequence $s_1,\dots, s_k$ of saddles, with decorations moved to the source or target and recorded as endomorphisms $f\in \End(M)$ or $g\in \End(N)$.
\end{proof}
As a direct consequence from \textit{(ii)}, we obtain the following.
\begin{cor}\label{cor:diskCatKar}
	Let $M,N$ be objects in $\BN_A(B^2,2n)$. Let $e_M\in \End_{\BN_A}(M)$ and $e_N\in\End_{\BN_A}(N)$ be idempotents. Every morphism $\psi\in \Hom_{\overline{\BN}_A}(e_M,e_N)$ is of the form \begin{align*}
		\psi = g\circ s_k\circ \dots \circ s_1 \circ f
	\end{align*} 
	with $f\in \End_{\overline{\BN}_A}(e_M)$ and $g\in \End_{\overline{\BN}_A}(e_N)$ and saddles $s_i$.
\end{cor}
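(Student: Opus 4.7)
The plan is to reduce this to Proposition~\ref{prop:diskCat} by exploiting the commutativity of the endomorphism algebras established in part~(i).

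First, I would unwind the meaning of a morphism in the Karoubi completion: by definition, a morphism $\psi \in \Hom_{\overline{\BN}_A}(e_M, e_N)$ is a morphism $\psi \colon M \to N$ in $\BN_A$ satisfying $e_N \circ \psi \circ e_M = \psi$. Applying Proposition~\ref{prop:diskCat}(ii) to $\psi$ viewed in $\BN_A$, I can write
\begin{equation*}
\psi \;=\; g' \circ s_k \circ \cdots \circ s_1 \circ f'
\end{equation*}
for some $f' \in \End_{\BN_A}(M)$, $g' \in \End_{\BN_A}(N)$, and saddles $s_i$.

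Next, I would absorb the idempotents into the two endomorphism factors. Setting $f := e_M \circ f' \circ e_M$ and $g := e_N \circ g' \circ e_N$, idempotency $e_M^2 = e_M$ gives $e_M \circ f \circ e_M = f$ and similarly $e_N \circ g \circ e_N = g$, so by construction $f \in \End_{\overline{\BN}_A}(e_M)$ and $g \in \End_{\overline{\BN}_A}(e_N)$. The main point is then to check
\begin{equation*}
g \circ s_k \circ \cdots \circ s_1 \circ f \;=\; \psi.
\end{equation*}
Here I would use Proposition~\ref{prop:diskCat}(i): $\End_{\BN_A}(M)$ and $\End_{\BN_A}(N)$ are commutative, so $e_M \circ f' = f' \circ e_M$ and $e_N \circ g' = g' \circ e_N$. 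Combined with $e_M^2 = e_M$ and $e_N^2 = e_N$, this lets me collapse $f = f' \circ e_M$ and $g = e_N \circ g'$, so that
\begin{equation*}
g \circ s_k \circ \cdots \circ s_1 \circ f \;=\; e_N \circ g' \circ s_k \circ \cdots \circ s_1 \circ f' \circ e_M \;=\; e_N \circ \psi \circ e_M \;=\; \psi,
\end{equation*}
which is exactly what I want.

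There is no real obstacle here; the statement is essentially a bookkeeping observation, and the only thing that has to be invoked beyond Proposition~\ref{prop:diskCat}(ii) is the commutativity of endomorphism algebras from (i), which is precisely what lets the idempotents $e_M$ and $e_N$ be absorbed into the outer endomorphism factors without disturbing the chain of saddles in the middle.
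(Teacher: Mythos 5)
Your proof is correct and is the natural way to flesh out what the paper simply asserts ("a direct consequence from (ii)"). One small but genuine point you make explicit is that part (i) of Proposition~\ref{prop:diskCat} (commutativity of $\End_{\BN_A}(M)$) is also needed: without it, setting $f = e_M\circ f'\circ e_M$ and $g = e_N\circ g'\circ e_N$ lands in the right $\Hom$-spaces, but one cannot simplify $e_N\circ g'\circ e_N\circ s_k\circ\dots\circ s_1\circ e_M\circ f'\circ e_M$ to $e_N\circ\psi\circ e_M$, and conversely the choices $f = f'\circ e_M$, $g = e_N\circ g'$ that make the product collapse to $e_N\circ\psi\circ e_M$ need commutativity to be idempotent-fixed. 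The paper attributes the corollary only to (ii), but your argument correctly records that (i) is doing quiet work.
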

Recall the notion of the trace of a $\kk$-linear category.
\begin{defn}
	Let $\C$ be a $\kk$-linear category. The \emph{trace} of $\C$ is the $\kk$-module
	\begin{align*}
		\Tr(\C) = \left(\bigoplus_{x\in \mathsf{Ob}(\C)} \End(x)\right)\big/ \mathrm{span}_\kk \{f\circ g - g\circ f \mid f\colon x\to y,g\colon y\to x\}.
	\end{align*}
\end{defn}
\begin{lem}\label{lem:TrOfKar}
	Let $\C$ be a $\kk$-linear category. Then the inclusion $\C\to \Kar(\C)$ of $\C$ into its Karoubi completion $\Kar(\C)$ induces an isomorphism of $\kk$-modules
	\begin{align*}
		\Tr(\C) \cong \Tr(\Kar(\C)).
	\end{align*}
\end{lem}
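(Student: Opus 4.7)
The plan is to exhibit an explicit inverse to the map $\Tr(\C)\to \Tr(\Kar(\C))$ induced by the inclusion $\iota$. Recall that an object of $\Kar(\C)$ is a pair $(x,e)$ with $e\in\End_\C(x)$ idempotent, and a morphism $\varphi\colon (x,e)\to (y,f)$ is a morphism $\varphi\colon x\to y$ in $\C$ with $f\varphi = \varphi = \varphi e$. In particular, every endomorphism $\varphi\in \End_{\Kar(\C)}((x,e))$ can be viewed as an endomorphism of $x$ in $\C$, and composition of morphisms in $\Kar(\C)$ agrees with composition in $\C$.

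First, I would define a candidate inverse $\rho\colon \Tr(\Kar(\C))\to \Tr(\C)$ on generators by
\[
\rho\bigl([\varphi]_{\Kar(\C)}\bigr) := [\varphi]_{\C}\in \Tr(\C),
\]
where on the right $\varphi\in \End_{\Kar(\C)}((x,e))$ is regarded as an element of $\End_\C(x)$. To show $\rho$ is well-defined, I would check that it kills the trace relations of $\Kar(\C)$: if $\varphi = \psi\circ\chi - \chi\circ\psi$ with $\psi\colon (x,e)\to (y,f)$ and $\chi\colon (y,f)\to (x,e)$ in $\Kar(\C)$, then viewing $\psi$ and $\chi$ as morphisms $x\to y$ and $y\to x$ in $\C$, the same expression exhibits $\varphi$ as a commutator in $\C$, so $[\varphi]_\C = 0$.

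Next, I would verify that $\rho$ and $\iota_*$ are mutually inverse. The composition $\rho\circ \iota_*$ is manifestly the identity on $\Tr(\C)$, since $\iota(x)=(x,\id_x)$ and morphisms are unchanged. The content lies in the other composition $\iota_*\circ\rho$: given $\varphi\in \End_{\Kar(\C)}((x,e))$, I need to show $[\varphi]_{(x,e)} = [\varphi]_{(x,\id_x)}$ in $\Tr(\Kar(\C))$. Here the main idea is to use the idempotent $e$ itself as a morphism in two different ways. Let $f := e$ viewed as a morphism $(x,\id_x)\to (x,e)$ and $g := \varphi$ viewed as a morphism $(x,e)\to (x,\id_x)$; then $g\circ f = \varphi\circ e = \varphi$ lives in $\End_{\Kar(\C)}((x,\id_x))$ while $f\circ g = e\circ \varphi = \varphi$ lives in $\End_{\Kar(\C)}((x,e))$, so the two representatives agree in $\Tr(\Kar(\C))$.

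I do not expect any serious obstacle; the only subtlety is checking that the trace relation in $\Kar(\C)$ really does descend to the trace relation in $\C$ (well-definedness of $\rho$) and keeping straight which identities use that $e$ is idempotent. Both are direct bookkeeping, which completes the argument.
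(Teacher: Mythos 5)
Your argument is correct. The paper itself gives no proof of this lemma; it simply cites \cite[Proposition~3.2]{BGHATraceAltDecat}, so there is no proof in the paper to compare against. Your direct construction of the inverse map $\rho$ is a standard and fully rigorous way to establish the claim, and all the steps check out: $\rho$ is well-defined on the direct sum since each $\End_{\Kar(\C)}((x,e))\hookrightarrow\End_\C(x)$ is $\kk$-linear; it kills the trace relations of $\Kar(\C)$ because a commutator $\psi\circ\chi$ vs.\ $\chi\circ\psi$ in $\Kar(\C)$ is literally a commutator in $\C$ once you forget the idempotent decorations; $\rho\circ\iota_*=\id$ is immediate; and for $\iota_*\circ\rho=\id$ the key observation, correctly identified, is that $e\colon(x,\id_x)\to(x,e)$ and $\varphi\colon(x,e)\to(x,\id_x)$ are legitimate morphisms in $\Kar(\C)$ (using $e^2=e$ and $e\varphi=\varphi=\varphi e$), so the trace relation $[\varphi\circ e]_{(x,\id_x)}=[e\circ\varphi]_{(x,e)}$ gives $[\varphi]_{(x,\id_x)}=[\varphi]_{(x,e)}$.

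One small stylistic point: in the well-definedness paragraph you write ``$\varphi=\psi\circ\chi-\chi\circ\psi$'', but $\psi\circ\chi$ and $\chi\circ\psi$ are endomorphisms of different objects and live in different summands of the direct sum, so calling the difference $\varphi$ and an ``endomorphism'' is slightly abusive. What you really mean, and what you then correctly use, is that the relation submodule for $\Kar(\C)$ is carried by $\rho$ into the relation submodule for $\C$. This is just phrasing; the mathematics is sound. Your proof is self-contained and elementary, which is a genuine improvement over the paper, which only delegates the statement to the literature.
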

\begin{proof}
	The proof can be found in \cite[Proposition 3.2]{BGHATraceAltDecat}.
\end{proof}
Tracing the disk category, we obtain the skein module of the solid torus.
\begin{prop}\label{prop:skmodIdempotents}
	Let $A$ be a commutative Frobenius algebra that is free over a commutative ring $\kk$. Then, gluing provides an isomorphism of $\kk$-modules
	\begin{align*}
		\Sk_A(\mathbb{H},(2n,0))\cong \Tr(BN_A(B^2,2n))\cong\Tr(\BN_A(B^2,2n))\cong \Tr(\overline{\mathsf{BN}}_A(B^2,2n)).
	\end{align*}
\end{prop}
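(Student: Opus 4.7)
The plan is to first establish the key isomorphism $\Sk_A(\mathbb{H},(2n,0)) \cong \Tr(BN_A(B^2,2n))$ via an explicit gluing construction, and then derive the remaining two isomorphisms by Morita-type arguments together with Lemma~\ref{lem:TrOfKar}.

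To build the gluing map, I would define
$$\Phi \colon \bigoplus_{M \in \mathrm{Ob}(BN_A(B^2,2n))} \End_{BN_A(B^2,2n)}(M) \longrightarrow \Sk_A(\mathbb{H},(2n,0))$$
by viewing $\mathbb{H} = B^2 \times S^1$ as the mapping torus of $\id_{B^2}$, and sending an $A$-decorated surface $F$ in $B^2 \times [0,1]$ representing $f \in \End(M)$ to the decorated surface in $\mathbb{H}$ obtained by identifying $B^2 \times \{0\}$ with $B^2 \times \{1\}$ via the identity. The images of $\mathbf{p} \times [0,1]$ form $2n$ longitudes, so this lands in $\Sk_A(\mathbb{H},(2n,0))$. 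Since the sphere, multilinearity, and neck-cutting relations can all be applied locally in the interior of $B^2 \times [0,1]$, the map $\Phi$ is well-defined. For $f \colon M \to N$ and $g \colon N \to M$ in $BN_A$, the gluings $\Phi(gf)$ and $\Phi(fg)$ coincide: they represent the same skein in $\mathbb{H}$, obtained by cutting at different meridional disks. Hence $\Phi$ descends to $\overline{\Phi}\colon \Tr(BN_A(B^2,2n)) \to \Sk_A(\mathbb{H},(2n,0))$.

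Surjectivity of $\overline{\Phi}$ follows by putting any representative $S$ of a class in $\Sk_A(\mathbb{H},(2n,0))$ into general position with respect to a meridional disk $D = B^2 \times \{*\}$. The intersection $M := S \cap D$ is a closed $1$-manifold in $B^2$ bounding $\mathbf{p}$, i.e.\ an object of $BN_A(B^2,2n)$, and cutting $\mathbb{H}$ along $D$ yields a decorated surface in $B^2 \times [0,1]$ representing an endomorphism of $M$ that maps to $[S]$ under $\Phi$. For injectivity, the main obstacle is to show that if two decorated surfaces transverse to $D$ produce isotopic (and skein-equivalent) elements of $\Sk_A(\mathbb{H},(2n,0))$, then the corresponding endomorphisms in $BN_A(B^2,2n)$ differ only by the commutator relations defining the trace. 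I would proceed by a standard one-parameter argument: generic isotopies that keep the surface transverse to $D$ correspond to equalities inside a single $\End(M)$, generic crossings of $D$ by a saddle point contribute exactly the cyclic shift that identifies $gf$ with $fg$ (by Proposition~\ref{prop:diskCat}(ii) every morphism decomposes into an endomorphism sandwiched with saddles), and the local skein relations on $\mathbb{H}$ pull back to the analogous relations already imposed in $BN_A(B^2,2n)$.

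For the remaining isomorphisms, I would argue that the inclusion $\BN_A(B^2,2n) \hookrightarrow BN_A(B^2,2n)$ is full and induces an equivalence after Karoubi (and additive) completion: every object of $BN_A(B^2,2n)$ is of the form $M' \sqcup (\text{inessential circles})$, and each inessential circle splits as a direct sum of copies of the empty configuration using dual bases of $A$ and cup/cap morphisms, exactly as in the delooping argument of Remark~\ref{rmk:gluingMorita}. A Morita-equivalent full subcategory has the same trace (this is direct from the cyclic relations defining $\Tr$), and the trace is invariant under Karoubi completion by Lemma~\ref{lem:TrOfKar}; combining these yields canonical isomorphisms $\Tr(BN_A(B^2,2n)) \cong \Tr(\BN_A(B^2,2n)) \cong \Tr(\overline{\mathsf{BN}}_A(B^2,2n))$, which concludes the chain.
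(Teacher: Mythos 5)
Your overall route matches the paper's: a gluing map to establish $\Sk_A(\mathbb{H},(2n,0))\cong\Tr(BN_A(B^2,2n))$, a delooping/Morita argument (as in Remark~\ref{rmk:gluingMorita}) for $\Tr(BN_A)\cong\Tr(\BN_A)$, and Lemma~\ref{lem:TrOfKar} for the Karoubi step. The one place you genuinely diverge is the first isomorphism: the paper delegates this to the general gluing formulae of \cite{WalkerUnivState} and \cite{WalkerTQFT06}, whereas you construct the gluing map $\overline{\Phi}$ explicitly and argue surjectivity (general position with respect to a meridional disk $D$) and injectivity (a one-parameter argument) from scratch. This buys a self-contained proof at the cost of having to verify the topological details that the cited framework packages up. Two such details you elide in the injectivity sketch: a generic one-parameter family of surfaces transverse to $D$ passes through tangencies of all three Morse indices, not only saddles --- births and deaths of circles of $S\cap D$ also occur, and you need these to correspond to cup/cap morphisms of $BN_A(B^2,2n)$ entering the trace relation; and when a defining skein relation of $\Sk_A(\mathbb{H},(2n,0))$ is applied inside a ball that straddles $D$, you must first isotope the ball off $D$ (tracking the resulting change of $S\cap D$ by trace relations) before pulling the relation back to $BN_A(B^2,2n)$. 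Both are standard, but without spelling them out the injectivity argument is a sketch rather than a proof. Your treatment of the second and third isomorphisms is essentially identical to the paper's.
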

\begin{proof}
	The first isomorphism follows from gluing formulae for general skein theories. See \cite[Section 4.1]{WalkerUnivState} and \cite[Section 1.2]{WalkerTQFT06}. The second isomorphism follows from the fact that we only need to consider gluing over boundary conditions that do not contain any closed circle components. This follows from considerations analogous to those in Remark \ref{rmk:gluingMorita}. The third isomorphism follows from Lemma \ref{lem:TrOfKar}.
\end{proof}
\begin{rmk}\label{rmk:skeletal}
	It suffices to restrict to a skeletal version of $\overline{\BN}_A$ when computing the trace. 
	In view of Corollary \ref{cor:diskCatKar} and  Proposition \ref{prop:skmodIdempotents}, we can describe the skein module of the solid torus via isomorphism classes of idempotents of the disk category. For the Frobenius algebra $A_\alpha$ which we consider in Subsection~\ref{subsec:alphaThy}, we construct idempotents $e\in\End(M)$ and $f\in\End(N)$ for planar matchings $M$ and $N$, such that 
	\begin{align*}\Hom_{\overline{\BN}_{\alpha}}(e,f) = 0 \quad \text{ for } e\not\cong f.
	\end{align*}
	In this case the skein module of the solid torus decomposes as the direct sum of $\End(e)$ over the isomorphism classes of those idempotents in the disk category. See Corollary \ref{cor:skmodViaIsoClasses}.
\end{rmk}
\begin{rmk}[Arc algebras]
	Since additive completion also does not change the trace, the above also computes the trace of  Khovanov arc algebras generalized to commutative Frobenius algebras $A$.
\end{rmk}

\subsection{Modified Bar-Natan skein theory}\label{subsec:alphaThy}
We now consider the $\alpha$-modified Bar-Natan Frobenius algebra $A_\alpha=\kk[x]/(x^2-\alpha)$ over $\kk=K[\alpha^{\pm1}]$ with $\Delta\colon 1\mapsto 1\otimes x+x\otimes 1$. We assume $2\in \kk^\times$. As usual, we denote the decoration by $x\in A_\alpha$ as a dot. The skein relations are 
\begin{align*}
	\ddsheet\, = \alpha\; \sheet && \neck\,= \cutdup + \cutddown&& 
	\sphere\, = 0 && \dsphere\, = 1.
\end{align*}
We compute the skein module $\Sk_\alpha(\mathbb{H},(2n,0))$ from the idempotents of the disk category, and the Kirby color $\omega_{2n}$ to model the 2-handle attachment. Under the isomorphism $\dTL_\alpha(0,2n)\cong \Sk_\alpha(\mathbb{H},(2n,0))$, we sometimes think of elements in the skein module of the solid torus as $\dTL_{\alpha}$-diagrams. We have the following local relations.
\begin{align*}
	\dTLCircle = \varepsilon(H)=\varepsilon(2x)=2 \quad &\text{and} \quad\dTLCircleDot=\varepsilon(2x^2)= \varepsilon(2\alpha)=0\\
	\LocaldTLdud\, +\, \LocaldTLudd\, &=\, \LocaldTLdupperudlower\, +\, \LocaldTLudupperdlower\\
	\alpha\;\LocaldTLudud\, + \, \LocaldTLdd\, &= \alpha \; \LocaldTLudud + \LocaldTLdupperdlower
\end{align*}

Let $M$ be an object in $\BN_{\alpha}(B^2,2n)$, i.e.\ a planar matching on $2n$ points in the disk. Write $\arcs(M)$ for the set of $n$ arcs of $M$. A partition $P$ of $\arcs(M)$ induces a partition of the set of boundary points by taking the union over all arcs in the same block. We refer to this as the \emph{boundary partition} and denote it by $\partial P$. The orientation on $B^2$ induces a cyclic order on the boundary points in $\partial B^2$. A choice of a distinguished point $1\subset \partial B^2$ induces a total order on the set of boundary points which we identify with $\{1,\dots, 2n\}$. With this identification, the arcs of a planar matching are encoded by a partition of the set $\{1,\dots, 2n\}$ into blocks $\{i,j\}$ of size 2. In the following, we often make the choice of a point $1\subset \partial B^2$, but our statements will be independent of this choice.

For a subset $D\subseteq \arcs(M)$, we write $x_D$ for the element in $\End_{\BN_\alpha}(M)$ that has dotted identity sheets on arcs in $D$ and identity sheets on other arcs. We prove the following statements after constructing and establishing several properties of the idempotents.
\begin{prop}[Structure of the idempotents]\label{prop:structureOfIdem}
	For a planar matching $M$ in $\BN_{\alpha}(B^2,2n)$, each partition $P=\{B,C\}$ of $\arcs(M)$ into blocks $B,C\subseteq \arcs(M)$, with $C=\emptyset$ allowed, defines an idempotent
	\begin{align*}
		e_P = \frac{1}{2^{n-1}}\sum_{\substack{D\subseteq \arcs(M)\\|D|\text{ even}}}  (-1)^{|D\cap B|}\alpha^{-|D|/2}  x_D
	\end{align*}
	in $R=\End_{\BN_{\alpha}}(M)$ satisfying the following properties.
	\begin{enumerate}[(i)]
		\item (Orthogonality) $e_PRe_Q=0=e_QRe_P$ for partitions $P\neq Q$.
		\item (Completeness) The set of idempotents for partitions $P$ satisfies $\sum_{P} e_P = \id_M$.
		\item ($A_\alpha$-Primitivity) $\End_{\overline{\BN}_\alpha}(e_P) \cong A_\alpha$.
	\end{enumerate}
\end{prop}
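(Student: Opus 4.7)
The strategy is to identify the endomorphism algebra with the algebra $A_\alpha^{\otimes n}$ and to recognize the $e_P$ as the primitive idempotents of an internal group algebra.

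\textbf{Step 1 (Identification).} By Proposition~\ref{prop:diskCat}(i), the endomorphism algebra $R = \End_{\BN_\alpha}(M)$ is isomorphic to $A_\alpha^{\otimes n}$, where $n = |\arcs(M)|$; write $x_i$ for the generator of $A_\alpha$ corresponding to the $i$-th arc, so that $R \cong \kk[x_1,\dots,x_n]/(x_i^2-\alpha)$, and $x_D = \prod_{i\in D} x_i$. In particular, $R$ is commutative and $\ZZ/2$-graded by the parity of the number of dots.

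\textbf{Step 2 (Internal group algebra).} For $D\subseteq \arcs(M)$ of even cardinality, set $\tilde{x}_D := \alpha^{-|D|/2} x_D$. A direct computation using $x_i^2=\alpha$ shows
\[
\tilde{x}_D \cdot \tilde{x}_{D'} = \tilde{x}_{D\triangle D'}
\]
for all even subsets $D, D'$. Thus the $\kk$-span of the $\tilde{x}_D$ (over even $D$) is a subalgebra of $R$ isomorphic to the group algebra $\kk[G]$ of $G:=\{D\subseteq \arcs(M): |D|\text{ even}\}$ with symmetric difference, and $G \cong (\ZZ/2)^{n-1}$.

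\textbf{Step 3 (Idempotents as character projectors).} A partition $P=\{B,C\}$ of $\arcs(M)$ corresponds to a sign function $\varepsilon_P\colon\arcs(M)\to\{\pm1\}$, unique up to global sign; since $(-1)^{|D\cap B|} = \prod_{i\in D}\varepsilon_P(i)$ on even $D$, this defines a character $\chi_P\colon G\to\{\pm1\}$, and the map $P\mapsto \chi_P$ is a bijection between the $2^{n-1}$ partitions and the $2^{n-1}$ characters of $G$. Under the identification of Step 2, the formula for $e_P$ becomes the standard character idempotent
\[
e_P = \frac{1}{|G|} \sum_{g\in G} \chi_P(g)\, g.
\]
Since $|G|=2^{n-1}\in\kk^\times$ (as $2\in\kk^\times$), classical character theory of abelian group algebras gives orthogonality $e_P e_Q = \delta_{P,Q}\, e_P$ and completeness $\sum_P e_P = 1$, proving (i) and (ii); in particular each $e_P$ is nonzero.

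\textbf{Step 4 ($A_\alpha$-primitivity).} Since $R$ is commutative, $\End_{\overline{\BN}_\alpha}(e_P) = e_P R e_P = e_P R$. The $R$-module decomposition $R = \bigoplus_P e_P R$ gives $\sum_P \rk_\kk(e_P R) = 2^n$, with $2^{n-1}$ summands. To pin down each rank, observe that for even $E$ we have $\tilde{x}_E e_P = \chi_P(E)\, e_P$ (eigenvector property of the character projector), while for odd $D$ we can write $D=\{j\}\sqcup D'$ with $D'$ even, so $x_D e_P = \alpha^{|D'|/2}\chi_P(D') x_j e_P$. Setting $y_i := x_i e_P$, a short computation using commutativity and the eigenvector property yields
\[
y_i y_j = x_i x_j e_P = \alpha\, \chi_P(\{i,j\})\, e_P = \alpha\,\varepsilon_P(i)\varepsilon_P(j)\, e_P,
\]
so $y_j = \varepsilon_P(i)\varepsilon_P(j)\, y_i$ and all $y_i$ are proportional to a single element $y := y_1$; moreover $y^2 = \alpha e_P$. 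Thus $e_P R$ is spanned over $\kk$ by $e_P$ and $y$; these are independent, since $e_P$ is even and $y$ odd in the $\ZZ/2$-grading, and $y\ne 0$ because $y^2 = \alpha e_P \ne 0$. Hence $\rk_\kk(e_P R) = 2$, and the assignment $x\mapsto y$ yields a $\kk$-algebra isomorphism $A_\alpha \xrightarrow{\cong} e_P R e_P$, proving (iii).

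\textbf{Main obstacle.} The crux is finding the right change of variables revealing the hidden group-algebra structure. Once the renormalization $\tilde{x}_D = \alpha^{-|D|/2}x_D$ is in hand, the proof reduces to well-known character theory plus a short commutative-algebra argument; without it, the orthogonality identities $e_P e_Q = \delta_{P,Q} e_P$ would have to be verified through an unenlightening direct expansion involving the signs $(-1)^{|D\cap B|}$ and the relation $x_i^2=\alpha$.
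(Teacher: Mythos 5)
Your argument is correct and takes a genuinely different, more conceptual route than the paper's. You identify a hidden group algebra $\kk[G]$, $G\cong(\ZZ/2)^{n-1}$, inside the even part of $A_\alpha^{\otimes n}$ via the renormalization $\tilde x_D=\alpha^{-|D|/2}x_D$, and then read off the $e_P$ as the standard character idempotents, so that orthogonality and completeness become textbook facts about abelian group algebras with $|G|\in\kk^\times$. The paper instead introduces pairwise \emph{join} and \emph{disjoin} idempotents $j(b,c)=\tfrac{1}{2}(\id+\alpha^{-1}x_bx_c)$ and $d(b,c)=\tfrac{1}{2}(\id-\alpha^{-1}x_bx_c)$, writes $e_P$ as a product of joins within blocks and disjoins across blocks, and deduces (i)--(iii) from combinatorial relations among joins and disjoins (in particular the vanishing of a triple disjoin, which forces at most two blocks). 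In your language, $j(b,c)$ is the projector onto the $+1$-eigenspace of $\tilde x_{\{b,c\}}$, so the two pictures are compatible. Your approach buys conceptual transparency and near-immediate proofs of (i) and (ii); the paper's local join/disjoin calculus is reused downstream (to analyze how saddles interact with idempotents in Lemma~\ref{lem:saddlesjoindisjoin} and Lemma~\ref{lem:isoseqofsaddles}, and to set up the root-projector version in Remark~\ref{rmk:genSemisimpleFA}), and transfers to Frobenius algebras where no such group structure is available.

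Two small points to tighten in Step 4. First, from the displayed identity $y_iy_j=\alpha\,\varepsilon_P(i)\varepsilon_P(j)\,e_P$ you infer $y_j=\varepsilon_P(i)\varepsilon_P(j)\,y_i$, but this does not follow from that identity alone without knowing $y_i$ is regular in $e_PR$. The clean fix is to multiply $x_ix_j e_P=\alpha\chi_P(\{i,j\})e_P$ by $x_i$ and cancel $x_i^2=\alpha\in\kk^\times$, giving $x_je_P=\chi_P(\{i,j\})\,x_ie_P$ directly; this is essentially the computation in the paper's Lemma~\ref{lem:primitive}. Second, you should note explicitly that $e_P\ne 0$ (its coefficient at $\tilde x_\emptyset=\id_M$ is $2^{-(n-1)}\in\kk^\times$ in the free $\kk$-module $R$) before concluding $y\ne 0$ from $y^2=\alpha e_P$, and that $e_PR$ is \emph{free} of rank 2 over $\kk$ rather than merely rank 2 in some weaker sense, which is what is needed to conclude $e_PR\cong A_\alpha$ as $\kk$-algebras. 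Both are one-line additions; the overall strategy is sound.
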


\begin{prop}[Isomorphism classes of idempotents]\label{prop:isoClassesOfIdem}
	Let $M$ and $N$ be planar matchings on $2n$ points. Two idempotents $e_P \in \End(M)$ and $e_Q\in \End(N)$ associated to partitions $P$ of $\arcs(M)$ and $Q$ of $\arcs(N)$ are isomorphic in the Karoubi completion if and only if the induced boundary partitions are the same. The isomorphism classes of primitive idempotents are in one-to-one correspondence with returning walks on $\ZZ$ that start and end at 0, have length $2n$, and start with a step in positive direction.
\end{prop}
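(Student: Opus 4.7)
The statement contains both an isomorphism criterion and an enumeration, which I treat in separate stages.

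For the \emph{forward direction of the criterion}, suppose $\partial P=\partial Q$. I will construct an isomorphism $e_P \cong e_Q$ in $\overline{\BN}_\alpha$ explicitly via saddles. By Proposition~\ref{prop:diskCat}(ii), some sequence of saddles connects $M$ to $N$, and the hypothesis $\partial P = \partial Q$ lets me choose saddles \emph{compatible} with the partition, in the sense that each saddle joins a pair of boundary points lying in the same block of $\partial P$. For a single such saddle $s\colon M\to M'$, a direct computation using the explicit formula of Proposition~\ref{prop:structureOfIdem}, together with the neck-cutting, dot, and saddle relations in $\dTL_\alpha$, shows that $s\circ e_P = c \cdot e_{P'}\circ s\circ e_P$ for a unit $c\in\kk^\times$ and the induced partition $P'$ of $\arcs(M')$. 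Iterating this and post-composing with the reverse saddles yields the desired isomorphism in the Karoubi completion.

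For the \emph{reverse direction of the criterion}, which I expect to be the main obstacle, I need $\Hom_{\overline{\BN}_\alpha}(e_P,e_Q)=0$ when $\partial P \neq \partial Q$. By Corollary~\ref{cor:diskCatKar} together with the $A_\alpha$-primitivity of $e_P$ and $e_Q$ from Proposition~\ref{prop:structureOfIdem}(iii), any such morphism reduces to $e_Q\circ(s_k\cdots s_1)\circ e_P$, so it suffices to show such composites vanish. The key local computation is: when a saddle $s$ identifies two boundary points from \emph{different} blocks of the current partition, applying $e_P$ afterwards produces a sign-weighted sum over subsets $D\subseteq\arcs(M)$ in which an involution---flipping the decoration on the arc created by the saddle---pairs each summand with its negative, forcing cancellation. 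This relies on the explicit formula for $e_P$ together with the orthogonality statement in Proposition~\ref{prop:structureOfIdem}(i). Extending the argument to arbitrary saddle chains will require an inductive setup that tracks an evolving coloring of the boundary under successive surgeries.

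For the \emph{counting via walks}, it remains to biject admissible boundary partitions, i.e.\ those arising as $\partial P$ for some $(M,P)$, with returning walks on $\ZZ$ of length $2n$ starting with a positive step. I associate to each isomorphism class a canonical representative $(M,P)$---say, the pair whose arcs are maximally nested subject to respecting the blocks of $\partial P$---and record a walk by traversing the boundary points $1,2,\dots,2n$ in order and assigning $\pm 1$ at each step according to whether the point lies at the left or right endpoint of its arc and to which block of $\partial P$ it belongs, with a fixed convention ensuring the first step is positive. A standard Catalan-type argument should then establish both injectivity and surjectivity of this correspondence.
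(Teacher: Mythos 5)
The overall shape of your plan is the same as the paper's (argue via saddle chains and explicit idempotent computations, then encode boundary partitions as walks), but two of the three pieces are left at a stage that is genuinely incomplete.

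\textbf{Forward direction.} Your observation $s\circ e_P = c\cdot e_{P'}\circ s\circ e_P$ is in the right spirit (in fact one gets $s\circ e_P = e_{P'}\circ s\circ e_P$ exactly, no scalar $c$, from the commutation relations below), but this alone does not produce an isomorphism in the Karoubi completion: you also need the \emph{inverse}, and this requires the reverse saddle $\overline{s}$ decorated by $H^{-1}=\tfrac{1}{2\alpha}x$ so that $\overline{s}\circ s = j(a_1,a_2)$, which is then absorbed by $e_P$. The paper records this as Lemma~\ref{lem:saddlesjoindisjoin}(vi) and it is what makes $e_P\,\overline{s}\,e_Q$ a two-sided inverse of $e_Q\,s\,e_P$. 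Without it, your isomorphism is only a one-sided factoring.

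\textbf{Reverse direction.} This is the genuine gap. Your ``key local computation'' (the involution flipping the decoration on the arc created by a single saddle) is correct and is precisely the content of Lemma~\ref{lem:saddlesjoindisjoin}(ii), i.e.\ $s\circ d(a_1,a_2)=0$. But you then write that extending to arbitrary saddle chains ``will require an inductive setup that tracks an evolving coloring of the boundary'' --- and you do not supply that setup. This is the heart of the argument, because the offending disjoin factor that kills the composite might not sit on the first saddle: you must be able to \emph{push} join and disjoin factors past saddles that act on other arcs, and transform those that share one arc with the saddle. These are the commutation relations of Lemma~\ref{lem:saddlesjoindisjoin}(iii)--(v). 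Without stating and proving them, the inductive step you gesture at cannot be carried out, and the vanishing of $\Hom_{\overline{\BN}_\alpha}(e_P,e_Q)$ for $\partial P\neq\partial Q$ is not established.

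\textbf{Walk bijection.} Here you do propose a genuinely different route: rather than the paper's device of adding the alternating sequence $s=(0,1,0,1,\ldots)$ to convert a boundary-partition sequence $b$ into a walk sequence $w=b+s$ and then verifying returning by a removal argument, you suggest choosing a ``maximally nested'' canonical representative and traversing the boundary points, with a $\pm1$ assigned from a combination of left/right endpoint and block membership. This could be made to work, but as written it is underspecified: the rule producing the sign is not pinned down, well-definedness independent of the choice of canonical matching is not checked, and ``a standard Catalan-type argument'' is invoked rather than given for returning-ness, for the fixed initial step, and for bijectivity. The paper's approach is cleaner because $w=b+s$ is manifestly well-defined on isomorphism classes (it depends only on $\partial P$), and the removal of $(0,0)/(1,1)$ pairs from $b$ translates transparently into removal of $(0,1)/(1,0)$ pairs from $w$, yielding returning-ness at once. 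If you keep your encoding, you should at minimum verify that it factors through $\partial P$ and check the returning/first-step conditions directly.
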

\begin{cor}\label{cor:isoClassesWalks}
	The set of returning walks on $\ZZ$ of length $2n$ starting and ending at 0 indexes a basis for $\Sk_\alpha(\mathbb{H}, (2n,0))\cong \Tr(\overline{\BN}_\alpha(2n))$. In particular, the skein module $\Sk_\alpha(\mathbb{H}, (2n,0))$ has rank $\binom{2n}{n}$ over $\kk=K[\alpha^{\pm1}]$.
\end{cor}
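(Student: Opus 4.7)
The plan is to combine the three preceding propositions and compute the trace of $\overline{\BN}_\alpha(2n)$ explicitly. By Proposition \ref{prop:skmodIdempotents}, it suffices to analyze $\Tr(\overline{\BN}_\alpha(2n))$. First I would use completeness from Proposition \ref{prop:structureOfIdem}(ii) together with orthogonality (i) to conclude that in the Karoubi completion every planar matching $M$ splits as $M \cong \bigoplus_P e_P$, where $P$ ranges over the $2^{n-1}$ unordered partitions of $\arcs(M)$ into at most two blocks.

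Next I would observe that the endomorphism algebra $A_\alpha = K[\alpha^{\pm 1}][x]/(x^2-\alpha)$ is a (quadratic) field extension of $\kk=K[\alpha^{\pm 1}]$, since $\alpha$ is not a square in $\kk$. In particular $A_\alpha$ is local, so each $e_P$ has local endomorphism ring by Proposition \ref{prop:structureOfIdem}(iii), whence $e_P$ is an indecomposable object of $\overline{\BN}_\alpha(2n)$, and the category is Krull--Schmidt. In any Krull--Schmidt $\kk$-linear category, the trace decomposes as a direct sum over isomorphism classes of indecomposables:
\[
\Tr(\overline{\BN}_\alpha(2n)) \;\cong\; \bigoplus_{[e_P]} \End_{\overline{\BN}_\alpha}(e_P) \big/ [\End_{\overline{\BN}_\alpha}(e_P),\End_{\overline{\BN}_\alpha}(e_P)] \;\cong\; \bigoplus_{[e_P]} A_\alpha,
\]
since $A_\alpha$ is commutative so the commutator subspace vanishes.

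By Proposition \ref{prop:isoClassesOfIdem}, the isomorphism classes $[e_P]$ are in bijection with returning walks on $\ZZ$ of length $2n$ whose first step is positive; there are exactly $\binom{2n-1}{n-1}$ of these. Since $A_\alpha$ is free of rank $2$ over $\kk$ with basis $\{1,x\}$, the trace is free of rank
\[
2\cdot\binom{2n-1}{n-1} \;=\; \binom{2n-1}{n-1}+\binom{2n-1}{n} \;=\; \binom{2n}{n}.
\]
To upgrade to a basis indexed by \emph{all} returning walks (with no restriction on the first step), I would pair the two basis vectors $\{1,x\}$ of each factor $A_\alpha$ with a walk and its ``negation'' (the walk obtained by reflecting through the horizontal axis): the basis vector $1\in\End(e_P)$ corresponds to the upward-starting walk that indexes the isomorphism class of $e_P$, and $x\in\End(e_P)$ corresponds to the reflected walk that starts downward. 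This assembles a basis of $\Sk_\alpha(\mathbb{H},(2n,0))$ in canonical bijection with the full set of $\binom{2n}{n}$ returning walks of length $2n$.

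The main obstacle is verifying the Krull--Schmidt decomposition of the trace in the precise form needed: one must confirm that morphisms between non-isomorphic $e_P$, $e_Q$ vanish not only within a single object (as in Proposition \ref{prop:structureOfIdem}(i)) but across distinct planar matchings as well; this is exactly the content of Proposition \ref{prop:isoClassesOfIdem}, combined with Corollary \ref{cor:diskCatKar} (which reduces arbitrary morphisms in $\overline{\BN}_\alpha$ to compositions of endomorphisms with sequences of saddles). A secondary technical point is the explicit identification of the $x$-action in $\End(e_P)=A_\alpha$ with the walk-reflection, which can be made precise by analyzing the effect of placing a dot on an arc of $M$ on the coefficients in the idempotent formula of Proposition \ref{prop:structureOfIdem}; however, for proving the stated rank and basis count this last identification is inessential and can be postponed.
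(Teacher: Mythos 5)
Your overall accounting is right, but the Krull--Schmidt framing introduces a genuine error. The ring $A_\alpha = K[\alpha^{\pm1}][x]/(x^2-\alpha)$ is \emph{not} a field and is not local: since $\alpha=x^2$, we have $A_\alpha\cong K[x^{\pm1}]$, a Laurent polynomial ring, which has infinitely many maximal ideals (e.g.\ $(x-c)$ for $c\in K^\times$). You seem to have conflated the Laurent polynomial ring $\kk=K[\alpha^{\pm1}]$ with the rational function field $K(\alpha)$; the irreducibility of $x^2-\alpha$ over $\kk$ does not make the quotient a field when $\kk$ is not one. Consequently, the idempotents $e_P$ do \emph{not} have local endomorphism rings, so the Krull--Schmidt argument you rely on to decompose the trace does not apply.

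The decomposition of $\Tr(\overline{\BN}_\alpha(2n))$ into $\bigoplus_{[\mathbf{e}]}\End(\mathbf{e})$ does not require Krull--Schmidt: it follows directly from the stronger vanishing $\Hom_{\overline{\BN}_\alpha}(e_P,e_Q)=0$ when $e_P\not\cong e_Q$, which is precisely Lemma~\ref{lem:isoseqofsaddles}.(iii) and is what Corollary~\ref{cor:skmodViaIsoClasses} records. Note that the references you give in your closing paragraph --- Proposition~\ref{prop:isoClassesOfIdem} and Corollary~\ref{cor:diskCatKar} --- are not quite enough on their own: Proposition~\ref{prop:isoClassesOfIdem} characterizes \emph{when} two idempotents are isomorphic, and Corollary~\ref{cor:diskCatKar} factors morphisms through saddle sequences, but the $\Hom=0$ claim requires Lemma~\ref{lem:saddlesjoindisjoin}.(ii) to show a saddle annihilates a disjoin, which is assembled into Lemma~\ref{lem:isoseqofsaddles}.(iii). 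Once you replace the locality/Krull--Schmidt step by a direct citation of Corollary~\ref{cor:skmodViaIsoClasses}, the rest of your argument --- counting iso classes via Proposition~\ref{prop:isoClassesOfIdem}/Lemma~\ref{lem:walksOnZ}, noting $\End(\mathbf{e})\cong A_\alpha$ free of rank $2$, the identity $2\binom{2n-1}{n-1}=\binom{2n}{n}$, and matching $\{\mathbf{e},\mathbf{\dot e}\}$ with a walk and its reflection --- reproduces the paper's proof.
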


In the following, we prove Proposition \ref{prop:structureOfIdem}. The separability idempotent $\Delta(H^{-1})=\frac{1}{2}(1\otimes 1+\frac{1}{\alpha}x\otimes x)\in A_\alpha^{\otimes 2}$ will play a central role in constructing the idempotents of $\End(M)\cong A_\alpha^{\otimes n}$.
\begin{con}[Join and disjoin]\label{con:joindisjoin}
	First consider the case $n=1$. There is a unique planar matching $M$ on $2$ points. The endomorphism algebra $\End(M)\cong A_\alpha$ has the idempotent $1\in A_\alpha$ which corresponds to the identity sheet on the unique arc. The identity does not decompose further\footnote{We assume that $\sqrt{\alpha}\notin K$. Evaluating $\alpha\mapsto 1$, we recover Lee theory where the identity decomposes into root projectors. See also Remark \ref{rmk:genSemisimpleFA} below.}. In the following, let $n\geq 2$. Let $M$ be a planar matching on $2n$ points and let $b,c\in \arcs(M)$. We use the following notation for elements in $\End(M)\cong A_\alpha^{\otimes n}$. We write $x_b$ for the element in $\End(M)$ given by the dotted sheet on $b$ and the identity on the other arcs. Under the identification $\End(M)\cong A_\alpha^{\otimes n}$, we have that $x_b$ is the elementary tensor with $x\in A_\alpha$ in the factor corresponding to the arc $b$ and $1\in A_\alpha$ on the other factors. For a product, we use the multiplication in the algebra $A_\alpha^{\otimes n}$.
    Define the \emph{join} and the \emph{disjoin} of arcs $b$ and $c$ as the endomorphisms
    \begin{align*}
        j(b,c) =  \tfrac{1}{2} \left( \id + \tfrac{1}{\alpha} x_b x_c\right)\in \End(M) \quad\quad \text{and} \quad\quad d(b,c) = \tfrac{1}{2} \left( \id - \tfrac{1}{\alpha} x_b x_c\right)\in \End(M)
    \end{align*}
	respectively. Indeed, the join and disjoin are idempotents since for the doubly dotted sheet on an arc $b$, we have $x_b^2=\alpha \cdot \id \in \End(M)$. The join $j(a,b)$ is the separability idempotent in the factor $A_\alpha^{\otimes2}$ that corresponds to the arcs $a$ and $b$ and the identity on the other arcs. Observe that the join and the disjoin are homogeneous, if we endow $A_\alpha$ with a grading with $\deg(1)=0$, $\deg(x)=2$ and $\deg(\alpha)=4$.
\end{con}

\begin{lem}\label{lem:join}
	Let $n\geq 2$ and let $M$ be a planar matching on $2n$ points.
	\begin{enumerate}[(i)]
		\item Let $b$ and $c$ be arcs of $M$. Then, $j(b,c)d(b,c)=0$.
		\item Let $B=\{b_1,\dots, b_k\}\subset \arcs(M)$ be a set of $k$ arcs. Then, 
		\begin{align*}
			j(B) := \prod_{i=1}^{k-1} j(b_i,b_{i+1}) = \frac{1}{2^{k-1}}\sum_{\substack{D\subseteq B\\|D|\text{ even}}} \alpha^{-|D|/2}  x_D
		\end{align*}		
        is independent of the enumeration of the arcs in $B$ and $j(B)j(b,b')=j(B)$ for $b,b'\in B$. 
		\item Let $B$ be a set of arcs with $b,b'\in B$ and let $c\in \arcs(M)\setminus B$. Then $j(B)d(b,c)=j(B)d(b',c)$.
	\end{enumerate}
\end{lem}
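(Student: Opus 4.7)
Part (i) is a routine computation: using that $x_b^2 = \alpha\cdot\id$ in $\End_{\BN_\alpha}(M)$ (since the doubly-dotted sheet evaluates to $\alpha$ times the undotted sheet, by the skein relations for $A_\alpha$), we get
\begin{align*}
j(b,c)\,d(b,c) \;=\; \tfrac{1}{4}\!\left(\id + \tfrac{1}{\alpha}x_bx_c\right)\!\left(\id - \tfrac{1}{\alpha}x_bx_c\right) \;=\; \tfrac{1}{4}\!\left(\id - \tfrac{1}{\alpha^{2}}x_b^{2}x_c^{2}\right) \;=\; 0.
\end{align*}

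For part (ii), the main point is to show that the product $\prod_{i=1}^{k-1} j(b_i,b_{i+1})$ along a given ``path'' enumeration can be rewritten as a symmetric ``star'' product around any chosen arc. The key lemma is the identity
\begin{align*}
j(b_1,b_2)\,j(b_2,b_3) \;=\; j(b_1,b_2)\,j(b_1,b_3),
\end{align*}
which one checks directly by expanding, multiplying out the four terms, and using $x_b^{2}=\alpha\cdot\id$. Applied iteratively, this identity converts the path product along $(b_1,b_2,\dots,b_k)$ into the star product $\prod_{i=2}^{k} j(b_1,b_i)$. Expanding that star product factor by factor, grouping terms according to the parity of $|S|$ in the exponent $x_{b_1}^{|S|}$ via $x_{b_1}^{|S|}=\alpha^{\lfloor|S|/2\rfloor}\cdot x_{b_1}^{|S|\bmod 2}$, and reparameterizing $D = S$ or $D = S\cup\{b_1\}$ (so that $|D|$ is always even), one reads off precisely the claimed symmetric formula. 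Because this formula is manifestly symmetric in $B$, the product is independent of the enumeration. The equality $j(B)j(b,b')=j(B)$ then follows in two lines: choose any enumeration of $B$ beginning $b,b',\ldots$; by independence of enumeration $j(B)$ already contains $j(b,b')$ as a factor, and $j(b,b')$ is idempotent.

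Part (iii) reduces to showing $j(B)\,x_b = j(B)\,x_{b'}$ for all $b,b'\in B$, because
\begin{align*}
j(B)\,d(b,c) - j(B)\,d(b',c) \;=\; \tfrac{x_c}{2\alpha}\,j(B)\bigl(x_{b'}-x_b\bigr).
\end{align*}
The identity $j(B)\,x_b = j(B)\,x_{b'}$ is then proved by direct computation from the formula in (ii): expanding $j(B)\,x_b$ and splitting the sum according to whether $b\in D$ or not (using $x_D x_b = \alpha\, x_{D\setminus\{b\}}$ or $x_{D\cup\{b\}}$ respectively), a short reindexing shows
\begin{align*}
j(B)\,x_b \;=\; \tfrac{1}{2^{k-1}}\!\sum_{\substack{D\subseteq B\\|D|\text{ odd}}}\alpha^{-(|D|-1)/2}\,x_D,
\end{align*}
which is visibly independent of $b$. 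The only mildly delicate part of the whole argument is the path-to-star rewriting in (ii); once that is in hand, the rest is bookkeeping with the combinatorial identities for $x_b^{2}=\alpha\cdot\id$.
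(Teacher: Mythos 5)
Your proof is correct, and it takes a somewhat more explicit route than the paper for parts (ii) and (iii). Part (i) matches the paper. For (ii), the paper simply asserts that in the product of $k-1$ joins ``each of the even dottings occurs exactly once''---a combinatorial fact about subforests of a path graph---and leaves it as an observation; your path-to-star rewriting via the identity $j(b_1,b_2)\,j(b_2,b_3)=j(b_1,b_2)\,j(b_1,b_3)$ makes that bijection explicit (the star around a fixed $b_1$ is trivial to expand), and enumeration-independence then falls out of the manifest symmetry of the resulting formula, so your version fills a small gap. For (iii), the paper's route is leaner: it verifies the single-pair identity $j(b,b')\,d(b,c)=j(b,b')\,d(b',c)$ by a one-line expansion and inserts $j(B)=j(B)j(b,b')$ from (ii); your reduction to $j(B)x_b=j(B)x_{b'}$ by a reindexing of the even-subset sum is equivalent, and is essentially a special case of the dot-symmetry the paper later records in Lemma~\ref{lem:primitive}. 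Both arguments are sound; you trade a little brevity in (iii) for a more self-contained (ii).
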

\begin{proof}
	We obtain \textit{(i)} directly from the definition. For \textit{(ii)}, observe that a product of $k-1$ joins is a sum of $2^{k-1}$ terms each of which has an even number of dots, and each of the even dottings occurs exactly once.  The factors of $1/\alpha$ follow from the homogeneity of the join, the factor $1/2^{k-1}$ is clear. The expression is indeed independent of the enumeration of arcs in $B$. In particular we can choose $b_{k-1}=b$ and $b_{k}=b'$. This shows \textit{(ii)}. Now, let $b,b'\in B$ and $c\in \arcs(M)\setminus B$. Then $j(b,b')d(b,c)=j(b,b')d(b',c)$. By \textit{(ii)}, we have $j(B)=j(B)j(b,b')$ and obtain \textit{(iii)}.
\end{proof}
With Lemma \ref{lem:join}, we can think of joined arcs as belonging to the same block and disjoined arcs belonging to different blocks. To join a set of $k$ arcs, we need precisely $k-1$ joins. Next, we show that we can at most choose two blocks of arcs. For this, we observe the following properties.
\begin{lem}\label{lem:disj}
	Let $n\geq 2$ and let $M$ be a planar matching on $2n$ points. 
	\begin{enumerate}[(i)]
		\item Let $a,b,c$ be arcs of $M$. Then, $d(a,b)d(b,c)d(c,a)=0$. \label{sub:tripledisj}
		\item Let $e$ be an idempotent for $M$ given by partition into three blocks. That is,
		\begin{align*}
			e = j(A)j(B)j(C)d(a,b)d(b',c)d(c',a')
		\end{align*}
		for disjoint sets of arcs $A,B,C$ with $a,a'\in A$, $b,b'\in B$ and $c,c'\in C$. Then, $e=0$.
		\item Let $\{B,C\}$ be a partition of $\arcs(M)$ with $C\neq \emptyset$ allowed. Fix arcs $b\in B$ and $c\in C$. Then,
		\begin{align*}
			j(B)j(C)d(b,c) = \frac{1}{2^{n-1}}\sum_{\substack{D\subseteq \arcs(M)\\|D|\text{ even}}} (-1)^{|B\cap D|}\alpha^{-|D|/2} x_D.
		\end{align*}
	\end{enumerate}
\end{lem}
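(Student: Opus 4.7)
The three parts of the lemma build on each other, so I plan to prove them in the stated order.

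For \emph{(i)}, the key idea is to note that the commutative algebra $\End_{\BN_\alpha}(M)\cong A_\alpha^{\otimes n}$ contains the elements $u:=\tfrac{1}{\alpha}x_ax_b$, $v:=\tfrac{1}{\alpha}x_bx_c$, $w:=\tfrac{1}{\alpha}x_cx_a$, and that these generate a copy of the Klein four-group: using $x_b^2=x_c^2=x_a^2=\alpha$, a short calculation gives $u^2=v^2=w^2=1$ and $uv=w$, $vw=u$, $uw=v$. Since $d(a,b)d(b,c)d(c,a)=\tfrac{1}{8}(1-u)(1-v)(1-w)$, expanding and using these relations collapses the expression to zero; the first factor gives $(1-u)(1-v)=1-u-v+w$, and multiplying by $(1-w)$ and using $w^2=1$, $uw=v$, $vw=u$ produces
\[
1 - u - v + w - w + uw + vw - w^2 \;=\; 1 - u - v + w - w + v + u - 1 \;=\; 0.
\]

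For \emph{(ii)}, the plan is to consolidate the three disjoins onto a fixed triple of arcs $(a,b,c)$ and then apply \emph{(i)}. Explicitly, since all joins and disjoins lie in the commutative algebra $A_\alpha^{\otimes n}$, we may freely reorder factors. Applying Lemma~\ref{lem:join}(iii) three times, each time absorbing one of the joins $j(A), j(B), j(C)$ into a disjoin to swap one endpoint with another representative of the same block, we obtain
\[
j(A)j(B)j(C)\,d(a,b)d(b',c)d(c',a') \;=\; j(A)j(B)j(C)\,d(a,b)d(b,c)d(c,a),
\]
which vanishes by \emph{(i)}.

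For \emph{(iii)}, the proof is a direct expansion. Substituting the formula for $j(B)$ and $j(C)$ from Lemma~\ref{lem:join}(ii) and $d(b,c)=\tfrac{1}{2}(\id-\tfrac{1}{\alpha}x_bx_c)$, one obtains
\[
j(B)j(C)d(b,c)=\tfrac{1}{2^{n-1}}\!\!\sum_{\substack{D_B\subseteq B,\, D_C\subseteq C\\|D_B|,|D_C|\text{ even}}}\!\!\alpha^{-(|D_B|+|D_C|)/2}\,x_{D_B\cup D_C}\;-\;\tfrac{1}{2^{n-1}}\!\!\sum_{\substack{D_B\subseteq B,\, D_C\subseteq C\\|D_B|,|D_C|\text{ even}}}\!\!\alpha^{-(|D_B|+|D_C|+2)/2}\,x_{D_B}x_{D_C}x_bx_c.
\]
In the second sum, the substitution $D_B':=D_B\triangle\{b\}$, $D_C':=D_C\triangle\{c\}$ is a bijection onto pairs with $|D_B'|,|D_C'|$ both odd. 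Using $x_b^2=\alpha=x_c^2$ to absorb each factor $x_b$ or $x_c$ that lands on an already-dotted arc, and tracking the exponent of $\alpha$ in each of the four cases (depending on whether $b\in D_B$ and $c\in D_C$), one checks that the power of $\alpha$ is always $-(|D_B'|+|D_C'|)/2$. Combining the two sums over all even-size $D\subseteq \arcs(M)$ via $D=D_B\cup D_C$, the first produces the terms with $|B\cap D|$ even and the second produces the terms with $|B\cap D|$ odd with a minus sign, giving the claimed factor $(-1)^{|B\cap D|}$.

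The only genuinely delicate step is the bookkeeping in \emph{(iii)}: keeping track of the interaction between the parity of $|D_B|$ and the $\alpha$-factor produced whenever the multiplication by $x_b$ (resp.\ $x_c$) lands on an arc already in $D_B$ (resp.\ $D_C$). This is the main obstacle, but it is resolved by the case analysis above.
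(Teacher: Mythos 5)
Your proof is correct and follows essentially the same route as the paper for all three parts: a direct expansion for \emph{(i)}, index-swapping via Lemma~\ref{lem:join}(iii) followed by \emph{(i)} for \emph{(ii)}, and a term-by-term expansion for \emph{(iii)}. Two small remarks. For \emph{(i)}, your ``Klein four-group'' packaging of $u,v,w$ is a pleasant way to organize what the paper just does by brute-force expansion, and the identities $u^2=v^2=w^2=1$, $uv=w$, etc., are all correct consequences of $x_a^2=x_b^2=x_c^2=\alpha$. For \emph{(iii)}, you make explicit a point the paper glides over: the bijection $D_B\mapsto D_B\triangle\{b\}$, $D_C\mapsto D_C\triangle\{c\}$ between the even-even and odd-odd pairs, together with the verification that the $\alpha$-exponent comes out uniformly as $-(|D'_B|+|D'_C|)/2$ in all four sub-cases. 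This is indeed the delicate bookkeeping step, and your case analysis handles it correctly; it also implicitly uses that subsets $D$ with $|D\cap B|$ and $|D\cap C|$ of opposite parity have $|D|$ odd and hence never arise, which is consistent with the paper's claim that ``exactly half the terms carry a minus sign.'' Finally, in \emph{(ii)} you invoke Lemma~\ref{lem:join}(iii) \emph{three} times (once each for $A$, $B$, $C$), which is the correct count; the paper says ``twice,'' which undercounts because $d(c',a')$ requires two swaps. This is a harmless slip in the paper, and your version is the more careful one.
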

\begin{proof}
	For \textit{(i)} we compute
    \begin{align*}
        d(a,b)d(b,c)d(c,a) =&\ \frac{1}{8} \left( \id - \tfrac{1}{\alpha} x_ax_b \right) \left( \id - \tfrac{1}{\alpha} x_bx_c \right)  \left( \id - \tfrac{1}{\alpha} x_ax_c \right) \\
        =&\  \frac{1}{8} \left( \id - \tfrac{1}{\alpha} x_ax_b - \tfrac{1}{\alpha} x_b x_c +\tfrac{1}{\alpha} x_a x_c - \tfrac{1}{\alpha} x_a x_c + \tfrac{1}{\alpha} x_b x_c + \tfrac{1}{\alpha} x_a x_b - \tfrac{1}{\alpha^2} x_a^2 x_c^2\right)=0.
    \end{align*}
	For \textit{(ii)} we apply Lemma \ref{lem:join}.\textit{(iii)} twice and obtain by \textit{(i)}
	\begin{align*}
		e = j(A)j(B)j(C)d(a,b)d(b',c)d(c',a') = j(A)j(B)j(C)d(a,b)d(b,c)d(c,a) = 0.
	\end{align*}
	Next, we show \textit{(iii)}. We proceed similarly to the case $C=\emptyset$ in Lemma \ref{lem:join}.\textit{(ii)}. Joins and disjoins only differ by a sign. Hence, for $C\neq \emptyset$, we obtain the same expression up to sign. Exactly half of the terms carry a minus sign. Those are the terms which have an odd number of dots on $B$ and on $C$. We obtain that the sign is determined by $|B\cap D|\equiv |C\cap D| \mod 2$.
\end{proof}
As a direct consequence, we obtain the following.
\begin{cor}\label{cor:orthog}
    Let $n\geq 2$ and let $M$ be a planar matching on $2n$ points. Consider two different partitions $P=\{B_1,C_1\}$ and $Q=\{B_2,C_2\}$ of the set $\arcs(M)$ with $C_i$ allowed to be empty. Then the associated idempotents $e_P,e_Q\in \End(M)$ satisfy
    \begin{align*}
        e_P R e_Q=0=e_Q R e_P
    \end{align*}
    with $R=\End(M)$.
\end{cor}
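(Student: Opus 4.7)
The plan is to reduce the statement to showing $e_P e_Q = 0$, and then to extract a factor of $j(a,b)\,d(a,b)$ in the product for a cleverly chosen pair of arcs $a,b$. First, by Proposition~\ref{prop:diskCat}.(i), the ring $R=\End_{\BN_\alpha}(M)\cong A_\alpha^{\otimes n}$ is commutative, so $e_P R e_Q = R\, e_P e_Q$ and similarly $e_Q R e_P = R\, e_Q e_P = R\, e_P e_Q$. Hence it suffices to prove $e_P e_Q = 0$.

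Since $P=\{B_1,C_1\}\neq \{B_2,C_2\}=Q$ and a partition of $\arcs(M)$ into two (possibly empty) blocks is determined by its induced equivalence relation on arcs, there must exist a pair of arcs $a,b\in \arcs(M)$ that lie in the same block of one partition but in different blocks of the other. By symmetry, assume $a,b$ lie in the same block $B_1$ of $P$, while $a\in B_2$ and $b\in C_2$ (the case $a\in C_2$, $b\in B_2$ is identical). In particular $C_2\neq\emptyset$, so Lemma~\ref{lem:disj}.(iii) applies to $Q$ and allows us to write $e_Q = j(B_2)\,j(C_2)\,d(a,b)$ by first using Lemma~\ref{lem:join}.(iii) to replace the distinguished pair in the disjoin by $(a,b)$.

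For $e_P$ we distinguish two subcases. If $C_1\neq\emptyset$, then $e_P = j(B_1)\,j(C_1)\,d(b_1,c_1)$ for any $b_1\in B_1, c_1\in C_1$ by Lemma~\ref{lem:disj}.(iii). Since $a,b\in B_1$, Lemma~\ref{lem:join}.(ii) yields $j(B_1)=j(B_1)\,j(a,b)$. If $C_1=\emptyset$, then $B_1=\arcs(M)$ and $e_P=j(\arcs(M))$ by Lemma~\ref{lem:join}.(ii), and again $j(\arcs(M))=j(\arcs(M))\,j(a,b)$. In either case, using commutativity of $R$ to reorder factors and Lemma~\ref{lem:join}.(i) which gives $j(a,b)\,d(a,b)=0$, we conclude
\[
e_P e_Q \;=\; [\,\cdots\,]\cdot j(a,b)\,d(a,b)\cdot[\,\cdots\,] \;=\; 0,
\]
as required.

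The main (and only) subtlety is the case analysis required to cover all possibilities for empty blocks and to justify the existence of the distinguishing pair $(a,b)$; once that combinatorial input is in place, the algebraic identities from Lemmas~\ref{lem:join} and~\ref{lem:disj} together with the commutativity of $R$ do the work immediately.
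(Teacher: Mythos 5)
Your proof is correct. The paper's own proof is a one-line pointer, ``by a similar computation as in Lemma~\ref{lem:disj}.(ii),'' and your argument supplies the missing details. One small point worth noting: the computation in Lemma~\ref{lem:disj}.(ii) ran through the triple-disjoin identity $d(a,b)d(b,c)d(c,a)=0$, but a product $e_Pe_Q$ carries at most two disjoins, so that identity is not the one doing the work here. You correctly identify the mechanism that actually applies: extract a pair $(a,b)$ on which the two partitions disagree, so that $e_P$ absorbs a factor $j(a,b)$ (via Lemma~\ref{lem:join}.(ii)) and $e_Q$ can be written with a factor $d(a,b)$ (via Lemma~\ref{lem:join}.(iii) and Lemma~\ref{lem:disj}.(iii)), and then kill the product using $j(a,b)d(a,b)=0$ from Lemma~\ref{lem:join}.(i), together with commutativity of $\End(M)$. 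Your justification of the combinatorial input---that two distinct two-block partitions (with empty blocks allowed) must disagree on some pair of arcs---is correct and handles the edge case $C_1=\emptyset$ cleanly. This is presumably exactly what the author meant by ``a similar computation,'' so the approach matches the paper's in spirit, and your write-up would serve as the full proof that the paper elides.
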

\begin{proof}
    By a similar computation as in Lemma \ref{lem:disj}.\textit{(ii)}, it follows that $e_Pe_Q=0=e_Qe_P$. By commutativity of $\End(M)$ we obtain the statement.
\end{proof}

\begin{lem} \label{lem:decompId}
	Let $n\geq 2$ and let $M$ be a planar matching on $2n$ points. Then, $\id_M = \sum_P e_P$ in $\End_{\BN_{\alpha}}(M)$, where the sum is taken over partitions of $\arcs(M)$ into exactly one or exactly two blocks.
\end{lem}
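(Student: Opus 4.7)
The plan is to expand $\sum_P e_P$ via the explicit formula for $e_P$ from Proposition~\ref{prop:structureOfIdem}, swap the order of summation, and show that all coefficients vanish except the one of $x_\emptyset = \id_M$.

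First I would unfold the definition and interchange the two sums:
\begin{align*}
\sum_P e_P \;=\; \frac{1}{2^{n-1}}\sum_{\substack{D\subseteq \arcs(M)\\|D|\text{ even}}} \alpha^{-|D|/2}\, x_D \cdot \Bigl(\sum_P (-1)^{|D\cap B|}\Bigr),
\end{align*}
where the inner sum ranges over unordered partitions $P=\{B,C\}$ of $\arcs(M)$ into one or two blocks (with $C=\emptyset$ allowed). Note that the inner summand is well-defined on unordered partitions because $|D\cap B|+|D\cap C|=|D|$ is even, so $(-1)^{|D\cap B|}=(-1)^{|D\cap C|}$.

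Next I would convert the inner sum over unordered partitions into a sum over all subsets $B\subseteq \arcs(M)$. Every unordered partition $\{B,C\}$ with $B\neq C$ corresponds to exactly two ordered partitions giving the same value of $(-1)^{|D\cap B|}$, and the single partition with $C=\emptyset$ also corresponds to two ordered ones, so
\begin{align*}
\sum_P (-1)^{|D\cap B|}\;=\;\frac{1}{2}\sum_{B\subseteq \arcs(M)} (-1)^{|D\cap B|}\;=\;\frac{1}{2}\prod_{b\in \arcs(M)}\bigl(1+(-1)^{[b\in D]}\bigr),
\end{align*}
where the last equality uses the standard factorization. This product equals $2^n$ if $D=\emptyset$ and $0$ otherwise.

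Substituting back, only the term $D=\emptyset$ survives, and
\begin{align*}
\sum_P e_P \;=\; \frac{1}{2^{n-1}}\cdot \alpha^0\cdot x_\emptyset\cdot 2^{n-1}\;=\; x_\emptyset \;=\; \id_M.
\end{align*}
I do not anticipate a main obstacle: the argument is a direct sign-counting computation, entirely independent of the planar matching $M$ and of the Frobenius algebra beyond what is packaged into the formula for $e_P$.
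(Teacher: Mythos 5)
Your proof is correct, but it takes a genuinely different route from the paper's. The paper proves this lemma by expanding $\id_M = \prod_{i<j} \bigl[j(b_i,b_j)+d(b_i,b_j)\bigr]$, noting each factor is the identity, and then using Lemma~\ref{lem:disj} (triple disjoins vanish) together with the idempotency of joins and disjoins to show that the only terms surviving the expansion are exactly the $e_P$ with $P$ a partition into one or two blocks. Your argument instead goes in the reverse direction: you start from the closed-form expression for $e_P$ established in Lemma~\ref{lem:disj}.\textit{(iii)}, interchange the sums over $P$ and $D$, and observe that the inner sum $\sum_P (-1)^{|D\cap B|}$ is a standard orthogonality computation that kills everything but $D = \emptyset$. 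The passage from unordered partitions to a free sum over subsets $B$ (contributing the factor $\tfrac12$) is justified correctly by the observation that $(-1)^{|D\cap B|}$ is symmetric under swapping $B$ and $C$ when $|D|$ is even, and that $\{B,C\}$ never has $B=C$. Your approach is more elementary and self-contained---it bypasses the join/disjoin machinery and does not invoke Lemma~\ref{lem:disj}.\textit{(i)} or~\textit{(ii)}---whereas the paper's expansion makes the structural origin of the $e_P$ more transparent and is the same technique that naturally produces them. Both are valid and roughly equal in length; yours is arguably cleaner as a standalone verification once the formula for $e_P$ is in hand.
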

\begin{proof}
	Write $\arcs(M)=\{b_1,\dots, b_n\}$. Then we can expand
	$\id_M = \prod_{i,j} \bigl[j(b_i,b_j)+ d(b_i,b_j)\bigr]$ since every factor is $\id_M$. Evaluation of the product yields the sum of all possible combinations of joins and disjoins. By Lemma~ \ref{lem:disj}, all terms that do not correspond to a partition of the arcs into one or two blocks are zero. The non-zero terms reduce to products of $n-1$ joins and disjoins by idempotency.
\end{proof}

For $b$ an arc of a planar matching $M$ and an idempotent $e\in \End(M)$, write 
\begin{align*}
    \mathrm{dot}_b(e) = x_b e = e x_b \in \End(M).
\end{align*}
\begin{lem}\label{lem:primitive}
	Let $M$ be a planar matching and let $e$ be the idempotent associated to a partition of the arcs of $M$ into blocks $B,C$ with $C=\emptyset$ allowed. Then
	\begin{enumerate}[(i)]
		\item Let $b$ and $b'$ be arcs in the same block. Then, $\mathrm{dot}_b(e) = \mathrm{dot}_{b'}(e)$.
		\item If $b\in B$ and $c\in C$, then $\mathrm{dot}_b(e)=-\mathrm{dot}_c(e)$.
		\item In the Karoubi completion, $\End_{\overline{\BN}_{\alpha}}(e)\cong A_\alpha$. That is, the idempotent $e$ is $A_\alpha$-primitive.
	\end{enumerate}
\end{lem}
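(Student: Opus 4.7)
The plan is to leverage the factored form $e = j(B)j(C)d(b_0, c_0)$ (valid for any representatives $b_0 \in B$, $c_0 \in C$ by Lemma~\ref{lem:disj}.(iii)) together with two elementary identities, each a direct consequence of $x_a^2 = \alpha$: namely $j(b,b')(x_b - x_{b'}) = 0$ and $d(b,c)(x_b + x_c) = 0$. Indeed, $j(b,b')x_b = \tfrac{1}{2}(x_b + \tfrac{1}{\alpha}x_b^2 x_{b'}) = \tfrac{1}{2}(x_b + x_{b'})$ is manifestly symmetric in $b, b'$, and the disjoin calculation is analogous up to a sign.

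For part (i), with $b, b' \in B$, I would first write $j(B) = j(B) j(b, b')$ via Lemma~\ref{lem:join}.(ii), then multiply by $(x_b - x_{b'})$: the inner $j(b,b')$ annihilates it, and, propagating through the remaining factor $j(C) d(b_0, c_0)$ by commutativity of $\End(M)$, this yields $e(x_b - x_{b'}) = 0$. For part (ii), with $b \in B$ and $c \in C$, Lemma~\ref{lem:join}.(iii) lets me replace the disjoin representatives so that $e = j(B)j(C)d(b,c)$ for the prescribed pair; the identity $d(b,c)(x_b+x_c)=0$ then gives $\mathrm{dot}_b(e) = -\mathrm{dot}_c(e)$ directly.

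For part (iii), since $\End(M) \cong A_\alpha^{\otimes n}$ is commutative, $\End_{\overline{\BN}_\alpha}(e) = e\,\End(M)\,e = e\,\End(M)$ as a unital algebra with unit $e$. By (i) and (ii), each $e x_a$ depends only on the block of $a$, up to sign, so fixing any $b \in B$ we have $e x_D = \pm\, e \cdot x_b^{|D|}$ for every generating monomial $x_D$. Using $x_b^2 = \alpha$, this reduces $e\,\End(M)$ to the $\kk$-span of $e$ and $e x_b$, subject to $(e x_b)^2 = \alpha\, e$. Linear independence of $e$ and $e x_b$ in $\End(M)$ follows from a parity-of-support argument in the basis $\{x_D\}$: the defining formula for $e$ is supported on subsets $D$ of even size, while $e x_b$ is supported on subsets of odd size. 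Hence $\End_{\overline{\BN}_\alpha}(e) \cong \kk[y]/(y^2 - \alpha) = A_\alpha$.

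The main point requiring care is the unified form $e = j(B)j(C)d(b,c)$ for arbitrary representatives $b \in B, c \in C$, which underlies both (i) and (ii); once this is established, (iii) reduces to bookkeeping and a short parity check. The edge case $n = 1$ is immediate: there is only the one-block partition, $e = \id_M$, and $\End(M) \cong A_\alpha$ holds by Proposition~\ref{prop:diskCat}.(i).
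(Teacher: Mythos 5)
Your proof is correct and takes essentially the same route as the paper: write $e = j(B)j(C)d(b,c)$ for arbitrary block representatives (using Lemma~\ref{lem:join}.(ii)--(iii)), push $x_b$ through the relevant join or disjoin factor to obtain the symmetry in (i) and antisymmetry in (ii), and for (iii) reduce any endomorphism of $e$ to a $\kk$-combination of $e$ and $\mathrm{dot}_b(e)$ subject to $\mathrm{dot}_b(e)^2 = \alpha e$. Your explicit parity-of-support argument for the linear independence of $e$ and $e x_b$ is a useful addition the paper leaves implicit; without it one only gets an upper bound on the rank of $\End_{\overline{\BN}_\alpha}(e)$.
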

\begin{proof}
	Write $e=j(B)j(C)d(b,c)$. For \textit{(i)} we assume without loss of generality that $C\neq \emptyset$ and $b,b'\in B$. Observe that
	\begin{align*}
        \mathrm{dot}_b(e) &= x_b e = x_b j(b,b') e = \tfrac{1}{2} (x_b +x_{b'}) e
	\end{align*}
	is symmetric in $b$ and $b'$. Similarly for $b\in B$ and $c\in C$,
	\begin{align*}
        \mathrm{dot}_b(e) &= x_b e = x_b d(b,c) e = \tfrac{1}{2} (x_b -x_{c}) e
	\end{align*}
	is antisymmetric in $b$ and $c$. This shows \textit{(ii)}. For \textit{(iii)}, we note that an endomorphism $f\in \End_{\overline{\BN}_{\alpha}}(e)$ is an endomorphism $f\in \End(M)$ with $ef=f$. It is a $K[\alpha^{\pm1}]$-linear combination of products of $\mathrm{dot}_b(e)$. By \textit{(i)} and \textit{(ii)}, this reduces to $f=\mu\cdot e+\nu\cdot\mathrm{dot}_b(e)$ for some $\mu,\nu\in K[\alpha^{\pm1}]$ and some $b\in B$.
\end{proof}
We are now ready to prove the following.
\begin{proof}[Proof of Proposition \ref{prop:structureOfIdem}]
	First consider $n\geq 2$. The general form has been shown in Lemma \ref{lem:disj}. Orthogonality is Corollary \ref{cor:orthog}, completeness is shown in Lemma \ref{lem:decompId}, and $A_\alpha$-primitivity is Lemma \ref{lem:primitive}.\textit{(iii)}. For $n=1$, we identify the identity as the idempotent associated to the partition with the unique arc in block $B$ and $C=\emptyset$. The general form, orthogonality and completeness are clear. $A_\alpha$-primitivity follows as for $n\geq2$.
\end{proof}
\begin{rmk}
	Consider $2n$ boundary points with a choice of boundary point $1 \subset \partial B^2$. For a planar matching $M$, let $b_0$ be the arc containing the boundary point 1. By Lemma \ref{lem:primitive}, for every arc $b\in \arcs(M)$, we have $\mathrm{dot}_b(e) = \pm \mathrm{dot}_{b_0}(e)$. In the following, we write $\dot{e} = \mathrm{dot}_{b_0}(e)$ and always assume that partitions $P=\{B,C\}$ of $\arcs(M)$ (with $C=\emptyset$ allowed) satisfy $b_0\in B$.
\end{rmk}
Following Remark \ref{rmk:skeletal}, we pass to a skeletonization of $\overline{\BN}_\alpha(B^2,2n)$ and denote the isomorphism class of an idempotent $e$ by $\mathbf{e}$. With a fixed choice of boundary point $1\subset \partial B^2$, we write $\{\mathbf{e},\mathbf{\dot{e}}\}$ for the basis of $\End(\mathbf{e})$ induced by $\{e,\dot{e}\}$. In the following, we prove Proposition \ref{prop:isoClassesOfIdem} and Corollary \ref{cor:isoClassesWalks}.
\begin{lem}\label{lem:saddlesjoindisjoin}
	Consider planar matchings $M$ and $N$ differing by exactly two arcs  $a_1,a_2\in \arcs(M)$ and $b_1,b_2\in \arcs(N)$. Write $s$ for the saddle from $M$ to $N$ changing $(a_1,a_2)$ to $(b_1,b_2)$. Let $c,c'\in \arcs(M)\cap\arcs(N)$ be common arcs of $M$ and $N$. Then saddles, joins and disjoins satisfy the following.
	\begin{enumerate}[(i)]
		\item $j(b_1,b_2)\circ s=s=s \circ j(a_1,a_2)$. 
		\item $d(b_1,b_2)\circ s=0=s \circ d(a_1,a_2)$.
		\item $j(c,c')\circ s=s\circ j(c,c')$ and $d(c,c')\circ s=s\circ d(c,c')$
		\item $j(b_1,c) \circ s = j(b_2,c)\circ s = s\circ j(a_1,c) = s\circ j(a_2,c)$ 
		\item $d(b_1,c) \circ s = d(b_2,c)\circ s = s\circ d(a_1,c) = s\circ d(a_2,c)$
		\item Denote by $\overline{s}$ the reverse saddle decorated by $H^{-1}=\frac{1}{2\alpha}x$. Then, $\overline{s}\circ s=j(a_1,a_2)$ and $s\circ \overline{s}=j(b_1,b_2)$.
	\end{enumerate}	
\end{lem}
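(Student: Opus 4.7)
The plan is to first make a topological observation about the saddle cobordism $s$, then derive items (i)--(v) directly from it, and finally prove (vi) via the neck-cutting relation.

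The key observation is that the saddle cobordism $s \colon M \to N$ is, near the surgery region, a single connected disk whose boundary meets the four arcs $a_1, a_2, b_1, b_2$. Consequently, a dot placed on the cylinder part of any of these four arcs lies on the same connected component of the composite cobordism, so it may be freely slid to any of the other three. Combined with the relation $x^2 = \alpha \cdot \id$ in $A_\alpha$, this yields the identities
\begin{align*}
    s \circ x_{a_1} = s \circ x_{a_2} = x_{b_1} \circ s = x_{b_2} \circ s, \qquad s \circ (x_{a_1} x_{a_2}) = \alpha \cdot s = (x_{b_1} x_{b_2}) \circ s.
\end{align*}

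From these identities, (i) and (ii) follow by direct expansion:
\begin{align*}
    s \circ j(a_1, a_2) = \tfrac{1}{2}\bigl(s + \tfrac{1}{\alpha} s \circ (x_{a_1} x_{a_2})\bigr) = \tfrac{1}{2}(s + s) = s,
\end{align*}
and $s \circ d(a_1, a_2) = \tfrac{1}{2}(s - s) = 0$; the analogous identities $j(b_1,b_2) \circ s = s$ and $d(b_1,b_2) \circ s = 0$ follow by the symmetric computation. For (iii), the arcs $c, c'$ are common to $M$ and $N$, so the dotted sheets $x_c, x_{c'}$ lie on components disjoint from the saddle disk and hence commute with $s$. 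For (iv) and (v), the dot-sliding identity $x_{b_i} \circ s = s \circ x_{a_j}$ together with the commutativity of $x_c$ with $s$ implies that all four expressions on each side coincide after expanding the joins (resp.\ disjoins).

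The main content is (vi). The composition $\bar s \circ s \colon M \to M$ agrees with $\id_M$ outside the surgery region, while near $(a_1, a_2)$ the two stacked saddle disks together form a tube connecting the two identity cylinders over $a_1$ and $a_2$, decorated by $H^{-1} = \tfrac{1}{2\alpha} x$. This tube is separating (cutting it along a small meridional circle splits the component into the two original strips, each capped off by a disk), so the separating case of the neck-cutting relation from Definition~\ref{def:skeinmod} applies and yields
\begin{align*}
    \Delta\bigl(\tfrac{1}{2\alpha} x\bigr) = \tfrac{1}{2\alpha}\bigl(x \otimes x + \alpha \cdot 1 \otimes 1\bigr) = \tfrac{1}{2}\bigl(1 \otimes 1 + \tfrac{1}{\alpha}\, x_{a_1} x_{a_2}\bigr),
\end{align*}
which is exactly $j(a_1, a_2)$ after identifying the two tensor factors with the decorations on the cylinders over $a_1$ and $a_2$. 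The identity $s \circ \bar s = j(b_1, b_2)$ follows by the symmetric argument with the tube now lying between the cylinders over $b_1$ and $b_2$. The main subtlety is visualizing the tube and verifying that the resulting curve is separating; once this is in place, the neck-cutting computation is routine and the factor $H^{-1}$ was designed precisely to produce the separability idempotent.
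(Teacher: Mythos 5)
Your proof is correct and takes essentially the same approach as the paper: the key observation that dots slide freely through the connected saddle gives (i)--(v), and the neck-cutting relation applied to the tube with the $H^{-1}$ decoration yields the separability idempotent for (vi). The only cosmetic difference is that the paper deduces (ii), (iv), (v) by invoking (i) together with Lemma~\ref{lem:join}, whereas you carry out the direct expansions; both are valid and equivalent.
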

\begin{proof}
	We will see that saddles behave similarly to joins. Indeed, since dots can move through saddles and saddles are connected, the dot on $a_1$ in $j(a_1,a_2)$ can be moved to $a_2$ and canceled for a factor of $\alpha$. This yields $s\circ j(a_1,a_2)=s$ and similarly $j(b_1,b_2)\circ s = s$. Now, using \textit{(i)}, we obtain \textit{(ii)} directly from Lemma \ref{lem:join}.\textit{(i)}. Since $s$ is the identity on $c$ and $c'$, we immediately obtain \textit{(iii)}. Again by applying \textit{(i)}, we obtain \textit{(iv)} and \textit{(v)} from Lemma \ref{lem:join}. For \textit{(vi)}, observe that the composition of a saddle and its reverse create a tube that can be neck-cut. With decoration by $H^{-1}$, this precisely yields the separability idempotent in the corresponding $A_\alpha^{\otimes 2}$ and the identity everywhere else. This is equal to the join.
\end{proof}

Let $M$ and $N$ be planar matchings as in Lemma \ref{lem:saddlesjoindisjoin} with a saddle $s$ from $M$ to $N$. Let $P$ and $Q$ be partitions of $\arcs(M)$ and $\arcs(N)$ respectively, into one or two blocks. We say that $s$ \emph{respects the partitions} $P$ and $Q$ if the partitions differ precisely by $(a_1,a_2)$ and $(b_1,b_2)$. That is, for $P=\{A_1,A_2\}$ and $Q=\{B_1,B_2\}$, $A_1\setminus \{a_1,a_2\} = B_1\setminus \{b_1,b_2\}$ and $A_2=B_2$. A sequence of saddles respects the partitions if each saddle does.

\begin{lem}\label{lem:isoseqofsaddles}
	Let $n\geq 2$ and let $M$ and $N$ be planar matchings on $2n$ points. Let $P$ and $Q$ be partitions of $\arcs(M)$ and $\arcs(N)$ respectively, into one or two blocks. Then the idempotents $e_P$ and $e_Q$ satisfy the following properties.
	\begin{enumerate}[(i)]
		\item There is an isomorphism $e_P \cong e_Q$ if and only if there exists a sequence of saddles $s=s_m\circ \dots \circ s_1$ from $M$ to $N$ respecting the partitions $P$ and $Q$. In this case, an isomorphism is given by $\varphi= e_Q \circ s\circ e_P$ with inverse $\varphi^{-1} = e_P \circ \overline{s} \circ e_Q$ where $\overline{s}$ is the reversed saddle sequence, each decorated by $H^{-1}$.
		\item There is an isomorphism $e_P \cong e_Q$ if and only if their boundary partitions are the same $\partial P=\partial Q$.
		\item $\Hom_{\overline{\BN}_{\alpha}}(e_P, e_Q) \cong\begin{cases}
			A_\alpha & \text{if }e_P\cong e_Q\\
			0&\text{else.}
		\end{cases} $ 
	\end{enumerate}
\end{lem}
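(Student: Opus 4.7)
I plan to establish four supporting facts, from which all three parts of the lemma will follow:
(A) a saddle sequence respecting the partitions gives an isomorphism $\varphi = e_Q\circ s\circ e_P$ with inverse $e_P\circ \bar s \circ e_Q$;
(B) a respecting saddle preserves the boundary partition;
(C) if $\partial P = \partial Q$ then some saddle sequence from $M$ to $N$ respecting the partitions exists;
(D) if no respecting sequence exists, then $\Hom_{\overline{\BN}_\alpha}(e_P,e_Q)=0$.

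For (A), it suffices by concatenation to treat a single saddle $s\colon M\to N$ respecting $P$ and $Q$, and to verify $s\circ e_P = e_Q\circ s$ along with the analogous relation $\bar s\circ e_Q = e_P\circ \bar s$. Using Lemma~\ref{lem:join}.(ii)--(iii), I rewrite $e_P$ so that $j(a_1,a_2)$ appears as an explicit factor (since $a_1,a_2$ lie in a common block) and similarly $e_Q$ with $j(b_1,b_2)$, while the remaining joins and disjoins involve only arcs common to $M$ and $N$. Then Lemma~\ref{lem:saddlesjoindisjoin}.(i), (iii)--(v) lets each such factor be interchanged with $s$, giving $s\circ e_P = e_Q\circ s$; the same reasoning applied to $\bar s$ (whose $H^{-1}$ decoration does not interfere with parts (i), (iii)--(v)) yields the analogous relation. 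Therefore
\[
\varphi \circ (e_P\circ\bar s\circ e_Q) = e_Q \circ s\circ \bar s \circ e_Q = e_Q \circ j(b_1,b_2) \circ e_Q = e_Q,
\]
using Lemma~\ref{lem:saddlesjoindisjoin}.(vi) for $s\circ\bar s=j(b_1,b_2)$ and Lemma~\ref{lem:join}.(ii) (which gives $j(B_1)\cdot j(b_1,b_2)=j(B_1)$, hence $e_Q\circ j(b_1,b_2)=e_Q$). Symmetrically the other composite equals $e_P$.

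Fact (B) is immediate: a respecting saddle modifies only arcs in a single block, so its four endpoints stay in that block of $\partial P = \partial Q$ and all other boundary-point assignments are unchanged. For (C), I induct on $|\arcs(M)\setminus \arcs(N)|$. Since $\partial P = \partial Q$, within each block of the boundary partition the arcs of $M$ and $N$ are matchings of the same boundary points, and a standard Temperley--Lieb argument produces a saddle acting on two arcs of $M$ in a single block of $P$ that strictly reduces the discrepancy with $N$; by construction this saddle respects the partitions.

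Fact (D) is the main step. Let $\psi\in \Hom_{\overline{\BN}_\alpha}(e_P,e_Q)$. By Corollary~\ref{cor:diskCatKar}, $\psi = g\circ s_k\circ \cdots \circ s_1 \circ f$ with $f=e_P f e_P$ and $g=e_Q g e_Q$. Insert the completeness resolution $\id_{M_i}=\sum_{R_i} e_{R_i}$ of Proposition~\ref{prop:structureOfIdem}.(ii) between consecutive saddles, expanding $\psi$ as a sum over tuples $(R_0,R_1,\ldots,R_k)=(P,R_1,\ldots,R_{k-1},Q)$ of intermediate partitions of summands
\[
g\circ s_k\circ e_{R_{k-1}}\circ s_{k-1}\circ\cdots\circ e_{R_1}\circ s_1\circ f.
\]
I claim that any summand in which some $s_i$ fails to respect $(R_{i-1},R_i)$ vanishes. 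Indeed, after rewriting $e_{R_{i-1}}$ by Lemma~\ref{lem:join}.(iii) to make the saddled arcs $a_1,a_2$ of $s_i$ explicit: if $a_1,a_2$ lie in different blocks of $R_{i-1}$, then Lemma~\ref{lem:saddlesjoindisjoin}.(ii) gives $s_i\circ e_{R_{i-1}}=0$; the symmetric argument on the left handles the case of $b_1,b_2$ in different blocks of $R_i$; and if a common arc $c$ is reassigned between blocks, then $e_{R_{i-1}}$ contains $j(a_1,c)$ while $e_{R_i}$ contains $d(b_1,c)$, and Lemma~\ref{lem:saddlesjoindisjoin}.(iv) pushes $j(a_1,c)$ through $s_i$ to $j(b_1,c)$, which is annihilated by $d(b_1,c)$ via Lemma~\ref{lem:join}.(i). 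Hence a nonzero $\psi$ forces the existence of a sequence of intermediate partitions with every transition respected, and by (B) the boundary partition is then preserved from $P$ to $Q$.

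Parts (i) and (ii) now follow by combining the facts: (A) together with (C) yields the $\Leftarrow$-directions of both, while (D) together with (B) yields the converses. For (iii), the vanishing when $e_P\not\cong e_Q$ is exactly (D), and when $e_P\cong e_Q$ the identification $\Hom_{\overline{\BN}_\alpha}(e_P,e_Q)\cong \End_{\overline{\BN}_\alpha}(e_P)\cong A_\alpha$ follows from Lemma~\ref{lem:primitive}.(iii). The main obstacle I anticipate is fact (C), where the required planar saddle sequence has to be constructed while tracking both partitions simultaneously; the algebraic parts are a systematic application of Lemmas~\ref{lem:join} and~\ref{lem:saddlesjoindisjoin}.
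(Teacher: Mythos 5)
Your proof is correct and follows the same overall strategy as the paper: reduce to saddle sequences via Corollary~\ref{cor:diskCatKar}, use the algebraic relations between joins, disjoins and saddles from Lemmas~\ref{lem:join} and~\ref{lem:saddlesjoindisjoin} to decide when such a sequence is zero or an isomorphism, and invoke $A_\alpha$-primitivity from Lemma~\ref{lem:primitive}.(iii) for the last claim. Two points of genuine difference are worth noting. First, your fact (D) is substantially more careful than the paper's one-line appeal to Lemma~\ref{lem:saddlesjoindisjoin}.(ii): the paper's citation only rules out the case where the two arcs being saddled lie in different blocks at the endpoints, whereas you correctly insert the completeness resolution $\id = \sum_R e_R$ of Proposition~\ref{prop:structureOfIdem}.(ii) between consecutive saddles and also handle the case where a \emph{common} arc $c$ gets reassigned between blocks (via $j(a_1,c)\leftrightarrow d(b_1,c)$ and Lemma~\ref{lem:saddlesjoindisjoin}.(iv)). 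This is a real improvement in rigor. Second, for fact (C) the paper uses a topological argument (viewing the thickened disk as $B^3$ and using the disk-configuration description of $\Sk_A(B^3,c)$ from Example~\ref{exm:FirstSkeinModules}), while you propose a combinatorial induction on $|\arcs(M)\setminus\arcs(N)|$ that builds the respecting saddle sequence directly. You are right to flag this as the main gap: the phrase ``a standard Temperley--Lieb argument produces a saddle... that strictly reduces the discrepancy'' needs to be made precise, since the planarity of the intermediate matching $M'$ and the fact that the two saddled arcs both lie in one block of $P$ both require verification. The usual way to make this work is to argue on the symmetric-difference cycles $\arcs(M)\triangle\arcs(N)$: since $\partial P=\partial Q$, each cycle lies entirely in one block of the boundary partition, and an innermost arc of $N$ of the form $\{j,j+1\}$ (cyclically adjacent) gives the required saddle without destroying planarity. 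Once that is spelled out, your proof is complete and in some respects tighter than the printed one.
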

\begin{proof}
	Using Corollary \ref{cor:diskCatKar}, by moving all decorations, we can assume that any morphism $\varphi\in \Hom_{\overline{\BN}_{\alpha}}(e_P,e_Q)$ is up to decorations of the form $e_Q\circ s \circ e_P$ for a sequence $s$ of saddles. If $s$ respects the partitions, by applying Lemma \ref{lem:saddlesjoindisjoin}.\textit{(vi)} iteratively and moving the decorations to $\End(e_P)$ and $\End(e_Q)$, we obtain that $\varphi=e_Q\circ s \circ e_P$ is an isomorphism. If $s$ does not respect the partitions, the morphism is zero by Lemma \ref{lem:saddlesjoindisjoin}.\textit{(ii)}. This shows \textit{(i)}. For \textit{(ii)}, we note that by considering the thickened disk as a 3-ball that has closed circle components in $\partial B^3$, it follows that there exists a sequence of saddles between $e_P$ and $e_Q$ respects the partition if and only if the boundary partitions $\partial P$ and $\partial Q$ are the same.
    To prove \textit{(iii)}, we again use that by Corollary \ref{cor:diskCatKar}, every morphism can be represented by a sequence of saddles and decorations. If the saddles do not respect the boundary partitions, the morphism is zero as above. If they do respect the boundary partitions, $e_P\cong e_Q$ by \textit{(ii)}. Hence, we only need to show that for $e_P\cong e_Q$, we have $\Hom_{\overline{\BN}_\alpha}(e_P,e_Q)\cong A_\alpha$. Indeed, by Lemma \ref{lem:primitive}, $\End(e_P)\cong A_\alpha$ with basis $\{e_P, \dot{e}_P\}$ (after a choice of boundary point 1) and we can move all decorations either to $e_P$ or $e_Q$. 
\end{proof}
Using Lemma \ref{lem:isoseqofsaddles}.\textit{(iii)} and passing to a skeletonization $\widehat{\BN}_\alpha(B^2,2n)$ of the category $\overline{\BN}_\alpha(B^2,2n)$, we obtain the following consequence.
\begin{cor}\label{cor:skmodViaIsoClasses}
	The isomorphism from Proposition \ref{prop:skmodIdempotents} becomes
	\begin{align*}
		\Sk_\alpha(\mathbb{H},(2n,0))\cong \Tr(\widehat{\BN}_\alpha(B^2,2n)) = \bigoplus_{\mathbf{e}}\End(\mathbf{e})
	\end{align*}
	where $\End(\mathbf{e})\cong\End(e)\cong A_\alpha$ for a representative $e$ of the class $\mathbf{e}$.
\end{cor}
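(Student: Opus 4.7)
The plan is to deduce the statement as a formal consequence of Proposition~\ref{prop:skmodIdempotents} together with the structural results on idempotents already established (Lemma~\ref{lem:isoseqofsaddles} and Lemma~\ref{lem:primitive}).

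First, I would invoke Proposition~\ref{prop:skmodIdempotents} to identify $\Sk_\alpha(\mathbb{H},(2n,0))$ with $\Tr(\overline{\BN}_\alpha(B^2,2n))$. Since the trace is invariant under equivalences of $\kk$-linear categories, it is unchanged by passing from $\overline{\BN}_\alpha(B^2,2n)$ to a chosen skeletal subcategory $\widehat{\BN}_\alpha(B^2,2n)$; by the completeness part of Proposition~\ref{prop:structureOfIdem}, every object of the Karoubi completion is isomorphic to a direct sum of the $A_\alpha$-primitive idempotents $e_P$, so the objects of $\widehat{\BN}_\alpha(B^2,2n)$ can be taken to be the isomorphism classes $\mathbf{e}$ of these primitive idempotents.

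Next I would spell out the decomposition of the trace. By definition,
\[
\Tr(\widehat{\BN}_\alpha(B^2,2n)) = \Big(\bigoplus_{\mathbf{e}} \End(\mathbf{e})\Big) \big/ \mathrm{span}_{\kk}\{fg - gf \mid f \colon \mathbf{e}\to \mathbf{e}',\, g\colon \mathbf{e}'\to \mathbf{e}\}.
\]
By Lemma~\ref{lem:isoseqofsaddles}.(iii), we have $\Hom_{\widehat{\BN}_\alpha}(\mathbf{e},\mathbf{e}') = 0$ whenever $\mathbf{e}\neq \mathbf{e}'$ in the skeleton. Therefore the only nontrivial commutator relations $fg-gf$ arise from endomorphisms of a single object $\mathbf{e}$, so the trace decomposes as
\[
\Tr(\widehat{\BN}_\alpha(B^2,2n)) = \bigoplus_{\mathbf{e}} \End(\mathbf{e})/[\End(\mathbf{e}),\End(\mathbf{e})].
\]
Finally, Lemma~\ref{lem:primitive}.(iii) identifies $\End(\mathbf{e}) \cong A_\alpha$, which is commutative, so the commutator subspace vanishes and each summand is simply $\End(\mathbf{e})\cong A_\alpha$.

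There is no real obstacle here: the content is entirely contained in the preceding lemmas, and the argument is a straightforward assembly. The only subtlety worth flagging is the first step—confirming that the trace is preserved when passing to a skeleton of $\overline{\BN}_\alpha(B^2,2n)$—which follows immediately from the equivalence invariance of $\Tr$ (cf.\ Lemma~\ref{lem:TrOfKar} and the proof of Proposition~\ref{prop:skmodIdempotents}).
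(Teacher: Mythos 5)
Your proposal is essentially the same argument the paper intends, and the key ingredients (Proposition~\ref{prop:skmodIdempotents}, the vanishing $\Hom_{\overline{\BN}_\alpha}(e_P,e_Q)=0$ for non-isomorphic primitives from Lemma~\ref{lem:isoseqofsaddles}.(iii), completeness and $A_\alpha$-primitivity from Proposition~\ref{prop:structureOfIdem}, and commutativity of $A_\alpha$) all appear in the right places.

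One small imprecision worth flagging: you justify the passage to $\widehat{\BN}_\alpha$ by ``invariance of $\Tr$ under equivalence'' and then take the objects of $\widehat{\BN}_\alpha$ to be only the primitive idempotent classes $\mathbf{e}$. But the full subcategory of $\overline{\BN}_\alpha(B^2,2n)$ on primitives is not equivalent to $\overline{\BN}_\alpha(B^2,2n)$ (the latter also contains non-primitive idempotents such as $\id_M$), only Morita equivalent; equivalently, one should pass through the additive completion where ``every object is a direct sum of primitives'' actually makes sense. The trace is indeed invariant under this (cf.\ the remark following the corollary about arc algebras, and the fact that $\Tr(\C)\cong\Tr(\Mat(\C))$), but strictly speaking Lemma~\ref{lem:TrOfKar} alone does not cover it. Alternatively and more concretely: $\Tr(\overline{\BN}_\alpha)\cong\Tr(\BN_\alpha)=\bigoplus_M\End(M)/[\,\cdot\,,\cdot\,]$, completeness and orthogonality give $\End(M)=\bigoplus_P\End(e_P)$, and the commutator relations then identify $\End(e_P)$ with $\End(e_Q)$ by conjugation exactly when $e_P\cong e_Q$ (Lemma~\ref{lem:isoseqofsaddles}) and impose nothing further since $\End(e_P)\cong A_\alpha$ is commutative. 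This is the content of what the paper (and you) call ``passing to the skeletonization,'' and it closes the small gap.
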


Fix a boundary point $1\subset \partial B^2$ and consider the totally ordered set of boundary points $\{1,\dots, 2n\}$. Let $M$ be a planar matching on these $2n$ points, $P$ a partition of $\arcs(M)$ into exactly one or exactly two blocks, and let $e_P\in \End(M)$ be the idempotent associated to $P$. Then, by Lemma~\ref{lem:isoseqofsaddles}.\textit{(ii)}, the isomorphism class $\mathbf{e}_P$ is determined by the induced boundary partition $\partial P$. Such a boundary partition can be encoded as a binary sequence $b=(0,b_2,\dots, b_{2n})\in (\ZZ/2)^{2n}$ with $b_i\in\ZZ/2$ labeling whether the $i$-th boundary point is in block 0 or 1. Recall that by convention, we always assume that the boundary point 1 is in block 0. We call such $b$ \emph{boundary partition sequences}.
We denote the set of all boundary partition sequences by $\mathcal{B}_{2n}$.

Consider walks on $\ZZ$ of length $2n$ starting in 0. If a walk is ending in 0, we call it \emph{returning}. We encode a returning walk on $\ZZ$ starting and ending in $0$ by a binary sequence that we call \emph{walk sequence}. Denote by $\mathcal{W}_{2n}$ the set of all walk sequences of length $2n$. Steps in positive direction are encoded by $0$, steps in negative direction by $1$. The set $\mathcal{W}_{2n}$ has $\binom{2n}{n}$ elements. Denote by $\mathcal{W}^+_{2n}$ the subset of those walks starting with the step $0\mapsto 1$, encoded by sequences starting with $0$. 

Denote by $s=(0,1,0,1,\dots, 0,1)\in (\ZZ/2)^{2n}$ the \emph{alternating sequence} of length $2n$.

\begin{lem}\label{lem:walksOnZ}
	Adding the alternating sequence $s$ defines a bijection $\mathcal{W}^+_{2n}\cong \mathcal{B}_{2n}$.
\end{lem}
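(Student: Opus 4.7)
The plan is to exhibit $\Phi\colon c \mapsto c+s$ as an involution on the set of sequences in $(\ZZ/2)^{2n}$ starting with $0$ (using $s+s=0$ and $s_1=0$), and then to verify $\Phi(\mathcal{W}^+_{2n}) = \mathcal{B}_{2n}$.

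First I would translate both membership conditions into a common combinatorial form. A sequence $c$ with $c_1=0$ lies in $\mathcal{W}^+_{2n}$ if and only if it has exactly $n$ zeros and $n$ ones. Writing $b=c+s$ and using that $s$ vanishes at odd positions and is $1$ at even positions, so $b_i = c_i$ for $i$ odd and $b_i = 1-c_i$ for $i$ even, a short count shows this condition on $c$ is equivalent to the \emph{balancing condition}
\[
\#\{i \text{ odd}: b_i=0\} = \#\{i \text{ even}: b_i=0\}
\]
for $b$. So the lemma reduces to showing that $b\in\mathcal{B}_{2n}$ if and only if $b_1=0$ and $b$ is balanced.

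For the forward direction (if $b = \partial P$ for some planar matching $M$ and partition $P$, then $b$ is balanced), the key observation is that in any planar matching, every arc $(i,j)$ has $j-i$ odd, since the arcs strictly nested inside pair up the boundary points between $i$ and $j$. Hence each arc has exactly one odd and one even endpoint. As both endpoints of an arc carry the same block label, each block of $P$ contributes equally to labels at odd and even positions in $b=\partial P$, yielding balancing.

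For the reverse direction I would induct on $n$, with the case $n=0$ vacuous. The central combinatorial observation for the step is that a balanced sequence $b$ starting with $0$ must contain an adjacent equal pair $b_i=b_{i+1}$: the only sequence of length $2n$ starting with $0$ with no adjacent equal pair is the strictly alternating $b=s$, for which $\sum_{i \text{ odd}} b_i = 0 \neq n = \sum_{i \text{ even}} b_i$ violates balancing. Pair such an adjacent $i,i+1$ into an arc (always planar) and remove these two positions; the resulting length-$2(n-1)$ sequence is still balanced, since we removed the same label $b_i=b_{i+1}$ from one odd and one even position. By induction this smaller sequence is realized as $\partial P'$ for some planar matching $M'$ and partition $P'$ on the remaining $2(n-1)$ points, possibly after swapping block labels if the shortened sequence starts with $1$. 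Adjoining the arc $(i,i+1)$ into the block corresponding to $b_i$ produces $(M,P)$ with $\partial P=b$. The main subtlety will be the bookkeeping around the convention $b_1=0$ in the inductive step, which may require swapping the labels of $P'$; this is harmless because partitions are unordered pairs of blocks.
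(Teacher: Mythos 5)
Your proof is correct and takes a genuinely different route from the paper. The paper's argument is essentially a pair of inverse removal algorithms: given $b\in\mathcal{B}_{2n}$ it repeatedly strips a nested arc, which removes a $(0,0)$ or $(1,1)$ subsequence from $b$ and simultaneously a $(0,1)$ or $(1,0)$ from $w=b+s$, concluding (by termination after $n$ steps) that $w$ is returning; in reverse, it strips $(0,1)$ or $(1,0)$ pairs from $w$ and constructs the matching and arc-partition along the way. Your proof instead isolates an \emph{intrinsic} characterization of $\mathcal{B}_{2n}$ that the paper never states: $b\in\mathcal{B}_{2n}$ iff $b_1=0$ and $b$ satisfies the balancing identity $\#\{i\text{ odd}: b_i=0\}=\#\{i\text{ even}: b_i=0\}$. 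The forward inclusion uses the clean observation (absent from the paper's proof) that in a planar matching every arc $\{i,j\}$ has $j-i$ odd, so each block contributes equally to odd and even positions; the reverse inclusion is your induction via pairing an adjacent equal pair. You then separately verify that $\Phi=(\,\cdot\,+s)$ exchanges ``balanced'' with ``$n$ zeros and $n$ ones,'' which is a routine parity count. The payoff of your approach is that it produces a standalone combinatorial description of $\mathcal{B}_{2n}$ usable elsewhere, whereas the paper's approach is shorter because it never separates the two descriptions. Your flag about relabeling blocks when the truncated sequence begins with $1$ is the right subtlety to address, and your argument that flipping preserves balancing (since both complements are taken within sets of size $n-1$) and that partitions are unordered disposes of it; the only slightly compressed step is checking that the reconstructed matching is planar, which follows because you only ever insert an arc between two boundary-adjacent points.
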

\begin{proof}
	Let $b\in \mathcal{B}_{2n}$ be a boundary partition sequence associated to an isomorphism class $\mathbf{e}$ represented by an idempotent $e_P\in \End_{\BN_{\alpha}}(M)$ for some planar matching $M$ and arc partition $P$. By adding the alternating sequence $s$ to $b$, this defines the walk sequence $w$ of a walk starting with a step in positive direction since by convention $b_1=0=w_1$. We need to show that this walk is returning. Every planar matching on $2n$ points can be reduced to a planar matching on $2(n-1)$ points by removing an arc joining two adjacent points. Consequently, removing subsequences $(0,0)$ and $(1,1)$ from $b$ corresponds under alternation to removing $(0,1)$ or $(1,0)$ from $w$. After removing $n$ arcs from $b$, this procedure terminates with the empty sequence. For $w$, we obtain that the walk must return since we removed an equal number of ones and zeros. 
	
	Conversely, let $w\in \mathcal{W}^+_{2n}$ be walk sequence of a returning walk of length $2n$ starting with a step in positive direction, and let $\overline{w}=w+s$ be its alternation. We construct a planar matching $M$ and an idempotent $e_P\in\End_{\BN_{\alpha}}(M)$ with boundary partition sequence $\overline{w}$. Pick subsequences $(0,1)$ or $(1,0)$ to remove from $w$. This creates a returning walk of length $2(n-1)$ (not necessarily starting with a step in positive direction). This corresponds to removing $(0,0)$ or $(1,1)$ in $\overline{w}$ as above. Since $w$ has the same number of 0s and 1s, we can remove $n$ such subsequences and this process terminates with the empty walk sequence. For $\overline{w}$ this removes $n$ subsequences $(0,0)$ or $(1,1)$ and $\overline{w}$ can be realized as a boundary partition sequence. Now construct $M$ and the partition $P$ as follows. Draw an arc $\{i,j\}$ if the subsequence $(\overline{w}(i),\overline{w}(j))$ in $(\overline{w}(1),\dots, \overline{w}(i),\dots, \overline{w}(j), \dots, \overline{w}(2n))$
	was removed in the process. The arc is partitioned in the same block as the boundary point 1 if and only if $(\overline{w}(i),\overline{w}(j))=(0,0)$. This yields an idempotent $e_P\in \End(M)$ given by the partition $P$ and has boundary partition sequence $\overline{w}$. 

	The two constructions above are clearly mutually inverse. While the choice of planar matching is not unique, the boundary partition sequence and the walk sequence are unique.
\end{proof}
\begin{rmk}\label{rmk:altWalkSeq}
   One model for walks on $\ZZ$ is provided by considering $\ZZ$ as a graph with vertices $m\in\ZZ$ and edges $\{m,m+1\}$, making it into a 2-regular\footnote{A graph is called $r$-regular if every vertex has degree $r$. An $r$-regular tree with $r\geq2$ is infinite.} tree. A walk on a graph is a sequence of edges such that for each pair of consecutive edges, the edges are adjacent. In the case of $\ZZ$, for every step there are exactly two choices: a step in negative or a step in positive direction as above. Given the starting vertex $0\in \ZZ$, a walk of length $2n$ is encoded in the walk sequence $w\in \mathcal{W}_{2n}$.
   
   There is an alternative way to determine a walk. For this, label the edges alternately by 0 and 1 as follows. For $k\in \ZZ$, the edge $\{2k,2k+1\}$ is labeled by 0 and the edge $\{2k-1,2k\}$ is labeled by 1. Then, at every vertex there is one edge with label 0 and one edge with label 1. Hence, starting at the vertex $0\in \ZZ$, a binary sequence of labels fully determines a walk on the 2-regular tree $\ZZ$. A sequence of labels $l\in (\ZZ/2)^{2n}$ and a walk sequence $w\in \mathcal{W}_{2n}$ define the same walk of length $2n$ starting at $0\in \ZZ$ if and only if $l=w+s$ where $s$ is the alternating sequence. This provides a more direct interpretation of a boundary partition sequence $b\in \mathcal{B}_{2n}$ as a walk on $\ZZ$ and in light of the generalization in Remark~\ref{rmk:genRkN} below, this is the correct encoding of walks. See also Example \ref{exm:walkIsoClass} for an illustration of a walk in the case $n=6$.
\end{rmk}

\begin{exm}\label{exm:walkIsoLowN}
	For $n=1$, we have $\Sk_\alpha(\mathbb{H},(2,0))\cong A_\alpha$ by Example~\ref{exm:dTLEnd1}. There is a unique planar matching $M$ with $\arcs(M)=\{b\}$. The idempotent associated to the partition $B=\{b\}$, $C=\emptyset$ is the identity $\id\in \End(M)$ which becomes $\udcup\in \dTL_\alpha(0,2)$ under the isomorphism $\Sk_\alpha(\mathbb{H},(2,0))\cong \dTL_{\alpha}(0,2)$ from Proposition~\ref{prop:SolidtorusAnddTL}. Its dotted version is $\dcup\in \dTL_\alpha(0,2)$.
    
	For $n=2$ and a choice of boundary point $1\subset \partial B^2$, we have the following correspondence.
	\begin{center}
	\renewcommand{\arraystretch}{1.3}
		\begin{tabular}{c|c|c|c|c}\
			$w\in \mathcal{W}^+_{4}$&$b\in\mathcal{B}_{4}$&partition of $\arcs(M)$&$\dTL_\alpha$-diagram of $\mathbf{e}$& $\dTL_\alpha$-diagram of $\mathbf{\dot{e}}$\\
			\hline
			$(0,1,0,1)$&$(0,0,0,0)$& \begin{tabular}{@{}c@{}}
				$B=\{\{1,2\},\{3,4\}\}, C=\emptyset$ \\
				$B=\{\{1,4\},\{2,3\}\}, C=\emptyset$
			\end{tabular}  & \begin{tabular}{@{}c@{}}
			$\tfrac{1}{2}\left(\udcup\,\udcup + \tfrac{1}{\alpha}\dcup\,\dcup\right)$\\
			$=\tfrac{1}{2}\left(\nestedudcups + \tfrac{1}{\alpha}\nesteddcups \,\right)$ \end{tabular} & \begin{tabular}{@{}c@{}}$\tfrac{1}{2}(\dcup\, \udcup + \udcup\, \dcup)$ \\ $=\tfrac{1}{2}(\nesteddcupudcup+ \nestedudcupdcup$ \end{tabular} \\
			\hline
			$(0,0,1,1)$&$(0,1,1,0)$& $B=\{\{1,2\}\}, C=\{\{3,4\}\}$ &$\tfrac{1}{2}\left(\udcup\, \udcup - \tfrac{1}{\alpha}\,\dcup\, \dcup\right)$ & $\tfrac{1}{2}(\dcup\, \udcup - \udcup\, \dcup)$ \\
			\hline 
			$(0,1,1,0)$&$(0,0,1,1)$& $B=\{\{1,4\}\}, C=\{\{2,3\}\}$ & $\tfrac{1}{2}\left(\,\nestedudcups - \tfrac{1}{\alpha}\nesteddcups\, \right)$ & $\tfrac{1}{2}\left(\,\nesteddcupudcup - \nestedudcupdcup\, \right)$
		\end{tabular}
	\end{center}
	Note that the first boundary partition sequence can be represented using two different planar matchings. The idempotents are the respective joins in the endomorphism algebra and are isomorphic. On the level of $\dTL_\alpha$, this is witnessed by the saddle relations.
\end{exm}

\begin{exm}\label{exm:walkIsoClass}
	Let $n=6$ and consider the following example of a walk sequence $w\in \mathcal{W}^+_{12}$ and its associated isomorphism class of an idempotent with boundary partition sequence $b\in\mathcal{B}_{12}$ for $b=w+s$. We choose a planar matching on $12$ points with arc partition $\{B,C\}$ that realizes $b$. We color the arcs in $B$ in black, and the arcs in $C$ in red.
	\begin{align*}
		\longwalk && \idempForLongwalk\\[0.2cm]
		w=(0,1,1,0,0,0,1,0,1,1,0,1)  &&  
		b=(0,0,1,1,0,1,1,1,1,0,0,0)
	\end{align*}	
	To highlight the sequence of labels as described in Remark~\ref{rmk:altWalkSeq}, we also colored the steps of the walk in black for label 0 and red for label 1.
\end{exm}

\begin{proof}[Proof of Proposition \ref{prop:isoClassesOfIdem}]
	Lemma \ref{lem:isoseqofsaddles}.\textit{(ii)} provides the first and Lemma \ref{lem:walksOnZ} the second statement. Note that the explicit indexing of basis elements depends on the choice of $1\subset \partial B^2$.
\end{proof}
\begin{proof}[Proof of Corollary \ref{cor:isoClassesWalks}]
	Choose a boundary point $1\subset \partial B^2$. Using that $\{\mathbf{e},\mathbf{\dot{e}}\}$ forms a $\kk$-basis for $\End(\mathbf{e})$ and identifying $\mathbf{\dot{e}}$ with the reflection of the walk associated to $\mathbf{e}$, we obtain the statement from Corollary \ref{cor:skmodViaIsoClasses} and Lemma \ref{lem:walksOnZ}. We again remark that the explicit isomorphism depends on the choice $1\subset \partial B^2$.
\end{proof}

Before we consider the pairing on $\Sk_\alpha(\mathbb{H},(2n,0))$ to compute the Kirby color for $A_\alpha$, we comment below on a generalization of $A_\alpha$ to rank $N$ and a root projector version of the above results.
\begin{rmk}[Generalization to rank $N$]\label{rmk:genRkN}
	There are similar statements for $\beta$-modified Bar-Natan theory of rank $N$ based on the Frobenius algebra $A_\beta$ from Example \ref{exm:FAExmLinkHom}. Let $N\geq 2$, $\kk=K[\beta^{\pm1}]$ for a field $K$ with $N\in K^\times$ and a primitive $N$-th root of unity $\zeta_N\in K$. Consider $A_\beta=\kk[x]/(x^N-\beta)$ with $\varepsilon\colon x^{k}\mapsto \delta_{k,N-1}$ and $\Delta\colon 1\mapsto \sum_{i=0}^{N-1} x^{N-1-i}\otimes x^{i}$. The handle element $H=Nx^{N-1}$ is invertible with $H^{-1}=x/(N\beta)$.
	
	Consider the category $\BN_\beta(B^2,2n)$ with a distinguished boundary point $1\subset \partial B^2$. The idempotents in $\BN_\beta(B^2,2n)$ have a similar structure. Proposition \ref{prop:structureOfIdem} generalizes as follows. Let $M$ be a planar matching on $2n$ points. We sketch how partitions of $\arcs(M)$ into $N$ blocks $B_1,\dots, B_N$ with the arc containing the boundary point 1 mapped to $B_1$, and other blocks allowed to be empty, define a set of complete, $A_\beta$-primitive, orthogonal idempotents in $A_\beta^{\otimes n}\cong\End_{\BN_\beta}(M)$. The separability idempotent 
	\begin{align*}
		\Delta(H^{-1})= \frac{1}{N\beta} \sum_{i=0}^{N-1} x^{N-i} \otimes x^{i} \in A^{\otimes 2} 
	\end{align*}
	on two arcs $b,c$ and the identity on the other arcs defines the \emph{join} idempotent $j(b,c)\in \End(M)\cong A_\beta^{\otimes n}$ of $b$ and $c$. Consider the algebra endomorphism $\Phi\colon A_\beta\to A_\beta$ determined by $\Phi(x)=\zeta_Nx$. The idempotent $1\otimes 1-\Delta(H^{-1})\in A_\beta^{\otimes 2}$ is orthogonal to the join and splits into $N-1$ idempotents in $A_\beta^{\otimes n}$ constructed from $\Phi^k$ for $1\leq k\leq N-1$.
	\begin{align*}
		\distk\; = \mathrm{dist}_k=  (\id\otimes \Phi^k)\circ \Delta(H^{-1}) \in A_\beta^{\otimes 2} 
	\end{align*}
	Here we depicted the algebra morphism $\Phi^k$ in colored dashed lines. Using graphical calculus, we verify the idempotency by computing
	\begin{align*}
		\left(\;\distk\; \right)^2 \;=\; \distksq\; =\; \distksqmod\; =\; \distk
	\end{align*}
	where we applied in the second step that $\Phi^k$ is an algebra morphism. The last equality follows from the $H^{-1}$ canceling the handle. For $1\leq i\leq N-1$ fixed, we have
	\begin{align*}
		\sum_{k=0}^{N-1}({\zeta_N}^{i})^k = 0.
	\end{align*}
	As a consequence
	\begin{align*}
		 \Delta(H^{-1})+\sum_{k=1}^{N-1} \mathrm{dist}_k &= \sum_{k=0}^{N-1}  (\id\otimes \Phi^k)\circ \Delta(H^{-1}) = \frac{1}{N\beta} \sum_{k=0}^{N-1} \sum_{i=0}^{N-1} x^{N-i} \otimes \Phi^k(x^{i})\\
		 &= \frac{1}{N\beta} \sum_{k=0}^{N-1} \sum_{i=0}^{N-1} x^{N-i} \otimes {\zeta_N}^{k\cdot i} x^i  = \frac{1}{N\beta} \sum_{k=0}^{N-1} x^N\otimes 1  = 1\otimes 1 \in A_\beta^{\otimes 2}.
	\end{align*}
	For $b,c\in \arcs(M)$ and choose $\mathrm{dist}_k\in A_\beta^{\otimes 2}$ on $b$ and $c$, and the identity on the other arcs. This defines the \emph{$k$-distance} idempotent $d_k(b,c)\in \End(M)\cong A_\beta^{\otimes n}$. For a partition $P=\{B_1,\dots, B_N\}$ as above, we let $e_P\in \End(M)$ be the idempotent constructed from products of join and distance idempotents as follows: If two arcs $b,b'$ are in the same block $B_i$, we have a factor $j(b,b')$. If $b\in B_i$ and $c\in B_{i+k}$ with indices considered mod $N$, we have a factor $d_k(b,c)$ recording the distance $k$ between the indices of the blocks in which the two arcs are partitioned. Note that for $N=2$, this recovers the disjoin for the algebra morphism with $\Phi(x)=-x$. One can show that this construction indeed yields a set of complete, $A_\beta$-primitive, orthogonal idempotents.
    
    We have the following statements analogous to Lemma~\ref{lem:isoseqofsaddles}. A partition of $\arcs(M)$ induces a partition on the set of boundary points. For partitions $P$ and $Q$ of the arc sets of planar matchings $M_1$ and $M_2$ into $N$ blocks, the corresponding idempotents $e_P$ and $e_Q$ are isomorphic if and only if their boundary partitions are the same. Otherwise $\Hom_{\overline{\BN}_{\beta}}(e_P,e_Q)=0$ in the Karoubi completion. As a result, $\Sk_\beta(\mathbb{H},(2n,0))\cong \bigoplus_\textbf{e} \End(\textbf{e})$, generalizing Corollary~\ref{cor:skmodViaIsoClasses}. Isomorphism classes of these idempotents in $\BN_\beta(B^2,2n)$ correspond to returning walks of length $2n$ on an $N$-regular tree starting and ending at a fixed vertex and with a fixed first step as follows. Consider the edges of the $N$-regular tree to be labeled by $\{1,\dots, N\}$ such that every vertex has exactly one edge for every label. At each step, the boundary partition sequence determines a walk by taking the corresponding edge. Since the boundary partitions can be realized by a planar matching, similar to Lemma~\ref{lem:walksOnZ}, the walk is returning. The fixed first step is a result of the choice that the boundary point 1 is in block $B_1$. Since $\End(e_P)\cong A_\beta$, the rank of $\Sk_\beta(\mathbb{H},(2n,0))$ over $\kk$ is given by the number of \emph{all} returning walks of length $2n$ on an $N$-regular tree starting and ending at a fixed vertex. This generalizes Proposition~\ref{prop:isoClassesOfIdem} and Corollary~\ref{cor:isoClassesWalks}. 
	
\end{rmk}
\begin{rmk}[Root projector version]\label{rmk:genSemisimpleFA}
	Another related theory is the following. Let $N\geq 1$ and let $K$ be a field with $N\in K^\times$. Recall from Example~\ref{exm:FAExmLinkHom} the Frobenius algebra
	\begin{align*}
		A_\mu=K[x,\mu_1,\dots, \mu_N, \mathrm{disc}(\mu_1,\dots,\mu_N)^{-1}]/\Pi_i(x-\mu_i)
	\end{align*}
	with counit $\varepsilon\colon x^{k}\mapsto \delta_{k,N-1}$. The comultiplication is determined by $\varepsilon$, and the handle element given by the formal derivative $H=\mathrm{d}p/\mathrm{d}x$ of the polynomial $p=\prod_i(x-\mu_i)$. Here, $\mathrm{disc}(\mu_1,\dots \mu_N)$ is the discriminant as a function of the roots. Its invertibility guarantees that the $\mu_i$ are pairwise distinct and thereby the strong separability of $A_\mu$. See \cite{BoernerDrube} for a version of this skein theory without the separability assumption.
	
	The Lagrange interpolators
	\begin{align*}
		\pi_i = \prod_{\substack{1\leq j \leq N\\ j\neq i}} \frac{x-\mu_j}{\mu_i-\mu_j}\in A_\mu, && \pi_i^2 = \pi_i + \frac{\prod_{j\neq i}(x-\mu_j)-\prod_{j\neq i}(\mu_i-\mu_j)}{(\prod_{j\neq i} (\mu_j-\mu_i)^2)(x-\mu_i)}p = \pi_i\in A_\mu.
	\end{align*}
	are idempotents for every $i$ called \emph{root projectors}. Note that the fraction on the right-hand side is a polynomial since the numerator has a root at $x=\mu_i$. For $i\neq j$, we clearly have 
	\begin{align*}
		\pi_i\pi_j=0 \in A_\mu.
	\end{align*}
	By evaluating the polynomial $\sum_i \pi_i$ and the constant polynomial $1$ at $x=\mu_k$ for all $1\leq k\leq N$, we obtain that the Lagrange interpolators satisfy
	\begin{align*}
		\sum_{i=1}^N \pi_i = 1 \in A_\mu.
	\end{align*}
	The idempotents $\pi_i\in A_\mu$ form a $\kk$-basis can be used to construct idempotents in $A_\mu^{\otimes n}\cong \End(M)$. For each arc $b\in \arcs(M)$, pick a root $\mu_i$ for $1\leq i \leq N$ and decorate the sheet on $b$ with $\pi_i$. This defines an idempotent $e\in \End(M)$. The set of such idempotents forms a $\kk$-basis of $\End(M)$. The map $\arcs(M)\to \{\mu_1,\dots ,\mu_N \}$ can be thought of as partitioning $\arcs(M)$ into blocks corresponding to the roots $\mu_i$ (with blocks allowed to be empty). Summing over idempotents associated to partitions into root blocks, we obtain
	\begin{align*}
		\sum_P e_P = \id \in \End(M).
	\end{align*}
	Another consequence of the above is $\End_{\overline{\BN}_{\mu}}(e_P)=\kk$. The separability idempotent $\Delta(H^{-1})\in A_\mu^{\otimes 2}$ acts as a join. This follows from the bimodule property of $\Delta$ or, more topologically, by decorations moving along the neck. In fact, this shows that the separability idempotent of a general strongly separable commutative Frobenius algebra always has a join property. Expanding the separability idempotent in $A_\mu^{\otimes 2}$ in terms of the basis $\pi_i\otimes \pi_j$ and using orthogonality of $\pi_i$ and idempotency, we find that it decomposes as
	\begin{align*}
		\Delta(H^{-1})=\sum_{i=1}^N \pi_i\otimes \pi_i \in A_\mu^{\otimes 2}.
	\end{align*}
	This means that saddles have an inverse given by the reversed saddle decorated with $H^{-1}$. We obtain statements similar to Lemma \ref{lem:saddlesjoindisjoin}, but with more general idempotents orthogonal to $\Delta(H^{-1})$ in place of the disjoin. As a consequence, we also obtain the analogous statement to Lemma \ref{lem:isoseqofsaddles}. Two idempotents associated to partitions into root blocks as above are isomorphic in the Karoubi completion $\overline{\BN}_\mu$ if and only if the induced maps $\{1,\dots, 2n\}\to \{\mu_1,\dots, \mu_N\}$, partitioning the boundary points into root blocks, agree. A similar argument to Lemma \ref{lem:walksOnZ} shows that isomorphism classes of these idempotents are classified by walks of length $2n$ on $N$-regular trees, starting and ending at a fixed vertex, but without a fixed choice of first step. This is a consequence of the fact that here we do not restrict to those partitions mapping the arc on the boundary point $1\subset \partial B^2$ into a fixed block. The rank of $\Sk_\mu(\mathbb{H},(2n,0))$ is the number of all such walks on $N$-regular trees.
    
	Recall that for $A_\beta$, considered in Remark \ref{rmk:genRkN}, the $k$-distance idempotent recording the distance between the blocks of two arcs. The analogous idempotent in $A_\mu$ is 
    \begin{align*}
		\mathrm{dist}_k = \sum_{i=1}^N \pi_i\otimes \pi_{i+k} \in A_\mu^{\otimes 2}
	\end{align*}
    where the index $i+k$ is to be treated mod $N$. Note however, that there are many more idempotents in $A_\mu^{\otimes 2}$. Indeed, one could write down a similar expression for any permutation of roots $\mu_i$.
\end{rmk}

\subsection{The Kirby color for $A_\alpha$}
In this section, we compute the Kirby color for $A_\alpha$ as defined in Definition~\ref{def:capAndKirby}. For this, we first compute the pairing $p_2$ and show that it is perfect. Using the copairing and cap values, we then derive two expressions for the Kirby color.

In the following, we will assume that the evaluation of the empty skein in $S^3$ is $\ev(\emptyset)=1\in \kk^\times$. By Proposition \ref{prop:evEulerChar}, a more general choice of $\ev(\emptyset)\in \kk^\times$ can be recovered in the computation of the invariant using the Euler characteristic. We will speak of the cap value and Kirby color always for $\ev(\emptyset)=1$. Note that the pairing $p_2$ and copairing $c_2$ do not carry a factor of $\ev(\emptyset)$.

\begin{prop}\label{prop:pairing}
	The pairing $p_2$ on the skein module of the solid torus is given on  isomorphism classes of primitive idempotents as	
	\begin{align*}
		\langle -,-\rangle \colon \bigoplus_{\mathbf{e}} \End(\mathbf{e})& \otimes \bigoplus_{\mathbf{e}} \End(\mathbf{e}) \to \kk\\
		\langle \mathbf{e}, \mathbf{f}\rangle = 2\cdot\delta_{\mathbf{e},\mathbf{f}}, \quad \quad
		\langle \mathbf{\dot{e}}, \mathbf{\dot{f}} \rangle &= 2\alpha \cdot\delta_{\mathbf{e},\mathbf{f}}, \quad \quad
		\langle \mathbf{\dot{e}}, \mathbf{f} \rangle =0= \langle \mathbf{e}, \mathbf{\dot{f}} \rangle.
	\end{align*}
	By our assumptions $2,\alpha\in \kk^\times$ it is, thus, perfect.
\end{prop}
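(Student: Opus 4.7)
The plan is to unwind the pairing geometrically, expand the idempotents combinatorially, and evaluate the resulting sum by factorizing over the connected components of a $1$-manifold in $S^2$.

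First I would unwind the geometry. By Proposition~\ref{prop:skmodIdempotents} and Construction~\ref{con:extension}, the pairing $p_2$ is the composite of gluing two solid tori along the identity of the boundary torus---producing a closed decorated surface in $S^1\times S^2$---with the abstract evaluation functional from Example~\ref{exm:s2s1eval}. For representatives $e_P\in\End(M)$ and $e_Q\in\End(N)$ of the iso classes $\mathbf{e},\mathbf{f}$, the glued surface is $(M\cup N)\times S^1$, where $M\cup N\subset S^2=B^2\cup_\partial B^2$ is a disjoint union of circles $C_1,\ldots,C_k$. Each $C_j$ yields a torus whose evaluation, by Construction~\ref{con:abstractEval}, is $\varepsilon(2x^{1+d_j})$, equal to $2\alpha^{d_j/2}$ when the total dot count $d_j$ on $C_j$ is even and to $0$ otherwise. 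Expanding $e_P = \tfrac{1}{2^{n-1}}\sum_{|D|\text{ even}}(-1)^{|D\cap B_P|}\alpha^{-|D|/2}x_D$ via Proposition~\ref{prop:structureOfIdem} and analogously $e_Q$, the $\alpha$-powers cancel and I arrive at
\begin{align*}
\langle \mathbf{e}, \mathbf{f}\rangle = \frac{2^k}{2^{2(n-1)}}\sum_{(D,D')\in\mathcal{A}}(-1)^{|D\cap B_P|+|D'\cap B_Q|},
\end{align*}
where $\mathcal{A}$ consists of pairs with $|D|,|D'|$ even satisfying $|D\cap M_j|\equiv|D'\cap N_j|\pmod 2$ on each $C_j$, and $M_j,N_j$ denote the arcs of $M,N$ on $C_j$.

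Next, introducing a common parity $\epsilon_j\in\ZZ/2$ per circle, this sum factorizes as $\prod_j T_j^P(\epsilon_j)T_j^Q(\epsilon_j)$ under the global constraint $\sum_j\epsilon_j\equiv 0\pmod 2$. A short binomial computation shows that $T_j^P(\epsilon)=\sum_{|D_j|\equiv\epsilon}(-1)^{|D_j\cap B_P|}$ vanishes unless all arcs of $M$ in $C_j$ lie in a single $P$-block, and similarly for $Q$. When both conditions hold, $T_j^P(\epsilon)T_j^Q(\epsilon) = (-1)^{\epsilon[\alpha_j^P\ne\alpha_j^Q]}\,2^{m_j+n_j-2}$, with $\alpha_j^\bullet\in\{0,1\}$ labelling the common blocks. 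The parity-constrained sum over $\vec\epsilon$ then evaluates to $2^{k-1}$ when $[\alpha_j^P\ne\alpha_j^Q]$ is constant in $j$ and to $0$ otherwise. Under the convention $\partial P(1)=\partial Q(1)=0$ from Proposition~\ref{prop:structureOfIdem}, the circle through boundary point~$1$ pins this constant to $0$, forcing $\alpha_j^P=\alpha_j^Q$ for all $j$, i.e.\ $\partial P=\partial Q$, which by Lemma~\ref{lem:isoseqofsaddles}.\textit{(ii)} is equivalent to $\mathbf{e}_P\cong\mathbf{e}_Q$. Assembling the prefactors gives $\langle\mathbf{e},\mathbf{f}\rangle=2\,\delta_{\mathbf{e},\mathbf{f}}$.

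The dotted pairings follow from the same mechanism with parity shifts induced by $\dot{e}_P=x_{b_0}e_P$, where $b_0$ is the arc containing boundary point~$1$. For $\langle\mathbf{\dot{e}},\mathbf{\dot{f}}\rangle$ the two extra dots contribute an even total shift; when $\mathbf{e}=\mathbf{f}$ and one takes $M=N$, both lie on the same circle through $b_0$, multiplying its evaluation by an additional $\alpha$ and yielding $2\alpha\,\delta_{\mathbf{e},\mathbf{f}}$, while the off-diagonal entries vanish by the same factorization. For $\langle\mathbf{e},\mathbf{\dot{f}}\rangle$ the single extra dot produces an odd total parity shift $\sum_j\tau_j\equiv 1\pmod 2$, which is incompatible with the requirement that both $|D|$ and $|D'|$ be even, so the defining sum is empty. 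Perfectness then follows immediately from $2,\alpha\in\kk^\times$. The main obstacle is the sign-tracking in the generating-function factorization and identifying the boundary partition as the invariant controlling non-vanishing; in particular, the convention ``boundary point~$1$ lies in block $B$'' from Proposition~\ref{prop:structureOfIdem} is what ultimately selects $\partial P=\partial Q$ rather than $\partial Q=\mathrm{swap}(\partial P)$ as the non-vanishing condition.
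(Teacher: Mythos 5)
Your proof is correct and handles all cases, but it takes a more computational route than the paper, which is worth comparing. The paper's proof works directly in $\dTL_\alpha$ and exploits freedom in the choice of representative: for the diagonal entry $\langle\mathbf{e},\mathbf{e}\rangle$ it chooses $M=N$ and the same idempotent on both sides, so that each of the $n$ circles contains exactly one arc of $M$ and one of $N$, reducing the count to the $2^{n-1}$ ``diagonal'' dottings $x_D\otimes x_D$; for the off-diagonal entries it avoids any global sum altogether by finding two boundary indices $k,l$ where the boundary partition sequences of $\mathbf{e}$ and $\mathbf{f}$ disagree, absorbing $j(k,l)$ into $e$ and $d(k,l)$ into $f$, and concluding from the local orthogonality $j(k,l)d(k,l)=0$ after gluing. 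Your argument instead pushes the gluing all the way to $S^1\times S^2$ and its abstract evaluation, expands both idempotents fully for arbitrary representatives, and factorizes the resulting parity-constrained sum circle-by-circle; the vanishing in the off-diagonal case then falls out of two binomial identities (one per circle when a circle is not $P$-monochromatic, and a parity-sum cancellation over $\vec\epsilon$ otherwise). What the paper's approach buys is brevity and a conceptual explanation for the vanishing via idempotent orthogonality; what your approach buys is uniformity (the diagonal value $2$ is obtained for arbitrary $M\neq N$ representing the same class, without any preliminary normalization) and a single mechanism covering all four entries. Two small remarks. First, the final paragraph about the convention ``boundary point $1$ lies in block $B$'' is slightly over-emphasized: since $e_P$ depends only on the unordered partition $\{B,C\}$ (the sign $(-1)^{|D\cap B|}$ is invariant under swapping $B\leftrightarrow C$ for even $|D|$), the two cases ``$c_j\equiv 0$'' and ``$c_j\equiv 1$'' of your parity sum describe the same unordered partition, so the convention merely pins down bookkeeping rather than ruling out a genuinely separate non-vanishing regime. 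Second, it would be worth one sentence in your write-up explaining that if $\partial P=\partial Q$ then every circle is automatically monochromatic for both $P$ and $Q$ (this follows by propagating the constancy of $\partial P=\partial Q$ around the alternating $M/N$-arcs of each circle), since otherwise a reader may worry whether the diagonal case could accidentally land in the ``some $T_j$ vanishes'' regime.
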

\begin{proof}
	In the following, choosing a representative idempotent $e\in \End(M)$ for $\mathbf{e}$ amounts to picking a planar matching $M$ realizing the boundary partition. By Proposition \ref{prop:SolidtorusAnddTL}, we obtain an element in $\dTL_A(0,2n)$. Two different choices agree by the relations in $\dTL_{\alpha}$.
    
	We first show $\langle \mathbf{e}, \mathbf{e}\rangle = 2$. Choose the same representing idempotent $e\in \End_{\BN_{\alpha}}(M)$ for a planar matching $M$ with arc partition $\{B,C\}$ of $\arcs(M)$. By Proposition \ref{prop:structureOfIdem}, the idempotent $e$ is a sum over $\dTL_\alpha$ diagrams with an even number of dots per diagram. The pairing creates a closed circle component out of each arc and evaluates them. By the relations in $\dTL_\alpha$, a circle is zero if and only if it is dotted. Hence, the only non-zero terms in $\langle e,e\rangle$ are the diagonal terms. For each non-zero circle component, we get a factor of 2 since two dots are precisely canceled by a factor of alpha. There are $2^{n-1}$ terms, one for each even subset $D\subseteq \arcs(M)$.  The signs cancel out in the diagonal terms. Hence, we obtain
	\begin{align*}
		\langle \mathbf{e},\mathbf{e}\rangle = \frac{1}{2^{n-1}2^{n-1}} 2^{|\arcs(M)|} 2^{n-1}=2.
	\end{align*}
	Now, choose representatives $e\in\End_{\BN_{\alpha}}(M)$ and $f\in\End_{\BN_{\alpha}}(N)$ for planar matchings $M$ and $N$. The $\dTL_\alpha$-diagram for $\dot{e}$ consists of terms each of which has an odd number of dots. This means any such term paired with any term of $f$ yields $n$ circle components where at least one circle has a single dot. Hence, 
	\begin{align*}
		\langle \mathbf{\dot{e}}, \mathbf{f}\rangle= 0 =\langle  \mathbf{e},\mathbf{\dot{f}}\rangle.
	\end{align*} 
	We show that $\langle\mathbf{e}, \mathbf{f}\rangle=0$ for $\mathbf{e}\neq \mathbf{f}$. Let $b,b'\in \mathcal{B}_{2n}$ the boundary partition sequences for $\mathbf{e}$ and $\mathbf{f}$ respectively. Since $b\neq b'$, there exists a pair of indices $(k,l)$ with $1\leq k,l\leq 2n$ such that
	\begin{align*}
		b(k)=b(l)=0\quad\quad \text{and}\quad\quad 0=b'(k)\neq b'(l)=1.
	\end{align*}
 	We can assume that the arcs containing the boundary points $k$ and $l$ are distinct for both $e$ and $f$. Denote by $j(k,l)$ and $d(k,l)$ the join and disjoin respectively of the two arcs containing the boundary points $k$ and $l$. We have $e\cdot j(k,l)=e$ and $f\cdot d(k,l)=f$. We compute the pairing of $e\cdot j(k,l)$ and $f\cdot d(k,l)$. After gluing the $\dTL_\alpha$-diagrams for $e$ and $f$, for the closed circle components corresponding to $k$ and $l$, we have locally 
	\begin{align*}
		\frac{1}{4}\left[\; \LocaldTLududKL\, + \frac{1}{\alpha}\,\LocaldTLddKL\,- \frac{1}{\alpha}\,\LocaldTLddKL\,-\frac{1}{\alpha^2}\, \LocaldTLDoubleddKL \;\right] = 0.
	\end{align*}
    Note that this essentially follows from the orthogonality $d(k,l)j(k,l)=0$. Lastly, for $\langle \mathbf{\dot{e}}, \mathbf{\dot{f}}\rangle$, we have
	\begin{align*}
		\langle \mathbf{\dot{e}}, \mathbf{\dot{f}}\rangle = \alpha \langle \mathbf{e}, \mathbf{f}\rangle = 2\alpha\cdot \delta_{\mathbf{e},\mathbf{f}}
	\end{align*}
	since we assume that each of the dot is on the arc with boundary point 1 and cancels out for an $\alpha$ upon pairing.
\end{proof}
As a consequence, we obtain the copairing.
\begin{cor}\label{cor:copairing}
	The copairing bimodule map
	\begin{align*}
		c_2\colon {}_{\skcat_\alpha(\mathbb{T})}\Sk_\alpha(I\times \mathbb{T} )_{\skcat_\alpha(\mathbb{T})} \to {}_{\skcat_\alpha(\mathbb{T})}\Sk_\alpha(\mathbb{H}) \otimes_\kk \Sk_\alpha(\mathbb{H})_{\skcat_\alpha(\mathbb{T})}
	\end{align*}
	for the boundary condition $c=(2n,0)$ is given by
	\begin{align*}
		\id_c &\mapsto \sum_{\mathbf{e}} \tfrac{1}{2}\mathbf{e}\otimes \mathbf{e} + \tfrac{1}{2\alpha} \mathbf{\dot{e}}\otimes\mathbf{\dot{e}}.
	\end{align*}
\end{cor}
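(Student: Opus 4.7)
The proof is a direct computation from Proposition \ref{prop:pairing}, which will require only three observations. First, by Corollary \ref{cor:skmodViaIsoClasses}, a $\kk$-basis of $\Sk_\alpha(\mathbb{H},(2n,0))$ is given by the family $\{\mathbf{e}, \mathbf{\dot{e}}\}$ as $\mathbf{e}$ ranges over the isomorphism classes of primitive idempotents described in Proposition \ref{prop:isoClassesOfIdem}. Second, Proposition \ref{prop:pairing} tells us that the pairing $p_2$ on this basis is diagonal, with diagonal entries $\langle \mathbf{e},\mathbf{e}\rangle = 2$ and $\langle\mathbf{\dot e},\mathbf{\dot e}\rangle = 2\alpha$, both of which are units in $\kk$ by our standing assumptions $2,\alpha\in \kk^\times$. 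Third, the copairing $c_2$ on the identity object $\id_c$ for $c=(2n,0)$ is, by its defining property (step (2) of Construction \ref{con:extension}), precisely the sum $\sum_i x_i\otimes y_i$ for any pair of bases $(x_i), (y_i)$ dual under $p_2$.

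The plan is then to read off the dual basis directly from the diagonal Gram matrix: the dual of $\mathbf{e}$ is $\tfrac{1}{2}\mathbf{e}$ and the dual of $\mathbf{\dot e}$ is $\tfrac{1}{2\alpha}\mathbf{\dot e}$. Substituting into the formula for $c_2(\id_c)$ yields the claimed expression
\[
c_2(\id_c) = \sum_{\mathbf{e}} \left(\tfrac{1}{2}\mathbf{e}\otimes \mathbf{e} + \tfrac{1}{2\alpha} \mathbf{\dot e}\otimes \mathbf{\dot e}\right).
\]

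The extension to a full bimodule map on ${}_{\skcat_\alpha(\mathbb{T})}\Sk_\alpha(I\times \mathbb{T})_{\skcat_\alpha(\mathbb{T})}$ from its values on identity morphisms $\id_c$ is the content of step (2) of Construction \ref{con:extension}, so no further work is needed beyond specifying the image on $\id_c$ for each admissible boundary condition $c=(2n,0)$. There is no genuine obstacle here; the statement is essentially a bookkeeping consequence of Proposition \ref{prop:pairing}, and the only mild point worth noting is that the diagonality of the pairing (together with the mixed terms $\langle \mathbf{e},\mathbf{\dot f}\rangle=0$) is what makes the dual basis formula so transparent and forces the absence of cross terms $\mathbf{e}\otimes\mathbf{\dot e}$ in $c_2(\id_c)$.
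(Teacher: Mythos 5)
Your proposal is correct and matches the paper's own (implicit) reasoning: the paper states the copairing ``as a consequence'' of Proposition~\ref{prop:pairing} without writing out a proof, and the argument is exactly the dual-basis reading of the diagonal Gram matrix $\mathrm{diag}(2,2\alpha)$ on $\{\mathbf{e},\mathbf{\dot e}\}$ combined with the general copairing recipe from step~(2) of Construction~\ref{con:extension}. One small note: the parenthetical in your last sentence might be read as suggesting $(2n,0)$ are the only admissible boundary conditions on $\mathbb{T}$, which is not so---the corollary simply asserts the formula only for those; but this does not affect the validity of what you prove.
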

\begin{exm}\label{exm:copairingLowN}
	For $n=1$, we have $\id_{(2,0)}\mapsto \tfrac{1}{2}\udcup\otimes \udcup + \tfrac{1}{2\alpha}\dcup\otimes \dcup$. For $n=2$, we have three walk sequences and correspondingly three isomorphism classes of idempotents $\mathbf{e}$ and their dotted versions $\mathbf{\dot{e}}$ from Example~\ref{exm:walkIsoLowN}. The copairing sends 
	\begin{align*}
		\id_{(4,0)}\mapsto &\ \tfrac{1}{8}(\udcup\, \udcup + \tfrac{1}{\alpha}\,\dcup\, \dcup) \otimes (\udcup\, \udcup + \tfrac{1}{\alpha}\,\dcup\, \dcup) + \tfrac{1}{8\alpha}(\dcup\, \udcup + \udcup\, \dcup)\otimes (\dcup\, \udcup + \udcup\, \dcup)\\
		 +&\ \tfrac{1}{8}(\udcup\, \udcup - \tfrac{1}{\alpha}\,\dcup\, \dcup)\otimes (\udcup\, \udcup - \tfrac{1}{\alpha}\,\dcup\, \dcup) + \tfrac{1}{8\alpha}(\dcup\, \udcup - \udcup\, \dcup)\otimes (\dcup\, \udcup - \udcup\, \dcup)\\
		+&\ \tfrac{1}{8}\left(\,\nestedudcups - \tfrac{1}{\alpha}\nesteddcups\, \right)\otimes \left(\,\nestedudcups - \tfrac{1}{\alpha}\nesteddcups\, \right) + \tfrac{1}{8\alpha}\left(\,\nesteddcupudcup - \nestedudcupdcup\, \right)\otimes \left(\,\nesteddcupudcup - \nestedudcupdcup\, \right).
	\end{align*}
\end{exm}

\begin{rmk}[The pairing and copairing for $A_\beta$]\label{rmk:copRkN}
Consider surface skein theory for the Frobenius algebra $A_\beta$ as in Remark \ref{rmk:genRkN}. Consider the category $\BN_{\beta}(B^2,2n)$ for $A_\beta$, and choose a distinguished boundary point $1\subset \partial B^2$ inducing a total order on the boundary points in $\partial B^2$. Let $M$ be a planar matching on $2n$ points and $e\in \End_{\BN_\beta}(M)$ be an idempotent associated to a partition of $\arcs(M)$ into $N$ blocks with all blocks but the first block allowed to be empty. Denote by $e^{\bullet (k)}\in \End_{\overline{\BN}_\beta}(e)$ the endomorphism that has $k$ dots on the arc containing the boundary point 1. Consider a skeletonization of $\overline{\BN}_\beta$. A basis for $\End_{\overline{\BN}_\beta}(\mathbf{e})$ is given by $\{\mathbf{e}, \mathbf{e}^{\bullet(1)}, \dots, \mathbf{e}^{\bullet(N-1)}\}$. Here $\mathbf{e}^{\bullet (k)}$ is the endomorphism of $\mathbf{e}$ induced by $e^{\bullet (k)}$. The pairing $p_2$ on the $A_\beta$-skein module of the solid torus with $2n$ longitudes becomes
\begin{align*}
	\langle\mathbf{e},\mathbf{f}\rangle=N\cdot \delta_{\mathbf{e},\mathbf{f}}, \text{ for } 1\leq k,l,\leq N-1, \quad \langle \mathbf{e}^{\bullet (k)},\mathbf{f}^{\bullet (l)}\rangle = N \beta \cdot \delta_{N,k+l}\delta_{\mathbf{e},\mathbf{f}}
\end{align*}
and is perfect as by assumption, $\beta,N\in \kk^\times$. As a result, for $c=(2n,0)$, the copairing sends 
 \begin{align*}
    \id_c \mapsto \sum_{\mathbf{e}} \sum_{i=1}^N  \frac{1}{N\beta}\mathbf{e}^{\bullet(i)}\otimes \mathbf{e}^{\bullet(N-i)}.  
 \end{align*}
Note that for, $H^{-1}= \tfrac{1}{N\beta} x$, we have
\begin{align*}
    \Delta(H^{-1}) = \sum_{i=1}^{N} \frac{1}{N\beta}x^i\otimes x^{N-i} \in A_\beta^{\otimes 2}.
\end{align*}
This means, we can think of the copairing as $\Delta(H^{-1})$ acting on each summand $\mathbf{e}\otimes \mathbf{e}$.
\end{rmk}
Next, we compute the cap value of an element in the skein module of the solid torus. By Corollary \ref{cor:skmodViaIsoClasses}, this amounts to determining the cap value of isomorphism classes $\mathbf{e}$ of primitive idempotents $e$. Considering idempotents $e$ as elements in $\dTL_\alpha(0,2n)$, by the isomorphisms from Proposition \ref{prop:SolidtorusAnddTL} and Corollary \ref{cor:skmodViaIsoClasses}, we have that two isomorphic idempotents $e$ and $e'$ are related by $\dTL_\alpha$ relations and therefore define the same element and $\mathrm{cap}(\textbf{e})=\mathrm{cap}(e)$. The meridional disks cap off the annuli obtained from rotating the $\dTL_\alpha$-diagrams. Hence, by the sphere and dotted sphere relation for $A_\alpha$, the cap value of $e$ is equal to the coefficient of the fully dotted term in $e$.

\begin{prop}[Cap value]\label{prop:capval}
	Let $M$ be a planar matching on $2n$ points, and $e\in \End_{\BN_{\alpha}}(M)$ an idempotent associated to the partition $\{B, C\}$ of $\arcs(M)$ with $C=\emptyset$ allowed. Let $b\in B$ be an arc with $\dot{e}=\mathrm{dot}_b(e)$. Then, 
	\begin{align*}
		\mathrm{cap}(e) = 
		\begin{cases}
			\frac{(-1)^{|B|}}{2^{(n-1)}\alpha^{n/2}}&\quad \text{if $n$ even,}\\
			0&\quad \text{if $n$ odd}
		\end{cases}
	&&\text{ and } &&
		\mathrm{cap}(\dot{e}) =
		\begin{cases}
			0&\quad \text{if $n$ even},\\
			\frac{(-1)^{|C|}}{2^{(n-1)}\alpha^{(n-1)/2}} &\quad \text{if $n$ odd.}
		\end{cases}
	\end{align*}
\end{prop}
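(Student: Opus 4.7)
The strategy rests entirely on the reduction already sketched in the paragraph preceding the proposition: the cap map factors through extracting the coefficient of the fully dotted monomial $x_{\arcs(M)}$. I would begin by making this precise. Under the isomorphism $\dTL_\alpha(0,2n)\cong \Sk_\alpha(\mathbb{H},(2n,0))$ of Proposition~\ref{prop:SolidtorusAnddTL}, an arc of a planar matching rotates to an annulus, and capping with the parallel meridional disks $\mathsf{M}$ converts each such annulus into a 2-sphere. A sphere decorated by $x^k$ evaluates by $\varepsilon$ to $\delta_{k,1}$, since $\varepsilon(1)=0$, $\varepsilon(x)=1$, and $\varepsilon(x^2)=\alpha\,\varepsilon(1)=0$. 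Consequently only monomials in which every arc carries exactly one dot survive, and $\mathrm{cap}(e)$ equals the scalar coefficient of $x_{\arcs(M)}$ when $e$ is expanded in the basis $\{x_D\mid D\subseteq \arcs(M)\}$.

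With this reduction in hand, the computation for $\mathrm{cap}(e_P)$ is a direct reading of the explicit formula
\[
e_P = \frac{1}{2^{n-1}}\sum_{\substack{D\subseteq \arcs(M)\\ |D|\text{ even}}}  (-1)^{|D\cap B|}\alpha^{-|D|/2}\,x_D
\]
from Proposition~\ref{prop:structureOfIdem}. The term $x_{\arcs(M)}$ occurs only for $D=\arcs(M)$, which has $|D|=n$. This is even iff $n$ is even, so $\mathrm{cap}(e_P)=0$ for $n$ odd; for $n$ even, $|D\cap B|=|B|$ and the coefficient is $(-1)^{|B|}/(2^{n-1}\alpha^{n/2})$, giving the first formula.

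For $\mathrm{cap}(\dot{e}_P)$ with $\dot{e}_P = x_b\cdot e_P$ and $b\in B$, the main step is to track how multiplication by $x_b$ acts on basis monomials: $x_b\cdot x_D$ equals $x_{D\cup\{b\}}$ when $b\notin D$ and $\alpha\,x_{D\setminus\{b\}}$ when $b\in D$. Only the first case can produce $x_{\arcs(M)}$, and it does so from the unique summand $D=\arcs(M)\setminus\{b\}$, with $|D|=n-1$. This forces $n$ odd; then $|D\cap B|=|B|-1$, and the coefficient reads $(-1)^{|B|-1}/(2^{n-1}\alpha^{(n-1)/2})$. I would finish by rewriting the sign via $|B|+|C|=n$: for $n$ odd, $(-1)^{|B|-1}=(-1)^{n-|C|-1}=(-1)^{|C|}$, matching the claimed expression. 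The only subtle point in the whole argument is this last sign manipulation; no other obstacle arises, since the cap-value-equals-top-coefficient principle trivializes everything else.
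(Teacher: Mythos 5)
Your proof is correct and follows essentially the same route as the paper's: both rely on the reduction (stated in the discussion preceding the proposition) that the cap value is the coefficient of the fully dotted monomial $x_{\arcs(M)}$, then read off that coefficient from the explicit formula for $e_P$ in Proposition~\ref{prop:structureOfIdem}, tracking the parity of $n$ and the effect of multiplication by $x_b$. Your treatment of the sign $(-1)^{|B|-1}=(-1)^{|C|}$ for odd $n$ matches the paper's observation $|C|\equiv|D\cap B|\bmod 2$ for $D=\arcs(M)\setminus\{b\}$.
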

\begin{proof}
	By Proposition \ref{prop:structureOfIdem}, the idempotent $e$ is a linear combination of terms with an even number of dotted sheets with all coefficients non-zero. If $n=|\arcs(M)|$ is even, there is a term with dots on all sheets $D=\arcs(M)$, the fully dotted term. This term has coefficient $(-1)^{|B|}2^{-(n-1)}\alpha^{-n/2}$. Note that since $n$ even, we have $(-1)^{|B|}=(-1)^{|C|}$. If $n$ is odd, there is no such term and $\mathrm{cap}(e)=0$. To determine $\mathrm{cap}(\dot{e})$, note that $\dot{e}$ consists of terms with an odd number of dotted sheets since either a dot on $b$ is added or canceled for an $\alpha$. As a result, $\mathrm{cap}(\dot{e})=0$ if $n$ is even. If $n$ is odd, then the fully dotted term of $\dot{e}$ has the same coefficient as the term of $e$ with all dotted sheets except the sheet on $b$. The coefficient is $(-1)^{|D\cap B|}2^{n-1}\alpha^{-(n-1)/2}$ for $D=\arcs(M)\setminus\{b\}$. But $|C| = |D\cap C| \equiv |D\cap B| \mod 2$. 
\end{proof}

A vector $z=(z_1, \dots, z_{2n})\in (\ZZ/2)^{2n}$ defines a pure tensor $T_z = x^{z_1}\otimes \dots \otimes x^{z_{2n}} \in A_\alpha^{\otimes 2n}$. The collection of all $T_z$ forms a basis of $A_\alpha^{\otimes2n}$. Write $|z|=|\{i\mid z_i=1\}|$ for the number of factors of $x$ in $T_z$. 
\begin{thm}\label{thm:KirbyAlpha}
	The Kirby color for $A_\alpha$ is given by
	\begin{align*}
		\omega_{2n} = \frac{1}{2^{2n}}\sum_{k=0}^{n}\frac{1}{\alpha^k}S(n-k,k)\sum_{\substack{z\in (\ZZ/2)^{2n} \\|z|=2k}} \mathrm{sign}(z,k)T_z \;\in A_\alpha^{\otimes 2n}
	\end{align*}
	with \begin{align*}
		\mathrm{sign}(z,k) = \begin{cases}
			+1 &\text{if } \sum\limits_{i\text{ even}} z_i\equiv k \mod 2,\\
			-1 &\text{else.}			
		\end{cases}
	\end{align*} and the super Catalan numbers\footnote{Setting $b=1$,  $S(a,1)/2$ recovers the Catalan number $C_a$. Setting $b=0$, one obtains the middle binomial coefficient $\tbinom{2a}{a}$. Moreover, the expression is symmetric in $a$ and $b$. For more details on the super Catalan numbers, see \cite{GesselSuperBallot}.}
	\begin{align*}
		S(a,b) = \frac{(2a)!(2b)!}{a!b!(a+b)!} \quad \text{ for } a,b\in \NN_0.
	\end{align*}
\end{thm}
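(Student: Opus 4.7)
My plan is to unfold the definition of $\omega_{2n}$ using the ingredients developed in the preceding subsection, and then reorganize the resulting combinatorial sum. I begin by substituting the copairing of Corollary~\ref{cor:copairing} into $\omega_{2n}=(\id\otimes\mathrm{cap})(c_2(\id_{(2n,0)}))$ and plugging in the cap values from Proposition~\ref{prop:capval}. Because $\mathrm{cap}(\mathbf{e})$ and $\mathrm{cap}(\mathbf{\dot{e}})$ vanish in opposite parities of $n$, exactly one summand survives per iso class, giving
\begin{align*}
\omega_{2n} = \frac{1}{2^n\alpha^{n/2}}\sum_{\mathbf{e}}(-1)^{|B_{\mathbf{e}}|}\mathbf{e}\quad (n\text{ even}),
\end{align*}
with the analogous formula involving $\mathbf{\dot{e}}$, sign $(-1)^{|C_{\mathbf{e}}|}$, and $\alpha^{(n+1)/2}$ when $n$ is odd.

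Next, I pass into $A_\alpha^{\otimes 2n}$ via the injection $\phi$ from Proposition~\ref{prop:fillAnnulus}. Choosing a representative idempotent $e_P\in\End_{\BN_\alpha}(M)$ with planar matching $M$ and partition $\{B_{\mathbf{e}}, C_{\mathbf{e}}\}$ realizing the boundary partition of $\mathbf{e}$, Proposition~\ref{prop:structureOfIdem} expands $e_P$ as an alternating sum over even-cardinality subsets $D\subseteq\arcs(M)$, and the identity $\phi(x_D)=\bigotimes_b\Delta(x^{d_b})$ further expands via $\Delta(1)=1\otimes x+x\otimes 1$ and $\Delta(x)=x\otimes x+\alpha\cdot 1\otimes 1$. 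For a fixed basis vector $T_z$ the subset $D\subseteq\arcs(M)$ contributing is forced, namely an arc $\{i,j\}$ lies in $D$ exactly when $z_i=z_j$, and the splitting of each $\Delta$ is likewise determined. A short bookkeeping shows that the resulting exponent of $\alpha$ equals $(n-2k)/2$ for $|z|=2k$, independent of the choice of $M$ and consistent with the well-definedness of $\phi(\mathbf{e})$.

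These reductions transform the theorem into the combinatorial identity
\begin{align*}
\sum_{\mathbf{e}}(-1)^{|B_{\mathbf{e}}|+|D(z,M_{\mathbf{e}})\cap B_{\mathbf{e}}|}\;=\;\tfrac{1}{2}\,S(n-k,k)\,\mathrm{sign}(z,k),
\end{align*}
summed over all iso classes of primitive idempotents in $\BN_\alpha(B^2,2n)$. Establishing this is the main obstacle. My plan is to factor the left-hand side into a $z$-dependent sign, which I expect to identify as $(-1)^{\sum_{i\text{ even}}z_i+k}=\mathrm{sign}(z,k)$ by direct analysis of how the alternating sequence from Lemma~\ref{lem:walksOnZ} (relating boundary partition sequences and walk sequences) interacts with the parity of $z$, times an unsigned count of compatible boundary partitions which I will identify with $S(n-k,k)$ via the walk model of Proposition~\ref{prop:isoClassesOfIdem} and a standard signed-sum representation of the super Catalan numbers. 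I will calibrate the sign and normalization against Example~\ref{exm:walkIsoLowN} in the cases $n=1$ and $n=2$, where the values $S(2,0)=S(0,2)=6$ and $S(1,1)=2$ can be matched directly to the coefficients extracted from the three iso classes.
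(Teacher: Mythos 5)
Your proposal takes essentially the same route as the paper: unfold $\omega_{2n}$ via the copairing and cap values, pass to $A_\alpha^{\otimes 2n}$ through the idempotent expansion (with $D$ forced by $z$), and reduce to a signed sum over iso classes of primitive idempotents that is then evaluated via the walk model and von Szily's identity. One caution: the quantity you describe as ``an unsigned count of compatible boundary partitions'' is not an unsigned count — after pulling out $\mathrm{sign}(z,k)$ via the alternating sequence, what remains is still a signed sum over walk sequences, $\sum_{w}(-1)^{z\cdot w}$, which equals $\tfrac{1}{2}S(n-k,k)$ (up to a tracked $(-1)^k$) only by von Szily's identity; since you already invoke ``a standard signed-sum representation of the super Catalan numbers,'' this is a slip in the description rather than a structural gap, but you should phrase it as a signed sum when writing the argument out.
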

\begin{exm}\label{exm:kirbySmalln} 
	For $n=1$, we have $S(1,0)=S(0,1)=2$ and therefore $\omega_{2}=\tfrac{1}{4}(2\cdot 1\otimes 1 +  \tfrac{2}{\alpha} x\otimes x)\in A_\alpha^{\otimes 2}$. Indeed, applying $\mathrm{cap}$ to the image of $\id_{(2,0)}$ in Example \ref{exm:copairingLowN} under the copairing $c_2$ yields 
	\begin{align*}
		\omega_2 = \tfrac{1}{2}\mathrm{cap}(\udcup)\, \udcup + \tfrac{1}{2\alpha} \mathrm{cap}(\dcup)\, \dcup = \tfrac{1}{2\alpha}\dcup = \tfrac{1}{2}(1\otimes 1 + \tfrac{1}{\alpha} x\otimes x).
	\end{align*}
	Let $n=2$. Applying $\mathrm{cap}$ to the expression in Example \ref{exm:copairingLowN}, we obtain the Kirby color 
	\begin{align*}
		\omega_4 &= \sum_{\mathbf{e}}\tfrac{1}{2}\mathrm{cap}(\mathbf{e}) \mathbf{e} + \tfrac{1}{2\alpha} \mathrm{cap}(\mathbf{\dot{e}})\mathbf{\dot{e}} = \sum_\mathbf{e}\tfrac{1}{2}\mathrm{cap}(\mathbf{e}) \mathbf{e}
		 = \tfrac{1}{8\alpha}\left( \tfrac{1}{\alpha}\dcup\,\dcup + \udcup\,\udcup + \tfrac{1}{\alpha}\dcup\,\dcup - \udcup\,\udcup + \tfrac{1}{\alpha} \nesteddcups - \nestedudcups\,\right)\\
		 &= \tfrac{1}{16} \left(\tfrac{4}{\alpha^2}\dcup\,\dcup + \tfrac{2}{\alpha^2}\nesteddcups- \tfrac{2}{\alpha}\nestedudcups \,\right)\\
		 &= \tfrac{1}{16}\left(6\cdot 1\otimes 1\otimes 1\otimes 1
		 + \tfrac{2}{\alpha} \cdot 1\otimes 1\otimes x\otimes x
		 -\tfrac{2}{\alpha}\cdot 1\otimes x\otimes 1 \otimes x
		 + \tfrac{2}{\alpha} \cdot 1 \otimes x \otimes x\otimes 1\right.\\
		 &\left.+ \tfrac{2}{\alpha} \cdot x\otimes 1 \otimes 1\otimes x
		 -\tfrac{2}{\alpha}\cdot x\otimes 1\otimes x \otimes 1
		 + \tfrac{2}{\alpha} \cdot x\otimes x\otimes 1\otimes 1
		 + \tfrac{6}{\alpha^2} \cdot x\otimes x\otimes x\otimes x\right).
	\end{align*}
	 Indeed, for terms with $|z|=0$ or $|z|=4$ we have $S(0,2)=S(2,0)=6$, and for terms with $|z|=2$ we have a coefficient $S(1,1)=2$.
\end{exm}
For the proof of Theorem \ref{thm:KirbyAlpha}, we need the following Lemma.
\begin{lem}[von Szily's identity]
	Let $a,b$ be non-negative integers. The super Catalan numbers satisfy
	\begin{align*}
		S(a,b) = \sum_{k\in \ZZ} (-1)^k \binom{2a}{a+k}\binom{2b}{b-k}
	\end{align*}
\end{lem}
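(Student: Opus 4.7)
The plan is to reduce von Szily's identity to the classical Chu--Vandermonde-type identity
$$\sum_{k \in \ZZ} (-1)^k \binom{a+b}{a+k}\binom{a+b}{a-k} = \binom{a+b}{a}$$
by means of a purely algebraic rewriting. The key observation is that the ratio
$$\frac{\binom{2a}{a+k}\binom{2b}{b-k}}{\binom{a+b}{a+k}\binom{a+b}{a-k}}$$
does not depend on $k$. Indeed, both numerator and denominator factor as
$$\binom{2a}{a+k}\binom{2b}{b-k} = \frac{(2a)!\,(2b)!}{(a+k)!\,(a-k)!\,(b-k)!\,(b+k)!}, \qquad \binom{a+b}{a+k}\binom{a+b}{a-k} = \frac{(a+b)!^2}{(a+k)!\,(a-k)!\,(b-k)!\,(b+k)!},$$
and dividing gives the $k$-independent constant $(2a)!(2b)!/(a+b)!^2$.

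First, I would verify this factorial identity explicitly and pull the constant outside the sum:
$$\sum_{k \in \ZZ}(-1)^k \binom{2a}{a+k}\binom{2b}{b-k} = \frac{(2a)!\,(2b)!}{(a+b)!^2}\sum_{k\in \ZZ} (-1)^k\binom{a+b}{a+k}\binom{a+b}{a-k}.$$
Note that the sum is finite because the binomial coefficients vanish whenever $|k|>\min(a,b)$.

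Next, I would establish the auxiliary identity $\sum_k (-1)^k\binom{a+b}{a+k}\binom{a+b}{a-k} = \binom{a+b}{a}$ by extracting $[x^{2a}]$ from both sides of $(1-x)^{a+b}(1+x)^{a+b} = (1-x^2)^{a+b}$. On the left, one obtains
$$[x^{2a}](1-x)^{a+b}(1+x)^{a+b} = \sum_{i+j = 2a}(-1)^i \binom{a+b}{i}\binom{a+b}{j} = (-1)^a \sum_{k} (-1)^k\binom{a+b}{a-k}\binom{a+b}{a+k}$$
via the substitution $i=a-k$, $j=a+k$. On the right, $[x^{2a}](1-x^2)^{a+b} = (-1)^a\binom{a+b}{a}$. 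Cancelling $(-1)^a$ gives the auxiliary identity.

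Finally, combining the two steps yields
$$\sum_{k\in \ZZ}(-1)^k\binom{2a}{a+k}\binom{2b}{b-k} = \frac{(2a)!\,(2b)!}{(a+b)!^2}\cdot\binom{a+b}{a} = \frac{(2a)!\,(2b)!}{a!\,b!\,(a+b)!} = S(a,b).$$
The proof is entirely elementary; no step poses a serious obstacle once the $k$-independent ratio is spotted. The only minor care required is bookkeeping of the index range to confirm that the sum is finite and that the index substitution $i=a-k$, $j=a+k$ is a bijection on the support.
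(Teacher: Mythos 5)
Your proof is correct. The paper itself does not supply a proof of von Szily's identity at all---it simply cites Larcombe--French--Gessel and references therein---so you have gone beyond what the paper does and provided a complete, self-contained argument. Your two steps are sound: the ratio
\[
\frac{\binom{2a}{a+k}\binom{2b}{b-k}}{\binom{a+b}{a+k}\binom{a+b}{a-k}} = \frac{(2a)!\,(2b)!}{(a+b)!^2}
\]
is indeed constant in $k$, and the auxiliary alternating convolution identity follows cleanly from comparing $[x^{2a}]$ in $(1-x)^{a+b}(1+x)^{a+b}=(1-x^2)^{a+b}$, with the sign $(-1)^{a-k}=(-1)^a(-1)^k$ cancelling as you claim. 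The only point worth spelling out a little more carefully is the constant extraction: the factorial manipulation literally makes sense only for $|k|\le\min(a,b)$, but one checks directly that both $\binom{2a}{a+k}\binom{2b}{b-k}$ and $\binom{a+b}{a+k}\binom{a+b}{a-k}$ vanish outside exactly that range, so the termwise identity $\binom{2a}{a+k}\binom{2b}{b-k}=\frac{(2a)!(2b)!}{(a+b)!^2}\binom{a+b}{a+k}\binom{a+b}{a-k}$ holds for all $k\in\ZZ$ and summing is legitimate. You gesture at this by noting finiteness of the sum; making the equal-support observation explicit would close the last small gap. Otherwise this is a clean, elementary proof and a perfectly good replacement for the paper's bare citation.
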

\begin{proof}
	See \cite{LarcombeFrenchGesselvonSzilyIdentity} and references therein.
\end{proof}
\begin{proof}[Proof of Theorem \ref{thm:KirbyAlpha}]
	First, we prove that the only terms $T_z$ occurring in the expression have $|z|=2k$ for some $k=0,\dots,n$. Using the embedding $\dTL_\alpha(0,2n)\subset A_\alpha^{\otimes 2n}$, an undotted cup $\udcup$ is evaluated to $1\otimes x+ x\otimes 1$, an odd number of $x$'s; a dotted cup $\dcup$ to $\alpha 1\otimes 1 + x\otimes x$, an even number of $x$'s. Let $e$ be a primitive idempotent representing the isomorphism class $\mathbf{e}$ and its dotted version $\mathbf{\dot{e}}$. If $n$ is even, only the undotted isomorphism class $\mathbf{e}$ yields a non-zero cap value. Every term in $e$ has an even number of dotted arcs and thereby also an even number of undotted arcs. Hence, considered as elements in $A^{\otimes 2n}$, every term has an even number of factors of $x$. If $n$ is odd, only dotted isomorphism classes have non-zero cap value. In this case there is an odd number of dotted arcs in each term and there is again an even number of undotted arcs. Hence, only $T_z$ with $|z|=2k$ occur. Next, we determine the coefficients. The $2^{-2n}$ and $\alpha^{-k}$ follow from cap values and coefficients of the idempotents: Every (dotted) idempotent has an overall factor of $2^{-(n-1)}$. The cap values provide another factor of $2^{-(n-1)}$. The copairing has a factor of $\frac12$ and the remaining factor of $\frac12$ comes from the combinatorial consideration below. For the factors of $\alpha$, observe that the expression for the idempotents, copairing, and cap values are homogeneous when endowing $A_\alpha$ with a grading as in Construction \ref{con:joindisjoin}. Hence, we can restore the factor of alpha by counting the number of $x$'s in the term $T_z$ which yields $\alpha^{-k}$.
	
	We are now left with sums over (pure) $\dTL_\alpha$-diagrams with coefficients $\pm1$ (and can subsequently ignore the factors of $\alpha$). Fix an idempotent $e$ representing $\mathbf{e}$, that is, fix a planar matching $M$ and partition $\{B,C\}$ of $\arcs(M)$. Note that if $n$ is even, $|B|\equiv |C| \mod 2$. Hence, the cap value contributes a global sign $(-1)^{|C|}$ and we do not need to distinguish $n$ even and odd. By Proposition~\ref{prop:structureOfIdem}, the $\dTL_\alpha$-diagram with dotting $D$ contributes the sign $(-1)^{|D\cap C|}$ to the expression of $e$. Every isomorphism class of these idempotents yields every term $T_z$ exactly once. For fixed $M$, we choose an arc $\{i,j\}\in \arcs(M)$ to be dotted if $z_i = z_j$, and undotted if $z_i\neq z_j$. This determines $D$ uniquely. The overall sign of $x_D$ is
	\begin{align*}
		(-1)^{|C|}(-1)^{|D\cap C|}=(-1)^{|C\setminus (D\cap C)|}.
	\end{align*}
	Let $b$ be the boundary partition sequence of $\mathbf{e}$, and fix the representing planar matching $M$ and the dotting $D$ in $e$ producing the term $T_z$. Recall that $b_i=1$ if and only if the arc at $i$ is in $C$ for the partition $\{B,C\}$ of $\arcs(M)$. We make the following computation. 
	\begin{align*}
		z\cdot b &= \sum_i z_ib_i = \sum_{(i,j)\in \arcs(M)} z_ib_i+z_jb_j = \sum_{(i,j)\in C} z_i+z_j = \sum_{(i,j)\in C\cap D} z_i+z_j + \sum_{(i,j)\in C\setminus(D\cap C)}  z_i+z_j \\&= |C\setminus (D\cap C)|  + \sum_{(i,j)\in C\setminus(D\cap C)}  z_i+z_j \equiv |C\setminus (D\cap C)| \mod 2.
	\end{align*}
	In the last two steps we used that for $(i,j)\in C\setminus(D\cap C)$, $z_i\neq z_j$, we have $z_i+z_j=1$, and for $(i,j)\in C\cap D$, we have $z_i=z_j$. Hence, by enumerating the isomorphism classes by their boundary partition sequences, the coefficient of $T_z$ with $|z|=2k$ that is still unaccounted for is equal to 
	\begin{align*}
		\sum_{b\in \mathcal{B}_{2n}} (-1)^{z\cdot b}.
	\end{align*}
	In the following we show that this is precisely $\tfrac{1}{2}S(n-k,k)\mathrm{sign}(z,k)$. First, let $z=(0,\dots,0, 1,\dots, 1)\in (\ZZ/2)^{2n}$ with $|z|=2k$ and consider $T_z$. Let $\mathcal{W}^{\text{even}}_{2n,2k}$ and $\mathcal{W}^{\text{odd}}_{2n,2k}$ be the subsets of $\mathcal{W}^{+}_{2n}$ with an even resp.\ odd number of 1's in the last $2k$ digits. We have		
		\begin{align*}
			|\mathcal{W}^{\text{even}}_{2n,2k}| = \tfrac{1}{2} \sum_{\substack{m\in \ZZ\\m \text{ even}}} \tbinom{2(n-k)}{n-m}\tbinom{2k}{m} \quad \quad \text{and}\quad \quad
			|\mathcal{W}^{\text{odd}}_{2n,2k}| =\tfrac{1}{2} \sum_{\substack{m\in \ZZ\\m \text{ odd}}} \tbinom{2(n-k)}{n-m}\tbinom{2k}{m}
		\end{align*}
		since we can divide the sequences into subsequences of lengths $2(n-k)$ and $2k$ and sum over $m$, the number of ones in the subsequence of length $2k$. The $\frac12$ comes from considering walk sequences $\mathcal{W}^+_{2n}$ instead of  $\mathcal{W}_{2n}$. Now, for a boundary partition sequence $b$ and its associated walk sequence $w$, we have $b=w+(0,1,0,1,\dots)$ and 
		\begin{align*}
			\sum_{b\in \mathcal{B}_{2n}} (-1)^{z\cdot b} &= (-1)^k \sum_{w\in \mathcal{W}_{2n}^+}(-1)^{z\cdot w} = (-1)^k \left( |\mathcal{W}^{\text{even}}_{2n,2k}| - |\mathcal{W}^{\text{odd}}_{2n,2k}|\right) = \tfrac{(-1)^k}{2}  \sum_{m\in \ZZ} (-1)^m\tbinom{2(n-k)}{n-m}\tbinom{2k}{m}\\
			&= \tfrac{1}{2}  \sum_{m\in \ZZ} (-1)^{m+k}\tbinom{2(n-k)}{n-m}\tbinom{2k}{m} =  \tfrac{1}{2}  \sum_{m'\in \ZZ} (-1)^{m'}\tbinom{2(n-k)}{n-k+m'}\tbinom{2k}{k-m'} = \tfrac{1}{2}S(n-k,k)
			\end{align*}
		where we substituted $m'=k-m$ in the penultimate step, and applied von Szily's identity in the last step. For general terms $T_z$ with $|z|=2k$, up to a global sign global sign accounted for by $(-1)^{z\cdot b}$ in the first equality, a very similar calculation applies. Indeed, for the alternating sequence $s=(0,1,0,\dots)\in (\ZZ/2)^{2n}$ and the walk sequence $w=b+s$ associated to $b$, we have
		\begin{align*}
			(-1)^{z\cdot b} = (-1)^{z \cdot s}(-1)^{z \cdot w} = \mathrm{sign}(z,k)(-1)^k(-1)^{z\cdot w}.
		\end{align*}
		We then replace $\mathcal{W}^{\text{odd}}_{2n,2k}$ and $\mathcal{W}^{\text{even}}_{2n,2k}$ by sets of walks that have an odd resp.\ even number of 1s at those $2k$ fixed positions with $z_i=1$. These sets clearly have the same cardinality as $\mathcal{W}^{\text{odd}}_{2n,2k}$ and $\mathcal{W}^{\text{even}}_{2n,2k}$, respectively. Therefore, we have determined the coefficients of all $T_z$ in the expression of $\omega_{2n}$.
		
\end{proof}
\begin{rmk}
	Capping off the Kirby color $\omega_{2n}$ with $2k$ undotted and $2n-2k$ dotted disks, extracts the coefficient of terms of the form $T_z$ for $|z|=2k$. 
\end{rmk}

In the following, we derive another expression for the Kirby color for $A_\alpha$. For this, we will work with the additional assumption that the underlying field $K$ in $\kk=K[\alpha^{\pm 1}]$ satisfies $\mathrm{char}(K)=0$. 

The symmetric group $S_{m}$ acts on $\dTL_\alpha(0,m)$ as follows. Define the crossing morphism
\begin{align*}
	\cross := \udid - \cupcap\in \dTL_\alpha(2,2) \quad \text{satisfying} \quad {\cross}^{\,2}=\udid,\quad  \dldcross = -\urdcross \quad \text{and}\quad \drdcross = - \uldcross\, .
\end{align*}
Let $1\leq i\leq m-1$. Define for the simple transposition $s_i:=(i,i+1)\in S_m$ the morphism
\begin{align*}
     P_{i}:= \;\crossSimpleTransp\; \in \dTL_\alpha(m,m)
\end{align*}
that is the identity on the first $i-1$ strands, the crossing morphism on $i$ and $i+1$, and the identity on the remaining strands. As composites of these morphisms satisfy the braid relations, the assignment $s_i\mapsto P_i$ extends to a group homomorphism 
\begin{align*}
    S_m \to \dTL_\alpha(m,m),\quad \sigma \mapsto P_\sigma.
\end{align*}
We let $\sigma\in S_m$ act on $\dTL_\alpha(0,m)$ by composing with the morphism $P_\sigma\in \dTL_\alpha(m,m)$.
However, this does not define a symmetric braiding on the category $\dTL_\alpha$ since naturality fails as the dot slides through a crossing only up to sign. 

Define the $m$-th symmetrizer to be the morphism
\begin{align*}
	\Symmet{m}\; :=\; \frac{1}{m!}\sum_{\sigma\in S_m} P_\sigma \in \dTL_\alpha(m,m).
\end{align*}
This satisfies the usual recursive relation
\begin{align*}
	\Symmet{m}\; = \;\frac{1}{m}\;\SymStack{m-1}{{m-1}}\; +\; \frac{m-1}{m}\;\SymStackCross{m-1}{m-1}\;, && \SymOne \;=\; \SymLine\;.
\end{align*}
Using the definition of the crossing, we obtain the recursion relation for the $m$-th Jones--Wenzl projector in $\dTL_\alpha$ and hence, it is equal to the $m$-th symmetrizer. Recall from Remark \ref{rmk:TL-in-dTL} that we can consider the Jones--Wenzl projectors as idempotents in $\dTL_\alpha$. See \cite[Section 3.2]{HRWKirbyColorForKh} for more details in the very similar case of $A_{\mathrm{BN}}$. 

The separability idempotent $\Delta(H^{-1})\in A_\alpha^{\otimes 2}$ can be considered as an element in $\dTL_\alpha(0,2)$, a cup with decoration given by $H^{-1}=\frac{1}{2\alpha}x$. We denote this element by
\begin{align*}
	 \SepidemCup\; :=\; \tfrac{1}{2\alpha}\;\dcupbig\;\in \dTL_\alpha(0,2).
\end{align*}
We also use graphical notation to depict the Kirby color by 
\begin{align*}
	\kirbycol{\omega_{2n}}\; \in \dTL_\alpha(0,2n).
\end{align*}

\begin{thm}\label{thm:KirbySymmetrizer}
	The Kirby color can be expressed as scaled symmetrized power of the separability idempotent.
	\begin{align*}
		\kirbycol{\omega_{2n}}\;= \tfrac{1}{2^n} \tbinom{2n}{n}\; \SymmetKirby{2n} \; \in \dTL_\alpha(0,2n).
	\end{align*}
\end{thm}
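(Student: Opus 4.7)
I will verify the asserted equality in $A_\alpha^{\otimes 2n}$ via the embedding $\dTL_\alpha(0,2n)\hookrightarrow A_\alpha^{\otimes 2n}$ of Proposition~\ref{prop:fillAnnulus}, using the explicit formula of Theorem~\ref{thm:KirbyAlpha} on the LHS. Because $\mathrm{Sym}_{2n}\circ P_i = \mathrm{Sym}_{2n}$, the RHS is independent of the planar matching chosen for the $n$ separability cups; placing them side-by-side at pairs $(2l-1,2l)$, they are represented by $e^{\otimes n}\in A_\alpha^{\otimes 2n}$ with $e = \Delta(H^{-1}) = \tfrac{1}{2}(1\otimes 1) + \tfrac{1}{2\alpha}(x\otimes x)$, so $e^{\otimes n} = \tfrac{1}{2^n}\sum_{D\subseteq [n]}\alpha^{-|D|}T_{z(D)}$, where $z(D)\in (\ZZ/2)^{2n}$ has $z(D)_{2l-1} = z(D)_{2l} = 1$ iff $l\in D$.

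\textbf{Signed action of crossings.} Using $P_i = \udid - \cupcap$ and the identity $\cupcap\circ(a\otimes b) = \varepsilon(ab)\Delta(1)$, a direct check on $\{1\otimes 1,\,1\otimes x,\, x\otimes 1,\, x\otimes x\}$ yields
\[ P_i\cdot T_z = (-1)^{\mathbf{1}[z_i\ne z_{i+1}]}\,T_{\sigma_i z}, \quad\text{hence}\quad P_\sigma\cdot T_z = s(\sigma,z)\,T_{\sigma z} \]
with $s(\sigma,z) = (-1)^{|\{i<j\,:\,\sigma(i)>\sigma(j),\,z_i\ne z_j\}|}$, and the stabilizer $\mathrm{Stab}(z)\cong S_{|z|}\times S_{2n-|z|}$ acts with sign $+1$. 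An element $\sum_z c_z T_z$ is $S_{2n}$-invariant iff $c_{\sigma z} = s(\sigma,z)\,c_z$; for adjacent transpositions this amounts to $\sign(\sigma_i z,k)/\sign(z,k) = (-1)^{\mathbf{1}[z_i\ne z_{i+1}]}$, which holds since $\sum_{j\text{ even}}(\sigma_i z)_j - \sum_{j\text{ even}} z_j \equiv z_i + z_{i+1}\pmod 2$. So $\omega_{2n}$ is $S_{2n}$-invariant, as is $\mathrm{Sym}_{2n}(e^{\otimes n})$ by construction, and both are supported in the even-$|z|$ strata.

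\textbf{Matching on a representative.} Since $S_{2n}$-invariants are determined by their coefficients on one orbit representative per $|z|$ stratum, I take $z_0^* = (0,\ldots,0,1,\ldots,1)$ with $2k$ trailing ones, for which $\sign(z_0^*,k) = +1$. Collapsing the stabilizer sum,
\[ [T_{z_0^*}]\,\mathrm{Sym}_{2n}(e^{\otimes n}) = \frac{1}{2^n\binom{2n}{2k}\alpha^k}\sum_{D:\,|D|=k} s(\sigma_0^{(D)},\,z(D)), \]
where $\sigma_0^{(D)}$ sends $U_D = \bigcup_{l\in D}\{2l-1,2l\}$ to $V = \{2n-2k+1,\ldots,2n\}$ in order. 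Since every element of $V$ exceeds every element of $[2n]\setminus V$, the inversions counted by $s(\sigma_0^{(D)},z(D))$ are exactly the pairs $i<j$ with $i\in U_D$, $j\notin U_D$, totalling
\[ \sum_{l\in D}\Big(|\{j>2l-1,\,j\notin U_D\}| + |\{j>2l,\,j\notin U_D\}|\Big) = \sum_{l\in D} 2|\{j>2l,\,j\notin U_D\}|, \]
always even. Hence $s(\sigma_0^{(D)},z(D)) = +1$ for every $D$, the sum is $\binom{n}{k}$, and multiplying by $\tfrac{1}{2^n}\binom{2n}{n}$ together with the identity $\binom{2n}{n}\binom{n}{k}/\binom{2n}{2k} = S(n-k,k)$ yields $S(n-k,k)/(2^{2n}\alpha^k)$, matching $[T_{z_0^*}]\,\omega_{2n}$. $S_{2n}$-invariance then propagates the equality to all $z^*$.

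\textbf{Main obstacle.} The one nontrivial combinatorial step is the parity of the inversion count, which hinges on the paired structure of $U_D$ relative to $V$: the positions $2l-1$ and $2l$ contribute identical tallies of $\{j>2l\,:\,j\notin U_D\}$, forcing the total to be even. The super-Catalan identity $\binom{2n}{n}\binom{n}{k}/\binom{2n}{2k} = S(n-k,k)$ is then routine, and the propagation by $S_{2n}$-invariance is a direct consequence of the cocycle property of $s$.
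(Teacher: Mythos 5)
Your proof is correct and follows essentially the same route as the paper: verify the equality coefficient by coefficient in $A_\alpha^{\otimes 2n}$ via the embedding, reduce to a single $T_{z}$ per $|z|=2k$ stratum using the symmetrizer property, and match the coefficient of $T_{z_0^*}$ via the super-Catalan identity $\binom{2n}{n}\binom{n}{k}/\binom{2n}{2k}=S(n-k,k)$. What you add is an explicit account of the signed $S_{2n}$-action (the factor $(-1)^{\mathbf{1}[z_i\ne z_{i+1}]}$, the cocycle $s(\sigma,z)$, the triviality of $s$ on stabilizers, and the even-inversion-count lemma showing each $D$ contributes $+1$), which the paper compresses into the phrase ``a straightforward counting argument.''
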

\begin{proof}
	We show that expressions in Theorem \ref{thm:KirbyAlpha} and \ref{thm:KirbySymmetrizer} are equal. Since the separability idempotent $\Delta(H^{-1})=\frac{1}{2}( 1\otimes 1 +\alpha^{-1}x\otimes x)$ has an even number of $x$'s, so do its symmetrized powers. By the symmetrizer property it suffices to determine the coefficient of any term $T_z$ with $|z|=2k$. Indeed, swapping $0$ and $1$ in $z$ only results in an overall sign change. The terms $T_z$ have a global factor of $2^{-n}\alpha^{-k}$ from powers of the separability idempotent. Without loss of generality, pick $z=z_0=(0,\dots, 0, 1,\dots,1)\in (\ZZ/2)^{2n}$ with $|z|=2k$. This choice for $z$ satisfies $\mathrm{sign}(z,k)=+1$ and the dot slide relations implement the swap of $0$ and $1$. We compute the coefficient of $T_z$ in the symmetrizer expression as	
	\begin{align*}
		\frac{1}{2^n\alpha^k}\frac{1}{2^n}\binom{2n}{n}\frac{1}{|S_{2n}|} \binom{n}{k} |\mathrm{Stab}(T_z)|=  \frac{1}{2^n\alpha^k}\frac{1}{2^n}\frac{(2n)!}{n!n!}\frac{1}{(2n)!} \frac{n!}{(n-k)!k!} (2k)!(2(n-k))!=\frac{1}{2^{2n}\alpha^k} S(n-k,k).
	\end{align*}
	The factor $\binom{n}{k}$ comes from the other terms in $\Delta(H^{-1})^{\otimes n}$ with $|z|=2k$. A straightforward counting argument shows that the sign $\mathrm{sign}(z,k)$ for a general term $T_z\in A^{\otimes 2n}$ and the sign obtained from swapping 0s and 1s from $z_0$ to $z$ are equal.
\end{proof} 
\begin{exm}
	For $n=1$, we have the separability idempotent
	\begin{align*}
			\kirbyTwo{\omega_{2}} = \tfrac{1}{2}\tbinom{2}{1}\; \SymKirbyTwo{2} =  \tfrac{1}{2\alpha} \left(\tfrac{1}{2}\,\dcupid\, +\tfrac{1}{2}\, \dcupcross\right) = \tfrac{1}{2\alpha}\left(\tfrac{1}{2}\,\dcupid\, +\tfrac{1}{2}\, \dcupid - \tfrac{1}{2}\,\dcupcapcup \right)= \tfrac{1}{2\alpha}\dcupbig = \SepidemCup
	\end{align*}
	where we used that the dotted circle is zero in $\dTL_\alpha$.
	\smallskip 
    
	Let $n=2$. The expression for $\omega_4$ in Example \ref{exm:kirbySmalln} can also be obtained from the symmetrizer of
	\begin{align*}
		\tfrac{1}{4}\tbinom{4}{2}\;\SepidemCup\,\SepidemCup = \tfrac{6}{16\alpha^2}\; \dcupbig\,\dcupbig = \tfrac{1}{16} \left( 6\cdot 1\otimes 1\otimes 1\otimes 1 + \tfrac{6}{\alpha} \cdot 1\otimes 1\otimes x\otimes x + \tfrac{6}{\alpha}\cdot  x\otimes x\otimes 1\otimes 1 + \tfrac{6}{\alpha^2}\cdot x\otimes x\otimes x\otimes x\right).
	\end{align*}
	The terms with $|z|=4$ and $|z|=0$ are invariant. For the other terms, the computation in the proof of Theorem~\ref{thm:KirbySymmetrizer} applies.
\end{exm}
We can now explicitly verify the properties stated Remark \ref{rmk:kirbyRecursiveCapping} for $A_\alpha$.
\begin{cor}\label{cor:annulusCapping}
	The Kirby color satisfies the following properties
	\begin{align*}
		\kirbyCap{\omega_{2n}} = 0\in \dTL_\alpha(0,2n) \quad \quad \text{and} \quad \quad \kirbydCap{\omega_{2n}}=\kirbywide{\omega_{2n-2}}\;\in \dTL_\alpha(0,2n-2).
	\end{align*}
\end{cor}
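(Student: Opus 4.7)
The plan is to prove the two capping identities separately: the undotted cap will follow quickly from Jones--Wenzl absorption applied to the symmetrizer form of Theorem~\ref{thm:KirbySymmetrizer}, while the dotted cap requires a term-by-term computation in $A_\alpha^{\otimes 2n}$ using Theorem~\ref{thm:KirbyAlpha} and ultimately reduces to a super Catalan recursion.

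For the undotted identity, I would use that $\omega_{2n}=\tfrac{1}{2^n}\binom{2n}{n}\,\mathrm{Sym}_{2n}\circ(\SepidemCup)^{\otimes n}$ and recall (from the discussion preceding Theorem~\ref{thm:KirbySymmetrizer}) that $\mathrm{Sym}_{2n}$ coincides with the $2n$-th Jones--Wenzl projector in $\dTL_\alpha$ at circle value $\delta=2$; indeed, the crossing $\cross=\udid-\cupcap$ has no dots, so the symmetrizer is a purely Temperley--Lieb element. The standard Jones--Wenzl absorption $\udcup_{i,i+1}\circ\mathrm{Sym}_{2n}=0$ then immediately gives $\kirbyCap{\omega_{2n}}=0$.

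For the dotted identity, I would use the embedding of Proposition~\ref{prop:fillAnnulus} to view $\omega_{2n}$ as an element of $A_\alpha^{\otimes 2n}$ written out explicitly via Theorem~\ref{thm:KirbyAlpha}. A dotted cap at positions $(i,i+1)$ then contracts the $i$-th and $(i+1)$-th tensor factors via $(a,b)\mapsto\varepsilon(xab)$; direct evaluation gives $(1,1)\mapsto 1$, $(1,x),(x,1)\mapsto 0$, $(x,x)\mapsto\alpha$, so only $T_z$ with $(z_i,z_{i+1})\in\{(0,0),(1,1)\}$ contribute. For each target $z'\in(\ZZ/2)^{2(n-1)}$ with $|z'|=2k'$ there are exactly two such preimages $z^{(0,0)}$ (with $|z|=2k'$) and $z^{(1,1)}$ (with $|z|=2k'+2$). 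Since exactly one of $i,i+1$ is at an even position, a parity check shows $\sum_{j\text{ even}}z^{(0,0)}_j=\sum_{j\text{ even}}z'_j$ and $\sum_{j\text{ even}}z^{(1,1)}_j=\sum_{j\text{ even}}z'_j+1$; matched with the increment of $k$ by $1$ in the second case, this gives $\mathrm{sign}(z^{(0,0)},k')=\mathrm{sign}(z^{(1,1)},k'+1)=\mathrm{sign}(z',k')$. Terms with $|z'|$ odd contribute nothing, as their preimages would have $|z|$ odd and so do not appear in $\omega_{2n}$.

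The total coefficient of $T_{z'}$ in $\kirbydCap{\omega_{2n}}$ is then
\[
\frac{\mathrm{sign}(z',k')}{2^{2n}\alpha^{k'}}\bigl[\,S(n-k',k')+S(n-k'-1,k'+1)\,\bigr],
\]
and the argument pivots on the super Catalan identity $S(a+1,b)+S(a,b+1)=4\,S(a,b)$, which follows from the direct computation $S(a+1,b)/S(a,b)=2(2a+1)/(a+b+1)$ together with its $a\leftrightarrow b$ analogue, summing to $4$. Applied with $a=n-1-k'$ and $b=k'$, the bracket becomes $4\,S((n-1)-k',k')$, and the factor $4/2^{2n}=1/2^{2(n-1)}$ matches the coefficient of $T_{z'}$ in $\omega_{2n-2}$ from Theorem~\ref{thm:KirbyAlpha}. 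The main obstacle is the sign bookkeeping required to match $\mathrm{sign}(z^{(0,0)},k')$ and $\mathrm{sign}(z^{(1,1)},k'+1)$ with $\mathrm{sign}(z',k')$; the super Catalan identity itself is elementary, but recognizing it as the right recursion is the key combinatorial insight.
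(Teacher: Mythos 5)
Your proof of the first identity coincides with the paper's: both observe that the symmetrizer is the Jones--Wenzl projector in $\dTL_\alpha$ at circle value $2$ and invoke cap annihilation.

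For the second identity, you take a genuinely different route. The paper works entirely diagrammatically from the symmetrizer form of Theorem~\ref{thm:KirbySymmetrizer}: it first establishes a dot-slide relation for the dotted cap through a crossing, then applies the symmetrizer recursion twice to express the capped Kirby color as a sum of six diagrams, and finally sets up an induction on a scalar $t_{2n}$ satisfying $\tilde{t}_{2n}=4(2n-1)\alpha+\tilde{t}_{2n-2}$ with $\tilde{t}_2=4\alpha$, closing to $t_{2n}=2n\alpha/(2n-1)$, which is then matched against the explicit coefficients. You instead work in $A_\alpha^{\otimes 2n}$ via the embedding of Proposition~\ref{prop:fillAnnulus} and the closed formula of Theorem~\ref{thm:KirbyAlpha}, contracting positions $i,i+1$ by $(a,b)\mapsto\varepsilon(xab)$ (which is indeed the $2$d-TQFT value of the dotted annulus, by functoriality of abstract evaluation). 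The sign analysis is correct: since exactly one of $i,i+1$ is even and removing an adjacent pair preserves the parities of the remaining positions after relabeling, both surviving preimages $z^{(0,0)}$ and $z^{(1,1)}$ carry $\mathrm{sign}(z',k')$. The resulting recursion $S(a+1,b)+S(a,b+1)=4S(a,b)$, which you verify from the ratio $S(a+1,b)/S(a,b)=2(2a+1)/(a+b+1)$, closes the computation and absorbs the extra $4/2^{2n}=1/2^{2(n-1)}$ exactly as needed. I checked your claimed coefficient match against Example~\ref{exm:kirbySmalln} for $n=2$ and it agrees. Two small points you elide but should state: the contraction on $A_\alpha^{\otimes 2n}$ commutes with the dotted cap in $\dTL_\alpha$ because the embedding $\phi$ is built from the $2$d TQFT $\mathcal{F}_{A_\alpha}$ and is therefore compatible with gluing cobordisms; and to conclude equality in $\dTL_\alpha(0,2n-2)$ you must invoke the injectivity of $\phi$, which is the content of Proposition~\ref{prop:fillAnnulus} under the standing perfectness hypothesis. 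Your approach is arguably cleaner for this specific algebra, though the paper's diagrammatic recursion is closer in spirit to the inductive strategy advocated in Remark~\ref{rmk:determineKirbyRecur} for more general Frobenius algebras where an explicit $T_z$-expansion is not yet available.
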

\begin{proof}
	For the first property, we note that the symmetrizer is annihilated by caps since the symmetrizer is the same as the Jones--Wenzl projector. For the second property, we first establish the following relations in $\dTL_\alpha$ using the definition of the crossing and the saddling relation in $\dTL_\alpha$
	\begin{align*}
		\DottedCapNaturality
        \;=\; 
        \DottedCapNaturalityOne - \DottedCapNaturalityTwo - \DottedCapNaturalityThree + \DottedCapNaturalityFour 
        \;=\; 
        \DottedCapId + \CapDottedId -\DottedDiagCap 
        \;=\; 
        \DiagDottedCap
	\end{align*}
	and similarly for the dotted cup. By this relation and the symmetrizer property of absorbing crossings, we can assume that the dotted cap is on the two rightmost strands. Applying the recursion for the symmetrizer twice, we obtain the following.
	\begin{align*}
		\Symmet{2n}&= \frac{1}{2n(2n-1)} \left[\,\SymmetDoubleId{2n-2}\, + (2n-2)\, \SymStackCrossId{2n-2}{2n-2}\;\right]\\ 
		+ \frac{1}{2n(2n-1)} &\left[ \; \SymStackIdCross{2n-2}{2n-2} + {2n-2}\; \SymStacksTwoCrossDown{2n-2}{2n-2}{2n-2} +{2n-2}\; \SymStacksTwoCrossUp{2n-2}{2n-2}{2n-2} + {(2n-2)^2}\;\SymFourStackCrosses{2n-2}{2n-2}{2n-2}{2n-2}\;\right] 
	\end{align*}
	Note that some of the expressions can be simplified by the idempotency of the symmetrizer. We prove inductively that 
	\begin{align*}
		\SymmetDottedCupsCapped{2n}	= t_{2n}\;\SymmetDottedCups{2n-2}, \quad \text{with } t_{2n} = \frac{2n\alpha}{2n-1}.
	\end{align*}
	For $n=1$, we have 
	\begin{align*}
			 \SymTwoCappedCupped{2}= \DoublyDottedCircle= \alpha\; \UndottedCircle= 2\alpha
	\end{align*}
	since the empty diagram on zero strands is one. Now let $n\geq2$ and assume that
	\begin{align*}
		\SymmetDottedCupsCapped{2n-2} = t_{2n-2}\; \SymmetDottedCups{2n-4}
	\end{align*}
	for some\footnote{Note that we do not assume $t_{2n-2}=\tfrac{(2n-2)\alpha}{2n-3}$ yet to keep generality for Remark \ref{rmk:determineKirbyRecur}.} $t_{2n-2}\in \kk$. We use $n$ dotted cups and a dotted cap on the symmetrizer recursion above to obtain the following expression.
		\begin{align*}
			\SymmetDottedCupsCapped{2n} &= \frac{1}{2n(2n-1)} \left( 2\alpha\;\SymmetDottedCups{2n-2} + (2n-2) \alpha \;\SymmetDottedCups{2n-2}\right) \\
			&+ \frac{1}{2n(2n-1)}\left( 2\alpha \;\SymmetDottedCups{2n-2} + (2n-2)\alpha \;\SymmetDottedCups{2n-2}+ (2n-2) \alpha \;\SymmetDottedCups{2n-2} \right.\\
			&\quad\quad+\left. \left((2n-2)\alpha + (2n-2)(2n-3)t_{2n-2})\right)\,\SymmetDottedCups{2n-2}\right)
		\end{align*}
	Here we have used for the first five terms
	\begin{align*}
		\DoublyDottedCircle=2\alpha\,,\quad \RightTwist=\OneLine\,,\quad \CrossedDottedCircle = \DoublyDottedCircle = 2\alpha,\quad
		\CrossedDottedCap=\DottedCap\,,\quad \CrossedDottedCup=\DottedCup\,.
	\end{align*}
	To obtain the last term, we use another recursion on the symmetrizer in the middle of the last term.
	\begin{align*}
		(2n-2)^2\;\SymmetTripleCrossAndCapped{2n-2} =(2n-2)\; \SymmetNextToTripleCrossAndCapped{2n-3} + (2n-2)(2n-3)\; \SymmetDoubleQuadrupleCrossAndCapped{2n-3}{2n-3}
	\end{align*}
	The larger $(2n-2)$-symmetrizer absorbs the $(2n-3)$-symmetrizers. Then, the first term has coefficient $(2n-2)\alpha$ by naturality of dotted cup and the invariance of the dotted cup under crossing. Applying naturality of the dotted cup to the second term, we have
	\begin{align*}
		(2n-2)(2n-3)\;\SymmetDoubleDCappedDCupped{2n-2}{2n-2} = (2n-2)(2n-3)t_{2n-2}\; \SymmetDottedCups{2n-2}
	\end{align*}
	by the inductive assumption and again by absorption of smaller symmetrizers. From the above computation we obtain
	\begin{align*}
		\SymmetDottedCupsCapped{2n} &=\frac{1}{2n(2n-1)}\left( 4(2n-1)\alpha + (2n-2)(2n-3)t_{2n-2}\right)\;\SymmetDottedCups{2n-2}
	\end{align*}
	and can now compute the coefficients $t_{2n}$ recursively. First, let $\tilde{t}_{2n}= 2n(2n-1)t_{2n}$. Then, 
	\begin{align*}
		\tilde{t}_{2n} = 4(2n-1)\alpha+\tilde{t}_{2n-2},\quad \tilde{t}_2=4\alpha \quad\quad \text{and hence,} \quad\quad \tilde{t}_{2n}=4\alpha \sum_{i=1}^n (2i-1)= 4\alpha n^2.
	\end{align*}
	As a consequence, we obtain
	\begin{align*}
		t_{2n} = \frac{1}{2n(2n-1)}\tilde{t}_{2n}= \frac{2n\alpha}{2n-1}.
	\end{align*}
	Finally, we have by Theorem \ref{thm:KirbySymmetrizer}
	\begin{align*}
		\SymmetKirbyCappedRight{\omega_{2n}} 
		= \tfrac{1}{2^{2n}\alpha^n}\tbinom{2n}{n}\,\SymmetDottedCupsCapped{2n}
		= \tfrac{1}{2^{2n}\alpha^n}\tbinom{2n}{n}t_{2n}\;\SymmetDottedCups{2n-2}= \tfrac{1}{2^{2n-2}\alpha^{n-1}}\tbinom{2n-2}{n-1}\; \SymmetKirby{2n-2} =\kirbywide{\omega_{2n-2}}
	\end{align*}
	and this concludes the proof.
\end{proof}
\begin{rmk}\label{rmk:determineKirbyRecur}
	An alternative way to derive the Kirby color for $A_\alpha$-surface skein theory starts from the properties in Corollary \ref{cor:annulusCapping}. From the first property, it can be derived that the Kirby color admits an expression as the Jones--Wenzl projector $\mathrm{JW}_{2n}$ applied to an element $\dTL_\alpha(0,2n)$ (if we also assume that the idempotent does not have through-degree zero). The cup annihilation property of $\mathrm{JW}_{2n}$ then implies that the element that the Jones--Wenzl projector is composed with must be a linear combination of dotted cups. But the relations in $\dTL_\alpha$ imply that any such configuration have the same image after applying $\mathrm{JW}_{2n}$. The second property fixes the coefficient recursively. In the last step of the proof of Corollary \ref{cor:annulusCapping}, one can also use the coefficients $t_{2n}$ to recursively determine the coefficient in the expression in Theorem \ref{thm:KirbySymmetrizer}. This fully determines $\omega_{2n}$. 
	
	Analogues of these properties hold for more general commutative Frobenius algebras $A$ for which $\sktft_A$ extends to $(4,2)$-handlebodies, see Remark \ref{rmk:kirbyRecursiveCapping}. It is conceivable that these properties lend themselves to a derivation the Kirby color after the structure of idempotents in $\dTL_A(2n,2n)$ has been fully understood. Concretely, this requires characterizing idempotents by Jones--Wenzl-like properties of annihilating caps in $\dTL_A$. 
\end{rmk}

\begin{rmk}
	We expect that the Kirby color for the surface skein theory $\sktft_\beta$ associated to the commutative Frobenius algebra $A_\beta$ does \emph{not} admit a formulation as a symmetrization of a tensor power of the separability idempotent if $N>2$. While the idempotents in the relevant $\dTL_\beta$ require further study, the existence of the symmetrizer for $N=2$ is related to the symmetry w.r.t.\ orientation swap. 
	An oriented version of the constructions in this section would require the longitudinal boundary curves on the solid torus to be alternately oriented. Forgetting orientation would then relate back to our setting. The failure of naturality of the symmetric group action is reflected by the fact that swapping two adjacent boundary curves requires a flip of orientation. The unoriented graphical calculus of (decorated) Temperley--Lieb diagrams does not record this, but perhaps an oriented version would be more natural---just as Khovanov homology in its original formulation for $\sl_2$ is (almost)\footnote{While Khovanov homology is an invariant of \emph{oriented} links, up to grading shift it does not depend on the choice of orientation.} insensitive to orientation. In this sense, the existence of the symmetrizer is a happy accident in rank $N=2$. Skein theory for the Frobenius algebra $A_\beta$ of rank $N>2$ relates to a version with webs and foams for $\mathfrak{gl}_N$. There, orientations play an important role.
\end{rmk}

\subsection{Computation of the invariant in examples}
In the following we compute the invariant $\sktft_\alpha(W)$ for examples of $(4,2)$-handlebodies $W$. We begin with $W=S^2\times B^2$ presented by one 0-handle, no 1-handles, and one 2-handle attached along the 0-framed unknot in $S^3$. The invariant $\sktft_\alpha(S^2\times B^2)$ defines a linear functional on $\Sk_\alpha(S^2\times S^1)$. We obtain the skein module $\Sk_\alpha(S^2\times S^1)$ from the skein module of $S^2\times B^1$ by gluing, introducing further relations. Recall from Lemma \ref{lem:incompInThickSTwo} that the incompressible surfaces in $S^2\times B^1$ are parallel copies of essential spheres $S^2\times \{*\}$. We have a similar statement for incompressible surfaces in and $S^2\times S^1$.
\begin{lem}
    Isotopy classes of closed incompressible surfaces in $S^2\times S^1$ are represented by parallel copies of essential spheres $S^2\times \{*\}$.
\end{lem}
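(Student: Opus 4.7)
My plan is to show that every connected component $S_0$ of a closed incompressible surface $S \subset S^2 \times S^1$ is an essential 2-sphere, and then to invoke the classical classification of essential 2-spheres in $S^2 \times S^1$.

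First I would handle the orientable case directly via Lemma~\ref{lem:piInjective}: if $S_0$ is orientable of positive genus, then incompressibility forces an injection $\pi_1(S_0) \hookrightarrow \pi_1(S^2 \times S^1) = \ZZ$. This is impossible since the fundamental group of a closed orientable surface of genus $g = 1$ contains $\ZZ^2$, and for $g \geq 2$ it is non-abelian, so neither admits an injection into the abelian, torsion-free group $\ZZ$. Hence such an $S_0$ must be a 2-sphere.

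For the non-orientable case, I would reduce to the orientable one via the orientation double cover. Let $\hat{S}_0 = \partial N(S_0)$ denote the boundary of a regular neighborhood of $S_0$; this is a twisted $I$-bundle (because $S_0$ is one-sided in the orientable ambient 3-manifold), so $\hat{S}_0$ is the connected orientation double cover of $S_0$. A standard innermost disk argument exploiting the $I$-bundle structure shows that incompressibility of $S_0$ implies incompressibility of $\hat{S}_0$: any compression disk for $\hat{S}_0$ lying outside $N(S_0)$ can be extended vertically through the $I$-bundle to a compression disk for $S_0$, while the twisted $I$-bundle itself admits no compression disks for its boundary. The orientable case forces $\hat{S}_0 = S^2$, whence $S_0 = \RP^2$. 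But $\RP^2$ does not embed in $S^2 \times S^1$ at all: the decomposition $S^2 \times S^1 = N(\RP^2) \cup_{S^2} (S^2 \times S^1 \setminus N(\RP^2))$ combined with Van Kampen's theorem (amalgamation over $\pi_1(S^2) = 1$) would give $\ZZ \cong \ZZ/2 \ast G$, which has 2-torsion, contradicting torsion-freeness of $\ZZ$.

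Having established that each component of $S$ is an essential 2-sphere, I would finish by invoking the classical fact that essential 2-spheres in $S^2 \times S^1$ are all isotopic to $S^2 \times \{*\}$---a consequence of $S^2 \times S^1$ being prime but non-irreducible with a unique isotopy class of essential sphere (obtained, e.g., by cutting along $S^2 \times \{*\}$ to reduce to the simply connected $S^2 \times [0,1]$ and applying standard arguments there). After isotopy, the components of $S$ form disjoint parallel copies of $S^2 \times \{*\}$. The main obstacle will be the careful verification that $\hat{S}_0$ is incompressible in $S^2 \times S^1$ whenever $S_0$ is; this is a standard fact in 3-manifold topology, but requires combining the twisted $I$-bundle structure of $N(S_0)$ with an innermost disk argument applied to any hypothetical compression disk for $\hat{S}_0$. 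The orientable case and the algebraic obstruction to embedding $\RP^2$ are both quick once this reduction is in place.
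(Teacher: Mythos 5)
Your proposal takes a genuinely different route from the paper's. The paper's proof isotopes the incompressible surface $S$ off a fiber $S^2\times\{t\}$, observing that an innermost intersection circle would yield a compression disk; once $S$ sits inside $S^2\times I$, it appeals to Lemma~\ref{lem:incompInThickSTwo}, and the non-orientable case is void for free because $S^2\times I$ is simply connected and Lemma~\ref{lem:SurfInsimplyCon} forces all embedded surfaces to be orientable. You instead classify the components of $S$ directly: the orientable case via $\pi_1$-injectivity (Lemma~\ref{lem:piInjective}), and the non-orientable case via the orientation double cover $\partial N(S_0)$. The orientable half and the final step (essential spheres in $S^2\times S^1$ are all isotopic to fibers and hence form parallel copies) are correct.

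The non-orientable case, however, has a real gap that you yourself flag: you need that incompressibility of the one-sided surface $S_0$ implies incompressibility of $\partial N(S_0)$. This is precisely the delicate point for one-sided surfaces. The paper's incompressibility notion (``no compression disk'') is not a priori equivalent to $\pi_1$-injectivity or to incompressibility of $\partial N(S_0)$ when $S_0$ is one-sided --- the loop theorem and Kneser's lemma are for two-sided surfaces, which is exactly why Lemma~\ref{lem:piInjective} carries an orientability hypothesis. Pushing a compression disk for $\partial N(S_0)$ down to one for $S_0$ is not a routine innermost-disk argument: the projected boundary curve need not be simple, and it need not be essential on $S_0$. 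Moreover, the versions of this reduction one finds in the literature are typically proved for irreducible ambient manifolds, and $S^2\times S^1$ is reducible, so extra care would be needed. In short, the ``standard fact'' you invoke is substantially harder to establish here than the paper's detour of reducing to a simply connected ambient $3$-manifold where non-orientable surfaces cannot even embed. As written, your argument is incomplete in the non-orientable case.
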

\begin{proof}
    Let $\mathsf{S}\hookrightarrow S^2\times S^1$ be a closed incompressible surface. We can assume that there exists a $t\in S^1$ such that $\mathsf{S}\cap (S^2\times \{t\}) = \emptyset$. Otherwise the intersection would generically be a disjoint union of closed curves on $S^2$ along which we could find a compression disk in $S^2\times \{t\}$. It follows that we can assume that $\mathsf{S}$ embeds in $S^2\times I$. By Lemma \ref{lem:incompInThickSTwo}, we obtain the statement.
\end{proof}
As a consequence of the above, $A_\alpha$-decorated parallel copies of essential spheres span the surface skein module $\Sk_\alpha(S^2\times S^1)$. Recall from the discussion before Proposition \ref{prop:TensorAlgebraSkModSphere} the tubing relations between parallel essential spheres, and the explicit relations \eqref{eq:tubingWith1} and \eqref{eq:tubingWithx} for $A_\alpha$ from the proof of Corollary \ref{cor:alphaSkmodThickSphere}.
Also recall from Corollary~\ref{cor:alphaSkmodThickSphere} that the set $\{\mathsf{S}^k,\mathsf{S}^k\mathsf{D} \mid k\geq 0\}$ forms a $\kk$-basis of the algebra $\Sk_\alpha(S^2\times B^1,\emptyset)$. Since we obtain $S^2\times S^1$ by gluing $S^2\times B^1$ to itself, on the level of skein modules, this computes the trace of the algebra $\Sk_\alpha(S^2\times B^1, \emptyset)$, i.e.\ the coinvariants
\begin{align*}
    \Sk_\alpha(S^2\times S^1) \cong \Tr (\Sk_\alpha(S^2\times B^1,\emptyset)) = \Tr \left(\kk\langle  \mathsf{S},\mathsf{D}\rangle /(\mathsf{S} \mathsf{D} + \mathsf{D} \mathsf{S},
        \mathsf{D} \mathsf{D} + \alpha \mathsf{S} \mathsf{S} - 1)\right).
\end{align*} 
This is the same as the trace of the $\kk$-linear category $\skcat_\alpha(S^2)$ since by Remark \ref{rmk:gluingMorita}, we only need to consider the empty boundary condition here. 
\begin{prop}\label{prop:skModSTwoSOne}
	Let $k\geq 1$. Denote by $\mathsf{S}^k\in \Sk_\alpha(S^2\times S^1)$ the isotopy class of the disjoint union of $k$ parallel copies of the essential sphere $S^2\times \{*\}$ without decoration, and by $\mathsf{D}\in  \Sk_\alpha(S^2\times S^1)$ the class of one dotted sphere $S^2\times \{*\}$. Then, the skein module 	$\Sk_\alpha(S^2\times S^1)$ is spanned by
	\begin{align*}
	 \{\emptyset, \mathsf{D},\mathsf{S}, \mathsf{S}^{2k} \text{ for } k\geq 1\}.
	\end{align*}
\end{prop}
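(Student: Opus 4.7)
The plan is to pass to the algebra presentation of the thickened sphere skein module from Corollary~\ref{cor:alphaSkmodThickSphere} and compute the trace using the defining relations of that algebra together with cyclicity. By the isomorphism recalled immediately before the statement,
\begin{align*}
    \Sk_\alpha(S^2\times S^1) \;\cong\; \Tr\bigl(\kk\langle\mathsf{S},\mathsf{D}\rangle/(\mathsf{S}\mathsf{D}+\mathsf{D}\mathsf{S},\, \mathsf{D}^2+\alpha\mathsf{S}^2-1)\bigr),
\end{align*}
and Corollary~\ref{cor:alphaSkmodThickSphere} provides the $\kk$-basis $\{\mathsf{S}^k,\mathsf{S}^k\mathsf{D}\mid k\geq 0\}$ for the algebra on the right.

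The first step is to exploit the interplay between the anticommutation $\mathsf{S}\mathsf{D}+\mathsf{D}\mathsf{S}=0$ and the cyclicity of the trace. Cyclicity forces $\mathsf{S}\mathsf{D}=\mathsf{D}\mathsf{S}$ in $\Tr$, while the algebra relation forces $\mathsf{S}\mathsf{D}=-\mathsf{D}\mathsf{S}$. Combining these gives $2\mathsf{S}\mathsf{D}=0$, and since $2\in\kk^\times$ we conclude $\mathsf{S}\mathsf{D}=0$ in the trace. Consequently $\mathsf{S}^k\mathsf{D}=\mathsf{S}^{k-1}\cdot\mathsf{S}\mathsf{D}=0$ for every $k\geq 1$, so the only surviving basis element of the form $\mathsf{S}^k\mathsf{D}$ is $\mathsf{D}=\mathsf{S}^0\mathsf{D}$.

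Next, I would extract a reduction relation on odd powers of $\mathsf{S}$ by evaluating $\mathsf{D}\mathsf{S}\mathsf{D}$ in two ways. Cyclicity gives
\begin{align*}
\mathsf{D}\mathsf{S}\mathsf{D} \;=\; \mathsf{D}^2\mathsf{S} \;=\; (1-\alpha\mathsf{S}^2)\mathsf{S} \;=\; \mathsf{S}-\alpha\mathsf{S}^3,
\end{align*}
whereas anticommutation in the algebra gives
\begin{align*}
\mathsf{D}\mathsf{S}\mathsf{D} \;=\; -\mathsf{S}\mathsf{D}^2 \;=\; -\mathsf{S}(1-\alpha\mathsf{S}^2) \;=\; -\mathsf{S}+\alpha\mathsf{S}^3.
\end{align*}
Equating these in the trace and using $2\in\kk^\times$ yields $\mathsf{S}=\alpha\mathsf{S}^3$. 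Iterating, $\mathsf{S}^{2k+1}=\alpha^{-k}\mathsf{S}$ for every $k\geq 1$, so every odd power of $\mathsf{S}$ beyond the first collapses to a scalar multiple of $\mathsf{S}$. Together with the first step, this shows that $\{\emptyset,\mathsf{D},\mathsf{S},\mathsf{S}^{2k}\mid k\geq 1\}$ spans $\Sk_\alpha(S^2\times S^1)$, which is the claim.

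There is no serious obstacle in this argument; the only subtle point is to be careful that the two computations of $\mathsf{D}\mathsf{S}\mathsf{D}$ are each valid (one uses the trace, the other uses the algebra) before equating them, and to invoke the invertibility of $2$ and of $\alpha$ at the two places where it is needed. I would also remark that this proof does not claim the given spanning set is a basis: additional relations (such as those expressing $\mathsf{D}^2$) could in principle cause linear dependencies, and determining the precise rank would require further analysis.
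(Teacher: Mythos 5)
Your overall strategy matches the paper's: pass to $\Tr$ of the algebra presentation of $\Sk_\alpha(S^2\times B^1,\emptyset)$, then play off the anticommutation relation against cyclic invariance. But both of your two reductions commit the same subtle error: they treat the trace as if it were a quotient \emph{algebra}, when it is only a quotient $\kk$-\emph{module}. The trace is $R/[R,R]$ where $[R,R]$ is the span of commutators; left multiplication by an element of $R$ does not descend to this quotient (e.g.\ for $R=M_2(\kk)$, $[e_{12}]=0$ in $\Tr(R)$ but $[e_{21}e_{12}]=[e_{22}]\neq 0$). So the step ``$\mathsf{S}^k\mathsf{D}=\mathsf{S}^{k-1}\cdot\mathsf{S}\mathsf{D}=0$'' does not follow from $[\mathsf{S}\mathsf{D}]=0$: the first equality is valid in $R$, but the conclusion would require $\mathsf{S}^{k-1}\cdot\mathsf{S}\mathsf{D}$ to actually vanish in $R$, which it doesn't. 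Likewise, ``iterating'' $[\mathsf{S}]=\alpha[\mathsf{S}^3]$ to get $[\mathsf{S}^{2k+1}]=\alpha^{-k}[\mathsf{S}]$ would require multiplying a trace relation by $\mathsf{S}^2$, which is not an available operation.

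The fix is straightforward and is what the paper does: run the cyclicity-plus-anticommutation argument on each word separately rather than on a generator. For the first step, observe that in $R$ one has $\mathsf{S}^k\mathsf{D}=-\mathsf{S}^{k-1}\mathsf{D}\mathsf{S}$, while in $\Tr$ one has $[\mathsf{S}^k\mathsf{D}]=[\mathsf{S}^{k-1}\mathsf{D}\mathsf{S}]$ (cyclic permutation of the full word); together these give $2[\mathsf{S}^k\mathsf{D}]=0$, hence $[\mathsf{S}^k\mathsf{D}]=0$ for every $k\geq 1$. For the second step, the analogous argument applied to the word $\mathsf{S}^{2k-1}\mathsf{D}^2$ (cycle a $\mathsf{D}$ to the front, then anticommute it back, picking up a sign from each of the $2k-1$ copies of $\mathsf{S}$) gives $[\mathsf{S}^{2k-1}\mathsf{D}^2]=0$ for each $k\geq 1$, and then substituting $\mathsf{D}^2=1-\alpha\mathsf{S}^2$ \emph{before} passing to the trace yields $[\mathsf{S}^{2k-1}]-\alpha[\mathsf{S}^{2k+1}]=0$. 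Your computation of $\mathsf{D}\mathsf{S}\mathsf{D}$ only establishes the $k=1$ case. Once you make each reduction word-by-word in this way, the argument is complete and coincides with the one in the paper.
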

\begin{proof}
	Gluing $S^2\times B^1$ to $S^2\times S^1$ introduces an additional isotopy cyclically permuting the positions of the essential spheres. This is implemented in the trace of $\Sk_\alpha(S^2\times B^1, \emptyset)$ by adding cyclic permutation relations of the words in $\mathsf{S}$ and $\mathsf{D}$. First, observe that for $k\geq 1$,
	\begin{align*}
		\mathsf{S}^{k-1}\mathsf{D}\mathsf{S} = \mathsf{S}^k\mathsf{D} = - \mathsf{S}^{k-1}\mathsf{D}\mathsf{S}
	\end{align*} 
	where the first equality follows from cyclic permutation and the second equality by \eqref{eq:tubingWith1} from the proof of Corollary~\ref{cor:alphaSkmodThickSphere}. Hence $\mathsf{S}^{k}\mathsf{D}=0$ for $k\geq 1$.
	Second, we show that $\mathsf{S}^{2k+1}$ and $\mathsf{S}^{2k-1}$ are linearly dependent. Fix $l\geq 2$. Again by cyclic permutation and applying \eqref{eq:tubingWith1}, we have
    \begin{align*}
       \mathsf{S}^{2k-1}\mathsf{D}^{l} = \mathsf{D}\mathsf{S}^{2k-1}\mathsf{D}^{l-1} = -  \mathsf{S}\mathsf{D}\mathsf{S}^{2k-2}\mathsf{D}^{l-1} = \dots = -\mathsf{S}^{2k-1}\mathsf{D}^{l}
    \end{align*}
    and hence, $\mathsf{S}^{2k-1}\mathsf{D}^{l}=0$.
    As a consequence, by applying \eqref{eq:tubingWithx} from the proof of Corollary~\ref{cor:alphaSkmodThickSphere}, we obtain
	\begin{align*}
		0=\mathsf{S}^{2k-1}\mathsf{D}^2=-\alpha\mathsf{S}^{2k+1} + \mathsf{S}^{2k-1}.
	\end{align*}

	The first argument appears in \cite[Theorem 3.1]{AsaedaFrohman} for the case of $A_{\mathrm{BN}}$. The second argument appears in the proof of \cite[Theorem 5.11]{BoernerDrube}.
\end{proof}
\begin{prop}\label{prop:InvSTwoBTwo}
	The invariant of the $(4,2)$-handlebody $W=S^2\times B^2$ is the linear functional
	\begin{align*}
		\sktft_\alpha(S^2\times B^2) \colon \Sk_\alpha(S^2\times S^1)&\to \kk\\
		\emptyset &\mapsto \ev(\emptyset)^2\\
		\mathsf{D} &\mapsto 0\\
		\mathsf{S}&\mapsto 0 \\
		\mathsf{S}^{2k}&\mapsto \tfrac{1}{2^{2k}\alpha^k} \tbinom{2k}{k}\ev(\emptyset)^2.
	\end{align*}
\end{prop}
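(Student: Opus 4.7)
I would apply Construction~\ref{con:TheInvariant} directly. The handle decomposition of $W = S^2 \times B^2$ has a single 0-handle and a single 2-handle, attached along the 0-framed unknot, so the dual link $L^\vee \subset \partial W = S^2\times S^1$ has one component, which can be realized as $\{*\} \times S^1$. Since $\chi(W) = 2$, Proposition~\ref{prop:evEulerChar} reduces the problem to computing the invariant for $\ev(\emptyset)=1$ and then inserting the factor $\ev(\emptyset)^2$ at the end.

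Next I would count transverse intersections with $L^\vee$ for each basis element from Proposition~\ref{prop:skModSTwoSOne}. An essential sphere $S^2 \times \{*\}$ meets $L^\vee$ transversely in a single point, so $\mathsf{S}^j$ meets $L^\vee$ in $j$ points. For $\mathsf{S}$ and $\mathsf{D}$ this gives $j=1$, and since $\Sk_\alpha(\mathbb{H},(1,0)) = 0$ by Proposition~\ref{prop:HomologDecompOfSkMod}, the Kirby color $\omega_1$ vanishes and Construction~\ref{con:TheInvariant} forces the invariant to $0$. For the empty skein there are no intersections, $\omega_0 = 1$, and the empty evaluation yields $1$, producing $\ev(\emptyset)^2$ after the Euler factor.

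The main step is the computation for $\mathsf{S}^{2k}$ with $k \geq 1$. The $2k$ connected components are each undecorated spheres pierced once by $L^\vee$, so by Construction~\ref{con:abstractEvalext} each contributes the map $\varepsilon\colon A_\alpha \to \kk$ (using $g=0$, $\pi_0(\partial C) = \emptyset$, $\ell(C) = 1$, $|P \cap C| = 1$), giving $\eval_\alpha(\mathsf{S}^{2k}, P) = \varepsilon^{\otimes 2k}$. Applying this to $\omega_{2k}$ from Theorem~\ref{thm:KirbyAlpha} and using $\varepsilon(1)=0$, $\varepsilon(x)=1$ kills every basis tensor $T_z$ except $z = (1,\ldots,1)$. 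For that term the summation index is $j = k$, the super Catalan coefficient is $S(0,k) = \binom{2k}{k}$, and the sign $\mathrm{sign}((1,\ldots,1), k) = +1$ (because $\sum_{i \text{ even}} z_i = k \equiv k \pmod 2$). This yields $\varepsilon^{\otimes 2k}(\omega_{2k}) = \binom{2k}{k}/(2^{2k}\alpha^k)$, which after multiplication by $\ev(\emptyset)^2$ gives the stated value. The main subtlety is the sign bookkeeping in the Kirby color formula, but since only one term survives the evaluation by $\varepsilon^{\otimes 2k}$, this reduces to verifying a single sign.
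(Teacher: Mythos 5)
Your proposal is correct and follows essentially the same route as the paper's proof: count punctures on each sphere, observe that odd puncture counts give zero, and identify the evaluation on $\mathsf{S}^{2k}$ with extracting the coefficient of $T_{(1,\dots,1)}$ in $\omega_{2k}$ from Theorem~\ref{thm:KirbyAlpha}, which is $S(0,k)/(2^{2k}\alpha^k)=\binom{2k}{k}/(2^{2k}\alpha^k)$. You spell out more explicitly than the paper does that the per-component abstract evaluation of an undecorated once-punctured sphere is $\varepsilon$ (via Construction~\ref{con:abstractEvalext} with $g=0$, no boundary, $\ell=1$, one puncture), and you verify the sign $\mathrm{sign}((1,\dots,1),k)=+1$; both checks are accurate and the extra detail is welcome but not a different argument.
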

\begin{proof}
	Each of the essential spheres is punctured once. For an odd number of parallel spheres, the invariant is zero. Capping off the Kirby color $\omega_{2k}$ with $2k$ undotted disks yields the coefficient of the term $T_z$ with $z=(1,\dots, 1)$. This is equal to $\tfrac{1}{2^{2k}\alpha^k} \tbinom{2k}{k}$. By Proposition \ref{prop:evEulerChar}, there is a scalar $\ev(\emptyset)^2\in \kk^\times$ since $\chi(S^2\times B^2)=2$.
\end{proof}
\begin{rmk}\label{rmk:InvThickenedCircle}
	Compare the linear functional $\sktft_\alpha(S^2\times B^2)$ above with $\sktft_\alpha(B^3\times S^1)$ from Example~\ref{exm:s2s1eval} which also defines a linear functional on $\Sk_\alpha(S^2\times S^1)$. The $(4,2)$-handlebody $W= S^1\times B^3$ is built from one 0-handle and one 1-handle. It defines the invariant
	\begin{align*}
		\sktft_\alpha(B^3\times S^1) \colon \Sk_\alpha(S^2\times S^1)\to \kk,	\quad 	\emptyset \mapsto 1,\quad \mathsf{S}^k\mapsto 0,\quad	\mathsf{D}\mapsto 1,
	\end{align*}
	by using abstract evaluation. Note that this is independent of the choice of $\ev(\emptyset)$ since the Euler characteristic is $\chi(B^3\times S^1)=0$. 
\end{rmk}

Next, we consider the invariant of the $(4,2)$-handlebody $W=S^1\times S^1\times B^2$ which consists of one 0-handle, two 1-handles and one 2-handle. See \cite[Example 5.3]{KirbyTopOf4Man}. First, we determine the skein module $\Sk_\alpha(S^1\times S^1\times S^1)$. 
In the following, we view $M=S^1\times S^1\times S^1$ as the Seifert-fibered manifold over the torus $F=S^1\times S^1$ with projection onto the first two factors $p\colon M\to F$.
\begin{lem}\label{lem:onesidedIncompIn3Torus}
    Any incompressible surface in $S^1\times S^1 \times S^1$ is orientable.
\end{lem}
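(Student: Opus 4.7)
The plan is to argue by contradiction. First I would assume that $S \subset M := S^1 \times S^1 \times S^1$ is a closed non-orientable incompressible surface. Since $M$ is orientable, $S$ must be one-sided, so a tubular neighborhood $N(S)$ is a twisted $I$-bundle over $S$, and its boundary $\tilde S := \partial N(S)$ is the orientation double cover of $S$, embedded two-sidedly in $M$ as a closed orientable surface. A standard argument (e.g.\ via the Loop Theorem applied to the complement of $N(S)$, or equivalently by lifting $S$ to the double cover $\hat M \to M$ classified by the Poincar\'e dual of $[S] \in H_2(M;\ZZ/2)$, which restricts to $w_1(S)$ on $S$) then shows that $\tilde S$ is incompressible in $M$.

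Next, I would apply Lemma \ref{lem:piInjective} to $\tilde S$ to deduce an injection $\pi_1(\tilde S) \hookrightarrow \pi_1(M) = \ZZ^3$, which forces $\pi_1(\tilde S)$ to be a free abelian subgroup of $\ZZ^3$. Combined with being the fundamental group of a closed orientable surface, this leaves only the options $\tilde S \cong S^2$ or $\tilde S \cong S^1 \times S^1$. If $\tilde S \cong S^2$, then $S \cong \RP^2$, and since $M$ is irreducible ($\pi_2(M) = 0$) the sphere $\tilde S$ bounds a ball in $M$; this ball cannot be $N(S) \cong \RP^3 \setminus B^3$ (which contains $\RP^2$), so it must be the closure of $M \setminus N(S)$, forcing $M \cong \RP^3$ and contradicting $\pi_1(M) = \ZZ^3$.

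The remaining case $\tilde S \cong S^1 \times S^1$ is where the work lies: here $S$ is the Klein bottle $K$ with presentation $\pi_1(K) = \langle a, b \mid a b a^{-1} b \rangle$, and the orientation double cover corresponds to the index-two subgroup $\pi_1(\tilde S) = \langle a^2, b \rangle$. The key algebraic observation is that in the abelian target $\pi_1(M) = \ZZ^3$, the image of the relator $a b a^{-1} b = 1$ reduces to $2 b = 0$, which together with torsion-freeness forces $b$ to map to $0 \in \ZZ^3$. On the other hand, the injectivity of $\pi_1(\tilde S) \hookrightarrow \pi_1(M)$ established in the previous step requires $b$, viewed as an element of $\pi_1(\tilde S)$, to map to a nonzero element. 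This contradiction rules out the Klein bottle case and completes the proof.

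The main obstacle will be cleanly establishing incompressibility of $\tilde S$ from incompressibility of the one-sided surface $S$; this is classical but requires some bookkeeping with the Loop Theorem (or an orientation double cover argument) in the one-sided setting. Once that step is in hand, everything else is driven by the simple algebraic observation that any homomorphism from $\pi_1(\RP^2)$ or $\pi_1(K)$ to the torsion-free abelian group $\ZZ^3$ must kill the relation witnessing non-orientability, which then conflicts with $\pi_1$-injectivity on the orientation double cover.
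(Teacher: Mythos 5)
Your overall strategy---passing to the two-sided surface $\tilde S = \partial N(S)$, applying Kneser's Lemma (Lemma~\ref{lem:piInjective}), and ruling out $\tilde S \cong S^2$ and $\tilde S \cong T^2$ by separate arguments---is a genuinely different route from the paper's proof, which simply cites Frohman's theorem on one-sided incompressible surfaces in Seifert fibered spaces via the vanishing Euler class. Your handling of the $\RP^2$ case and of the Klein-bottle algebra (killing $b$ via $2b=0$ in the torsion-free abelian $\ZZ^3$ and then contradicting $\pi_1$-injectivity of $\tilde S$) is correct.

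The gap is exactly where you flagged the difficulty, but it is not mere bookkeeping: the claim that $S$ incompressible forces $\tilde S = \partial N(S)$ incompressible is \emph{false} as a general statement. In $L(4,1)$ the Bredon--Wood Klein bottle $K$ is incompressible, yet $\partial N(K)$ is a torus, which cannot be $\pi_1$-injective into the finite group $\pi_1(L(4,1)) \cong \ZZ/4$ and hence is compressible by Lemma~\ref{lem:piInjective}; the same failure persists after passing to the double cover $\hat M \cong \RP^3$, where the lifted torus remains compressible. So neither of your proposed ``standard arguments'' proves the step in general. To carry it through for $T^3$ you must use something specific to $T^3$: take a compression disk $D$ for $\tilde S$ in $M \setminus \mathrm{int}\,N(S)$ with $\partial D = \gamma$, and examine the covering $p\colon \tilde S \to S$. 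If $p|_\gamma$ is injective, gluing $D$ to the half of the $I$-fiber annulus over $p(\gamma)$ lying between $\gamma$ and $p(\gamma)$ produces an embedded compression disk for $S$, contradicting its incompressibility. If instead $p|_\gamma$ is two-to-one, the $I$-fibers over $p(\gamma)$ form a M\"{o}bius band $B$ with $\partial B = \gamma$, and $D \cup B$ is an embedded $\RP^2 \subset T^3$; since $\pi_1(T^3)=\ZZ^3$ is torsion-free this inclusion lifts to an embedding $\RP^2 \hookrightarrow \RR^3$, contradicting Lemma~\ref{lem:SurfInsimplyCon}. It is precisely this torsion-freeness---unavailable in a lens space---that rescues the step. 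Without treating the second case explicitly, the proof is incomplete.
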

\begin{proof}
    By \cite[Theorem 3.6]{FrohmanOneSidedIncompressibles}, we obtain that there are no 1-sided, i.e.\ non-orientable, surfaces in $S^1\times S^1\times S^1$, since the Euler class of the trivial bundle $p\colon M\to F$ is zero.
\end{proof}
\begin{lem}\label{lem:IncompIn3Torus}
    Isotopy classes of incompressible surfaces in $S^1\times S^1\times S^1$ are configurations of parallel tori, parameterized by their first homology class $(k,l,m)\in H_1(S^1\times S^1\times S^1;\ZZ)\cong \ZZ\oplus \ZZ\oplus \ZZ$ up to global sign.
\end{lem}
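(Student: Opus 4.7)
The plan is to reduce to the classification of connected incompressible closed surfaces, show each such surface is an embedded torus isotopic to a linear subtorus, and then observe that disconnected incompressible surfaces must consist of parallel copies of a single linear torus.

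First, I would argue that any closed incompressible connected surface $S\hookrightarrow T^3:=S^1\times S^1\times S^1$ must be a torus. By Lemma \ref{lem:onesidedIncompIn3Torus}, $S$ is orientable, so by Lemma \ref{lem:piInjective} either $S\cong S^2$ or the induced map $\iota_*\colon \pi_1(S)\to\pi_1(T^3)\cong\ZZ^3$ is injective. Since $\pi_1(T^3)$ is abelian, any closed orientable surface of genus $g\geq 2$ is excluded because its fundamental group is non-abelian. Spheres are excluded because $T^3$ is irreducible (it is aspherical with universal cover $\RR^3$, so every embedded $S^2$ bounds a ball and is compressible). Hence $S$ is a torus, and $\iota_*\pi_1(S)$ is a rank-$2$ subgroup of $\ZZ^3$, which after saturation is a primitive rank-$2$ sublattice $L\subset\ZZ^3$.

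Next, I would show that every embedded incompressible torus $T\hookrightarrow T^3$ is isotopic to a linear subtorus. The cleanest approach is to lift $T$ to the universal cover $\RR^3\to T^3$: the preimage $\widetilde T$ is a disjoint union of properly embedded planes (each the universal cover of $T$), invariant under translation by the rank-$2$ sublattice $L\subset \ZZ^3$ above. An affine straightening in $\RR^3$, equivariant under the $\ZZ^3$-action, isotopes $\widetilde T$ to the collection of affine $2$-planes parallel to $L\otimes\RR$, which descends to an isotopy of $T$ to a linear subtorus in $T^3$. Alternatively, one may invoke the standard classification of incompressible surfaces in Seifert-fibered manifolds applied to the projection $T^3\to S^1\times S^1$ (see e.g.\ \cite{MartelliGeomTop,hatcher3manifolds}). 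Primitive rank-$2$ sublattices of $\ZZ^3$ are in bijection, via Poincaré duality, with primitive classes in $H_1(T^3;\ZZ)\cong\ZZ^3$ up to sign, so linear incompressible subtori are classified by primitive $(k_0,l_0,m_0)\in\ZZ^3$ up to sign.

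Finally, for the disconnected case, any two disjoint incompressible tori in $T^3$ lift to families of parallel planes in $\RR^3$ that cannot intersect; this forces their directing sublattices to coincide and the tori to be mutually parallel. Therefore a disconnected incompressible surface consists of $n\geq 1$ parallel copies of a linear subtorus with primitive class $(k_0,l_0,m_0)$, representing the total homology class $n(k_0,l_0,m_0)\in H_1(T^3;\ZZ)$. Conversely, every $(k,l,m)\in\ZZ^3$ can be written as $n(k_0,l_0,m_0)$ with $(k_0,l_0,m_0)$ primitive and $n=\gcd(k,l,m)\geq 0$ (with $n=0$ giving the empty surface), and is realized by the associated parallel configuration; the sign ambiguity reflects that we work with unoriented surfaces. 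The main obstacle is the straightening step: showing that any incompressible torus is isotopic (not merely homotopic or homologous) to a linear one requires the equivariant isotopy in the universal cover outlined above.
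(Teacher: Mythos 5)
Your argument is correct in outline but takes a genuinely different primary route from the paper. The paper simply invokes Waldhausen's classification of 2-sided incompressible surfaces in Seifert-fibered manifolds (Satz 2.8 of \cite{WaldhausenSeifertFib}), applied to the product fibration $p\colon T^3\to T^2$, which in one stroke identifies every such surface as a union of parallel linear tori indexed by $(k,l,m)$ up to sign. You instead argue directly: reduce to connected surfaces, show each must be a $\pi_1$-injective torus (ruling out spheres via irreducibility and higher genus via abelianity of $\pi_1(T^3)$), lift to the universal cover $\RR^3$, and then claim an equivariant straightening to affine planes. The first reduction is clean and essentially the same computation that underlies Waldhausen. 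The delicate step, which you rightly flag, is the equivariant straightening: the preimage of an incompressible torus is a priori only a $\ZZ^3$-equivariant family of topological properly embedded planes, and showing these can be isotoped equivariantly to affine planes is a classical but non-automatic fact (it is essentially what Waldhausen's theorem packages). The same caveat applies to your treatment of disconnected surfaces: ``parallel planes with different slopes must intersect'' is immediate only \emph{after} straightening, not for arbitrary wiggly planes, so the two steps must be organized in the right order. Since you also explicitly offer the Seifert-fibered/Waldhausen classification as an alternative, the proof is sound, but if the universal-cover argument is to stand on its own it needs a citation or proof for the equivariant straightening. One small omission: the paper also records that the zero class $(0,0,0)$ corresponds to the empty surface, which rounds out the parameterization.
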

\begin{proof}
    By Lemma \ref{lem:onesidedIncompIn3Torus}, every incompressible surface in $S^1\times S^1\times S^1$ is orientable, i.e.\ 2-sided. It follows from a theorem of Waldhausen \cite[Satz 2.8]{WaldhausenSeifertFib} that isotopy classes of 2-sided incompressible surfaces in the Seifert-fibered 3-manifold $S^1\times S^1\times S^1$ are represented by surfaces $S$ such that either $S=p^{-1}(p(S))$, or $p|_S$ is a covering map\footnote{In \cite{FrohmanOneSidedIncompressibles} these are called \emph{vertical} and \emph{horizontal}, respectively. Note that the notion of horizontal is different in \cite[Definition 2.5]{WaldhausenSeifertFib}}. These are classified by their first homology class $(k,l,m)\in H_1(S^1\times S^1\times S^1;\ZZ)$ up to global sign, i.e $(k,l,m)\simeq (-k,-l,-m)$, and represented by $d=\gcd(k,l,m)$ parallel copies of incompressible tori of \emph{slope} $(\frac{k}{d},\frac{l}{d},\frac{m}{d})$.  
    The special case of the zero homology class $(0,0,0)\in H_1(S^1\times S^1\times S^1; \ZZ)$ corresponds to the empty surface. Also compare \cite[Section 3]{AsaedaFrohman}.
\end{proof}
\begin{prop}\label{prop:SkModThreeTorus}
	Let $(p,q,r)\in \ZZ^3/(\pm1)$ be a triple of integers up to global sign with $\gcd(p,q,r)=1$. Denote by $\mathsf{T}^d_{(p,q,r)}\in\Sk_\alpha(S^1\times S^1\times S^1)$ the isotopy class of $d$ parallel copies of incompressible tori without decoration. Denote by $\mathsf{D}^d_{(p,q,r)}$ the class of $d$ parallel copies of incompressible dotted tori. Then the skein module $\Sk_\alpha(S^1\times S^1\times S^1)$ is spanned by
	\begin{align*}
	\{ \emptyset, \mathsf{D}_{(p,q,r)}, \mathsf{T}_{(p,q,r)}, \mathsf{T}^{2k}_{(p,q,r)} \mid k\geq 1, (p,q,r)\in \ZZ^3/\{\pm1\}, \gcd(p,q,r)=1\}.
	\end{align*} 
\end{prop}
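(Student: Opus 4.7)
The plan is to mirror the proof of Proposition~\ref{prop:skModSTwoSOne}. I would proceed in three steps.

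First, I would apply Proposition~\ref{prop:incompGens} together with Lemma~\ref{lem:IncompIn3Torus} to reduce to configurations of $d$ parallel copies of incompressible tori of primitive slope $(p,q,r) \in \ZZ^3/\{\pm 1\}$, each decorated from the $\kk$-basis $\{1,x\}$ of $A_\alpha$. Since the mapping class group of $T^3$ (which contains $\mathrm{SL}(3,\ZZ)$) acts transitively on primitive slopes, up to diffeomorphism I may fix a standard slope, say $(0,0,1)$, giving horizontal parallel tori in the presentation $T^3 = T^2 \times S^1$.

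Second, I would establish two reduction tools mirroring the sphere case. The $S^1$-factor provides cyclic permutation of horizontal parallel tori via ambient isotopy: rotating along $S^1$ realizes the cycle $[a_1, \ldots, a_d] \mapsto [a_2, \ldots, a_d, a_1]$. Tubing of adjacent parallel tori creates compressible higher-genus surfaces whose neck-cuts (both separating, involving $\Delta(1) = 1\otimes x + x\otimes 1$, and non-separating, involving the handle element $H = 2x \in A_\alpha$) produce relations among multi-torus configurations. The precise form of these relations follows from the tunneling-graph methodology of \cite[Theorem 11.2]{KaiserTunneling} applied to $\Sigma = T^2$.

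Third, I would apply these relations in direct analogy with the sphere case: the torus-analog of $\mathsf{S}\mathsf{D} + \mathsf{D}\mathsf{S} = 0$ annihilates any mixed dotted/undotted configuration and reduces multiple dotted tori to at most a single $\mathsf{D}_{(p,q,r)}$; the torus-analog of $\mathsf{D}^2 + \alpha \mathsf{S}^2 = 1$ then reduces any odd number $\geq 3$ of parallel undotted tori to a scalar multiple of $\mathsf{T}_{(p,q,r)}$. What remains are precisely the claimed spanning elements: the empty surface, $\mathsf{T}_{(p,q,r)}$, $\mathsf{D}_{(p,q,r)}$, and $\mathsf{T}^{2k}_{(p,q,r)}$ for $k \geq 1$.

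The hard part will be deriving the correct torus-analogs of the sphere relations. In the sphere case (Corollary~\ref{cor:alphaSkmodThickSphere}) the sphere relation $\mathsf{S}_a = \varepsilon(a)$ immediately produced a clean algebra presentation of $\Sk_\alpha(S^2 \times B^1, \emptyset)$. Tori are incompressible and admit no such scalar-valued reduction, so the analogous relations in $\Sk_\alpha(T^2 \times B^1, \emptyset)$ must instead be extracted from comparing alternative neck-cuts of higher-genus compressible surfaces built from multiple parallel tori and tubes, with the handle element $H$ entering through non-separating cuts. Carrying out this comparison and verifying that it yields exactly the reductions needed to match the claimed spanning set is the main technical obstacle.
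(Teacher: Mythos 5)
Your plan mirrors the paper's proof, which is also extremely terse: it invokes Kaiser's tunneling-graph theorem for $\Sigma_g\times B^1$ to get the algebraic relations among parallel copies of an essential fiber, and then uses the cyclic-permutation isotopy available after gluing $T^2\times B^1$ into $T^2\times S^1$ to cut the spanning set down to $\{\emptyset, \mathsf{D}, \mathsf{T}, \mathsf{T}^{2k}\}$ for each primitive slope $(p,q,r)$. So the overall route is the right one. Your uncertainty about the \emph{form} of the torus analogue of the tubing relations is also well placed, and it exposes an imprecision in the paper's own phrasing (``the tubing relations \eqref{eq:tubingWith1} and \eqref{eq:tubingWithx} generalize''): the constants do change.

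Here is how to close that gap. Take two parallel essential tori with a tube in $T^2\times B^1$, giving a genus-2 surface with decoration $a$. Cutting the tube's meridian is a separating neck-cut and yields $\sum_i \mathsf{T}_{x_i}\mathsf{T}_{y_i}$ with $\Delta(a)=\sum_i x_i\otimes y_i$, exactly as in the $g=0$ case. But the other complementary region is now $F\times I$ with $F$ a once-punctured torus, a genus-2 \emph{handlebody} rather than a ball. Compressing along its two cut disks $\gamma_i\times I$ is a sequence of two non-separating neck-cuts, each multiplying the decoration by the handle element $H$; only after both does the sphere relation apply. This gives $\Delta(a) = \varepsilon(H^2 a)$ rather than $\varepsilon(a)$. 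You can cross-check this against the abstract evaluation: $\sum_i\varepsilon(Hx_i)\varepsilon(Hy_i)=(\varepsilon\otimes\varepsilon)\bigl((H\otimes H)\Delta(a)\bigr)=(\varepsilon\otimes\varepsilon)\Delta(H^2a)=\varepsilon(H^2a)$. For $A_\alpha$, since $H^2=4\alpha$, equations \eqref{eq:tubingWith1} and \eqref{eq:tubingWithx} become $\mathsf{T}\mathsf{D}+\mathsf{D}\mathsf{T}=0$ (unchanged, because $\varepsilon(H^2)=0$) and $\mathsf{D}^2+\alpha\mathsf{T}^2=4\alpha$ (a different scalar, because $\varepsilon(H^2 x)=4\alpha$). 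The reduction as in Proposition~\ref{prop:skModSTwoSOne} only needs the constant on the right-hand side to be a unit: from $\mathsf{T}^{2k-1}\mathsf{D}^2 = 0$ (cyclic permutation) and $\mathsf{D}^2 = 4\alpha - \alpha\mathsf{T}^2$, one gets $\mathsf{T}^{2k+1}=4\mathsf{T}^{2k-1}$, so odd powers of $\mathsf{T}$ still collapse onto $\mathsf{T}$. The claimed spanning set survives. So what you flagged as ``the hard part'' is indeed a genuine gap in your write-up, but it is resolvable along exactly these lines; just be sure not to take the paper's ``generalize'' to mean that the constants literally carry over.
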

\begin{proof}
	The tubing relations \eqref{eq:tubingWith1} and \eqref{eq:tubingWithx} generalize to $\mathsf{T}_{(p,q,r)}$ and $\mathsf{D}_{(p,q,r)}$ and can be used to reduce the generators as in Corollary~\ref{cor:alphaSkmodThickSphere}. Using the analogous cyclic permutation relation from Proposition \ref{prop:skModSTwoSOne}, it can also be shown that $\mathsf{T}^{2k+1}_{(p,q,r)}$ and $\mathsf{T}_{(p,q,r)}$ are linearly dependent. 
\end{proof}

\begin{defn}
    Let $n\geq1$, $d\mid 2n$ and $r=2n/d$. We define the \emph{$(d,r)$-cyclic toric cap value} of the Kirby color $\omega_{2n}$ to be the abstract evaluation
    \begin{align*}
        \mathrm{cap}^{(1)}_{d,r}(\omega_{2n}):= \eval_\alpha(S,P)(\omega_{2n})\in \kk
    \end{align*}
    from Construction \ref{con:abstractEvalext}, where $S$ is the surface with $d$ connected components $C_i$ each of which is a torus, and $P$ with $|P|=dr=2n$ has $r$ points on each $C_i$. The tensor factors corresponding to $C_i$ in $A^{\otimes P}$ are at the positions
		\begin{align*}
			i,\quad i+d, \quad i+2d,\quad \dots, \quad i+(r-1)d \quad \text{for } 1\leq i \leq d.
		\end{align*}
\end{defn}
\begin{rmk}
    The superscript on $\mathrm{cap}_{d,r}^{(1)}(\omega_{2n})$ stands for the genus. One could generalize this to cyclic cap values of genus $g$ surfaces. 
\end{rmk}

\begin{prop}\label{prop:InvThickenedTorus}
	The invariant $\sktft_\alpha(S^1\times S^1\times B^2)$ is the linear functional
\begin{align*}
	\sktft_\alpha(S^1\times S^1\times B^2)\colon \Sk_\alpha (S^1\times S^1 \times S^1) &\to \kk\\
	\emptyset & \mapsto 1\\ 
	\mathsf{D}_{(p,q,r)}&\mapsto 0\\
	\mathsf{T}_{(p,q,r)}&\mapsto 0\\
	\mathsf{T}^{2k}_{(p,q,r)}&\mapsto \mathrm{cap}^{(1)}_{2k,|r|}(\omega_{2k|r|})
\end{align*}
\end{prop}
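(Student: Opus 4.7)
The plan is to apply Construction~\ref{con:TheInvariant} to the handle decomposition of $W = S^1\times S^1\times B^2$ induced by the standard CW structure of $T^2$ crossed with $B^2$: one 0-handle, two 1-handles along generators $a, b$, and a single 2-handle attached along the commutator $[a,b]$ in $\#_2 S^1\times S^2$. The cocore of the 2-handle is $\{*\}\times B^2\subset T^2\times B^2$, so the dual knot $L^\vee\subset \partial W = T^3$ is isotopic to the third $S^1$-factor $\{*\}\times\{*\}\times S^1$ and represents the class $(0,0,1)\in H_1(T^3;\ZZ)$. Since $\chi(W)=0$, Proposition~\ref{prop:evEulerChar} ensures that setting $\ev(\emptyset)=1$ introduces no extra scalar; in particular the empty skein is sent to $1$.

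Next I would count intersections via Poincar\'e duality. A primitive linear torus of slope $(p,q,r)$ is dual to $(p,q,r)\in H^1(T^3)$ and therefore meets $L^\vee$ transversely in $|r|$ points. The configuration $\mathsf{T}^d_{(p,q,r)}$ of $d$ parallel translates $C_1,\dots, C_d$ meets $L^\vee$ in $d|r|$ total points; because the $C_i$ are obtained from one another by a small translation in a direction transverse to $L^\vee$, traversing $L^\vee$ once encounters these intersections in the cyclic order $C_1, C_2, \ldots, C_d$, repeating $|r|$ times. This is exactly the positional convention used in the definition of $\mathrm{cap}^{(1)}_{d,|r|}$ preceding the proposition, so that after puncturing along $P=\mathsf{T}^d_{(p,q,r)}\cap L^\vee$ the tensor factors of $\omega_{d|r|}$ are consumed by the components in the prescribed interleaved pattern.

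With this geometric picture in hand, Construction~\ref{con:TheInvariant} delivers directly
\begin{align*}
\sktft_\alpha(W)(\mathsf{T}^{2k}_{(p,q,r)})
\;=\; \eval_\alpha(\mathsf{T}^{2k}_{(p,q,r)}, P)(\omega_{2k|r|})
\;=\; \mathrm{cap}^{(1)}_{2k,|r|}(\omega_{2k|r|}),
\end{align*}
as claimed, the intermediate identifications being merely unpackings of definitions.

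Finally, the vanishings on the single-(dotted or undotted)-torus generators would be handled by a parity analysis. When $|r|$ is odd, the relevant attaching-region skein module $\Sk_\alpha(S^1\times B^2,(|r|,0))$ vanishes by the $\ZZ/2$-homological obstruction of Proposition~\ref{prop:HomologDecompOfSkMod}, so the 2-handle map itself is zero in that degree and both $\mathsf{T}_{(p,q,r)}$ and $\mathsf{D}_{(p,q,r)}$ are sent to $0$. For the dotted single torus with $|r|$ even the $x$-decoration combines with the genus-one handle element $H=2x$ to give $\varepsilon(Hx)=\varepsilon(2\alpha)=0$ after the final counit is applied, independent of the remaining Kirby-color coefficients. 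The main obstacle will be the undotted single-torus case with $|r|$ even, where one must show the cyclic cap value $\mathrm{cap}^{(1)}_{1,|r|}(\omega_{|r|})$ vanishes; for this I would iteratively apply the annulus-capping identities of Corollary~\ref{cor:annulusCapping} to the symmetrized form of $\omega_{|r|}$ from Theorem~\ref{thm:KirbySymmetrizer}, reducing the single-torus evaluation to a base case in which either a dotted closed-surface factor or a compressible configuration is forced to appear, each of which evaluates to zero.
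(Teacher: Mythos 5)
Your refinement into parity cases is more careful than the paper's own proof, which disposes of the single-torus generators $\mathsf{T}_{(p,q,r)}$ and $\mathsf{D}_{(p,q,r)}$ with the one-line assertion ``the number of intersections is odd.'' Your treatment of $|r|$ odd (vanishing of the 2-handle map by the $\ZZ/2$-homological constraint) and of the dotted torus with $|r|$ even is sound, although the latter step should be stated precisely: the dot and the genus-one handle element contribute $H x = 2\alpha$, and since every surviving term of the Kirby color multiplies in an even power of $x$, one lands on $\varepsilon(2\alpha\, x^{|z|}) = 0$; the phrase ``$\varepsilon(Hx)=0$ independent of the remaining coefficients'' conflates this with a false multiplicativity.

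However, the vanishing you propose to establish for the undotted single torus with $|r|$ even is false, so this step of the argument cannot be completed. For $|r| = 2$ one computes directly
\begin{align*}
\mathrm{cap}^{(1)}_{1,2}(\omega_{2}) \;=\; \tfrac{1}{2}\bigl(\varepsilon(H\cdot 1\cdot 1) + \tfrac{1}{\alpha}\varepsilon(H\cdot x\cdot x)\bigr) \;=\; \tfrac{1}{2}\bigl(2 + \tfrac{1}{\alpha}\cdot 2\alpha\bigr) \;=\; 2 \;\neq\; 0,
\end{align*}
and even more simply, for $r = 0$ (e.g.\ $(p,q,r) = (1,0,0)$, still primitive) the single undotted torus is disjoint from $L^\vee$, so Construction~\ref{con:TheInvariant} gives $\sktft_\alpha(W)(\mathsf{T}_{(1,0,0)}) = \eval_\alpha(\mathsf{T}_{(1,0,0)}) = \varepsilon(H) = 2$. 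No iterative application of the annulus-capping identities of Corollary~\ref{cor:annulusCapping} can produce a zero here. This shows that the parity assertion in the paper's proof is not justified for $|r|$ even and that the stated formula $\mathsf{T}_{(p,q,r)} \mapsto 0$ appears to require a restriction to odd $|r|$ or a correction in the even cases; your sharper case analysis exposes the gap but cannot repair it by the proposed route.
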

\begin{proof}
	Note that since $\chi(S^1\times S^1\times B^2)=0$, there is no dependence on $\ev(\emptyset)\in \kk^\times$. Hence, the empty skein evaluates to 1. The invariant is zero on $\mathsf{D}$ and $\mathsf{T}$ since the number of intersections is odd. For $2k$ parallel copies of undotted tori $(p,q,r)$, we have $2k|r|$ punctures in cyclic order of genus 1 surfaces.	
\end{proof}

\bibliographystyle{alpha}
\bibliography{allrefs}

\end{document}